\numberwithin{equation}{section}
\newcommand{\fg}{{\mathfrak{g}}}
\newcommand{\fn}{{\mathfrak{n}}}
\newcommand{\sG}{{\mathscr G}}
\newcommand{\E}{\mathcal{E}}
\newcommand{\cA}{\mathcal{A}}
\newcommand{\cB}{\mathcal{B}}
\newcommand{\cT}{\mathcal{T}}
\newcommand{\cW}{\mathcal{W}}
\newcommand{\cZ}{\mathcal{Z}}
\newcommand{\dbF}{{\mathbb{F}}}
\newcommand{\dbQ}{{\mathbb{Q}}}
\newcommand{\dbZ}{{\mathbb{Z}}}
\newcommand{\BC}{{\mathbb{C}}}
\newcommand{\BG}{{\mathbb{G}}}
\newcommand{\BF}{{\mathbb{F}}}
\newcommand{\BK}{{\mathbb{K}}}
\newcommand{\BN}{{\mathbb{N}}}
\newcommand{\BQ}{{\mathbb{Q}}}
\newcommand{\BR}{{\mathbb{R}}}
\newcommand{\BZ}{{\mathbb{Z}}}
\newcommand{\fp}{{\mathfrak p}}
\newcommand{\tX}{{\tilde X}}
\newcommand{\BQbar}{\overline{\dbQ}}
\newcommand{\BFbar}{\overline{\dbF}}
\newcommand{\BZbar}{\overline{\dbZ}}
\newcommand{\iso}{\buildrel{\sim}\over{\longrightarrow}}
\newcommand{\epi}{\twoheadrightarrow}
\newcommand{\mono}{\hookrightarrow}
\DeclareMathOperator{\cont}{{cont}}
\DeclareMathOperator{\Ss}{{ss}}
\DeclareMathOperator{\Aff}{{Aff}}
\DeclareMathOperator{\Br}{{Br}}
\DeclareMathOperator{\coarse}{{coarse}}
\DeclareMathOperator{\Dyn}{{Dyn}}
\DeclareMathOperator{\Epi}{{Epi}}
\DeclareMathOperator{\Ex}{{Ex}}
\DeclareMathOperator{\Irr}{{Irr}}
\DeclareMathOperator{\Tr}{{Tr}}
\DeclareMathOperator{\Spec}{{Spec}}
\DeclareMathOperator{\Gal}{{Gal}}
\DeclareMathOperator{\red}{{red}}
\DeclareMathOperator{\et}{{et}}
\DeclareMathOperator{\tors}{{tors}}
\DeclareMathOperator{\mot}{{mot}}
\DeclareMathOperator{\Hom}{{Hom}}
\DeclareMathOperator{\FIsoc}{\mathit{F}-Isoc}
\DeclareMathOperator{\FIsocd}{\mathit{F}-Isoc^{\dagger}}
\DeclareMathOperator{\Inj}{{Inj}}
\DeclareMathOperator{\Lie}{{Lie}}
\DeclareMathOperator{\Norm}{{Norm}}
\DeclareMathOperator{\NORM}{{\bf Norm}}
\DeclareMathOperator{\Open}{{Open}}
\DeclareMathOperator{\Par}{{Par}}
\DeclareMathOperator{\Rep}{{Rep}}
\DeclareMathOperator{\Stab}{{Stab}}
\DeclareMathOperator{\Syl}{{Syl}}
\DeclareMathOperator{\Aut}{{Aut}}
\DeclareMathOperator{\id}{{id}}
\DeclareMathOperator{\Id}{{Id}}
\DeclareMathOperator{\Coker}{{Coker}}
\DeclareMathOperator{\Ker}{{Ker}}
\DeclareMathOperator{\Fr}{{Fr}}
\DeclareMathOperator{\im}{{Im}}
\DeclareMathOperator{\pol}{{pol}}
\DeclareMathOperator{\Res}{{Res}}
\DeclareMathOperator{\Tann}{{Tann}}
\DeclareMathOperator{\Val}{{Val}}
\DeclareMathOperator{\Vect}{{Vec}}
\DeclareMathOperator{\Pross}{{Pro-ss}}
\DeclareMathOperator{\PROSS}{{\bf Pro-ss}}
\DeclareMathOperator{\Prored}{{Pro-red}}
\newcommand{\sfC}{{\mathsf{C}}}
\newtheorem{cor}[subsubsection]{Corollary}
\newtheorem{lem}[subsubsection]{Lemma}
\newtheorem{prop}[subsubsection]{Proposition}
\newtheorem{conj}[subsubsection]{Conjecture}
\newtheorem{theorem}[subsubsection]{Theorem}
\newtheorem{quest}[subsubsection]{Question}
\theoremstyle{remark}
\newtheorem{rem}[subsubsection]{Remark}
\theoremstyle{definition}
\newtheorem{ex}[subsubsection]{Example}
\newcommand{\SupSet}{\raise1.75pt
     \hbox{$\,\,\scriptstyle\supset\,\,$}}
\newcommand{\SubSet}{\raise1.75pt
     \hbox{$\,\,\scriptstyle\subset\,\,$}}
\renewcommand{\over}{\@@over}
\renewcommand{\atop}{\@@atop}
\renewcommand{\above}{\@@above}
\renewcommand{\overwithdelims}{\@@overwithdelims}
\renewcommand{\atopwithdelims}{\@@atopwithdelims}
\renewcommand{\abovewithdelims}{\@@abovewithdelims}
\begin{document} 

\title[The pro-semisimple completion of $\pi_1 (X)$, where $X$ is a smooth $\BF_q$-variety]{On the pro-semisimple completion of the fundamental group of a smooth variety over a finite field}
\author{Vladimir Drinfeld}
\address{University of Chicago, Department of Mathematics, Chicago, IL 60637}
\email{drinfeld@math.uchicago.edu}

\dedicatory{To Dima Kazhdan with gratitude and admiration}

\begin{abstract}
Let $\Pi$ be the fundamental group of a smooth variety $X$ over $\BF_p$. 
 Let $\BQbar$ be an algebraic closure of $\BQ$. Given a non-Archimedean place
$\lambda$ of $\BQbar$ prime to $p$, consider the $\lambda$-adic pro-semisimple completion of $\Pi$ as an object of the groupoid whose objects are pro-semisimple groups and whose morphisms are isomorphisms up to conjugation by elements of the neutral connected component.
We prove that this  object does not depend on $\lambda$. If $\dim X=1$ we also prove a crystalline generalization of this fact.

We deduce this from the Langlands conjecture for function fields (proved by L. Lafforgue) and its crystalline analog (proved by T. Abe) using a reconstruction theorem in the spirit of Kazhdan-Larsen-Varshavsky.

We also formulate two related Conjectures \ref{conj:reciprocity} and \ref{conj:2reciprocity}. Each of them is a kind of ``reciprocity law" involving a sum over all $\ell$-adic cohomology theories (including the crystalline theory for $\ell=p$).
\end{abstract}
\keywords{$\ell$-adic representation, independence of $\ell$, local system, Langlands conjecture,
motivic fundamental group, weakly motivic, Tannaka reconstruction}
\subjclass[2010]{Primary 14G15, 11G35; secondary 20G05}

\thanks{Partially supported by NSF grant DMS-1303100}

\maketitle

\tableofcontents

\section{Introduction}
\subsection{Some notation}   \label{ss:some}
\subsubsection{The setting}   \label{sss:some}
Once and for all, we fix a prime $p$.

Let $X$ be an irreducible normal variety over $\BF_p \,$.
We fix a universal cover $\tX\to X$ and set $\Pi:=\Aut (\tilde X/X)$. If one also chooses a geometric point $\xi$ of $X$ and a lift of $\xi$ to $\tilde X$ then $\Pi$ identifies with $\pi_1 (X,\xi )$.

Let $|X|$ (resp.~$|\tX|$) denote the set of closed points of $X$ (resp.~of $\tX$). We have a canonical $\Pi$-equivariant map
\begin{equation} \label{e:Frob1}
|\tX|\to\Pi,\quad \tilde x\mapsto F_{\tilde x} \, ,
\end{equation}
where $F_{\tilde x}$ is the geometric Frobenius, i.e., the unique automorphism of $\tX$ over $X$ whose restriction to $\{\tilde x\}$ equals the composition $\{\tilde x\}\overset{\varphi}\longrightarrow\{\tilde x\}\mono\tX$, where $\varphi :\{\tilde x\}\to\{\tilde x\}$ is the Frobenius morphism with respect to $x$ (this means that for any regular function $f$ on $\{\tilde x\}$ one has $\varphi^*(f)=f^{q_x}$, where $q_x$ is the order of the residue field of $x$).

\subsubsection{The subset $\Pi_{\Fr}\subset\Pi$}  \label{sss:Pi_Fr}
Let $\Pi_{\Fr}\subset\Pi$ be the subset formed by elements $F_{\tilde x}^n$, where $\tilde x$ runs through $|\tilde X|$ and $n$ runs through $\BN$. The subset $\Pi_{\Fr}\subset\Pi$ is dense (by \v {C}ebotarev's theorem). The group $\Pi$ acts on $\Pi_{\Fr}$ by conjugation.

The subset $\Pi_{\Fr}\subset\Pi$ behaves functorially in the following sense. Suppose we have another pair $(X',\tilde X')$ as above and a morphism $(X',\tilde X')\to (X,\tilde X)$. Set $\Pi':=\Aut (\tilde X'/X')$, then there is a unique homomorphism $f:\Pi'\to\Pi$ that makes the map $\tilde X'\to\tilde X$ equivariant with respect to $\Pi'$. It is easy to see that $f(\Pi'_{\Fr})\subset\Pi_{\Fr}\,$. Moreover, if the morphism $X'\to X$ is finite and etale then $f(\Pi'_{\Fr})=f(\Pi )\cap\Pi_{\Fr}\,$.

\subsection{The $\lambda$-adic pro-semisimple completion of $\Pi$ and its ``coarse" version $\hat\Pi_{(\lambda)}\,$}
\subsubsection{The $\ell$-adic pro-semisimple completion of $\Pi$}    \label{sss:thecompletion}
Fix a prime $\ell\ne p$. Consider the category opposite to the category of pairs $(G,\rho )$, where $G$ is a (not necessarily connected) semisimple algebraic group over $\BQ_\ell$ and $\rho :\Pi\to G(\BQ_\ell )$ is a continuous homomorphism whose image is Zariski-dense in $G$. This category is equivalent to a poset $I$. We have a projective system of semisimple algebraic groups over $\BQ_{\ell}$ indexed by $I$. Its projective limit is called the \emph{$\ell$-adic pro-semisimple completion} of $\Pi$. We will denote it by $\hat\Pi_\ell\,$. It is clear that $\hat\Pi_\ell$ is a pro-semisimple group scheme, and one has a canonical exact sequence of group schemes
\begin{equation}   \label{e:exact}
0\to \hat\Pi_\ell^\circ\to \hat\Pi_\ell\to\Pi\to 0,
\end{equation}
where $\hat\Pi_\ell^\circ$ is the neutral connected component of $\hat\Pi_\ell \,$.  By the definition of 
 $\hat\Pi_\ell\,$, one has a canonical map $\Pi\to\hat\Pi_\ell (\BQ_{\ell} )$ such that the composition 
 $\Pi\to\hat\Pi_\ell (\BQ_{\ell})\to\Pi$ is equal to the identity.

\subsubsection{The scheme $[\hat\Pi_\ell ]$}   \label{sss:charFr}
For any pro-reductive group $G$, let $[G]$ denote the GIT quotient of $G$ by the conjugation action of the neutral connected component $G^\circ$ (i.e., $[G]$ is the spectrum of the algebra of those regular functions on $G$ that are invariant under $G^\circ$-conjugation). We have the projection $G\to [G]$ and the canonical map $[G]\epi\pi_0 (G):=G/G^\circ$.

In particular, we have the scheme $[\hat\Pi_\ell ]$ and the canonical map $[\hat\Pi_\ell ]\epi \pi_0 (\hat\Pi_\ell )=\Pi$.

\subsubsection{The groupoids $\Prored (E)$ and $\Pross (E)$}   \label{sss:coarsecat}
For any algebraically closed field $E$, let $\Prored (E)$ denote the groupoid whose objects are pro-reductive groups over $E$ and whose morphisms are as follows: a morphism $G_1\to G_2$ is an isomorphism of group schemes $G_1\to G_2$ defined up to composing with automorphisms of $G_2$ of the form $x\mapsto gxg^{-1}$, $g\in G^\circ_2\,$. 
Let $\Pross (E)\subset \Prored (E)$ denote the full subcategory formed by pro-semisimple groups.

Note that the functors $G\mapsto [G]$ and $G\mapsto \pi_0 (G)$ are well-defined on the category $\Prored (E)$. For $G\in\Prored (E)$ the affine scheme $[G]$ is equipped with a map $[G]\to\pi_0 (G)$ and an action of $\pi_0 (G)$.

It is easy to see and well known that for any  homomorphism $E\to \tilde E$ between algebraically closed fields the corresponding functors $\Prored (E)\to \Prored (\tilde E)$ and $\Pross (E)\to \Pross (\tilde E)$ are equivalences (in the pro-semisimple case, see Proposition~\ref{p:E-independence}).

\subsubsection{The objects $\hat\Pi_{(\lambda)}\in \Pross (\BQbar)$}     \label{sss:lambda-completion}
Fix an algebraic closure $\BQbar$ of $\BQ$. For each non-Archimedean place $\lambda$ of $\BQbar$ not dividing $p$, one can define an object 
$\hat\Pi_{(\lambda)}\in \Pross (\BQbar)$ as follows. 
For each subfield $E\subset\BQbar$ finite over $\BQ$ let $E_\lambda$ denote the completion of $E$ with respect to the place of $E$ corresponding to $\lambda$. Let $\BQbar_\lambda$ denote the direct limit of all such fields $E_\lambda\,$; this is an algebraic closure of $\BQ_\ell\,$. The embedding $\BQbar\mono\BQbar_\lambda$ induces an equivalence
\begin{equation}   \label{e:Qbar and completion}
\Pross (\BQbar )\iso \Pross (\BQbar_\lambda ).
\end{equation}
Let $\ell$ be the prime such that $\lambda$ divides $\ell$, then we have an embedding $\BQ_\ell\mono \BQbar_\lambda\,$. Set 
$\hat\Pi_\lambda :=\hat\Pi_\ell\otimes_{\BQ_{\ell}}\BQbar_\lambda\,$.
Consider the pro-semisimple group $\hat\Pi_\lambda$ as an object of $\Pross (\BQbar_\lambda)\,$, and then apply the functor inverse to \eqref{e:Qbar and completion}. Thus we get an object of $\Pross (\BQbar)$, which will be denoted by 
$\hat\Pi_{(\lambda)}\,$. Let us emphasize that $\hat\Pi_{(\lambda)}$ is defined as an object of the rather ``coarse" category $\Pross (\BQbar)$ but not as a group scheme ``on the nose".

\subsection{A result of L.~Lafforgue}    \label{ss:add-structure}
By \S\ref{sss:charFr}-\ref{sss:coarsecat}, we have an affine scheme $[\hat\Pi_{(\lambda)}]$ over $\BQbar$, a morphism $[\hat\Pi_{(\lambda)}]\to\pi_0 (\hat\Pi_{(\lambda)})=\Pi$, and an action of $\Pi$ on $[\hat\Pi_{(\lambda)}]$.  
We also have a canonical map 
\begin{equation}   \label{e:Frob4}
\Pi_{\Fr}\to [\hat\Pi_{(\lambda)}] (\BQbar_\lambda)\,  , 
\end{equation}
namely, the composition 
$\Pi_{\Fr}\mono\Pi\to\hat\Pi_\ell (\BQ_\ell)\to [\hat\Pi_\ell ](\BQ_\ell)\mono [\hat\Pi_{(\lambda)}] (\BQbar_\lambda)$. The map \eqref{e:Frob4} is clearly $\Pi$-equivariant. It has Zariski-dense image (because $\Pi_{\Fr}$ is dense in $\Pi$).

\begin{prop}  \label{p:algebraicity}
The image of the map \eqref{e:Frob4} is contained in  $[\hat\Pi_{(\lambda)}] (\BQbar)$.
\end{prop}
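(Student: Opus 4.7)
The plan is to verify $\BQbar$-rationality one finite-type quotient at a time, reduce from $[G_0]$ to the coarser GIT quotient $G_0\,/\!/\,G_0$, descend further to characters of representations, and then invoke Lafforgue's theorem.

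First I would unfold the construction: $\hat\Pi_{(\lambda)}$ is a filtered inverse limit of semisimple groups $G_0\in\Pross(\BQbar)$, each corresponding, via the equivalence $\Pross(\BQbar)\iso\Pross(\BQbar_\lambda)$, to the base change $G\otimes_{\BQ_\ell}\BQbar_\lambda$ of some semisimple $\BQ_\ell$-group $G$ equipped with a continuous Zariski-dense homomorphism $\rho:\Pi\to G(\BQ_\ell)$. Hence $[\hat\Pi_{(\lambda)}]=\varprojlim [G_0]$, and it suffices to show that for every such $(G,\rho)$ and every $\gamma=F_{\tilde x}^n\in\Pi_{\Fr}$, the class of $\rho(\gamma)$ in $[G_0](\BQbar_\lambda)$ lies in $[G_0](\BQbar)$.

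Next I would factor the natural map $[G_0]=G_0\,/\!/\,G_0^\circ\to G_0\,/\!/\,G_0=[G_0]/\pi_0(G_0)$. Since $\pi_0(G_0)$ is a finite group, this is a finite morphism, so every fiber is a finite $\BQbar$-scheme $\Spec A$. Any $\BQbar$-algebra homomorphism $A\to\BQbar_\lambda$ factors through a maximal ideal of $A$, whose residue field is $\BQbar$ (as $\BQbar$ is algebraically closed), so each $\BQbar_\lambda$-point of such a fiber is automatically a $\BQbar$-point. Thus I may reduce the problem to showing $\BQbar$-rationality of the image of $\rho(\gamma)$ in $(G_0\,/\!/\,G_0)(\BQbar_\lambda)$.

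Because $G_0$ is reductive over the algebraically closed field $\BQbar$, Peter--Weyl together with Schur's lemma gives $\BQbar[G_0]^{G_0}=\bigoplus_V\BQbar\cdot\chi_V$, with $V$ running over finite-dimensional irreducible $G_0$-representations, so it is enough to verify $\chi_V(\rho(\gamma))\in\BQbar$ for each such $V$. I would transport $V$ across the equivalence $\Pross(\BQbar)\iso\Pross(\BQbar_\lambda)$ and descend from $\BQbar_\lambda$ to some finite extension $E_\lambda/\BQ_\ell$ inside $\BQbar_\lambda$, obtaining a representation $V':G\to GL(V')$ defined over $E_\lambda$; the composition $V'\circ\rho:\Pi\to GL(V')(E_\lambda)$ is continuous, and it is semisimple because $V'$ is completely reducible as a representation of the semisimple group $G$ and Zariski density of $\rho(\Pi)$ forces every $\Pi$-stable subspace to be $G$-stable, hence complemented. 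By L.~Lafforgue's theorem in the curve case, and its extension to smooth higher-dimensional $\BF_p$-varieties due to Deligne, the characteristic polynomial of $(V'\circ\rho)(\gamma)$ lies in $\BQbar[T]$; in particular $\chi_V(\rho(\gamma))=\Tr((V'\circ\rho)(\gamma))\in\BQbar$, completing the proof. The only substantive input is the invocation of Lafforgue's theorem; everything else is a formal unwinding of the GIT quotient and the Peter--Weyl decomposition, and I expect the main subtlety to be keeping track of the fields of definition during the descent of $V$ so that Lafforgue's statement applies to $V'\circ\rho$.
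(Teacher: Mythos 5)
Your proposal is correct and is essentially the paper's own proof: the paper disposes of the statement by citing Proposition VII.7(i) of \cite{La} (algebraicity of the Frobenius characteristic polynomials for objects of $\cT_\lambda (X)$), and your reduction to finite-type quotients, the finite morphism $[G_0]\to G_0/\!/G_0$, and the Peter--Weyl/character argument are exactly the formal unwinding that the paper treats as immediate. One small correction of attribution and generality: the higher-dimensional input you need is Lafforgue's own Proposition VII.7(i), which is proved for \emph{normal} varieties by restriction to curves (Deligne's theorem in \cite{De2} concerns finiteness of the trace field and, like Drinfeld's theorem, is only needed for the existence of companions and requires smoothness), and this matters here because Proposition \ref{p:algebraicity} is stated under the standing assumption that $X$ is normal, not necessarily smooth.
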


\begin{proof}
This immediately follows from Proposition~VII.7(i) of \cite{La} (which is a corollary of the Langlands conjecture proved by L.~Lafforgue in \cite{La}).
One can apply \cite[Proposition~VII.7]{La} because $X$ is assumed normal.
\end{proof}

Thus by Proposition~\ref{p:algebraicity}, we get a diagram of sets
\begin{equation}   \label{e:diagram of sets}
\Pi_{\Fr}\to [\hat\Pi_{(\lambda)}] (\BQbar)\epi\Pi\, .
\end{equation}

\subsection{Main theorem}
The following theorem is our main result.

\begin{theorem}  \label{t:main}
Assume that $X$ is smooth. Let $\lambda$ and $\lambda'$ be non-Archimedean places of $\BQbar$ not dividing $p$. Then there exists a unique isomorphism $\hat\Pi_{(\lambda)}\iso\hat\Pi_{(\lambda')}$ in the category $\Pross (\BQbar )$ 
which sends diagram \eqref{e:diagram of sets} to a similar diagram $\Pi_{\Fr}\to [\hat\Pi_{(\lambda')}] (\BQbar)\epi\Pi\,$. 
\end{theorem}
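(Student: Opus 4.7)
The plan is to split the theorem into uniqueness, which reduces to a Zariski-density argument, and existence, which I would deduce by combining Lafforgue's Langlands correspondence with a reconstruction theorem of Kazhdan--Larsen--Varshavsky type.

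For uniqueness, observe that since $\Pi_{\Fr}$ is dense in $\Pi$ and $\Pi$ has Zariski-dense image in $\hat\Pi_\ell$, the image of \eqref{e:Frob4} is Zariski-dense in $[\hat\Pi_{(\lambda)}]$. Any morphism $\Phi:\hat\Pi_{(\lambda)}\iso\hat\Pi_{(\lambda')}$ in $\Pross(\BQbar)$ induces a well-defined scheme isomorphism $[\Phi]:[\hat\Pi_{(\lambda)}]\iso[\hat\Pi_{(\lambda')}]$, since the conjugation ambiguity in $\Phi$ acts trivially on the GIT quotient. If $\Phi$ intertwines the two copies of \eqref{e:diagram of sets}, the restriction of $[\Phi]$ to the image of $\Pi_{\Fr}$ is prescribed, and by Zariski density $[\Phi]$ itself is prescribed. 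The remaining point is that the assignment sending a morphism in $\Pross(\BQbar)$ to the induced pair (scheme map on $[\cdot]$, map on $\pi_0$) is injective on isomorphism classes; this is a standard rigidity statement resting on the fact that for a connected semisimple group $\operatorname{Out}(G)$ acts faithfully on $[G]=T/W$.

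For existence, the heart of the argument is to use Lafforgue's theorem to produce an $\ell$-independent parametrization of irreducible continuous semisimple $\overline{\mathbb{Q}}_\lambda$-representations of $\Pi$, preserving Frobenius characteristic polynomials at every closed point. When $\dim X=1$ this is essentially Lafforgue's correspondence: irreducible representations correspond to cuspidal automorphic representations of $\operatorname{GL}_n$, which are defined independently of $\ell$, and the correspondence is compatible with Satake parameters. When $\dim X>1$ one reduces to the curve case by restricting to smooth curves inside $X$, enough of which detect irreducible semisimple representations by \v Cebotarev. Having produced compatible bijections on simple objects, I would invoke a reconstruction theorem to upgrade this set-theoretic matching to a morphism in $\Pross(\BQbar)$. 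The desired statement reads roughly as follows: an object $G\in\Pross(\BQbar)$ is determined, up to the intrinsic ambiguity of the groupoid, by its GIT quotient $[G]\epi\pi_0(G)$ together with a Zariski-dense $\pi_0$-equivariant subset $S\subset[G](\BQbar)$ encoding trace functions in every irreducible representation. A proof would proceed in the Kazhdan--Larsen--Varshavsky spirit: recover $[G]$ as a scheme from the trace data on $S$, recover $\pi_0(G)$ from the map $S\to\pi_0(G)$, and then reconstruct $G$ itself via the Tannakian principle applied to the category of representations cut out by the trace relations coming from $S$.

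The main obstacle is precisely the formulation and proof of this reconstruction theorem. One must navigate between the rigidity provided by the Zariski-dense set $S$ of Frobenius classes and the inherent ambiguity of $\Pross(\BQbar)$ (conjugation by the neutral connected component), ensuring that the information in \eqref{e:diagram of sets} is exactly strong enough to pin down the pro-semisimple completion up to this ambiguity and no finer. The Langlands input, while conceptually essential, enters only through its bijective consequences on simple objects and their Frobenius traces; the true technical work lies in the Tannakian reconstruction.
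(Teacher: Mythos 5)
Your overall architecture (uniqueness by Zariski density plus rigidity; existence by Lafforgue/companions plus a KLV-type reconstruction) is the paper's, but two of the steps you treat as routine are exactly where the real content lies, and as stated they do not go through. For uniqueness, the rigidity you invoke --- faithfulness of $\operatorname{Out}(G^\circ)$ on $[G^\circ]=T/W$ --- only controls the restriction of an automorphism to the neutral component. An automorphism of $\hat\Pi_{(\lambda)}$ acting trivially on $[\hat\Pi_{(\lambda)}]$ may still differ from an inner one by a $1$-cocycle $\pi_0(G)\to Z(G^\circ)$, and killing its class in $H^1(\pi_0(G),Z)$ is a genuine argument: it uses that $\hat\Pi_{(\lambda)}^\circ$ is simply connected (hence a product of almost-simple groups --- a fact you would also need to prove), Shapiro's lemma, minuscule-weight extensions of central characters, and the fact that automorphism groups of connected Dynkin diagrams are cyclic or $S_3$. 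So "standard rigidity" is an overstatement; the statement you need is precisely the paper's Proposition~\ref{p:rigidity}/\ref{p:2rigidity}.

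For existence, the reconstruction theorem in the form you propose --- recover $G\in\Pross(\BQbar)$ from $[G]\epi\pi_0(G)$ together with a Zariski-dense set of trace data --- is not strong enough. First, one must use the whole system of Grothendieck semirings $K^+(G\times_\Pi U)$ for \emph{all} open subgroups $U\subset\Pi$ (all finite \'etale covers), with the restriction maps between them. Second, one must retain the lambda (exterior-power/Adams) structure: Example~\ref{ex:why Adams}, via Jantzen's theorem, produces a semiring automorphism of $K^+\bigl((\BZ/2\BZ)\ltimes SL(2n+1)\bigr)$ not induced by any group automorphism, so plain character data on a dense set cannot pin down a disconnected group. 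Third, even with all this, reconstruction only works under a cohomological hypothesis (vanishing of the relevant classes in $H^2(U,\BQbar^\times)$, needed to promote the matching of the component-group extensions to an isomorphism of extensions); this holds for affine curves because their fundamental groups have strict cohomological dimension $2$, but it is not known for $\dim X>1$. Consequently the higher-dimensional case cannot be handled by "apply the reconstruction theorem to the $\lambda$-independent trace data": the paper instead reduces to curves a second time, at the group level, using Hilbert irreducibility to find curves $C\subset X$ whose fundamental group surjects onto each finite-type quotient of $\hat\Pi_{(\lambda)}$ while capturing a prescribed finite set of Frobenius classes, and then glues the resulting finite-type isomorphisms (Propositions~\ref{p:main-finite} and~\ref{p:coarse epi}). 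Your use of curves only to produce the $\lambda$-independent parametrization misses this essential second reduction; relatedly, obtaining companions on $X$ itself for $\dim X>1$ is Drinfeld's theorem, not a consequence of \v{C}ebotarev restriction to curves.
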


In this theorem the uniqueness statement holds without the smoothness assumption; this statement immediately follows from the easy Proposition~\ref{p:2rigidity} combined with Zariski-density of the image of the map $\Pi_{\Fr}\to [\hat\Pi_{(\lambda)}] $. 
The proof of the existence statement will be given in \S\ref{s:main}; it relies on the main theorem of \cite{Dr}, which is proved under the smoothness assumption on $X$.

\subsubsection{The object $\hat\Pi\in \Pross (\BQbar)$}  \label{sss:hatPi}
Theorem \ref{t:main} says that  the object $\hat\Pi_{(\lambda)}\in\Pross (\BQbar )$ does not depend on $\lambda$. We will denote it by 
$\hat\Pi\in\Pross (\BQbar )$ and call it \emph{the pro-semisimple completion of $\Pi$.} 

By definition, $\hat\Pi/\Pi^\circ$ canonically identifies with $\Pi$; moreover, $\hat\Pi$ is equipped with a canonical $\Pi$-equivariant map 
$\Pi_{\Fr}\to [\hat\Pi ] (\BQbar )$ with Zariski-dense image,  whose composition with the projection
$[\hat\Pi ]\to\Pi/\Pi^\circ=\Pi$ is equal to the inclusion $\Pi_{\Fr}\mono \Pi$. 

The object $\hat\Pi\in\Pross (\BQbar )$ is clearly $\Gal (\BQbar /\BQ)$-equivariant\footnote{The image of any element of $\Pi_{\Fr}$ in $[\hat\Pi_{(\lambda)}] (\BQbar)$  is fixed by the action of $\Gal (\BQbar /\BQ)$ on $[\hat\Pi_{(\lambda)}] (\BQbar)$.}; in particular, the Dynkin diagram of $\hat\Pi$ is equipped with a canonical action of $\Gal (\BQbar /\BQ)$, which commutes with the action of $\Pi=\Pi/\Pi^\circ$. 
A more detailed discussion of $\hat\Pi$ is contained in Appendix~\ref{s:what we know}.

\begin{rem}   \label{r:mot-hope1}
According to the philosophy of motives, the $\Gal (\BQbar /\BQ)$-equivariant object $\hat\Pi\in\Pross (\BQbar )$ should come from a much finer object, namely, a pro-semisimple gerbe\footnote{This is the gerbe of fiber functors on the conjectural Tannakian category $\cT (X)$ form \S\ref{sss:mot Tann}.} over $\BQ$ equipped with a morphism to the classifying stack of $\Pi$. Here ``pro-semisimple gerbe over $\BQ$" is a shorthand for ``a fpqc-gerbe on the category of $\BQ$-schemes which is locally isomorphic to the classifying stack of a pro-semisimple group scheme".

\end{rem}

\begin{rem}  \label{r:mot-hope2}
In \S\ref{sss:coarsecat} we defined $\Pross (E)$-iso\-mor\-phisms between $G_1$ and $G_2$ as elements of a certain quotient \emph{set}. 
Replacing the quotient set by the corresponding quotient \emph{groupoid}, one gets a 2-groupoid $\Pross_{\rm true}(E)$, whose 1-categorical truncation is $\Pross (E)$. We have the functor
\[
\pi_0 :\Pross_{\rm true}(E)\to\mbox{\{Pro-finite groups\}}.
\]
Now the motivic hope from Remark~\ref{r:mot-hope1} can be reformulated as follows: \emph{$\hat\Pi$ should canonically lift to a  $\Gal (\BQbar /\BQ)$-equivariant object $\hat\Pi_{\rm true}$ of the 2-groupoid $\Pross_{\rm true}(\BQbar )$ equipped with an isomorphism 
$\pi_0(\hat\Pi_{\rm true})\iso\Pi$.} 
In fact, the philosophy of motives suggests even more\footnote{Unlike Remarks~\ref{r:mot-hope1}-\ref{r:mot-hope2}, the conjectural picture of Appendix~\ref{s:Tannakian} takes in account the canonical \emph{polarization} on the category of pure motives.}, see Appendix~\ref{s:Tannakian}.
\end{rem}

\begin{rem}  
The fact that $\hat\Pi_{(\lambda)}^\circ\in\Pross (\BQbar )$ does not depend on $\lambda$ was proved by CheeWhye Chin in~\cite{Ch2}.
\end{rem}

\subsection{A crystalline analog of Theorem~\ref{t:main}}   \label{ss:crystalline analog}
Using overconvergent $F$-isocrystals on $X$, one can define $\hat\Pi_{(\lambda)}$ even for $\lambda$ dividing $p$, see  \S\ref{lambda |p}. If $\dim X=1$ then Theorem~\ref{t:main} holds for \emph{arbitrary} non-Archimedean places of $\BQbar$, see Theorem~\ref{t:p-main} (to prove this, one uses crystalline analogs of the results of \cite{La}, which were proved by T.~Abe \cite{A}). If $\dim X>1$ this is not clear (the missing piece is the existence of ``crystalline companions" of $\ell$-adic local systems on $X$).

\subsection{Relation with the works \cite{VLa2} and \cite{Ar}}
\subsubsection{Relation with \S 12.2.4 of V.~Lafforgue's article \cite{VLa2}}  
In \S\ref{ss:motivic Langlands parameter} we give an unconditional definition of ``motivic Langlands parameter" in the sense of  \cite[\S 12.2.4]{VLa2} using a certain object $\hat\Pi^{\mot}\in\Prored (\BQbar)$, which is a variant of the object $\hat\Pi\in \Pross (\BQbar)$ defined in \S\ref{sss:hatPi}. A brief discussion of $\hat\Pi^{\mot}$ is contained in \S\ref{sss:variant of hatPi} below.

\subsubsection{Relation with J.~Arthur's work \cite{Ar}}  
The relation is philosophical.

Assuming certain conjectural properties of automorphic representations, J.~Arthur constructs in \cite{Ar} the ``automorphic Langlands group" of a global field $F$ (the idea goes back to \cite[\S 2]{Lan}). 

Now suppose that $F$ is the field of rational functions on a smooth connected curve over $\BF_p\,$. Then
the ``automorphic Langlands group" from \cite{Lan,Ar} should be more or less\footnote{The words ``more or less" are mostly due to the fact that the automorphic Langlands group from \cite{Lan} is an algebraic group over $\BC$ (rather than $\BQbar$) and the one from \cite{Ar} is a locally compact topological group. In addition, the ``automorphic Langlands group" from \cite{Ar} is an extension of the Weil group rather than of the Galois group.} isomorphic to the projective limit of the pro-semisimple completions of $\pi_1 (U)$ for all non-empty open $U\subset X$.

Unlike \cite{Lan,Ar}, we consider only function fields (which are not required to be 1-dimensional), and we work with Galois representations (automorphic representations appear only behind the scenes, namely in the proof of the results of \cite[Ch.~VII, \S 2]{La}, which are crucial for us); this allows us to give an unconditional definition of 
$\hat\Pi$. Unlike \cite{Lan,Ar}, we do not study ramification.

\subsection{A variant of $\hat\Pi$ and an open problem}   \label{ss:weakly motivic}
\subsubsection{The group schemes $\hat\Pi_\lambda^{\mot}$ and $\hat\Pi^{\mot}$}  \label{sss:variant of hatPi}
It is easy to see that a finite-dimensional representation of $\hat\Pi_\lambda$ is the same as a semisimple $\lambda$-adic local system on $X$ with the following property: the determinant of each of its irreducible components has finite order. Unfortunately, this property is not  stable under pullbacks. At least for this reason, it is natural to replace it with the property which we call\footnote{``Weakly motivic" is just a name. In particular, if $\dim X>1$ it is unknown whether all weakly motivic local systems on $X$ come from motives over the field of rational functions on $X$.} ``weakly motivic"; by definition, this means that for every closed point $x\in X$ all eigenvalues of the geometric Frobenius of $x$ are $q_x$-Weil numbers, where $q_x$ is the order of the residue field of $x$. The property of ``weakly motivic" is clearly stable under pullbacks.

A semisimple weakly motivic $\lambda$-adic local system on $X$ is the same as a finite-dimensional representation of a certain pro-reductive group scheme $\hat\Pi_\lambda^{\mot}$ over $\BQbar_\lambda\,$, which can be easily expressed in terms of the pro-semisimple group 
$\hat\Pi_\lambda$. Similarly to \S\ref{sss:lambda-completion}, one defines $\hat\Pi_{(\lambda )}^{\mot}\in\Prored (\BQbar)$. Theorem~\ref{t:main} easily implies that $\hat\Pi_{(\lambda )}^{\mot}$ does not depend on $\lambda$, so one can write simply $\hat\Pi^{\mot}\in\Prored (\BQbar)$.

The details are explained in \S\ref{ss:def of mot}-\ref{ss:mot&red}.

\subsubsection{An open problem}
Now suppose we have  a morphism $X'\to X$ between smooth varieties over $\BF_p\,$. Let $\Pi'$ and $\Pi$ be the fundamental groups of $X'$ and $X$ corresponding to some geometric point of $X'$. The morphism $X'\to X$ induces a homomorphism $f:\Pi'\to\Pi$ and then a 
$(\hat\Pi^{\mot})^\circ$-conjugacy  class of homomorphisms $\hat f_{(\lambda)}:\widehat{\Pi'}^{\mot}\to\hat\Pi^{\mot}$, 
which \emph{a priori} depends on the additional choice of a non-Archimedean place $\lambda$ of $\BQbar$ not dividing $p$.
Conjecture \ref{c:functoriality in X} says that $\hat f_{(\lambda)}$ does not depend on $\lambda$.

I cannot prove this conjecture in general (e.g., if $X$ is a surface and $X'$ is a curve on $X$). The difficulty is due to the difference between conjugacy and ``element-conjugacy" in the sense of M.~Larsen~\cite{L1,L2,W1,W2}.

 \subsection{Method of the proof of Theorem~\ref{t:main}}
 The method is quite elementary modulo the results of~\cite{La,Dr}. 
  
 Let $K^+(\hat\Pi_{(\lambda )} )$ denote the Grothendieck semiring of the category of finite-dimensional representations of $\hat\Pi_{(\lambda )}$. It is equipped with the lambda-operations (here ``lambda" stands for ``exterior power"). The lambda-semiring  $K^+(\hat\Pi_{(\lambda )} )$ is a ``poor man's substitute" for the Tannakian category of representations of $\hat\Pi_{(\lambda )}\,$.
 
 Associating to a $\lambda$-adic representation of $\Pi$ the restriction of its character to 
 $\Pi_{\Fr}\subset\Pi$, we realize $K^+(\hat\Pi_{(\lambda )} )$ as a lambda-subsemiring of the algebra of functions\footnote{Functions $\Pi_{\Fr}\to \BQbar$ form a lambda-ring. The corresponding Adams operation $\psi^n$ is defined by $(\psi^n f)(g):=f(g^n)$.} $\Pi_{\Fr}\to \BQbar$. According to \cite{Dr} (and according to \cite{La} if $\dim X=1$), this lambda-subsemiring does not depend on the place $\lambda$ of the field $\BQbar$. This remains true if one replaces $\Pi$ by any open subgroup $U\subset\Pi$. We prove a reconstruction theorem in the spirit of Kazhdan-Larsen-Varshavsky \cite{KLV}, which allows one to reconstruct $\hat\Pi_{(\lambda )}$ from the lambda-semirings $K^+(\hat\Pi_{(\lambda )}\times_\Pi U)$ and the natural homomorphisms between them assuming that $H^2 (U ,\BQ/\BZ)=0$ for any open subgroup $U\subset\Pi$. This assumption holds if $\dim X=1$; the general case is treated by reduction to curves using Hilbert irreducibility.

 \subsection{Dealing with 1-categorical truncations of 2-groupoids}
 Our $\hat\Pi_{(\lambda )}$ is an object of the groupoid $\Pross (\BQbar )$, which is a 1-categorical truncation of the conceptually better
2-groupoid $\Pross_{\rm true} (\BQbar )$ defined in Remark~\ref {r:mot-hope2}.  I cannot prove any theorem without this truncation. On the other hand, the truncation leads to some technical difficulties.
 
One of the main difficulties is as follows. As explained in \S\ref{ss:Pross and variants}, an object of $\Pross (E)$ involves a group extension~\eqref{e:ext of pi}. So proving independence of $\hat\Pi_{(\lambda )}$ on $\lambda$ involves showing that a certain extension of $\Pi$ by a certain pro-finite abelian group $Z$ does not depend on $\lambda$ as an object of the ``coarse" groupoid whose morphisms are isomorphisms of extensions \emph{modulo conjugations by elements of $Z$.} Despite a certain ugliness of this groupoid, this turns out to be possible due to the fact that the automorphism group of the Dynkin diagram of any simple Lie algebra is either cyclic or the group $S_3$ (which is not far from being cyclic\footnote{The relevant property of $S_3$ is that all its Sylow subgroups are cyclic. Finite groups with this property are discussed in Appendix~\ref{s:Zassenhaus}.}). This fact is used in the proof of Propositions~\ref{p:rigidity} and \ref{p:coarse extensions}.  
 
 \subsection{Organization of the article}
 In \S\ref{s:group schemes} we briefly discuss affine group schemes over algebraically closed fields of characteristic 0. We also explain there several ways to think about the groupoid $\Pross (E)$ defined in \S\ref{sss:coarsecat}. Finally we prove Proposition~\ref{p:rigidity}, which says
 that if $G\in\Pross (E)$ is such that $G^\circ$ is a product of almost-simple groups then $G$ has no nontrivial $\Pross (E)$-automorphisms inducing the identity on the scheme $[G]$.
 
 In \S\ref{s:completion} we recall some standard facts about the $\lambda$-adic pro-semisimple completion $\hat\Pi_\lambda\,$; in particular, we prove that it is simply connected\footnote{This simply-connectedness property is easy and well known to the experts. Probably it motivated Question~8.1 from Serre's article~\cite{Se}.} (see Proposition~\ref{p:2simply connected}). Combining this with Proposition~\ref{p:rigidity}, we prove in \S\ref{ss:rigidity} the uniqueness statement of our main Theorem~\ref{t:main}. In \S\ref{ss:pro-reductive} we discuss the $\lambda$-adic pro-reductive completion of $\Pi$.

In \S\ref{s:KLV} we formulate and prove a reconstruction theorem in the spirit of Kazhdan-Larsen-Varshavsky \cite{KLV} (see Theorem~\ref{t:variant of KLV}).

In \S\ref{s:main} we prove Theorem~\ref{t:main} by combining Theorem~\ref{t:variant of KLV} with the results of \cite{La,Dr}.

In \S\ref{s:mot} we explain the details related to the group schemes $\hat\Pi_\lambda^{\mot}$ and $\hat\Pi^{\mot}$ mentioned in \S\ref{ss:weakly motivic}. In \S\ref{ss:motivic Langlands parameter} we give an unconditional definition of ``motivic Langlands parameter" in the sense of  \cite[\S 12.2.4]{VLa2}.

\S\ref{s:crys} is devoted to the crystalline analog of Theorem~\ref{t:main} mentioned in  \S\ref{ss:crystalline analog}.

In Appendix~\ref{s:coarse hom} we prove the technical Proposition~\ref{p:coarse hom}.

In Appendix \ref{s:twisted conjugacy} we recall some results of S.~Mohrdieck and T.~A.~Springer on twisted conjugacy. We also recall there a surprising theorem of J.~C.~Jantzen and use it to construct an automorphism of the Grothendieck semiring of $SL(2n+1)$ which is not compatible with the lambda-operations (this shows that a certain difficulty in the proof of Theorem~\ref{t:variant of KLV} is not imaginary).

Appendix~\ref{s:Zassenhaus} is devoted to some facts from finite group theory related to the main body of the article.

The pair consisting of $\hat\Pi\in\Pross (\BQbar )$ and the canonical map $\Pi_{\Fr}\to [\hat\Pi](\BQbar )$
is an interesting number-theoretic object. In the first part of Appendix~\ref{s:what we know} we reformulate these data in more ``elementary" terms (such as Dynkin diagrams and extensions of $\Pi$ by finite abelian groups). In \S\ref{ss:Laurent Lafforgue}-\ref{ss:p-adic behavior} 
we translate some results of \cite{La,VLa, DK} into this language.

In Appendix~\ref{s:Tannakian} we formulate the conjectural picture suggested by the philosophy of motives and briefly mentioned in Remarks~\ref{r:mot-hope1}-\ref{r:mot-hope2}. This picture implies falsifiable Conjectures \ref{conj:reciprocity} and \ref{conj:2reciprocity}; each of them is a ``reciprocity law" involving a sum over all $\ell$-adic cohomology theories (including the crystalline theory for $\ell=p$). We also compare the conjectural picture with the unconditional results.

 \subsection{Acknowledgements} 

I thank G.~Glauberman, H.~Esnault, K.~S.~Kedlaya, V.~Lafforgue,  E.~B.~Vinberg, D.~Vogan, and Xinwen Zhu for stimulating discussions and useful references.

\section{Generalities on group schemes}   \label{s:group schemes}
Fix an algebraically closed field $E$ of characteristic 0. Unless stated otherwise, the words ``group scheme" will mean ``affine group scheme over $E$". Since $E$ has characteristic 0 all group schemes are automatically reduced. By an algebraic group we mean a group scheme of finite type.

Let us emphasize that semisimple or reductive groups are \emph{not assumed to be connected}.

\subsection{Conventions and general remarks} \label{ss:group schemes gen}
\subsubsection{Group schemes as pro-objects} \label{sss:group schemes pro}
A group scheme is the same as a pro-object of the category of algebraic groups. Moreover, any such pro-object can be represented by a filtering projective system of algebraic groups in which all transition maps are \emph{epimorphisms.} 

This point of view allows one to easily generalize various notions and results from algebraic groups to group schemes.

A \emph{pro-semisimple group} is a group scheme $G$ such that each quotient of $G$ having finite type is semisimple. Similarly, one has the notions of \emph{pro-reductive group} and \emph{pro-torus}. 
Note that a group scheme $G$ all of whose finite type quotients are finite is the same as a pro-finite group in the usual sense (i.e., a totally disconnected compact topological group).

\subsubsection{Almost-simple groups} 
A connected algebraic group is said to be \emph{almost-simple} if any normal subgroup of $G$ different from $G$ is finite.

Any connected and simply-connected pro-semisimple group is a product of almost-simple algebraic groups. Any connected pro-semisimple group with trivial center is a product of simple algebraic groups.

\subsubsection{Pinnings}   \label{sss:pinnings}
Recall that according to Bourbaki, a \emph{pinning} of a reductive group $G$ is a choice of the following data: a maximal subtorus of $G^\circ$, a Borel subgroup of 
$G^\circ$ containing it, and for each simple root $\alpha$, a nonzero element in the root space $\fg_\alpha\, $, where $\fg$ is the Lie algebra of $G$.

If $G$ is a pro-reductive group then define a \emph{pinning} of $G$ to be a collection of pinnings of the simple quotients of $G^\circ_{ad}\,$. (Thus the notion of pinning depends only on $G^\circ$.) A \emph{pinned pro-reductive group} is a pro-reductive group equipped with a pinning.

\subsubsection{Dynkin diagrams}  \label{sss:Dynkin diagrams} 
By a \emph{finite Dynkin diagram} we mean a Dynkin diagram of some semisimple algebraic group over an algebraically closed field.
(E.g.,  $E_9$ is not allowed.) By a \emph{Dynkin diagram} we mean a (possibly infinite) disjoint union of finite Dynkin diagrams.

To any pro-reductive group $G$ one associates a Dynkin diagram, namely the disjoint union of the Dynkin diagrams of all simple quotients of $G^\circ_{ad}\,$. The group $\pi_0(G)$ acts on the Dynkin diagram of $G$; this action is continuous, i.e., the stabilizer of any vertex (or equivalently, of any connected component) is open in $\pi_0(G)$.

Given a pro-reductive group scheme $G$, one can talk about \emph{the} maximal pro-torus\footnote{The key point is that if $B,B'\subset G^\circ$ are Borel subgroups then $B/[B,B]$ identifies with $B'/[B',B']$ \emph{canonically} (because the normalizer of $B$ in $G^\circ$ equals $B$). } of $G$ and the Dynkin diagram of $G$ (in the above sense); they depend only on $G^\circ$.

\subsubsection{Some groups corresponding to a Dynkin diagram}   \label{sss:some groups}
Given a Dynkin diagram $\Delta$, let $G_\Delta$ denote the connected simply connected pinned pro-semisimple group with Dynkin diagram $\Delta$. Let $Z_\Delta$ denote the center of $G_\Delta\,$. Let $Z_\Delta (E)$ denote the group of $E$-points of $Z_\Delta$.

Note that $G_\Delta$ and $Z_\Delta$ are naturally defined over $\BQ$, so the group $\Aut (E)$ acts on $Z_\Delta (E)$. This action is usually nontrivial.\footnote{This is why in \S\ref{sss:Pross''}(iii) we write $Z_\Delta (E)$ rather than $Z_\Delta\,$.}

\subsection{The groupoid $\Pross (E)$}  \label{ss:Pross and variants}
The groupoid $\Pross (E)$ was introduced in  \S\ref{sss:coarsecat}. Recall that its objects are pro-semisimple groups over $E$, and a $\Pross (E)$-morphism $G_1\to G_2$ is an isomorphism of group schemes $G_1\to G_2$ defined up to composing with automorphisms of $G_2$ of the form $x\mapsto gxg^{-1}$, $g\in G^\circ_2\,$. 

Now we will introduce two other ``realizations" of the groupoid $\Pross (E)$. More precisely, we will define groupoids $\Pross' (E)$, 
$\Pross^{\prime\prime}(E)$, and equivalences
\[
\Pross (E) \buildrel{\sim}\over{\longleftarrow}\Pross' (E)\iso\Pross^{\prime\prime}(E).
\]

\subsubsection{The groupoid $\Pross' (E)$}  \label{sss:pinned}
Its objects are pinned pro-semisimple groups over $E$ (in the sense of \S\ref{sss:pinnings}), and its morphisms are as follows: if $G_1, G_2$ are pinned pro-semisimple groups then a $\Pross' (E)$-morphism $G_1\to G_2$ is an isomorphism of group schemes $G_1\to G_2$ compatible with the pinnings and defined up to composing with automorphisms of $G_2$ of the form $x\mapsto gxg^{-1}$, where $g$ belongs to the center of $G^\circ_2\,$. The canonical functor $\Pross' (E)\to\Pross  (E)$ is an equivalence.

\subsubsection{The groupoid $\Pross^{\prime\prime}(E)$}   \label{sss:Pross''}
Its objects are the following collections of data:

(i) a pro-finite group $\Gamma$;

(ii) a Dynkin diagram $\Delta$ equipped with a continuous action of $\Gamma$ (continuity means that the stabilizer of each vertex of $\Delta$ is open);

(iii) a pro-finite group $Z$ equipped with an action of $\Gamma$ and a $\Gamma$-equivariant epimorphism $Z_{\Delta}(E)\epi Z$;

(iv) a group extension 
\begin{equation}   \label{e:ext of pi}
0\to Z\to \tilde\Gamma\to\Gamma\to 0.
\end{equation}

There is an obvious notion of isomorphism between two such collections, but to define $\Pross^{\prime\prime}(E)$ we use a \emph{coarser} one,
in which isomorphisms between extensions 
\[
0\to Z_1\to \tilde\Gamma_1\to\Gamma_1\to 0 \quad \mbox{and} \quad 0\to Z_2\to \tilde\Gamma_2\to\Gamma_2\to 0
\]
 are defined only up to composing with inner automorphisms of $\tilde\Gamma_2$ corresponding to elements of $Z_2\,$. 
 
 \subsubsection{Remark} \label{sss:rem H^2}
 For fixed $\Gamma$ and $Z$, isomorphism classes of extensions \eqref{e:ext of pi} are parametrized by $H^2(\Gamma, Z)$. On the other hand, automorphisms of an extension \eqref{e:ext of pi} up to conjugations by elements of $Z$ are parametrized by $H^1(\Gamma, Z)$.

\subsubsection{The equivalence $\Pross' (E)\iso\Pross^{\prime\prime}(E)$}  
\label{sss:Pross' and Pross''}
To a pinned pro-semisimple group $G$ one associates the collection $(\Gamma ,\Delta , Z, \tilde\Gamma )$, where $\Gamma=G/G^\circ$, $\Delta$ is the Dynkin diagram of $G$, $Z$ is the center of $G^\circ$, and $\tilde\Gamma$ is the subgroup of elements $g\in G(E)$ such that conjugation by $g$ preserves the pinning. Thus one gets an equivalence $\Pross' (E)\iso\Pross^{\prime\prime}(E)$.

The inverse equivalence $\Pross^{\prime\prime}(E)\iso\Pross' (E)$ can be described as follows. 
Given $\Delta$ and $Z$, one sets $G^\circ:=G_\Delta/\Ker (Z_\Delta\epi Z)$, where $G_\Delta$ and
$Z_\Delta$ are as in \S\ref{sss:some groups}.
Then one sets
\begin{equation}  \label{e:inverse construction}
G:=(\tilde\Gamma\ltimes G^\circ)/\Ker (Z\times Z\overset{m}\longrightarrow Z).
\end{equation}
Here the semidirect product $\tilde\Gamma\ltimes G^\circ$ is defined using the canonical action of $\Gamma$ on $G^\circ$ (i.e., the action preserving the pinning), $Z\times Z$ is a subgroup of $\tilde\Gamma\ltimes G^\circ$, and $m:Z\times Z\to Z$ is the multiplication map.

\begin{prop}   \label{p:E-independence}
For any  homomorphism $E\to \tilde E$ between algebraically closed fields, the corresponding functor $\Pross (E)\to \Pross (\tilde E)$ is an equivalence.
\end{prop}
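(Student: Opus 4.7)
The plan is to reduce the statement to a combinatorial comparison by invoking the equivalence $\Pross (E)\iso\Pross^{\prime\prime}(E)$ established in \S\ref{sss:Pross' and Pross''}. In the description via $\Pross^{\prime\prime}$, an object is a quadruple $(\Gamma ,\Delta ,Z,\tilde\Gamma )$ in which the algebraically closed field $E$ enters only through condition (iii): namely, $Z$ is equipped with a $\Gamma$-equivariant epimorphism $Z_\Delta (E)\epi Z$. So it is enough to show that $Z_\Delta (E)\to Z_\Delta (\tilde E)$ is a canonical isomorphism of topological groups, and that the base-change functor matches up with the obvious functor on $\Pross^{\prime\prime}$.

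First I would observe that by \S\ref{sss:some groups}, $G_\Delta$ is naturally defined over $\BQ$ (even over $\BZ$), and hence so is its center $Z_\Delta$. Since $G_\Delta$ is a product of simply-connected almost-simple groups, $Z_\Delta$ is a pro-finite group scheme of multiplicative type: a projective limit of finite products of $\mu_n$'s indexed by the connected components of $\Delta$. In characteristic $0$ such a group scheme is étale, and its points in any algebraically closed field are precisely the torsion subgroup $\varprojlim_n \Hom (X^*(Z_\Delta )_n ,\mu_n (E))$ of a group of multiplicative type; since $\mu_n (E)\to \mu_n (\tilde E)$ is a bijection for every $n$, the induced map $Z_\Delta (E)\to Z_\Delta (\tilde E)$ is an isomorphism of topological groups.

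Next I would verify that under the equivalence $\Pross (E)\iso\Pross^{\prime\prime}(E)$, the base-change functor $G\mapsto G\otimes_E\tilde E$ corresponds to the functor that leaves $(\Gamma ,\Delta ,Z,\tilde\Gamma )$ unchanged and replaces the epimorphism $Z_\Delta (E)\epi Z$ by its composition with the isomorphism $Z_\Delta (\tilde E)\iso Z_\Delta (E)$ from the previous paragraph. This is essentially a formal check: $\pi_0 (G)=\pi_0 (G\otimes_E\tilde E)$; the Dynkin diagram is independent of the base field; the center of $G^\circ$ is a pro-finite group scheme of multiplicative type whose points do not change; and the group $\tilde\Gamma$ of pinning-preserving elements is (non-canonically) a product $Z(E)\times\Gamma$ since $H^1(\Spec E,Z)=0$ for $E$ algebraically closed, so $\tilde\Gamma (E)=\tilde\Gamma (\tilde E)$ as well. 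The explicit formula \eqref{e:inverse construction} for the inverse equivalence makes it transparent that applying $-\otimes_E\tilde E$ to the resulting group gives the same construction carried out over $\tilde E$.

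With these two steps in place, essential surjectivity and full faithfulness of $\Pross^{\prime\prime}(E)\to\Pross^{\prime\prime}(\tilde E)$ are immediate, since morphisms in $\Pross^{\prime\prime}$ (isomorphisms of data modulo $Z$-conjugation) are defined without any further reference to the ambient algebraically closed field. Transporting this equivalence back via $\Pross (E)\iso\Pross^{\prime\prime}(E)$ and $\Pross (\tilde E)\iso\Pross^{\prime\prime}(\tilde E)$ yields the proposition. The only real point of care is the correspondence in the second paragraph above between the algebro-geometric operation of base change and the combinatorial operation on $\Pross^{\prime\prime}$; everything else is formal, and crucially relies on the fact that in the pro-semisimple (rather than pro-reductive) setting the relevant ``coefficient'' group $Z_\Delta$ is of multiplicative type with torsion character module, so its points are insensitive to extensions of algebraically closed base fields in characteristic $0$.
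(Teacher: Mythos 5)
Your proof follows exactly the route of the paper: the paper's entire argument is to pass to the realization $\Pross^{\prime\prime}$ and observe that the induced functor $\Pross^{\prime\prime}(E)\to\Pross^{\prime\prime}(\tilde E)$ is "clearly" an equivalence; you are simply spelling out why this is clear, the essential point being that $Z_\Delta$ is a pro-finite group scheme of multiplicative type defined over $\BQ$, so $Z_\Delta(E)\to Z_\Delta(\tilde E)$ is an isomorphism because roots of unity do not change under an extension of algebraically closed fields of characteristic $0$. One side remark in your compatibility check is wrong, though: the group $\tilde\Gamma$ of pinning-preserving elements is \emph{not} in general a product $Z(E)\times\Gamma$ — if it were, the class $\nu\in H^2(\Gamma,Z)$ of \S\ref{sss:class in H^2} would always vanish, and already $G=(SL_2\times\mu_4)/\mu_2$ (with $\mu_2$ embedded anti-diagonally) gives $\tilde\Gamma\cong\mu_4(E)$, a nonsplit extension of $\BZ/2$ by $\BZ/2$; also the appeal to $H^1(\Spec E,Z)=0$ concerns fppf torsors, not extensions of the abstract group $\Gamma$ by $Z(E)$. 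The conclusion you actually need, namely that $\tilde\Gamma$ is unchanged under base change, is nevertheless true and has a simpler justification: the elements whose conjugation preserves the pinning form a closed subgroup scheme $N\subset G$ which is an extension of the pro-finite constant group $\pi_0(G)$ by $Z$, hence pro-finite \'etale, so $N(E)\to N(\tilde E)$ is bijective. With that repair your argument is complete and coincides with the paper's.
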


\begin{proof}   
The functor $\Pross^{\prime\prime} (E)\to \Pross^{\prime\prime} (\tilde E )$ is clearly an equivalence.
\end{proof}

\subsubsection{The element of $H^2$ corresponding to an object of $\Pross (E)$}   \label{sss:class in H^2}
Combining \S\ref{sss:rem H^2} with the equivalence $\Pross (E)\iso\Pross^{\prime\prime}(E)$, we see that any object $G\in \Pross (E)$ defines an element $\nu\in H^2 (\Gamma ,Z)$, 
where $\Gamma:=\pi_0 (G)$ and $Z$ is the center of $G^\circ$. Namely, fix a pinning of $G$ and let 
$\tilde\Gamma$ be the group formed by elements $g\in G(E)$ such that conjugation by $g$ preserves the pinning; then $\tilde\Gamma$ is an extension of $\Gamma$ by $Z(E)=Z$, and $\nu\in H^2 (\Gamma ,Z )$ is its class.

\subsubsection{Analogs for $\Prored (E)$}
Proposition~\ref{p:E-independence} remains valid if one replaces $\Pross$ by $\Prored$. For a stronger statement, see Proposition~\ref{p:reductive coarse} below.

The constructions of \S\ref{sss:pinned}-\ref{sss:Pross' and Pross''} can be easily generalized to pro-reductive groups (of course, in the pro-reductive case the group $Z$ from \S\ref{sss:Pross''} should be a commutative group scheme, and instead of surjectivity of the homomorphism $Z_\Delta\to Z$ one should require its cokernel to be a torus). Details are left to the reader.

\subsection{Coarse homomorphisms}
Given group schemes $G_1\, ,G_2$ over $E$, let $\Hom_{\coarse} (G_1\, ,G_2)$ denote the quotient of the set $\Hom(G_1\, ,G_2)$ by the conjugation action of $G_2^\circ (E)$. 

\begin{prop} \label{p:coarse hom}
The functor on the category of group schemes defined by $G_2\mapsto \Hom_{\coarse} (G_1\, ,G_2)$ commutes with filtering projective limits.
\end{prop}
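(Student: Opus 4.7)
The plan is to prove bijectivity of the canonical map
\[
\Hom_{\coarse}(G_1,\varprojlim_\alpha G_2^\alpha)\longrightarrow \varprojlim_\alpha \Hom_{\coarse}(G_1,G_2^\alpha),
\]
where, as in \S\ref{sss:group schemes pro}, I may assume $(G_2^\alpha)$ is a filtering inverse system of algebraic groups with surjective transition maps $\pi_{\alpha,\beta}\colon G_2^\beta\to G_2^\alpha$ and limit $G_2$. Two standard inputs will be used throughout: first, $\Hom(G_1,-)$ already commutes with filtering inverse limits in the category of group schemes; second, since we are in characteristic $0$, any surjective morphism of algebraic groups induces a surjection on $E$-points, and in particular $\pi_{\alpha,\beta}\colon G_2^{\beta,\circ}(E)\epi G_2^{\alpha,\circ}(E)$ for all $\beta\geq\alpha$. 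All the work is in controlling the quotient by $G_2^\circ(E)$-conjugation.

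\emph{Surjectivity.} Given a compatible system $([f_\alpha])\in\varprojlim \Hom_{\coarse}(G_1,G_2^\alpha)$, let $L_\alpha\subset \Hom(G_1,G_2^\alpha)$ denote the $G_2^{\alpha,\circ}(E)$-conjugacy class $[f_\alpha]$; compatibility ensures that $f\mapsto \pi_{\alpha,\beta}\circ f$ defines a map $L_\beta\to L_\alpha$. These maps are surjective: given $f'\in L_\alpha$ and any $f_0\in L_\beta$, write $\pi_{\alpha,\beta}\circ f_0 = c_h\circ f'$ with $h\in G_2^{\alpha,\circ}(E)$, lift $h^{-1}$ to $h'\in G_2^{\beta,\circ}(E)$ using characteristic-$0$ surjectivity, and observe that $c_{h'}\circ f_0\in L_\beta$ maps to $f'$ under $\pi_{\alpha,\beta}$. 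Since an inverse limit of nonempty sets with surjective transitions is nonempty, one obtains a strictly compatible family $(\tilde f_\alpha)$, equivalently a morphism $f\colon G_1\to G_2$ whose $\Hom_{\coarse}$-class lifts $([f_\alpha])$.

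\emph{Injectivity.} Suppose $f,f'\colon G_1\to G_2$ satisfy $[f_\alpha]=[f'_\alpha]$ for every $\alpha$, and consider the closed subschemes
\[
T_\alpha := \{\, g\in G_2^{\alpha,\circ} : gf_\alpha g^{-1}=f'_\alpha \,\},
\]
each of which is nonempty and is a right torsor for the centralizer $Z_\alpha:=Z_{G_2^{\alpha,\circ}}(f_\alpha)$. A compatible family $(g_\alpha)\in\varprojlim T_\alpha(E)$ would produce the desired $g\in G_2^\circ(E)$ with $gfg^{-1}=f'$, so it suffices to show $\varprojlim T_\alpha(E)\neq\emptyset$. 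The main obstacle appears here: the transition $T_\beta\to T_\alpha$ is generally \emph{not} surjective on $E$-points, since its image is a $\pi_{\alpha,\beta}(Z_\beta)$-coset inside the $Z_\alpha$-coset $T_\alpha$, and $\pi_{\alpha,\beta}(Z_\beta)\subseteq Z_\alpha$ can be a proper subgroup.

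This is circumvented by a stable-image argument. As $\beta$ grows with $\alpha$ fixed, the closed subgroups $\pi_{\alpha,\beta}(Z_\beta)\subseteq Z_\alpha$ form a decreasing family; closed subgroup chains in the finite-type algebraic group $Z_\alpha$ satisfy the descending chain condition, since along any strict descent the pair (dimension of identity component, order of the component group) strictly decreases lexicographically. Hence the family stabilizes to some $Z_\alpha^*\subseteq Z_\alpha$, and correspondingly $\pi_{\alpha,\beta}(T_\beta)$ stabilizes to a nonempty closed $Z_\alpha^*$-coset $T_\alpha^*\subseteq T_\alpha$. The induced transitions $T_\beta^*\to T_\alpha^*$ are now surjective on $E$-points: given $t_\alpha\in T_\alpha^*$ and $\beta\geq\alpha$, pick $\gamma\geq\beta$ large enough that $\pi_{\alpha,\gamma}(T_\gamma)=T_\alpha^*$ and $\pi_{\beta,\gamma}(T_\gamma)=T_\beta^*$, choose $t_\gamma\in T_\gamma$ with $\pi_{\alpha,\gamma}(t_\gamma)=t_\alpha$, and set $t_\beta:=\pi_{\beta,\gamma}(t_\gamma)\in T_\beta^*$. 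A final appeal to the fact that filtering inverse limits of nonempty sets with surjective transitions are nonempty produces a compatible family in $\varprojlim T_\alpha^*\subseteq \varprojlim T_\alpha(E)$, which completes the argument.
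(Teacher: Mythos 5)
Your proposal founders on a set-theoretic point that the paper flags explicitly: it is \emph{not} true that a filtering projective limit of non-empty sets with surjective transition maps is non-empty once the index poset is uncountable (see the Remark preceding Lemma~\ref{l:set-theor}: the sets of injections $S\mono\BN$, for finite $S\subset\BR$, have surjective restriction maps, yet their limit is the empty set of injections $\BR\mono\BN$). The index poset here (finite-type quotients of $G_2$, or a general filtering index category) need not be countable, so this principle cannot be invoked. You use it twice: once to conclude the surjectivity argument from the surjectivity of $L_\beta\to L_\alpha$ (which you do verify correctly, as does the paper), and once at the end of the injectivity argument after your stable-image reduction. In both places the conclusion is unjustified, and no Mittag--Leffler-type argument is available without countability; this is precisely the difficulty the whole of Appendix~\ref{s:coarse hom} is designed to overcome.

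The paper's route is: assume the limit is empty and extract, via a Zorn's-lemma argument (Lemma~\ref{l:set-theor}), a strictly decreasing infinite chain of subsets $F_1\supset F_2\supset\ldots$ of some fixed $Y_{H'}$; then rule this out by an algebraic descending-chain condition. In the surjectivity part the $F_i$ are orbits of a decreasing chain of algebraic subgroups $K_i\subset\Gamma_{H'}$ (images of stabilizers), which must stabilize; in the injectivity part the $F_i$ are Zariski-closed subsets of an algebraic variety, and Noetherianity gives the contradiction. Your stable-image step in the injectivity part (stabilizing $\pi_{\alpha,\beta}(Z_\beta)$ and $\pi_{\alpha,\beta}(T_\beta)$ by DCC) is genuinely parallel to the paper's passage to $Z_{H',H}$ for $H'$ small, but it only restores surjectivity of the transition maps; it does not remove the obstruction to non-emptiness of the limit, and your surjectivity part contains no analogue of the $K_i$-argument at all. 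To repair the proof you would need to supply something like Lemma~\ref{l:set-theor} and then convert the resulting infinite strict chain into a violation of DCC for algebraic subgroups (surjectivity) or of Noetherianity (injectivity), as the paper does.
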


The proposition will be proved in Appendix~\ref{s:coarse hom}. In most situations we will use the following easy particular case of Proposition~\ref{p:coarse hom}.

\begin{prop}  \label{p:coarse epi}
Let $G_1\, ,G_2$ be pro-semisimple groups over $E$. Then the canonical map
\begin{equation} \label{e:2bijection}
\Epi_{\coarse} (G_1\, ,G_2)\to\underset{H\in N}{\underset{\longleftarrow}{\lim}}\Epi_{\coarse} (G_1\, ,G_2/H)
\end{equation}
is bijective. Here $\Epi_{\coarse}\subset\Hom_{\coarse}$ is the subset corresponding to epimorphisms and $N$ is the subset of all normal subgroups $H\subset G_2$ such that $G_2/H$ has finite type.
\end{prop}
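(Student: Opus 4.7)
The plan is to deduce the proposition immediately from Proposition~\ref{p:coarse hom}. Since $G_2$ is pro-semisimple, it is canonically the filtering projective limit $G_2=\varprojlim_{H\in N}G_2/H$ of its finite-type quotients, with epimorphic transition maps. Applying Proposition~\ref{p:coarse hom} to this system yields a bijection
\[
\Hom_{\coarse}(G_1,G_2)\iso\varprojlim_{H\in N}\Hom_{\coarse}(G_1,G_2/H).
\]
It therefore suffices to show that this bijection sends $\Epi_{\coarse}(G_1,G_2)$ onto $\varprojlim_{H\in N}\Epi_{\coarse}(G_1,G_2/H)$; injectivity of the restricted map is then automatic from injectivity on the ambient $\Hom$-sets.

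The ``only if'' direction is immediate: if $\phi\colon G_1\to G_2$ is an epimorphism, then so is its composition with each projection $G_2\epi G_2/H$. For the converse, I would argue at the level of coordinate rings. Let $\phi\colon G_1\to G_2$ be a homomorphism whose reduction modulo every $H\in N$ is an epimorphism, and let $K\subset G_2$ denote its scheme-theoretic image (a closed subgroup scheme), defined by a Hopf ideal $I\subset\cO(G_2)$. Because $\cO(G_2)=\varinjlim_H\cO(G_2/H)$ and the image of $K$ in each $G_2/H$ equals all of $G_2/H$, the pull-back of $I$ to each $\cO(G_2/H)$ vanishes; hence $I=0$, so $K=G_2$ and $\phi$ is an epimorphism. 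Observe that the coarse-equivalence class is irrelevant to this step: surjectivity of $\phi$ depends only on its $G_2^\circ(E)$-conjugacy class.

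The genuine obstacle lies in Proposition~\ref{p:coarse hom} itself, whose proof is deferred to Appendix~\ref{s:coarse hom}; once that input is granted, the present reduction is essentially formal. A minor subtlety is the identification of the notion ``epimorphism'' (in the sense of the arrow $\epi$) for pro-reductive group schemes with the condition that the scheme-theoretic image fills the target, but this is standard in characteristic $0$ because all finite-type quotients appearing here are reductive.
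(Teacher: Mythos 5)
Your proof is correct, but it takes a genuinely different route from the paper's. You deduce the statement from Proposition~\ref{p:coarse hom}, applied to the filtering system $G_2=\varprojlim_{H\in N}G_2/H$, together with the (correctly argued) observation that surjectivity depends only on the coarse class and that a homomorphism whose composition with every projection $G_2\epi G_2/H$ is surjective is itself surjective, since $\cO(G_2)=\varinjlim_H\cO(G_2/H)$ forces the comorphism to be injective. There is no circularity, because the proof of Proposition~\ref{p:coarse hom} in Appendix~\ref{s:coarse hom} does not use Proposition~\ref{p:coarse epi}; so your reduction is legitimate. However, the reason the paper states Proposition~\ref{p:coarse epi} separately is precisely to have an easy case available without invoking the appendix, whose proof of Proposition~\ref{p:coarse hom} requires Zorn's lemma and Noetherian chain arguments. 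The paper instead gives a short direct argument special to pro-semisimple targets: fix pinnings of $G_1$ and $G_2$; for each $H$ the preimage $S_H$ of $\xi_H$ inside the set of pinning-compatible epimorphisms $G_1\to G_2/H$ is non-empty and carries a transitive action of the \emph{finite} center $Z_H$ of $(G_2/H)^\circ$, so the inverse limit of the $S_H$ over the directed set $N$ is non-empty and carries a transitive action of $\varprojlim Z_H$, which yields both surjectivity and injectivity of \eqref{e:2bijection}. Your route buys economy once Proposition~\ref{p:coarse hom} is granted, and it isolates the general fact that surjectivity is detected on finite-type quotients; the paper's route buys a self-contained elementary proof exploiting pinnings and finiteness of centers, a pattern reused elsewhere in the paper (e.g.\ in the proof of Proposition~\ref{p:easy reduction}).
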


\begin{proof}
Suppose we are given a compatible family of elements $\xi_H\in \Epi_{\coarse} (G_1\, ,G_2/H)$, $H\in N$. Fix pinnings of $G_1\, ,G_2\,$. 
For each $H\in N$ let $S_H$ be the preimage of $\xi_H$ in the set of epimorphisms $G_1\to G_2/H$ compatible with the pinnings. Then $S_H$ is a non-empty set equipped with a transitive action of $Z_H\,$, where $Z_H$ is the center of $(G_2/H)^\circ$. The groups $Z_H$ are finite, so it is clear that $\underset{\longleftarrow}{\lim}\, S_H$ is a non-empty set equipped with a transitive action of $\underset{\longleftarrow}{\lim}\, Z_H\,$.
The proposition follows.
\end{proof}

The following statement is a generalization of Proposition~\ref{p:E-independence}.

\begin{prop} \label{p:reductive coarse}
Let $\tilde E$ be an algebraically closed field containing $E$. Let $G_1,G_2$ be group schemes over $E$. Assume that $G_1$ is pro-reductive.
Then the canonical map  $$\Hom_{\coarse} (G_1\, ,G_2)\to\Hom_{\coarse} (G_1\otimes_E\tilde E\, ,G_2\otimes_E\tilde E)$$
is bijective.
\end{prop}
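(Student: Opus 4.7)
The plan is to reduce to the case where both $G_1$ and $G_2$ are algebraic groups of finite type, then establish injectivity via a transporter plus Nullstellensatz, and surjectivity via Weil's infinitesimal rigidity for homomorphisms out of a reductive group. For the reductions, Proposition~\ref{p:coarse hom} (applied on both sides of the map) handles $G_2$: one reduces to $G_2$ of finite type. To reduce $G_1$ to finite type as well, write $G_1=\varprojlim_\alpha G_\alpha$ with $G_\alpha$ reductive of finite type and surjective transition maps; since $\cO(G_2)$ is finitely generated, any Hopf-algebra map $\cO(G_2)\to\cO(G_1)=\varinjlim_\alpha\cO(G_\alpha)$ factors through some $\cO(G_\alpha)$, yielding
\[
\Hom_{\coarse}(G_1,G_2)=\varinjlim_\alpha\Hom_{\coarse}(G_\alpha,G_2),
\]
and likewise after base change to $\tilde E$. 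Hence we may assume $G_1$ is reductive of finite type.

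For injectivity, given $\phi_1,\phi_2\colon G_1\to G_2$ over $E$ that are $G_2^\circ(\tilde E)$-conjugate, the transporter
\[
T=\{g\in G_2^\circ : g\,\phi_1(x)=\phi_2(x)\,g\text{ for all }x\in G_1\}
\]
is a closed subscheme of $G_2^\circ$ of finite type over $E$; since $T(\tilde E)\neq\emptyset$, $T$ is a nonempty finite-type $E$-scheme, and the Nullstellensatz gives $T(E)\neq\emptyset$.

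For surjectivity, given $\psi\colon G_{1,\tilde E}\to G_{2,\tilde E}$, I would spread it out to a family $\psi_A\colon G_{1,A}\to G_{2,A}$ over a finitely generated integral $E$-subalgebra $A\subset\tilde E$, pick any closed point $s_0\in\Spec A(E)$, set $\phi:=\psi_{s_0}\colon G_1\to G_2$, and consider the relative transporter
\[
T=\{(g,a)\in G_2^\circ\times\Spec A : g\,\phi\,g^{-1}=\psi_a\}.
\]
The key claim---which I expect to be the main obstacle---is that $T\to\Spec A$ is smooth at $(e,s_0)$. This will follow from Weil rigidity: because $G_1$ is reductive and $\operatorname{char}E=0$, the rational cohomology $H^1(G_1,V)$ vanishes for every finite-dimensional representation $V$ of $G_1$, applied here to $V=\fg_2=\Lie G_2$ with $G_1$-action via $\operatorname{Ad}\circ\phi$. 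The vanishing trivializes first-order deformations of $\phi$ and, applied again at each stage, kills the obstructions in the step-by-step lifting, yielding formal smoothness at $(e,s_0)$ and hence smoothness in a Zariski neighborhood. Consequently the image of $T\to\Spec A$ contains a nonempty open subset of $\Spec A$ and, by irreducibility, the generic point $\eta$. Then $T\otimes_A\tilde E=T_\eta\otimes_{\operatorname{Frac}(A)}\tilde E$ is a nonempty finite-type $\tilde E$-scheme and therefore has a $\tilde E$-point by Nullstellensatz, giving $g\in G_2^\circ(\tilde E)$ with $g\,(\phi\otimes\tilde E)\,g^{-1}=\psi$; so $[\phi]\in\Hom_{\coarse}(G_1,G_2)$ is a preimage of $[\psi]$. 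The main work is packaging the cohomological vanishing into genuine (formal) smoothness of $T$ at $(e,s_0)$---i.e., verifying that all obstructions in the iterative lifting really lie in $H^1(G_1,\fg_2)$---after which the rest of the argument is a routine combination of spreading out, irreducibility, and the Nullstellensatz.
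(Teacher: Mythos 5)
Your proposal is correct and follows essentially the same route as the paper: reduce to $G_1,G_2$ of finite type (via Proposition~\ref{p:coarse hom} and the pro-structure of $G_1$), spread $\psi$ out over a finitely generated $A\subset\tilde E$, specialize at a closed point, and show the relative transporter dominates $\Spec A$ by a deformation-theoretic argument resting on $H^1(G_1,\fg_2)=0$, finishing with the Nullstellensatz. The only difference is packaging: you deduce that the generic point lies in the image via formal smoothness of $T\to\Spec A$ and openness, whereas the paper produces a point of the transporter over $\hat A$ and concludes $\Ker(A\to B)=0$ --- the same cohomological vanishing does the work in both versions.
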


\begin{proof}  
By Proposition~\ref{p:coarse hom}, we can assume that $G_2$ has finite type. Any normal subgroup of $G_1\otimes_E\tilde E$ is defined over 
$E$, so we can also assume that $G_1$ has finite type. In the finite type case the proposition is well known. For completeness, let us prove surjectivity in this case.

We have to show that any homomorphism $\tilde f:G_1\otimes_E\tilde E\to G_2\otimes_E\tilde E$ is $G_2^\circ (\tilde E)$-conjugate to a homomorphism defined over $E$. Clearly $\tilde f$ comes from a homomorphism $f:G_1\otimes_EA\to G_2\otimes_EA$, where $A\subset\tilde E$ is a finitely generated $E$-subalgebra. Choose a maximal ideal $m\subset A$, then $A/m=E$. Let $f_0:G_1\to G_2$ be the reduction of $f$ modulo $m$. We claim that $\tilde f$ is $G_2^\circ (\tilde E)$-conjugate to $f_0\,$. To prove this, consider the functor that to any $A$-algebra $\bar A$ associates the set of all $g\in G_2^\circ (\bar A)$ such that the $g$-conjugate of $\bar f_0$ equals~$\bar f$ (here 
$\bar f_0:G_1\otimes_E\bar A\to G_2\otimes_E\bar A$ and $\bar f:G_1\otimes_E\bar A\to G_2\otimes_E\bar A$ are obtained from $f_0$ and $f$ by base change). This functor is representable by a finitely generated $A$-algebra $B$. The problem is to show that $\Hom_A (B,\tilde E)\ne\emptyset$. It suffices to prove that $\Ker (A\to B)=0$. To do this, it suffices to check that $\Hom_A (B,\hat A)\ne\emptyset$, where $\hat A$ is the completion of $A$ at $m$. One proves this by a standard deformation-theoretic argument using the equality $H^1 (G_1\, , \fg_2)=0$, where $\fg_2$ is the Lie algebra of 
$G_2\,$.
\end{proof}

\subsection{The functor $G\mapsto [G]$}  \label{ss:[G]}
\subsubsection{Definition of $[G]$}
Recall that for any pro-reductive group $G$ we denote by $[G]$ the GIT quotient of $G$ by the conjugation action of $G^\circ$ (i.e., $[G]$ is the spectrum of the algebra of those regular functions on $G$ that are invariant under $G^\circ$-conjugation). Clearly $[G]$ is an affine scheme equipped with an action of $\pi_0(G):=G/G^\circ$ (by conjugation) and a morphism $\nu :[G]\to\pi_0(G)$. The map $\nu$ is $\pi_0(G)$-equivariant, and it is easy to see that the action of any $\gamma\in \pi_0(G)$ on $\nu^{-1} (\gamma )$ is trivial. It is also clear that the fibers of $\nu$ are irreducible.

Conjugation by elements of $G^\circ$ acts trivially on $[G]$ and $\pi_0 (G)$. So the functors $G\mapsto [G]$ and $G\mapsto\pi_0 (G)$ are well-defined on the groupoid $\Pross (E)$.

In Appendix~\ref{s:twisted conjugacy} we recall an explicit description of $[G]$; if $G$ is connected it amounts to the usual identification of $[G]$ with~$T/W$.

\subsubsection{Action of the center of $G^\circ$}
Let $G$ be a  pro-reductive group and $Z$ the center of $G^\circ$. Then $Z$ acts on the scheme $G$ by left multiplication and right multiplication. These actions may be different, but they induce the same action of $Z$ on $[G]$.

\subsubsection{The goal of this subsection}
We will prove the following statement.
\begin{prop}   \label{p:rigidity}
Let $G\in \Pross (E)$. Suppose that $\varphi\in\Aut G$ induces the identity on $[G]$. If $G^\circ$ is a product of almost-simple groups then 
$\varphi =\id_G\,$.
\end{prop}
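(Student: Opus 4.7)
The plan is to choose a representative $\tilde\varphi\in\Aut(G)$ of the $\Pross(E)$-class $\varphi$ and to show that after modifying $\tilde\varphi$ by an inner automorphism coming from $G^\circ(E)$ it becomes the identity. First I restrict to the neutral component. Since $[G^\circ]$ is the fibre of $[G]\to\pi_0(G)$ over the identity, it embeds into $[G]$, so $\tilde\varphi|_{G^\circ}$ acts trivially on $[G^\circ]$. Writing $G^\circ=\prod_i G_i$ with each $G_i$ almost-simple, $\tilde\varphi|_{G^\circ}$ permutes the factors $G_i$; but a nontrivial permutation would act nontrivially on $[G^\circ]=\prod_i [G_i]$, so each $G_i$ is preserved and $\tilde\varphi|_{G_i}$ acts trivially on $[G_i]=T_i/W_i$. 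Any nontrivial Dynkin diagram automorphism of an almost-simple group acts nontrivially on $T_i/W_i$, so each $\tilde\varphi|_{G_i}$ is inner, and hence $\tilde\varphi|_{G^\circ}$ is conjugation by some $v\in G^\circ(E)$. Replacing $\tilde\varphi$ by its composition with conjugation by $v^{-1}$ (permissible in $\Pross(E)$), I may assume $\tilde\varphi|_{G^\circ}=\id_{G^\circ}$.

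Next I extract a cocycle. For $g\in G(E)$ set $c(g):=g^{-1}\tilde\varphi(g)$. Comparing $\tilde\varphi(ghg^{-1})=\tilde\varphi(g)\,h\,\tilde\varphi(g)^{-1}$ with $\tilde\varphi(ghg^{-1})=ghg^{-1}$ for $h\in G^\circ$ shows that $c(g)$ centralises $G^\circ$, and since also $c(g)\in G^\circ$ one gets $c(g)\in Z:=Z(G^\circ)$. The equality $c|_{G^\circ}=1$ lets $c$ descend to a map $c:\Gamma\to Z$ with $\Gamma:=\pi_0(G)$, and multiplicativity of $\tilde\varphi$ makes $c$ into a $1$-cocycle for the natural $\Gamma$-action on $Z$; the remaining freedom of conjugating $\tilde\varphi$ further by elements of $Z\subset G^\circ(E)$ modifies $c$ by coboundaries. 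The task is therefore to prove $[c]=0$ in $H^1(\Gamma,Z)$.

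The hypothesis that $\tilde\varphi$ acts trivially on $[G]$ means that for each $\gamma\in\Gamma$ and each $g\in G^\gamma(E)$ the elements $g$ and $g\cdot c(\gamma)$ have the same image in the twisted adjoint quotient $[G^\gamma]$. Using the Mohrdieck--Springer description of $[G^\gamma]$ recalled in Appendix~\ref{s:twisted conjugacy}, this yields an explicit constraint on $c(\gamma)\in Z$. Since $G^\circ=\prod_i G_i$, one has $Z=\prod_i Z_i$, and the $\Gamma$-action on $Z$ combines a permutation of the indices $i$ with, on each orbit, an action of the stabiliser through the Dynkin diagram automorphism group of the corresponding factor. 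By Shapiro's lemma it suffices to treat a single orbit, i.e.\ to show that the analogous class $[c_j]$ vanishes in $H^1(\Gamma_j,Z_j)$ where $\Gamma_j$ acts on $Z_j$ through $\Aut(\Delta_j)$. The key structural input is that $\Aut(\Delta_j)$ is cyclic for every type other than $D_4$, and equals $S_3$ for $D_4$, so all its Sylow subgroups are cyclic. Combining Sylow-theoretic detection of $H^1$ with the observation that the pointwise constraint from the previous paragraph kills the restriction of $[c_j]$ to every cyclic subgroup then forces $[c_j]=0$.

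Finally, given $c(\gamma)={}^{\gamma^{-1}}z_0\cdot z_0^{-1}$ for some $z_0\in Z$, a direct computation identifies $\tilde\varphi$ with conjugation by $z_0\in G^\circ(E)$, so $\varphi=\id$ in $\Pross(E)$. The main obstacle is the third paragraph: translating the GIT-triviality on each $[G^\gamma]$ into a precise cohomological condition, and then running the cyclic/Sylow argument that exploits the very restricted shape of $\Aut(\Delta_j)$. The other steps are essentially formal.
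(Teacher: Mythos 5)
Your skeleton is exactly the paper's: triviality on $[G^\circ]$ forces $\varphi|_{G^\circ}$ inner (normalize it to the identity), the defect $c(g)=g^{-1}\varphi(g)$ is a $1$-cocycle $\pi_0(G)\to Z$, Shapiro's lemma plus the product hypothesis reduces to $G^\circ$ almost-simple, and then one wins because $\Aut$ of a connected Dynkin diagram is cyclic or $S_3$, so all Sylow subgroups of $\im(\pi_0(G)\to\Aut Z)$ are cyclic and a class vanishing on every cyclic subgroup vanishes (the paper's Lemma~\ref{l:cyclic_Sylow}). Those formal steps are fine (modulo small omissions the paper does make explicit: triviality on $[G]$ gives triviality on $\pi_0(G)$, so $c(g)\in G^\circ$; and one first reduces to $G$ of finite type before invoking Shapiro and the Sylow argument).

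The genuine gap is your third paragraph, and you half-admit it: you never prove that the ``explicit constraint'' coming from triviality of multiplication by $c(\gamma)$ on $[G]_\gamma$ is precisely the coboundary condition $c(\gamma)\in\im(\gamma-1:Z\to Z)$, which is what kills the restriction of $[c]$ to the (pro)cyclic subgroup generated by $\gamma$. This is the paper's Lemma~\ref{l:no kernel}, and it is the only non-formal ingredient of the whole proof. The Mohrdieck--Springer description does not deliver it by itself: it merely translates the statement into the assertion that $Z_\gamma=\Coker(\gamma-1:Z\to Z)$ acts \emph{faithfully} on $\cT_\gamma=T_\gamma/W^\gamma$, i.e.\ that no nontrivial element of $Z_\gamma$ preserves every $W^\gamma$-orbit in the twisted sector --- which is exactly the content you need to prove, not a reformulation you can stop at. The paper establishes it by a genuinely different mechanism: Lemma~\ref{l:minuscule} (via minuscule weights) produces a $\gamma$-invariant dominant weight $\omega$ extending any $\gamma$-invariant character of $Z$, one extends the irreducible representation $V_\omega$ to the subgroup generated by $G^\circ$ and a lift of $\gamma$, and one checks that its character is not identically zero on $[G]_\gamma$ (since $\rho(\gamma)\neq 0$ and $\rho|_{G^\circ}$ is irreducible), whence $\omega(z)=1$ for every invariant character, forcing $z\in\im(\gamma-1)$. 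Until you supply an argument of this kind (or an honest proof of faithfulness of the $Z_\gamma$-action on $T_\gamma/W^\gamma$), the proof is incomplete at its one essential point.
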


The proof will be given in \S\ref{sss:proof-rigidity} below. We need the following lemmas.

\begin{lem}  \label{l:cyclic_Sylow}
Let $A$ be an abelian group equipped with an action of a group $H$. Assume that the group $H':=\im (H\to\Aut A)$ is finite and all its Sylow subgroups are cyclic. Suppose that $u\in H^1(H,A)$ has zero restriction to any cyclic subgroup of $H$. Then $u=0$.
\end{lem}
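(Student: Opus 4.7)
The plan is to reduce first to the case where $H$ is finite and acts faithfully on $A$, and then invoke the standard fact that for a finite group all of whose Sylow subgroups are cyclic, every cohomology class is detected by its restrictions to those cyclic Sylow subgroups.

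First I would set $N:=\ker(H\to\Aut A)$, so that $N\triangleleft H$, $H/N\cong H'$, and $N$ acts trivially on $A$. Since the $N$-action is trivial, $H^1(N,A)=\Hom(N,A)$. For each $n\in N$ the cyclic subgroup $\langle n\rangle\subset H$ is among those on which $u$ is assumed to vanish, so $u|_N$ is zero on every generator and hence $u|_N=0$ in $\Hom(N,A)$. The inflation-restriction sequence
$$0\to H^1(H',A)\to H^1(H,A)\to H^1(N,A)$$
then shows that $u$ is the inflation of a unique class $u'\in H^1(H',A)$.

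Next I would verify that $u'$ still vanishes on every cyclic subgroup $C'\subset H'$. Given such $C'$, pick $h\in H$ whose image generates $C'$ and set $C:=\langle h\rangle$, a cyclic subgroup of $H$ surjecting onto $C'$. The hypothesis gives $u|_C=0$; but $u|_C$ equals the inflation of $u'|_{C'}$ along $C\twoheadrightarrow C'=C/(C\cap N)$, and the inflation map $H^1(C',A)\to H^1(C,A)$ is injective by another application of inflation-restriction (note $C\cap N\triangleleft C$ since $C$ is abelian). Hence $u'|_{C'}=0$.

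Finally, since $H'$ is finite, $H^1(H',A)$ is torsion of exponent dividing $|H'|$, and for each prime $p$ its $p$-primary component embeds under restriction into $H^1(\Syl_p H',A)$ (because the composition of restriction and corestriction is multiplication by $[H':\Syl_p H']$, which is invertible on the $p$-primary part). By hypothesis each $\Syl_p H'$ is cyclic, so the previous step gives $u'|_{\Syl_p H'}=0$ for every $p$, whence $u'=0$ and therefore $u=0$. I expect no serious obstacle here: the Sylow-cyclic hypothesis is exactly what converts cyclic-subgroup detection on $H'$ into Sylow-subgroup detection, which is the only place where the structural assumption on $H'$ is actually used.
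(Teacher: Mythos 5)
Your proof is correct and follows essentially the same route as the paper's: kill $u$ on $N=\Ker(H\to\Aut A)$ using vanishing on cyclic subgroups, descend to a class on $H'$, and then use restriction–corestriction to the cyclic Sylow subgroups to conclude. The only difference is that you spell out details the paper leaves implicit (injectivity of inflation when checking that the descended class vanishes on cyclic subgroups of $H'$, and the $p$-primary decomposition), which is a faithful elaboration rather than a different argument.
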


\begin{proof}
The restriction of $u$ to $\Ker (H\to\Aut A)$ is a homomorphism $H\to A$. This homomorphism is zero (because so is its restriction to any cyclic subgroup). So $u\in H^1 (H',A)$. For each prime $p$ the restriction of $u$ to the corresponding Sylow subgroup $\Syl_p\subset H'$ is zero, so
$(H':\Syl_p)\cdot u=0$. Therefore $u=0$.
\end{proof}

\begin{rem}
In Appendix~\ref{s:Zassenhaus} we recall a theorem of Zassenhaus describing the class of finite groups considered in Lemma~\ref{l:cyclic_Sylow}. We also formulate there  a generalization of Lemma~\ref{l:cyclic_Sylow} (see Proposition~\ref{p:2coarse extensions} and Remark~\ref{r:why generalization}).
\end{rem}

\begin{lem}  \label{l:minuscule}
Let $H$ be a connected semisimple group. Let $Z$ (resp.~T and $\Delta$) be the center of $H$ (resp.~the maximal torus and Dynkin diagram). Let $\chi\in\Hom (Z,\BG_m)$, and let $\Stab_\chi\subset\Aut\Delta$ be the stabilizer of $\chi$. Then there exists a dominant $\Stab_\chi$-invariant weight $\omega\in\Hom (T,\BG_m)$ such that $\omega|_Z=\chi$.
\end{lem}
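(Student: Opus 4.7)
My plan is to construct $\omega$ canonically from $\chi$, so that $\Stab_\chi$-invariance is automatic by functoriality. Write $P = \Hom(\tilde T, \BG_m)$ and $Q \subset P$ for the weight and root lattices, where $\tilde T$ is the maximal torus of the simply connected cover $\tilde H$ of $H$. Since $T/Z$ is the maximal torus of the adjoint group, one has $\Hom(T/Z, \BG_m) = Q$, hence $\Hom(Z,\BG_m) = \Hom(T,\BG_m)/Q$ sits naturally inside $P/Q = \Hom(\tilde Z, \BG_m)$. Consequently, any $\omega \in P$ whose class in $P/Q$ lies in the subgroup $\Hom(Z,\BG_m)$ automatically lies in $\Hom(T,\BG_m)$, and its restriction to $Z$ is exactly that class. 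The task therefore reduces to producing a dominant $\omega \in P$ whose class in $P/Q$ is $\chi$ and which is fixed by $\Stab_\chi$ (acting through $\Aut \Delta$ on $P$).

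The key input is the classification of minuscule dominant weights: for each connected Dynkin diagram $\Delta_i$ and each class $c \in P_i/Q_i$, there is a \emph{unique} minuscule dominant weight in the class $c$, with $0$ counting as minuscule in the trivial class. Indeed, in types $A_n, B_n, C_n, D_n, E_6, E_7$ the nontrivial classes of $P_i/Q_i$ biject with the standard minuscule fundamental weights, while for types $E_8, F_4, G_2$ one has $P_i/Q_i = 0$. This is a standard case-by-case verification (cf.\ Bourbaki).

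Now decompose $\Delta = \bigsqcup_i \Delta_i$ into connected components and write $\chi = (\chi_i) \in \bigoplus_i P_i/Q_i$, and set $\omega := \sum_i \omega_i$, where $\omega_i \in P_i$ is the unique minuscule dominant weight of class $\chi_i$. Then $\omega$ is dominant and its class in $P/Q$ equals $\chi \in \Hom(Z,\BG_m)$, so the first paragraph gives $\omega \in \Hom(T,\BG_m)$ with $\omega|_Z = \chi$. For $\Stab_\chi$-invariance, observe that any $\tau \in \Stab_\chi$ acts on $P$ through $\Aut \Delta$ and preserves both dominance and the property of being minuscule; hence $\tau(\omega)$ is a minuscule dominant weight whose class in $P/Q$ is $\tau(\chi) = \chi$, and uniqueness forces $\tau(\omega) = \omega$.

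The only substantive ingredient is the case-by-case minuscule classification; everything else is formal. The one bookkeeping point to watch is the situation when $\tau \in \Stab_\chi$ permutes isomorphic components of $\Delta$, but this is handled automatically: the induced isomorphism between two isomorphic $\Delta_i, \Delta_{\sigma(i)}$ carries the unique minuscule representative of $\chi_i$ to the unique minuscule representative of $\chi_{\sigma(i)}$, by the same uniqueness statement applied in the target component.
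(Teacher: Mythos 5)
Your proof is correct and is essentially the paper's argument: the paper also takes the unique minuscule dominant weight (allowing $0$) restricting to $\chi$ and gets $\Stab_\chi$-invariance from that uniqueness. Your version just spells out the lattice bookkeeping ($Q\subset\Hom(T,\BG_m)\subset P$) and the component-by-component uniqueness that the paper leaves implicit.
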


\begin{proof}
The theory of minuscule weights tells us that there exists a unique dominant $\omega\in\Hom (T,\BG_m)$ such that $\omega|_Z=\chi$ and 
$(\omega ,\check\alpha )\in\{ 0,1\}$ for each positive coroot $\check\alpha$. Clearly $\omega$ is $\Stab_\chi$-invariant.
\end{proof}

\begin{lem}  \label{l:no kernel}
Let $G$ be a semisimple group and $Z$ the  center of $G^\circ$. Let $\sigma\in\pi_0(G)$,  and let $[G]_\sigma$ denote the preimage of $\sigma$ in $[G]$. If $z\in Z$ is such that multiplication by $z$ acts trivially on $[G]_\sigma$ then $z=\sigma (\zeta )/\zeta$ for some  $\zeta\in Z$ (here $\sigma (\zeta )$ denotes the result of the conjugation action of $\sigma\in\pi_0(G)$ on $\zeta$).
\end{lem}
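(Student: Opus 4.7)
The plan is to use Pontryagin duality for the finite abelian group $Z$ (finite because $G^\circ$ is a semisimple algebraic group). Set $B := \{\sigma(\zeta)\zeta^{-1} : \zeta \in Z\} \subset Z$. A character $\chi \in \Hom(Z, \BG_m)$ annihilates $B$ if and only if $\chi(\sigma(\zeta)) = \chi(\zeta)$ for every $\zeta$, i.e., iff $\chi$ is $\sigma$-invariant. By duality for finite abelian groups, $B$ is then the common kernel of all $\sigma$-fixed characters, and the lemma reduces to showing $\chi(z) = 1$ for every $\chi \in \Hom(Z, \BG_m)^\sigma$.

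Fix such a $\chi$. By Lemma~\ref{l:minuscule} applied to $H = G^\circ$, there is a dominant weight $\omega$ of $G^\circ$ with $\omega|_Z = \chi$ that is invariant under $\Stab_\chi \subset \Aut(\Delta)$. Since $\sigma \in \pi_0(G)$ preserves $\chi$ by hypothesis, $\sigma \in \Stab_\chi$, so $\omega$ is $\sigma$-invariant. Let $V_\omega$ be the irreducible $G^\circ$-representation with highest weight $\omega$; then $Z$ acts on $V_\omega$ by the scalar $\chi$. Lift $\sigma$ to $\tilde\sigma \in G(E)$ and set $K := \langle G^\circ, \tilde\sigma \rangle \subset G$. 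The $\sigma$-invariance of $\omega$ yields an intertwiner $A : V_\omega \to V_\omega$ with $A \pi_{V_\omega}(h) A^{-1} = \pi_{V_\omega}(\tilde\sigma h \tilde\sigma^{-1})$, unique up to scalar; since $A^n$ is by Schur a scalar multiple of $\pi_{V_\omega}(\tilde\sigma^n)$ (with $n$ the order of $\sigma$ in $\pi_0(G)$), rescaling $A$ appropriately extends $V_\omega$ to a $K$-representation $V$. Note $[G]_\sigma = [K]_\sigma$, since $G_\sigma = K_\sigma$ as schemes.

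Consider $\chi_V(g) := \operatorname{tr}(g\mid V)$ for $g \in K_\sigma$. This is $G^\circ$-conjugation-invariant, so descends to a regular function on $[G]_\sigma$. Because $Z$ acts on $V$ by the scalar $\chi$, we have $\chi_V(zg) = \chi(z)\chi_V(g)$ identically. The hypothesis that multiplication by $z$ acts trivially on $[G]_\sigma$ then forces $(\chi(z) - 1)\chi_V \equiv 0$ on $K_\sigma$. If $\chi_V$ is not identically zero on $K_\sigma$, we deduce $\chi(z) = 1$, which is what we needed.

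The key step is thus the non-vanishing of $\chi_V$ on $K_\sigma$. Under the identification $K_\sigma \cong G^\circ$, $\tilde\sigma h \leftrightarrow h$, the function $\chi_V$ becomes $h \mapsto \operatorname{tr}(A \cdot \pi_{V_\omega}(h))$ on $G^\circ$; expanded in any basis of $V_\omega$, this is a linear combination of matrix coefficients of $V_\omega$ whose coefficients are the entries of $A$. Since $V_\omega$ is an irreducible $G^\circ$-representation, its matrix coefficients are linearly independent in $O(G^\circ)$ by Schur orthogonality, and since $A \neq 0$, this linear combination is a nonzero function on $G^\circ$. (Alternatively, the same non-vanishing is contained in the explicit description of $[G]_\sigma$ recalled in Appendix~\ref{s:twisted conjugacy}.) This completes the plan.
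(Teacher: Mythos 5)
Your proof is correct and follows essentially the same route as the paper: reduce by duality to showing $\chi(z)=1$ for every $\sigma$-invariant character $\chi$ of $Z$, extend $\chi$ to a $\sigma$-invariant dominant weight via Lemma~\ref{l:minuscule}, extend $V_\omega$ to the subgroup generated by $G^\circ$ and a lift of $\sigma$, and conclude from the identity $(\chi(z)-1)\Tr\equiv 0$ on the $\sigma$-fiber together with the non-vanishing of the twisted trace, which the paper also gets from irreducibility of $V_\omega$ (your intertwiner/matrix-coefficient details just make explicit what the paper asserts in one line).
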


Let us note that the converse statement is obvious because if $g\in G$ belongs to the preimage of $\sigma$ then $\zeta^{-1}\cdot  \sigma (\zeta)\cdot g=\zeta^{-1}g\zeta$.

\begin{proof}
We can assume that $\pi_0(G)$ is generated by $\sigma$ (otherwise replace $G$ by a subgroup).
It suffices to show that $z$ is killed by any $\sigma$-invariant character $\chi :Z\to\BG_m\,$. By Lemma~\ref{l:minuscule}, such a character can be extended to a $\sigma$-invariant weight $\omega$ of $G^\circ$.
 Let $\rho$ be a representation of $G$ whose restriction to $G^\circ$ is the irreducible representation with highest weight $\omega$ (such $\rho$ exists because $\omega$ is $\sigma$-invariant and $\pi_0(G)$ is generated by~$\sigma$).

Since multiplication by $z$ acts trivially on $[G]_\sigma$ we have $\Tr\rho (zg)=\Tr\rho (g)$ for all $g\in [G]_\sigma\,$. Equivalently,
$(\omega (z)-1)\cdot \Tr\rho (g)=0$ for $g\in [G]_\sigma\,$. So $\omega (z)=1$ unless $\Tr\rho (g)=0$ for all $g\in [G]_\sigma\,$.

It remains to show that the latter is impossible. Choose some $\gamma\in G$ mapping to $\sigma\in\pi_0 (G)$. We have to show that 
$\Tr (\rho (\gamma )\cdot\rho (g))$ cannot be zero for all $g\in G^\circ$. This is clear because $\rho (\gamma )\ne 0$ and the restriction of $\rho$ to $G^\circ$ is irreducible.
\end{proof}

\subsubsection{Proof of Proposition~\ref{p:rigidity}}   \label{sss:proof-rigidity}
We will denote by $\varphi$ an automorphism of $G$ defined on the nose (rather than up to inner automorphisms corresponding to elements of $G^\circ$). The problem is to show that $\varphi$ is given by conjugation with an element of $G^\circ$.

Since $\varphi$ acts trivially on $[G^\circ ]$ the restriction of $\varphi$ to $G^\circ$ is inner. We can assume that it equals the identity. The problem is then to show that $\varphi$ is given by conjugation with an element of $Z$, where $Z$ is the center of $G^\circ$.

Since $\varphi$ acts trivially on $[G]$ it acts trivially on $\pi_0 (G)$. So $\varphi$ has the form $\varphi (g)=f (g)\cdot g$, where 
$f:\pi_0 (G)\to Z$ is a 1-cocycle. The problem is to prove that the class $[f]\in H^1(\pi_0 (G),Z)$ equals $0$. It is enough to do this if $G$ has finite type. 
Using Shapiro's lemma and the assumption that $G^\circ$ is a product of almost-simple groups we reduce this to the situation where $G^\circ$ is
almost-simple. In this case it suffices to use Lemmas~\ref{l:cyclic_Sylow} and \ref{l:no kernel} (Lemma~\ref{l:cyclic_Sylow} is applicable because the automorphism group of a connected Dynkin diagram is either cyclic or isomorphic to $S_3$). \qed

\section{Generalities on the group scheme $\hat\Pi_\lambda$}   \label{s:completion}
The material of this section is well known, but I was unable to find references.

\subsection{The $\BQ_\ell$-pro-algebraic completion of $\Pi$}
In this subsection $\Pi$ can be any pro-finite group.

\subsubsection{}  \label{sss:0pro-algebraic}
The \emph{$\BQ_\ell$-pro-algebraic completion} of $\Pi$ is defined to be the affine group scheme $\tilde\Pi_\ell$ over $\BQ_\ell$ co-representing the functor
\[
H\mapsto\Hom_{\cont} (\Pi ,H(\BQ_\ell ))
\]
on the category of affine algebraic groups $H$ over $\BQ_\ell\,$. Equivalently, 
$\tilde\Pi_\ell$ is the projective limit of all algebraic groups $H$ over $\BQ_\ell$ equipped with a continuous homomorphism $\rho :\Pi\to H(\BQ_\ell )$ such that $\rho (\Pi )$ is Zariski-dense in $H$.

The above homomorphisms $\rho :\Pi\to H(\BQ_\ell )$ yield a canonical homomorphism $\Pi\to\tilde\Pi_\ell (\BQ_\ell )$.

On the other hand, for any finite group scheme $\Gamma$ over $\BQ_\ell$ one has
\[
\Hom (\tilde\Pi_\ell/\tilde\Pi_\ell^\circ \,,\Gamma )=\Hom (\tilde\Pi_\ell\, ,\Gamma )=\Hom_{\cont} (\Pi ,\Gamma (\BQ_\ell )),
\]
so $\tilde\Pi_\ell/\tilde\Pi_\ell^\circ$ canonically identifies with $\Pi$ (considered as a group scheme over $\BQ_\ell$). In other words, one has a canonical epimorphism $\tilde\Pi_\ell\epi\Pi$, whose kernel equals $\tilde\Pi_\ell^\circ\,$. 

The composition $\Pi\to\tilde\Pi_\ell (\BQ_\ell )\to\Pi$ equals the identity. 

\begin{lem}   \label{l:subgr of fin index}
Let $\Pi'\subset\Pi$ be an open subgroup. Then the canonical homomorphism $f:\tilde\Pi'_\ell\to\tilde\Pi_\ell$ is injective, and its image is equal to $\tilde\Pi_\ell\times_{\Pi}\Pi'$.
\end{lem}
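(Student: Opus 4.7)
The plan is to exhibit $\tilde\Pi'_\ell$ as an inverse limit computed over the same indexing system that defines $\tilde\Pi_\ell$, and to read off both conclusions simultaneously. For each pair $(H,\rho)$ with $H$ an algebraic group over $\BQ_\ell$ and $\rho:\Pi\to H(\BQ_\ell)$ a continuous homomorphism with Zariski-dense image, let $H'_\rho\subset H$ denote the Zariski closure of $\rho(\Pi')$ and $\bar\rho:\Pi\to\pi_0(H)$ the induced homomorphism, which is surjective because $\rho(\Pi)$ is dense in $H$ and $\pi_0(H)$ is finite.

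The key claim is that $H'_\rho=H\times_{\pi_0(H)}\bar\rho(\Pi')$, i.e.\ $H'_\rho$ is the preimage in $H$ of the finite subgroup $\bar\rho(\Pi')\subset\pi_0(H)$. Since the image of $H'_\rho$ in $\pi_0(H)$ is evidently $\bar\rho(\Pi')$, it suffices to show $(H'_\rho)^\circ=H^\circ$. Setting $N:=\rho^{-1}(H^\circ)$, the subgroup $\rho(N)=\rho(\Pi)\cap H^\circ$ is Zariski-dense in $H^\circ$, because density of $\rho(\Pi)$ in $H$ descends to the clopen subgroup $H^\circ$. Writing $N$ as a union of at most $[\Pi:\Pi']$ cosets of $\Pi'\cap N$ and taking Zariski closures exhibits $H^\circ$ as a finite union of translates of the closed subgroup $\overline{\rho(\Pi'\cap N)}$, so that subgroup has finite index in $H^\circ$. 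Since a closed subgroup of finite index in a Zariski-connected algebraic group must equal the whole group, $\overline{\rho(\Pi'\cap N)}=H^\circ$, proving the claim.

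Next I verify that the pairs $(H'_\rho,\rho|_{\Pi'})$ form a cofinal subsystem of the one defining $\tilde\Pi'_\ell$. Given any $(H',\rho':\Pi'\to H'(\BQ_\ell))$ with $\rho'$ continuous and Zariski-dense, choose a faithful representation of $H'$ on a $\BQ_\ell$-vector space $V$ and form the induced representation $W:=\mathrm{Ind}_{\Pi'}^\Pi V$, which is continuous and finite-dimensional because $[\Pi:\Pi']$ is finite. Taking $H$ to be the Zariski closure of the image of $\Pi$ in $GL_W$, the restriction $W|_{\Pi'}$ decomposes into conjugates of $V$, and projection to the $V$-summand produces a surjective homomorphism $H'_\rho\twoheadrightarrow H'$ intertwining $\rho|_{\Pi'}$ and $\rho'$, as required.

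Passing to inverse limits and using that inverse limits of affine schemes commute with fiber products,
\[
\tilde\Pi'_\ell=\varprojlim_{(H,\rho)}H'_\rho=\varprojlim_{(H,\rho)}\bigl(H\times_{\pi_0(H)}\bar\rho(\Pi')\bigr)=\tilde\Pi_\ell\times_\Pi\Pi',
\]
where the last equality uses that $\{\bar\rho(\Pi')\}$ runs through a cofinal system of finite quotients of $\Pi'$ (obtained by taking $H$ to be $\Pi$ modulo the normal core of any open $V\subset\Pi'$). This identification is tautologically compatible with the canonical map $f$, so $f$ is injective with image $\tilde\Pi_\ell\times_\Pi\Pi'$. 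The main technical hurdle is the verification that $(H'_\rho)^\circ=H^\circ$, which ultimately rests on the elementary fact that a closed subgroup of finite index in a Zariski-connected algebraic group must coincide with the whole group.
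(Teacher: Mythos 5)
Your argument is correct, and it uses the same two essential ingredients as the paper's proof, but packaged differently. The paper argues directly at the pro-level: it identifies the image of $f$ with the Zariski closure of $g(\Pi')$ (where $g:\Pi\to\tilde\Pi_\ell(\BQ_\ell)$ is the canonical map), asserts that this closure equals $\tilde\Pi_\ell\times_\Pi\Pi'$ --- which is exactly the content of your key claim $(H'_\rho)^\circ=H^\circ$, there left implicit --- and then proves injectivity separately by a Tannakian argument: every finite-dimensional $\tilde\Pi'_\ell$-module $V'$ embeds $\Pi'$-equivariantly into a $\tilde\Pi_\ell$-module (in effect $V'\hookrightarrow\mathrm{Res\,Ind}\,V'$), so $\Ker f$ acts trivially on everything and is trivial. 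You instead work at the level of the defining filtered systems: the closure computation is carried out on each finite-type quotient $(H,\rho)$, the induced representation reappears in your cofinality step (your surjection $H'_\rho\twoheadrightarrow H'$ is the group-theoretic counterpart of the paper's module embedding), and both the injectivity and the identification of the image then drop out of a single limit computation, using that cofiltered limits of affine schemes commute with fiber products and that the quotients $\bar\rho(\Pi')$ are cofinal among finite quotients of $\Pi'$ (via normal cores). What your packaging buys is that the step the paper leaves to the reader is made fully explicit, and the two conclusions are obtained simultaneously; what it costs is the extra bookkeeping about cofinal subsystems and the identifications $\varprojlim\pi_0(H)=\Pi$, $\varprojlim\bar\rho(\Pi')=\Pi'$, which you do address. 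One small imprecision: the restriction $W|_{\Pi'}$ of $\mathrm{Ind}_{\Pi'}^\Pi V$ does not literally decompose into conjugates of $V$ when $\Pi'$ is not normal (Mackey gives summands induced from intersections); but your argument only needs that the canonical copy of $V$ is a $\Pi'$-stable subspace on which $\Pi'$ acts by $\rho'$, with $H'_\rho$ preserving it by Zariski closedness of the stabilizer, so this does not affect correctness.
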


\begin{proof}

Let $g:\Pi\to\tilde\Pi_\ell (\BQ_\ell )$ and $g':\Pi'\to\tilde\Pi'_\ell (\BQ_\ell )$ be the canonical homomorphisms. Since $g'$ has Zariski-dense image, $f (\tilde\Pi'_\ell)$ is the Zariski closure of $(f\circ g') (\BQ_\ell )$. Since $f\circ g'=g|_{\Pi'}$ and $g$ has Zariski-dense image, the Zariski closure of $(f\circ g') (\BQ_\ell )$ equals $\tilde\Pi_\ell\times_{\Pi}\Pi'$.

Let us prove that $f$ is injective.
Note that a finite-dimensional $\tilde\Pi_\ell$-module (resp.~$\tilde\Pi'_\ell$-module) is the same as an $\ell$-adic representation of the pro-finite group $\Pi$ (resp.~$\Pi'$). So for any finite-dimensional $\tilde\Pi'_\ell$-module $V'$ there exists a 
finite-dimensional $\tilde\Pi_\ell$-module $V$ equipped with an injective homomorphism of $\tilde\Pi'_\ell$-modules $V'\mono V$, where the 
$\tilde\Pi'_\ell$-module structure on $V$ is defined via $f:\tilde\Pi'_\ell\to\tilde\Pi_\ell\,$. Then $\Ker f$ acts trivially on $V$ and therefore on $V'$. Since $\Ker f$ acts trivially on any $V'$ we see that $\Ker f$ is trivial.
\end{proof}

\begin{lem}  \label{l:Homs from neutral comp}
For any algebraic group $H$ over $\BQ_\ell$ one has $\Hom (\tilde\Pi_\ell^\circ ,H)=F(H)$, where
\[
F(H):=\underset{\Pi'}{\underset{\longrightarrow}{\lim}}\, \Hom_{\cont} (\Pi' ,H(\BQ_\ell )).
\]
(the direct limit is taken over the set of all open subgroups $\Pi'\subset\Pi$).
\end{lem}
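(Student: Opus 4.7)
The plan is to construct a map $F(H) \to \Hom(\tilde\Pi_\ell^\circ, H)$ and show it is bijective. Given a continuous $\rho \colon \Pi' \to H(\BQ_\ell)$ for an open $\Pi' \subset \Pi$, the universal property of $\tilde\Pi'_\ell$ produces a homomorphism $\tilde\rho \colon \tilde\Pi'_\ell \to H$, and by Lemma~\ref{l:subgr of fin index} the group scheme $\tilde\Pi'_\ell = \tilde\Pi_\ell \times_\Pi \Pi'$ contains $\tilde\Pi_\ell^\circ$ (which is the kernel of $\tilde\Pi_\ell \epi \Pi$). Restricting $\tilde\rho$ to $\tilde\Pi_\ell^\circ$ gives an element of $\Hom(\tilde\Pi_\ell^\circ, H)$ that is stable under shrinking $\Pi'$, so I obtain a well-defined map $F(H) \to \Hom(\tilde\Pi_\ell^\circ, H)$.

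For surjectivity, I would present $\tilde\Pi_\ell$ as a cofiltered limit $\underset{\longleftarrow}{\lim}\, G_\alpha$ of finite-type quotients with epimorphic transition maps, as in \S\ref{sss:group schemes pro}. The image of $\tilde\Pi_\ell^\circ$ in each $G_\alpha$ is exactly $G_\alpha^\circ$: it is contained in $G_\alpha^\circ$ by connectedness, and conversely any closed $I \subset G_\alpha$ containing this image yields a quotient $G_\alpha/I$ that is a quotient of $\Pi = \tilde\Pi_\ell/\tilde\Pi_\ell^\circ$ and hence totally disconnected, forcing $G_\alpha^\circ \subset I$. Consequently $\tilde\Pi_\ell^\circ = \underset{\longleftarrow}{\lim}\, G_\alpha^\circ$, and any $f \in \Hom(\tilde\Pi_\ell^\circ, H)$ factors as $\tilde\Pi_\ell^\circ \to G_\alpha^\circ \overset{\bar{f}}{\to} H$ for some $\alpha$ (since $H$ is of finite type). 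The induced homomorphism $\Pi \to G_\alpha/G_\alpha^\circ$ is continuous into a finite group, so its kernel $\Pi'$ is open; by Lemma~\ref{l:subgr of fin index} the preimage $\tilde\Pi'_\ell$ maps into $G_\alpha^\circ$, and composing with $\bar{f}$ yields a homomorphism $\tilde\Pi'_\ell \to H$, which corresponds under the universal property to a continuous $\rho \colon \Pi' \to H(\BQ_\ell)$ whose image in $\Hom(\tilde\Pi_\ell^\circ, H)$ is $f$.

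For injectivity, suppose $\rho_i \colon \Pi_i \to H(\BQ_\ell)$ for $i = 1, 2$ induce the same $f \colon \tilde\Pi_\ell^\circ \to H$. Passing to $\Pi_3 := \Pi_1 \cap \Pi_2$ with the associated $\tilde\rho_i \colon \tilde\Pi_3 \to H$, I would factor the joint map $(\tilde\rho_1, \tilde\rho_2) \colon \tilde\Pi_3 \to H \times H$ through a common finite-type quotient $G$ of $\tilde\Pi_3$, producing $f_1, f_2 \colon G \to H$ that agree on $G^\circ$ (again using that the image of $\tilde\Pi_\ell^\circ$ in $G$ equals $G^\circ$). The locus in $G/G^\circ$ on which $f_1 = f_2$ is a subgroup, and its preimage in $\Pi_3$ is an open subgroup $\Pi' \subset \Pi_3$ on which $\rho_1 = \rho_2$, so they represent the same element of $F(H)$.

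The hard part is purely bookkeeping with pro-objects: confirming $\tilde\Pi_\ell^\circ = \underset{\longleftarrow}{\lim}\, G_\alpha^\circ$, that the image of $\tilde\Pi_\ell^\circ$ in any finite-type quotient is the full neutral component, and that $\Hom$ from a cofiltered limit of group schemes to a finite-type target is computed as a filtered colimit at finite levels. Each of these follows routinely from the description of group schemes as pro-objects in finite-type algebraic groups (\S\ref{sss:group schemes pro}).
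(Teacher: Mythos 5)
Your proof is correct and takes essentially the same route as the paper: the paper's one-line argument writes $\Hom(\tilde\Pi_\ell^\circ,H)$ as the direct limit of $\Hom(\tilde\Pi_\ell\times_\Pi\Pi',H)$ over open subgroups $\Pi'\subset\Pi$ and then applies Lemma~\ref{l:subgr of fin index} together with the universal property of $\tilde\Pi'_\ell\,$, which is precisely the identification you establish. Your surjectivity and injectivity arguments via finite-type quotients $G_\alpha$ (and the fact that $\tilde\Pi_\ell^\circ$ surjects onto each $G_\alpha^\circ$) are just an explicit verification of that direct-limit exchange, so there is no gap.
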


Note that elements of the above set $F(H)$ are \emph{germs} of continuous homomorphisms $\Pi' \to H(\BQ_\ell )$.

\begin{proof}
 Write $\Hom (\tilde\Pi_\ell^\circ ,H)$ as the direct limit of $\Hom (\tilde\Pi_\ell\times_{\Pi}\Pi' ,H)$, then apply Lemma~\ref{l:subgr of fin index}.
\end{proof}

\begin{prop} \label{p:simply connected}
The group scheme $\tilde\Pi_\ell^\circ$ is simply connected.
\end{prop}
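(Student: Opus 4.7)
The plan is to replace simple connectedness of $\tilde\Pi_\ell^\circ$ by its standard characterization in characteristic $0$: for every central isogeny $\pi\colon G'\to G$ of connected algebraic groups over $\BQ_\ell$ and every morphism $f\colon \tilde\Pi_\ell^\circ\to G$, there should exist a morphism $\tilde f\colon \tilde\Pi_\ell^\circ\to G'$ with $\pi\circ\tilde f = f$. By Lemma~\ref{l:Homs from neutral comp}, both $f$ and $\tilde f$ correspond to germs (along open subgroups $\Pi'\subset\Pi$) of continuous homomorphisms $\Pi'\to G(\BQ_\ell)$, respectively $\Pi'\to G'(\BQ_\ell)$. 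So the task reduces to the concrete claim: given any continuous $\rho\colon\Pi'\to G(\BQ_\ell)$ on an open subgroup $\Pi'\subset\Pi$, one must find an open subgroup $\Pi''\subset\Pi'$ and a continuous homomorphism $\tilde\rho\colon\Pi''\to G'(\BQ_\ell)$ with $\pi\circ\tilde\rho = \rho|_{\Pi''}$; composing such a $\tilde\rho$ with $\pi$ recovers the germ of $\rho$, and hence $f$.

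To settle the lifting, I would observe that in characteristic $0$ the central isogeny $\pi$ is \'etale, so the induced map of $\ell$-adic Lie groups $G'(\BQ_\ell)\to G(\BQ_\ell)$ is a local homeomorphism at $1$. One therefore picks an open subgroup $V\subset G'(\BQ_\ell)$ that $\pi$ maps homeomorphically and isomorphically onto an open subgroup $U:=\pi(V)\subset G(\BQ_\ell)$ (such $V$ exists because $G'(\BQ_\ell)$ has a neighborhood basis of $1$ by open subgroups and because the finite subgroup $\ker\pi(\BQ_\ell)$ is discrete), takes $s\colon U\to V$ to be the inverse (a continuous group homomorphism), sets $\Pi'':=\rho^{-1}(U)$ (open in $\Pi'$ by continuity of $\rho$), and defines $\tilde\rho := s\circ\rho|_{\Pi''}$.

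There is essentially no obstacle in this argument; the whole content lies in recognizing that the universal property encoded in Lemma~\ref{l:Homs from neutral comp} trivializes the lifting problem, because one is free to pass to arbitrarily small open subgroups of $\Pi$ at the cost of no loss of information about morphisms out of $\tilde\Pi_\ell^\circ$. This is precisely why no potentially obstructing $H^2$-class is seen: any 2-cocycle on $\Pi$ with values in the finite group $\ker\pi(\BQ_\ell)$ automatically restricts to a coboundary on some open subgroup, namely the preimage under $\rho$ of any sufficiently small open subgroup of $G(\BQ_\ell)$ lifted to $G'(\BQ_\ell)$ via the local \'etale section $s$.
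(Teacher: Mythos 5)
Your proof is correct and takes essentially the same route as the paper: Drinfeld likewise reduces via Lemma~\ref{l:Homs from neutral comp} to lifting germs of continuous homomorphisms from open subgroups of $\Pi$, and then uses that an \'etale homomorphism (equivalently, in characteristic $0$, a central isogeny) $H'\to H$ restricts to an open embedding of some open subgroup of $H'(\BQ_\ell)$ into $H(\BQ_\ell)$, so such germs lift uniquely. The only cosmetic difference is that the paper phrases simple connectedness as bijectivity of $F(H')\to F(H)$ for \'etale $H'\to H$, while you phrase it as the lifting property through central isogenies.
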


\begin{proof}
Let $F$ be as in Lemma~\ref{l:Homs from neutral comp}. It suffices to check that for any etale homomorphism $\varphi :H'\to H$ of algebraic groups  over $\BQ_\ell\,$, the corresponding map $F(H')\to F(H)$ is bijective. This is clear because there exists an open subgroup 
$U\subset H' (\BQ_\ell )$ such that the map $U\to H(\BQ_\ell )$ induced by $\varphi$ is an open embedding.
\end{proof}

\subsection{Continuous homomorphisms from $\Pi$ to algebraic groups over $\BQbar_\lambda\,$}
The notation $\BQbar_\lambda$ was introduced in \S\ref{sss:lambda-completion}. Recall that $\BQbar_\lambda$ is an algebraic closure of $\BQ_\ell$ (the fact that $\BQbar_\lambda$ is equipped with an embedding of a fixed algebraic closure of $\BQ$ is not important for now).

\subsubsection{} 
We equip $\BQbar_\lambda$ with the topology defined by the absolute value. Then for any scheme $Y$ over $\BQbar_\lambda$ we get a topology on $Y (\BQbar_\lambda )$; if $Y$ is affine this is the coarsest topology such that all regular functions on $Y (\BQbar_\lambda )$ are continuous.

\begin{prop}  \label{p:cont_hom}
Let $E\subset\BQbar_\lambda$ be a subfield finite over $\BQ_\ell\,$, $H$ an affine algebraic group over $E$, and 
$f:\Pi\to H(\BQbar_\lambda)$ a continuous homomorphism. Then there is a field $E'\subset\BQbar_\lambda$ finite over $E$ such that 
$f(\Pi )\subset H(E')$.
\end{prop}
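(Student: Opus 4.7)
The plan is to reduce to $H = GL_n$ and then show that $f(\Pi)$ is topologically finitely generated; once finite generation is known, the conclusion follows from the closedness of $GL_n(E')$ in $GL_n(\BQbar_\lambda)$ for finite $E'/E$. First, pick a closed $E$-embedding $H \hookrightarrow GL_{n,E}$; since $H(E') = H(\BQbar_\lambda) \cap GL_n(E')$ for any intermediate subfield $E'$, it suffices to treat the case $H = GL_n$.

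By continuity of $f$, the open subgroup $U := (1 + \ell M_n(\mathcal{O}_{\BQbar_\lambda})) \cap GL_n(\BQbar_\lambda)$ (replace $\ell$ by $4$ if $\ell = 2$, so that $U$ is torsion-free pro-$\ell$) has preimage containing an open subgroup $\Pi_0 \subset \Pi$ of finite index. Finitely many coset representatives of $\Pi_0$ in $\Pi$ send under $f$ to elements with entries in a common finite extension $E_0$ of $E$, so after replacing $E$ by $E_0$ and $\Pi$ by $\Pi_0$ we may assume $f(\Pi) \subset U$.

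The crucial step is that $f(\Pi) \subset U$ is topologically finitely generated. Here one uses the constraint $f(\Pi) \subset GL_n(\BQbar_\lambda)$ rather than merely $GL_n(\mathbb{C}_\ell)$: via the logarithm, $\log f(\Pi)$ is a compact subset of $M_n(\BQbar_\lambda)$, stable under scaling by $\BZ_\ell$ (since $f(\pi^t) = \exp(t \log f(\pi))$ for $t \in \BZ_\ell$) and closed under the Baker--Campbell--Hausdorff operation. A compact subset of $M_n(\BQbar_\lambda)$ of this Lie-algebraic nature must have finite $\BZ_\ell$-rank: otherwise one could extract $\BZ_\ell$-linearly independent elements $v_i \to 0$ with all closed $\BZ_\ell$-linear combinations in $M_n(\BQbar_\lambda)$, but for ``generic'' coefficients $(a_i) \in \prod \BZ_\ell$ the convergent sum $\sum a_i v_i$ lies in the completion (in $\mathbb{C}_\ell$) of an infinite algebraic extension of $\BQ_\ell$ and is therefore transcendental over $\BQ_\ell$---contradicting membership in $\BQbar_\lambda$.

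Once finite generation is in hand, choose topological generators $\gamma_1, \ldots, \gamma_k \in f(\Pi)$; each has matrix entries in some finite extension $E_i$ of $E$. Let $E' := E \cdot E_1 \cdots E_k$, still finite over $E$. Then each $\gamma_i \in GL_n(E')$, and since $E'$ is complete, $GL_n(E')$ is a closed subgroup of $GL_n(\BQbar_\lambda)$, hence contains the topological closure of $\langle \gamma_1, \ldots, \gamma_k \rangle = f(\Pi)$. The main obstacle is the finite-generation step: making precise the genericity argument requires combining the Lie-theoretic structure coming from BCH with the fact that completions of infinite algebraic extensions of $\BQ_\ell$ contain transcendental elements, a subtlety forced by $\mathbb{C}_\ell$ not being locally compact.
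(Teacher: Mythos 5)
The outer layers of your argument are fine: the reduction to $H=GL_n$, the passage to an open subgroup $\Pi_0$ with $f(\Pi_0)\subset U$, and the final step (topological generators have entries in a finite extension $E'$, and $GL_n(E')$ is closed in $GL_n(\BQbar_\lambda)$, so it contains the closure of the group they generate). The gap is exactly where you place the "crucial step", and it is twofold. First, closedness of $L=\log f(\Pi)$ under Baker--Campbell--Hausdorff and under $\BZ_\ell$-scaling does not give what you actually use, namely that every convergent combination $\sum_i a_iv_i$ with $(a_i)\in\prod_i\BZ_\ell$ lies in $M_n(\BQbar_\lambda)$: $L$ is not closed under addition, and the only elements you can genuinely produce inside $f(\Pi)$ are the convergent ordered products $\prod_i\exp(v_i)^{a_i}$, whose logarithms are BCH series, not the sums $\sum_i a_iv_i$, unless the chosen elements commute --- and nothing lets you arrange commutativity. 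Second, the transcendence inference is false as stated: an element of the completion (inside $\BC_\ell$) of an infinite algebraic extension of $\BQ_\ell$ is not "therefore transcendental" --- such a completion contains the extension itself and all of $\BQ_\ell$. What you need is that \emph{some} choice of coefficients produces a sum outside $\BQbar_\lambda$, and "generic" does not deliver this; the natural way to make it precise is to note that $\BQbar_\lambda$ is a countable union of the closed, finite-dimensional subfields $E'$ and to run a Baire category argument on a compact sub\emph{group} or submodule --- but that group structure is precisely what you lost in passing from products to sums.

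Once Baire category enters, you may as well apply it directly to $f(\Pi)$ and discard the $\log$/BCH apparatus entirely: $\BQbar_\lambda$ contains only countably many subfields finite over $E$, each $H(E')$ is closed in $H(\BQbar_\lambda)$, so $\Pi=\bigcup_{E'}f^{-1}(H(E'))$ and Baire gives some $f^{-1}(H(E'_0))$ with nonempty interior, hence open of finite index; adjoining the images of finitely many coset representatives finishes the proof. That argument is correct and works for an arbitrary compact group $\Pi$, and it is genuinely different from the paper's: the paper does not prove a general statement about compact subgroups of $GL_n(\BQbar_\lambda)$, but instead uses an arithmetic property of $\Pi$ itself --- the maximal pro-$\ell$ quotient of any open subgroup of $\Pi$ is topologically finitely generated, by finiteness of $H^1(\Pi,\BZ/\ell\BZ)=H^1_{\et}(X,\BZ/\ell\BZ)$ --- to see that $f(f^{-1}(U))$ is topologically finitely generated, and only then argues as in your last step. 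So the statement you aim at (topological finite generation of an arbitrary compact subgroup of $U$) is true, but the known proofs obtain it \emph{from} the containment in $GL_n(E')$ rather than the other way around; your sketch of a direct proof does not close, so as written the proposal has a genuine gap.
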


For completeness, we will give a proof. It is based on the following property of $\Pi$.

\begin{lem}   \label{l:top-fin-gen}
The maximal pro-$\ell$ quotient of any open subgroup of $\Pi$ is topologically finitely generated.
\end{lem}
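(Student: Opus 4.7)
The plan is to deduce the statement from standard finiteness theorems in \'etale cohomology, using crucially that $\ell\ne p$. First I would observe that every open subgroup $U\subset\Pi$ is canonically isomorphic to $\pi_1(X')$ for some connected finite \'etale cover $X'\to X$, and that such an $X'$ is automatically a normal irreducible variety of finite type over $\BF_p$. Hence it suffices to treat $\Pi$ itself, applied uniformly to every $X$ in this class.

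Next I would invoke the classical criterion from the theory of pro-$\ell$ groups (a Burnside-basis argument applied to the system of finite discrete quotients): a pro-$\ell$ group $P$ is topologically finitely generated if and only if $\dim_{\BF_\ell} H^1(P,\BF_\ell)<\infty$, in which case this dimension equals the minimal number of topological generators. Applying the criterion to the maximal pro-$\ell$ quotient $P$ of $\Pi$ and using the inflation isomorphism $H^1(P,\BF_\ell)\iso H^1(\Pi,\BF_\ell)$ (which holds because any continuous homomorphism from $\Pi$ to a pro-$\ell$ group factors through $P$), the problem reduces to showing that $\dim_{\BF_\ell} H^1(\Pi,\BF_\ell)<\infty$.

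Finally I would identify $H^1(\Pi,\BF_\ell)$ with $H^1_{\et}(X,\BF_\ell)$: the constant sheaf $\BF_\ell$ on $X_{\et}$ corresponds to the trivial $\Pi$-module, and in degree one both groups classify $\BF_\ell$-torsors on the connected scheme $X$. Then I would cite the standard finiteness theorem for \'etale cohomology with constructible $\ell$-torsion coefficients ($\ell\ne p$) on a separated scheme of finite type over a finite field in order to conclude. No step presents a genuine obstacle; the whole argument is a chain of well-known reductions, and the only real difficulty --- as the author himself notes at the start of this section --- is in locating explicit references.
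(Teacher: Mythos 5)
Your argument is correct and is essentially the paper's own proof: the paper likewise reduces to $U=\Pi$ by passing to a finite \'etale cover and then concludes from finiteness of $H^1(\Pi,\BZ/\ell\BZ)=H^1_{\et}(X,\BZ/\ell\BZ)$. You have merely made explicit the standard intermediate steps (the Frattini-quotient criterion for pro-$\ell$ groups and the inflation identification) that the paper leaves implicit.
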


\begin{proof}
Replacing $X$ be a finite etale cover, we can assume that the open subgroup equals $\Pi$. Then use finiteness of the group
$H^1(\Pi ,\BZ/\ell\BZ )=H^1_{\et} (X,\BZ/\ell\BZ )$.
\end{proof}

\begin{proof}[Proof of Proposition~\ref{p:cont_hom}]
There exists an open subgroup $U\subset H(\BQbar_\lambda )$ with the following property: for any $u\in U$ the elements $u^{\ell^n}$ converge to $1$. Then the homomorphism $f^{-1}(U)\to U$ factors through the maximal pro-$\ell$-quotient of $f^{-1}(U)$. The latter is topologically finitely generated by Lemma~\ref{l:top-fin-gen}. So $f(f^{-1}(U))\subset H(\tilde E)$ for some subfield $\tilde E\subset\BQ_\ell$ finite over $E$. Since 
$[\Pi:f^{-1}(U)]<\infty$, this implies that $f(\Pi )$ is  topologically finitely generated. The proposition follows.
\end{proof}

\subsection{The group $\hat\Pi_\lambda\,$}    \label{ss:hat-Pi-lambda}
Recall that (pro)-semisimple groups are \emph{not assumed to be connected.}

\subsubsection{The $\BQbar_\lambda$-pro-algebraic completion}   \label{sss:pro-algebraic}
In \S\ref{sss:0pro-algebraic} we defined the notion of $\BQ_\ell$-pro-algebraic completion.
Replacing $\BQ_\ell$ by $\BQbar_\lambda$ in this definition, one gets a similar notion of \emph{$\BQbar_\lambda$-pro-algebraic completion} of $\Pi$. It is easy to check that the $\BQbar_\lambda$-pro-algebraic completion of $\Pi$ is obtained from the $\BQ_\ell$-pro-algebraic completion by base change (use Proposition~\ref{p:cont_hom} and the Weil restriction functor $\Res_{E/\BQ_\ell}$ for finite extensions $E\supset\BQ_\ell$).

\subsubsection{The group $\hat\Pi_\ell\,$}   \label{sss:hat-Pi-ell}
We defined $\hat\Pi_\ell\,$ to be the $\ell$-adic pro-semisimple completion of $\Pi$. A definition of what this means was given in \S\ref{sss:thecompletion}. Equivalently, $\hat\Pi_\ell$ is the maximal pro-semisimple quotient of the $\BQ_\ell$-pro-algebraic completion of $\Pi$. 

\subsubsection{The group $\hat\Pi_\lambda\,$}  \label{sss:hat-Pi-lambda}
We will work with the group $\hat\Pi_\lambda\,$, which can be defined in three equivalent ways:

(i) $\hat\Pi_\lambda:=\hat\Pi_\ell\otimes_{\BQ_\ell}\BQbar_\lambda\,$;

(ii) $\hat\Pi_\lambda$ is the maximal semisimple quotient of the $\BQbar_\lambda$-pro-algebraic completion of $\Pi$;

(iii)  $\hat\Pi_\lambda$ is the projective limit of all semisimple algebraic groups $G$ over $\BQbar_\lambda$ equipped with a continuous homomorphism $\rho :\Pi\to G(\BQbar_\lambda)$ such that $\rho (\Pi )$ is Zariski-dense in $G$ (this is similar to the definition of $\hat\Pi_\ell$ from \S\ref{sss:thecompletion}).

By definition,we have a canonical continuous homomorphism $\Pi\to\hat\Pi_\lambda (\BQbar_\lambda )$, whose image is contained in 
$\hat\Pi_\ell (\BQ_\ell )$.

\begin{prop}  \label{p:2simply connected}
$\hat\Pi_\lambda^\circ$ is simply connected. 
\end{prop}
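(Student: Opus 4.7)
Plan: By construction $\hat\Pi_\lambda = \hat\Pi_\ell \otimes_{\BQ_\ell} \BQbar_\lambda$, so by base change it suffices to show that $\hat\Pi_\ell^\circ$ is simply connected. The strategy is to combine Proposition~\ref{p:simply connected} (which says $\tilde\Pi_\ell^\circ$ is simply connected) with a lifting-and-section argument, after first identifying $\hat\Pi_\ell^\circ$ with the maximal pro-semisimple quotient of $\tilde\Pi_\ell^\circ$.

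I would begin by making this identification precise. Set $N := \ker(\tilde\Pi_\ell \epi \hat\Pi_\ell)$; both $\tilde\Pi_\ell$ and $\hat\Pi_\ell$ have $\pi_0 = \Pi$ and the induced map on $\pi_0$ is the identity, so $N \subset \tilde\Pi_\ell^\circ$ and $\hat\Pi_\ell^\circ = \tilde\Pi_\ell^\circ / N$ is pro-semisimple. Conversely, the radical $R$ of $\tilde\Pi_\ell^\circ$ is characteristic and hence normal in $\tilde\Pi_\ell$, and $\tilde\Pi_\ell/R$ has pro-semisimple identity component, so $\tilde\Pi_\ell/R$ is itself pro-semisimple and $\tilde\Pi_\ell \to \tilde\Pi_\ell/R$ factors through $\hat\Pi_\ell$; this gives $N \subset R$. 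Since $R$ is the smallest kernel of a pro-semisimple quotient of $\tilde\Pi_\ell^\circ$, one has $R \subset N$ as well, so $N = R$.

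Let $\phi : \tilde G \epi \hat\Pi_\ell^\circ$ be the simply-connected cover; its kernel $K$ is a profinite central subgroup of $\tilde G$, and the task becomes showing $K = 1$. I form the pullback $H := \tilde\Pi_\ell^\circ \times_{\hat\Pi_\ell^\circ} \tilde G$, which is a central extension of $\tilde\Pi_\ell^\circ$ by $K$. Its identity component $H^\circ$ surjects onto $\tilde\Pi_\ell^\circ$ by a central pro-finite isogeny of connected groups; by Proposition~\ref{p:simply connected} (applied to each finite step in the limit), $\tilde\Pi_\ell^\circ$ admits no such non-trivial cover, so $H^\circ \iso \tilde\Pi_\ell^\circ$. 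Hence $K \cap H^\circ = 1$, and since $K$ is central and $K \cdot H^\circ = H$, this gives a direct-product decomposition $H = H^\circ \times K$. Projecting the splitting $\tilde\Pi_\ell^\circ \iso H^\circ \mono H$ onto $\tilde G$ yields a group homomorphism $\tilde\Pi_\ell^\circ \to \tilde G$ lifting the quotient $\tilde\Pi_\ell^\circ \to \hat\Pi_\ell^\circ$.

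Because $\tilde G$ is pro-semisimple, this lift factors through the maximal pro-semisimple quotient $\tilde\Pi_\ell^\circ/R = \hat\Pi_\ell^\circ$, producing a section $s: \hat\Pi_\ell^\circ \to \tilde G$ of $\phi$. A central extension with a splitting is a direct product of group schemes, so $\tilde G \cong \hat\Pi_\ell^\circ \times K$; but in characteristic $0$ the profinite group scheme $K$ is \'etale, so $\pi_0(\hat\Pi_\ell^\circ \times K) = K$, and the connectedness of $\tilde G$ forces $K = 1$. The main obstacle I expect is the first step, namely the careful identification $\hat\Pi_\ell^\circ = \tilde\Pi_\ell^\circ/R$ (and the accompanying observation that ``pro-semisimple'' is detected on the identity component); everything after that is routine manipulation with central isogenies, provided one knows the simply-connected cover $\tilde G$ exists as a pro-algebraic group with connected total space.
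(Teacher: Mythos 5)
Your argument is correct and is essentially the paper's own proof written out in detail: the paper disposes of the statement in two lines by citing Proposition~\ref{p:simply connected} (simple connectedness of $\tilde\Pi_\ell^\circ$) together with the description of $\hat\Pi_\ell$ in \S\ref{sss:hat-Pi-ell} as the maximal pro-semisimple quotient of $\tilde\Pi_\ell$, and then base-changes via \S\ref{sss:hat-Pi-lambda}(i); your identification $\hat\Pi_\ell^\circ=\tilde\Pi_\ell^\circ/R$ and the splitting argument for the cover $\tilde G\to\hat\Pi_\ell^\circ$ are exactly the details being suppressed. One small repair: the phrase ``because $\tilde G$ is pro-semisimple, the lift factors through the maximal pro-semisimple quotient'' is not a valid general principle (a homomorphism into a pro-semisimple group need not kill the radical, e.g.\ a nontrivial map $\BG_a\to SL(2)$); the correct reason here is that the lift sends $R$ into $\Ker\phi=K$, and $R$ is connected while $K$ is pro-finite, hence the lift kills $R$ and does factor through $\tilde\Pi_\ell^\circ/R=\hat\Pi_\ell^\circ$ as you need.
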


\begin{proof}
By Proposition~\ref{p:simply connected} and  \S\ref{sss:hat-Pi-ell}, $\hat\Pi_\ell^\circ$ is simply connected. By  \S\ref{sss:hat-Pi-lambda}(i), this implies that $\hat\Pi_\lambda$ is simply connected.
\end{proof}

\begin{prop}   \label{p:subgr of fin index}
Let $\Pi'\subset\Pi$ be an open subgroup. Then the canonical homomorphism $\hat\Pi'_\lambda\to\hat\Pi_\lambda$ is injective, and its image is equal to $\hat\Pi_\lambda\times_{\Pi}\Pi'$. 

\end{prop}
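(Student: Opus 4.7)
The plan is to reduce the statement to its analog Lemma~\ref{l:subgr of fin index} for the full pro-algebraic completion. First, using the identification $\hat\Pi_\lambda=\hat\Pi_\ell\otimes_{\BQ_\ell}\BQbar_\lambda$ from \S\ref{sss:hat-Pi-lambda}(i) and the corresponding statement for $\Pi'$, it suffices to prove the $\BQ_\ell$-version: the canonical map $\hat\Pi'_\ell\to\hat\Pi_\ell$ is injective with image $\hat\Pi_\ell\times_\Pi\Pi'$. Base change is harmless here because $\Pi'\subset\Pi$ is a clopen subscheme of a constant pro-finite group scheme, so the fiber product $(-)\times_\Pi\Pi'$ commutes with $(-)\otimes_{\BQ_\ell}\BQbar_\lambda$.

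Next, I would realize $\hat\Pi_\ell$ and $\hat\Pi'_\ell$ as explicit quotients of $\tilde\Pi_\ell$ and $\tilde\Pi'_\ell$ respectively. For any pro-algebraic group $\tilde G$, the maximal pro-semisimple quotient is $\tilde G/R(\tilde G^\circ)$, where $R(\tilde G^\circ)\subset\tilde G^\circ$ denotes the pro-solvable radical of the neutral component: this subgroup is characteristic in $\tilde G^\circ$ and hence normal in $\tilde G$; the quotient has pro-semisimple connected component $\tilde G^\circ/R(\tilde G^\circ)$ and pro-finite component group $\pi_0(\tilde G)$, so is pro-semisimple; and the kernel of any pro-semisimple quotient of $\tilde G$ must contain $R(\tilde G^\circ)$. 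That $\hat\Pi_\ell$ is indeed the maximal pro-semisimple quotient of $\tilde\Pi_\ell$ follows from the universal property of $\tilde\Pi_\ell$ together with the Zariski-density of the composition $\Pi\to\tilde\Pi_\ell(\BQ_\ell)\to\hat\Pi_\ell(\BQ_\ell)$ in each finite-type semisimple quotient.

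By Lemma~\ref{l:subgr of fin index}, $\tilde\Pi'_\ell$ is identified with $\tilde\Pi_\ell\times_\Pi\Pi'$. Since $\Pi'$ is open in the pro-finite group $\Pi$, this fiber product is a clopen union of connected components of $\tilde\Pi_\ell$, hence has the same neutral component $\tilde\Pi_\ell^\circ$; in particular the pro-radical $R:=R(\tilde\Pi_\ell^\circ)$ coincides with that of $(\tilde\Pi'_\ell)^\circ$. Therefore
\[
\hat\Pi'_\ell=\tilde\Pi'_\ell/R=(\tilde\Pi_\ell\times_\Pi\Pi')/R=(\tilde\Pi_\ell/R)\times_\Pi\Pi'=\hat\Pi_\ell\times_\Pi\Pi',
\]
the third equality holding because $R\subset\tilde\Pi_\ell^\circ$ lies in the kernel of $\tilde\Pi_\ell\to\Pi$, so quotienting by $R$ commutes with pullback along $\Pi'\subset\Pi$. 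The composite $\tilde\Pi'_\ell\to\tilde\Pi_\ell\to\hat\Pi_\ell$ factors through this identification, showing that the resulting map $\hat\Pi'_\ell\to\hat\Pi_\ell$ is the canonical one and is the inclusion of the fiber product; this gives both injectivity and the desired image. Base-changing to $\BQbar_\lambda$ finishes the proof.

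The one step that requires a bit of care is the description of the maximal pro-semisimple quotient as $\tilde G/R(\tilde G^\circ)$ in the disconnected setting; once that is in hand, everything reduces to Lemma~\ref{l:subgr of fin index} and the elementary observation that quotienting $\tilde\Pi_\ell$ by a normal subgroup contained in $\tilde\Pi_\ell^\circ$ commutes with passing to the preimage of an open subgroup of $\Pi$.
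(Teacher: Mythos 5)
Your proof is correct and is essentially the paper's own argument: the paper proves this proposition by exactly the combination you use, namely Lemma~\ref{l:subgr of fin index}, the description of $\hat\Pi_\ell$ as the maximal pro-semisimple quotient of $\tilde\Pi_\ell$ (\S\ref{sss:hat-Pi-ell}), and the base-change definition $\hat\Pi_\lambda=\hat\Pi_\ell\otimes_{\BQ_\ell}\BQbar_\lambda$ (\S\ref{sss:hat-Pi-lambda}(i)). You merely make explicit the step the paper leaves implicit, that the maximal pro-semisimple quotient is obtained by killing the pro-solvable radical of the neutral component, which is unchanged when passing to the preimage of the open subgroup $\Pi'$.
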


\begin{proof}
Follows from Lemma~\ref{l:subgr of fin index}, \S\ref{sss:hat-Pi-ell}, and \S\ref{sss:hat-Pi-lambda}(i).
\end{proof}

\subsection{A rigidity property of $\hat\Pi_{(\lambda )}\,$}  \label{ss:rigidity}
In \S\ref{sss:lambda-completion} we defined $\hat\Pi_{(\lambda )}$ to be the image of $\hat\Pi_\lambda$ in the ``coarse" category $\Pross (\BQbar_{\lambda})=\Pross (\BQbar )$. Recall that $[\hat\Pi_{(\lambda )}]$ denotes the GIT quotient of $\hat\Pi_{(\lambda )}$ by the conjugation action of 
the neutral connected component $\hat\Pi_{(\lambda )}^\circ\,$.

\begin{prop}   \label{p:2rigidity}
$\hat\Pi_{(\lambda )}\,$ has no nontrivial automorphisms inducing the identity on $[\hat\Pi_{(\lambda )}]$.
\end{prop}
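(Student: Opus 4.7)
The plan is to reduce this to the algebraic rigidity statement of Proposition~\ref{p:rigidity}, so that essentially no new work is required beyond what has already been set up in Sections~\ref{s:group schemes} and~\ref{s:completion}.

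First I would observe that by definition $\hat\Pi_{(\lambda)}$ is obtained from $\hat\Pi_\lambda \in \Pross(\BQbar_\lambda)$ via the equivalence $\Pross(\BQbar) \iso \Pross(\BQbar_\lambda)$ of \eqref{e:Qbar and completion}. Since this equivalence transports the functor $G \mapsto [G]$ to itself (the GIT construction commutes with algebraically closed field extensions, as noted in connection with Proposition~\ref{p:E-independence}), it suffices to prove that $\hat\Pi_\lambda$ itself has no nontrivial $\Pross(\BQbar_\lambda)$-automorphisms inducing the identity on $[\hat\Pi_\lambda]$.

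Next I would verify the hypothesis of Proposition~\ref{p:rigidity}, namely that $\hat\Pi_\lambda^\circ$ is a product of almost-simple groups. By Proposition~\ref{p:2simply connected}, $\hat\Pi_\lambda^\circ$ is simply connected; it is also pro-semisimple by construction. The structural fact recalled in \S\ref{ss:group schemes gen} (``any connected and simply-connected pro-semisimple group is a product of almost-simple algebraic groups'') then gives exactly the decomposition required.

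Having checked the hypothesis, I would simply invoke Proposition~\ref{p:rigidity} applied to $G = \hat\Pi_\lambda$, concluding that any automorphism of $\hat\Pi_\lambda$ acting trivially on $[\hat\Pi_\lambda]$ is the identity in $\Pross(\BQbar_\lambda)$, hence the same holds for $\hat\Pi_{(\lambda)}$ in $\Pross(\BQbar)$. There is no real obstacle here; the content of the statement has been absorbed into Propositions~\ref{p:rigidity} and~\ref{p:2simply connected}, and the present proposition is just the assembly of these two ingredients via the transport of structure along the equivalence \eqref{e:Qbar and completion}.
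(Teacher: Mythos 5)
Your proposal is correct and is essentially the paper's own proof: Proposition~\ref{p:2simply connected} gives simple-connectedness, hence $\hat\Pi_{(\lambda)}^\circ$ is a product of almost-simple groups, and Proposition~\ref{p:rigidity} then applies. The extra step of transporting along the equivalence \eqref{e:Qbar and completion} is harmless but not needed, since Proposition~\ref{p:rigidity} holds over any algebraically closed field of characteristic $0$ and so applies directly to $\hat\Pi_{(\lambda)}\in\Pross(\BQbar)$.
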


\begin{proof}
By Proposition~\ref{p:2simply connected}, $\hat\Pi_{(\lambda )}^\circ$ is simply connected. So it is a product of almost-simple groups.
It remains to apply Proposition~\ref{p:rigidity}.
\end{proof}

Note that Proposition~\ref{p:2rigidity} immediately implies the uniqueness statement of our main 
Theorem~\ref{t:main}.

\subsection{The pro-reductive completion $\hat\Pi_\lambda^{\red}\,$}  \label{ss:pro-reductive}
Replacing in \S\ref{sss:hat-Pi-ell}-\ref{sss:hat-Pi-lambda} the word ``semisimple" by ``reductive", one defines the pro-reductive group scheme
$\hat\Pi_\lambda^{\red}$ over $\BQbar_\lambda\,$, which we call the $\lambda$-adic pro-reductive completion of $\Pi$. In this subsection (which will be used only in \S\ref{s:mot}) we express $\hat\Pi_\lambda^{\red}$ in terms of $\hat\Pi_\lambda$ by paraphrasing \cite[\S 1.3]{De}. The
result is formulated in Proposition~\ref{p:pro-reductive}.

\medskip

Let $G$ be a pro-reductive group scheme over an algebraically closed field. Let $A$ be an abelian group equipped with a homomorphism
\begin{equation}  \label{e:A to chars}
A\to\Hom (G,\BG_m).
\end{equation}
The homomorphism \eqref{e:A to chars} induces homomorphisms $G\to\Hom (A,\BG_m)$ and $\pi_0(G)\to\Hom (A_{\tors}\, ,\BG_m)$, where 
$A_{\tors}\subset A$ is the torsion subgroup. The homomorphism $G\to\pi_0 (G)$ factors through the maximal pro-semisimple quotient $G^{\Ss}$ of $G$. So  one gets homomorphisms $G^{\Ss}\to\pi_0 (G)\to\Hom (A_{\tors}\, ,\BG_m)$ and
\begin{equation}  \label{e:G to fibprod}
G\to G^{\Ss}\underset{\Hom (A_{\tors}\, ,\BG_m)}\times\Hom (A,\BG_m).
\end{equation}

\begin{lem}   \label{l:conditions that ensure}
The map \eqref{e:G to fibprod} is an isomorphism if the following conditions hold:

(a) $A/A_{\tors}$ is a $\BQ$-vector space;

(b) for any subgroup $G'\subset G$ of finite index, the map $A/A_{\tors}\to\Hom (G',\BG_m)/\Hom (G',\BG_m)_{\tors}$ induced by \eqref{e:A to chars} is an isomorphism.
\end{lem}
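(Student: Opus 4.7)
The plan is to exhibit both $G$ and $\tilde G := G^{\Ss}\times_{\Hom(A_{\tors},\BG_m)}\Hom(A,\BG_m)$ as extensions of $G^{\Ss}$ by pro-tori and then match the pro-torus kernels via the five lemma. Since the maximal pro-semisimple quotient of $G^\circ$ is $G^\circ/Z(G^\circ)^\circ$, we have $\ker(G\twoheadrightarrow G^{\Ss}) = K := Z(G^\circ)^\circ$, a pro-torus, while $\ker(\tilde G\twoheadrightarrow G^{\Ss}) = \Hom(A/A_{\tors},\BG_m)$ by divisibility of $\BG_m\,$. The map $\varphi$ from~\eqref{e:G to fibprod} is a morphism of these extensions inducing the identity on $G^{\Ss}$, so by the five lemma the problem reduces to showing that the restriction $\varphi_K : K\to\Hom(A/A_{\tors},\BG_m)$ is an isomorphism of pro-tori, equivalently (by Cartier duality in characteristic $0$) that the dual map $A/A_{\tors}\to\Hom(K,\BG_m)$ of character groups is an isomorphism of abelian groups.

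Since $A/A_{\tors}$ is a $\BQ$-vector space by (a) and $\Hom(K,\BG_m)$ is torsion-free (character lattice of a pro-torus), the elementary fact that a $\BQ$-subspace of a torsion-free abelian group with torsion cokernel coincides with the whole group reduces the claim to showing $A/A_{\tors}\to\Hom(K,\BG_m)$ is injective with torsion cokernel. Writing $T := G^\circ/[G^\circ,G^\circ]$, the composite $K\hookrightarrow G^\circ\twoheadrightarrow T$ is an isogeny of pro-tori (surjective because $G^\circ = K\cdot [G^\circ,G^\circ]$ in the pro-limit of the usual decomposition, with pro-finite kernel $K\cap [G^\circ,G^\circ]$ contained in the center of $[G^\circ,G^\circ]$), which dualizes to an injection $\Hom(G^\circ,\BG_m) = \Hom(T,\BG_m) \hookrightarrow \Hom(K,\BG_m)$ with torsion cokernel; it thus suffices to establish the analogous injectivity-with-torsion-cokernel for $A/A_{\tors}\to\Hom(G^\circ,\BG_m)$.

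For this step I would use condition (b) in two ways. Condition (b) for $G' = G$ gives $A/A_{\tors}\iso\Hom(G,\BG_m)/\tors$, and restriction $\Hom(G,\BG_m)\to\Hom(G^\circ,\BG_m)$ has torsion kernel (characters of $\pi_0(G)$) and torsion-free target, so $A/A_{\tors}$ injects into $\Hom(G^\circ,\BG_m)$. Applying condition (b) to arbitrary finite-index $G'$ then forces the $\pi_0(G)$-action on $\Hom(G^\circ,\BG_m)\otimes\BQ$ to be trivial: a non-invariant character $\chi$ would lie in the $\pi_0(G')$-invariants for $G'$ the preimage of its (necessarily open) stabilizer, but not in the $\pi_0(G)$-invariants, contradicting the constancy of $\Hom(G',\BG_m)/\tors\iso A/A_{\tors}\,$. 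Since the image of $\Hom(G,\BG_m)\to\Hom(G^\circ,\BG_m)$ equals (modulo torsion, via the Hochschild--Serre argument with torsion $H^2$) the $\pi_0(G)$-invariants, which now form everything modulo torsion, the cokernel of $A/A_{\tors}\hookrightarrow\Hom(G^\circ,\BG_m)$ is torsion. Composing with the isogeny-dual gives the required statement for $A/A_{\tors}\to\Hom(K,\BG_m)$, completing the proof. The main technical point is adapting the structure theory of reductive groups (the identification $\ker(G\to G^{\Ss}) = Z(G^\circ)^\circ$ and the isogeny $K\to T$) to the pro-reductive setting, which one handles by passage to the projective limit of finite-type reductive quotients where these statements are classical.
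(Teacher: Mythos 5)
Your first half coincides with the paper's own argument: both exhibit $G$ and the fiber product as extensions of $G^{\Ss}$ by the pro-tori $Z(G^\circ)^\circ$ and $\Hom (A/A_{\tors},\BG_m)$, reduce via duality for diagonalizable groups to showing that $A/A_{\tors}\to\Hom (Z(G^\circ)^\circ,\BG_m)$ is an isomorphism, factor this through $\Hom (G^\circ,\BG_m)$ with the isogeny-dual step contributing only a torsion cokernel, and use divisibility of $A/A_{\tors}$ together with torsion-freeness of the target to upgrade ``injective with torsion cokernel'' to ``isomorphism''. Where you genuinely diverge is in how hypothesis (b) reaches $\Hom (G^\circ,\BG_m)$. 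The paper's key (and very short) observation is that every character of $G^\circ$ factors through a finite-type quotient of $G$ and hence extends to a finite-index subgroup $G'\subset G$, so $\Hom (G^\circ,\BG_m)=\varinjlim_{G'}\Hom (G',\BG_m)$; combined with (b) this makes $A/A_{\tors}\to\Hom (G^\circ,\BG_m)$ an isomorphism outright (and, with (a), makes the target a $\BQ$-vector space), with no invariants, stabilizers, or $H^2$ appearing. You instead route through the claim that, modulo torsion, the image of restriction $\Hom (G',\BG_m)\to\Hom (G^\circ,\BG_m)$ is the full group of $\pi_0(G')$-invariant characters, and then use (b) for varying $G'$ to trivialize the $\pi_0(G)$-action rationally.

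That route can be completed, but the parenthetical ``Hochschild--Serre argument with torsion $H^2$'' is the actual mathematical content of your second half and is left unproved: you need that a $\pi_0(G')$-invariant algebraic character $\chi$ of $G^\circ$ extends to $G'$ after multiplication by a nonzero integer, i.e.\ that the group of central extensions of the pro-finite group scheme $\pi_0(G')$ by $\BG_m$ (inside affine group schemes) is torsion; the naive cocycle formalism does not apply verbatim to a pro-algebraic group with pro-finite component group. The statement is true, but proving it — e.g.\ by pushing out $1\to G^\circ\to G'\to\pi_0(G')\to 1$ along $\chi$ and passing to a finite-type quotient of the resulting extension in which the central $\BG_m$ survives up to a finite subgroup, where the class is killed by the order of the finite component group; or by first extending $\chi$ to some finite-index subgroup and applying $\det\circ\mathrm{Ind}$ — uses precisely the pro-finite-type reduction that the paper's direct-limit lemma packages. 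So your argument is correct in outline but substantially longer, and its one compressed step is most naturally justified by the very observation that makes the paper's proof immediate; I would either prove that step along the lines above or switch to the limit description of $\Hom (G^\circ,\BG_m)$.
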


\begin{proof}
Let $Z$ denote the center of $G^\circ$. One has exact sequences
\[
0\to Z^\circ\to G\to G^{\Ss}\to 0 ,
\]
\[
0\to\Hom (A/A_{\tors}\, ,\BG_m)\to G^{\Ss}\underset{\Hom (A_{\tors}\, ,\BG_m)}\times\Hom (A,\BG_m)\to G^{\Ss}\to 0 .
\]
So the problem is to prove that the composition $Z^\circ\to G^\circ\to\Hom (A/A_{\tors}\, ,\BG_m)$ is an isomorphism. This is equivalent to showing that the composition
\[
A/A_{\tors}\overset{f}\longrightarrow\Hom (G^\circ ,\BG_m)\overset{g}\longrightarrow\Hom (Z^\circ ,\BG_m)
\]
is an isomorphism.

$\Hom (G^\circ ,\BG_m)$ is the direct limit of the groups $\Hom (G',\BG_m)$ corresponding to all subgroups $G'\subset G$ of finite index. So 
conditions (a)-(b) imply that $f$ is an isomorphism and $\Hom (G^\circ ,\BG_m)$ is a $\BQ$-vector space. On the other hand, it is clear that $g$ is injective and $\Coker g$ is torsion. Since $\Hom (G^\circ ,\BG_m)$ is a $\BQ$-vector space, this implies that $g$ is an isomorphism.
\end{proof}

Now let $G:=\hat\Pi_\lambda^{\red}$ and $A:=\BZbar_\lambda^\times$, where $\BZbar_\lambda$ is the ring of integers of $\BQbar_\lambda\,$.
Define the map \eqref{e:A to chars} to be the composition
\[
\BZbar_\lambda^\times\simeq\Hom_{\cont}(\hat\BZ ,\BQbar_\lambda^\times )\to\Hom_{\cont}(\Pi ,\BQbar_\lambda^\times )=
\Hom (\hat\Pi_\lambda^{\red},\BG_m ),
\]
where the map $\Hom_{\cont}(\hat\BZ ,\BQbar_\lambda^\times )\to\Hom_{\cont}(\Pi ,\BQbar_\lambda^\times )$ comes from the canonical homomorphism 
\begin{equation}  \label{e:to hat Z}
\Pi\to\Gal (\bar\BF_p/\BF_p)=\hat\BZ\, .
\end{equation}
Then $G^{\Ss}=\hat\Pi_\lambda\,$, $A_{\tors}$ is the subgroup $\mu_\infty (\BQbar )\subset\BQbar^\times$ formed by all roots of unity, and
\begin{equation}  \label{e:chars of Ators}
\Hom (A_{\tors}\, ,\BG_m)=\Hom (\mu_\infty (\BQbar )\, ,\BG_m)=\hat\BZ\, .
\end{equation}
So \eqref{e:G to fibprod} is a map
\begin{equation}  \label{e:our G to fibprod}
\hat\Pi_\lambda^{\red}\to \hat\Pi_\lambda\underset{\hat\BZ}\times\Hom (\BZbar_\lambda^\times ,\BG_m).
\end{equation}
Here the map $\hat\Pi_\lambda\to\hat\BZ$ is the composition $\hat\Pi_\lambda\epi\hat\Pi\to\hat\BZ$, where the second arrow is the map 
\eqref{e:to hat Z}; the map $\Hom (\BZbar_\lambda^\times ,\BG_m)\to\hat\BZ$ comes from \eqref{e:chars of Ators}. Note that 
$A=\BZbar_\lambda^\times$ is considered as an \emph{abstract} group (so the group $\Hom (\BZbar_\lambda^\times ,\BG_m)$ is \emph{huge}).

\begin{prop}  \label{p:pro-reductive}
The map \eqref{e:our G to fibprod} is an isomorphism.
\end{prop}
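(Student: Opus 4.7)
The plan is to apply Lemma~\ref{l:conditions that ensure} to $G=\hat\Pi_\lambda^{\red}$, $A=\BZbar_\lambda^\times$, equipped with the map~\eqref{e:A to chars}. Condition~(a) holds because $\BZbar_\lambda^\times$ is divisible: any $u\in\BZbar_\lambda^\times$ has an $n$-th root $v\in\BQbar_\lambda^\times$ by algebraic closedness of $\BQbar_\lambda$, and $v(v)=v(u)/n=0$ puts $v$ in $\BZbar_\lambda^\times$. Hence $A/A_{\tors}=\BZbar_\lambda^\times/\mu_\infty(\BQbar)$ is torsion-free and divisible, so a $\BQ$-vector space.

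For condition~(b), any finite-index subgroup $G'\subset G$ must contain $G^\circ$ and so corresponds to an open subgroup $\Pi'\subset\Pi$; by a pro-reductive analog of Proposition~\ref{p:subgr of fin index} (same proof), $G'$ identifies with the pro-reductive completion of $\Pi'$, so $\Hom(G',\BG_m)$ is the group of continuous characters $\chi\colon\Pi'\to\BQbar_\lambda^\times$. The map induced by~\eqref{e:A to chars} sends $a\in\BZbar_\lambda^\times$ to the character $\chi_a\colon\Pi'\to\Pi\to\hat\BZ\to\BQbar_\lambda^\times$ with $1\mapsto a$. Injectivity modulo torsion is easy: writing the image of $\Pi'\to\hat\BZ$ as a finite-index subgroup $m\hat\BZ$, the condition $\chi_a^n=1$ on $\Pi'$ forces $a^{mn}=1$, so $a\in\mu_\infty(\BQbar)$.

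Surjectivity modulo torsion is the heart of the matter: given a continuous $\chi\colon\Pi'\to\BQbar_\lambda^\times$, some power $\chi^k$ must factor through $\Pi'\to\hat\BZ$; equivalently, $\chi$ must have finite order on the geometric subgroup $\Pi'\cap\pi_1(X_{\bar\BF_p})$. This is the one-dimensional instance of the finiteness input from Weil~II \cite[\S 1.3]{De}: the restriction of any continuous one-dimensional $\lambda$-adic character to the geometric fundamental group is pure of weight zero, so its values are algebraic numbers all of whose archimedean absolute values equal $1$, and Kronecker's theorem forces them to be roots of unity. Once $\chi^k$ factors through $\Pi'\to\hat\BZ$, the resulting continuous character of $m\hat\BZ$ extends to a continuous character of $\hat\BZ$ (using divisibility of $\BZbar_\lambda^\times$ to choose an $m$-th root of the image of $m$), which under the identification $\BZbar_\lambda^\times\iso\Hom_{\cont}(\hat\BZ,\BQbar_\lambda^\times)$ corresponds to some $a\in\BZbar_\lambda^\times$ with $\chi^k=\chi_a|_{\Pi'}$. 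The main obstacle is thus the Deligne input from Weil~II; the rest is purely formal.
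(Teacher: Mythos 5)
Your overall strategy is exactly the paper's: apply Lemma~\ref{l:conditions that ensure} to $G=\hat\Pi_\lambda^{\red}$ and $A=\BZbar_\lambda^\times$ with the map \eqref{e:A to chars}, check (a) via divisibility of $\BZbar_\lambda^\times$, and reduce (b), via the pro-reductive analog of Proposition~\ref{p:subgr of fin index}, to the statement that every continuous character $\chi\colon U\to\BQbar_\lambda^\times$ of an open subgroup $U\subset\Pi$ agrees modulo torsion with a character pulled back along \eqref{e:to hat Z} from $\hat\BZ$. The paper does precisely this, deducing the latter statement from finiteness of $\Coker (U\to\hat\BZ)$ together with Theorem~1.3.1 of \cite{De}; your formal steps (injectivity modulo torsion via the image $m\hat\BZ$, extension from $m\hat\BZ$ to $\hat\BZ$ by divisibility, and extracting a $k$-th root to conclude for $[\chi]$ itself) all match.

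The one genuine problem is your justification of the key finiteness. You claim that the restriction of an arbitrary continuous rank-one character to the geometric fundamental group is ``pure of weight zero'', hence has algebraic values of archimedean absolute value $1$, and that Kronecker's theorem finishes. This argument is not valid: the values of an arbitrary continuous character on elements of $\Ker (U\to\hat\BZ)$ are merely elements of $\BZbar_\lambda^\times$ and need not be algebraic over $\BQ$; moreover, purity is a condition on Frobenius eigenvalues at closed points (i.e.\ it lives on the arithmetic quotient) and neither applies to nor controls values on geometric monodromy elements. The statement you need --- and the one the paper cites --- is Deligne's Theorem~1.3.1 of \cite{De}: the non-$p$ part of $\Ker (U\to\hat\BZ)/[U,U]$ is finite; combined with the elementary remark that a continuous character with values in $\BQbar_\lambda^\times$, $\lambda$ prime to $p$, has finite image on any pro-$p$ group, this yields that some power of $\chi$ is trivial on $\Ker (U\to\hat\BZ)$ and hence factors through the open subgroup $\im (U\to\hat\BZ)\subset\hat\BZ$. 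That theorem is a result of geometric class field theory (resting on the structure of $\mathrm{Pic}^0$ and Weil's Riemann hypothesis for abelian varieties), not a consequence of weight-zero purity plus Kronecker; since you do point to \cite[\S 1.3]{De}, replacing your gloss by the actual citation repairs the proof, and the rest coincides with the paper's argument.
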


\begin{proof} 
It suffices to check that conditions (a)-(b) of Lemma~\ref{l:conditions that ensure} hold. This is clear for (a). By Proposition~\ref{p:subgr of fin index}, condition (b) essentially says that for any open subgroup $U\subset\Pi$ the map
\[
\Hom_{\cont}( \hat\BZ,\BQbar_\lambda^\times )/\Hom_{\cont}( \hat\BZ ,\BQbar_\lambda^\times )_{\tors}\to
\Hom_{\cont}( U,\BQbar_\lambda^\times )/\Hom_{\cont}(U ,\BQbar_\lambda^\times )_{\tors}
\]
induced by \eqref{e:to hat Z} is an isomorphism. This follows from finiteness of $\Coker (U\to\hat\BZ )$ combined with
Theorem~1.3.1 of \cite{De}, which says that the non-$p$ part of the quotient $\Ker (U\to\hat\BZ )/[U,U]$ is finite.
\end{proof}

\subsection{The category of representations of \,$\hat\Pi_\ell\,$ and \,$\hat\Pi_\lambda\,$}  \label{sss:rep hat-Pi-lambda}
By a $\lambda$-adic representation of $\Pi$ we mean a finite-dimensional vector space $V$ over $\BQbar_\lambda$ equipped with a continuous homomorphism $\Pi\to GL(V)$. A $\lambda$-adic representation of $\Pi$ is essentially the same as a lisse $\BQbar_\lambda$-sheaf on $X$.

The Tannakian category of finite-dimensional representations of $\hat\Pi_\lambda$ over $\BQbar_\lambda$ identifies with a Tannakian subcategory\footnote{By a \emph{Tannakian subcategory} of a Tannakian category $\cT$ we mean a strictly full subcategory $\cT'\subset\cT$ stable under tensor products, direct sums, dualization, and passing to subobjects} $\cT_\lambda (X)$ of the category of lisse $\BQbar_\lambda$-sheaves on $X$. It is easy to see that a lisse $\BQbar_\lambda$-sheaf $\E$ on $X$ is in
$\cT_\lambda (X)$ if and only if it is semisimple and for every connected finite etale covering $\pi :X'\to X$, the determinant of each of the irreducible components of $\pi^*\E$ has finite order. According to the next proposition, it is enough to check the latter property when $\pi$ is an isomorphism.

\begin{prop}  \label{p:Weil-2}
Let $\pi :X'\to X$ be a connected finite etale covering.  Let $\E$ be an irreducible lisse $\BQbar_\lambda$-sheaf on $X$ and $\E'\subset\pi^*\E$ an irreducible subsheaf. If $\det\E$ has finite order then so does $\det\E'$.
\end{prop}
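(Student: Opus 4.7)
The plan is to reduce the claim to a statement about Frobenius eigenvalues at closed points of $X'$ and then invoke Lafforgue's theorem.

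First I observe that for any closed point $x'\in|X'|$ with image $x:=\pi(x')$ and residue degree $d:=[k(x'):k(x)]$, one has $(\pi^*\E)_{\bar x'}=\E_{\bar x}$ as $\BQbar_\lambda$-vector spaces, and $F_{x'}$ acts on this fiber as $F_x^d$. Since $\E'\subset\pi^*\E$, the fiber $\E'_{\bar x'}$ is an $F_x^d$-stable subspace of $\E_{\bar x}$, so every eigenvalue of $F_{x'}$ on $\E'_{\bar x'}$ is a $d$-th power of an eigenvalue of $F_x$ on $\E_{\bar x}$.

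Next I invoke L.~Lafforgue's results---in particular the same \cite[Prop.~VII.7]{La} already used in the proof of Proposition~\ref{p:algebraicity}, together with the purity statement of \cite{La} in the curve case, and their higher-dimensional extensions used in \cite{Dr}. Since $X$ is smooth over $\BF_p$ and $\E$ is irreducible with $\det\E$ of finite order, these results supply a number field $E\subset\BQbar$ such that, for every closed point $x\in|X|$, every eigenvalue of $F_x$ on $\E_{\bar x}$ is a $q_x$-Weil integer of weight $0$ contained in $E$; equivalently, an algebraic integer in $E$ all of whose complex conjugates have absolute value $1$. Combined with the first step, every eigenvalue of $F_{x'}$ on $\E'_{\bar x'}$ is likewise an algebraic integer in $E$ with all complex absolute values $1$.

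Hence $\det\E'(F_{x'})$, being a product of such numbers, is an algebraic integer in $E$ with all complex absolute values $1$, so by Kronecker's theorem it must belong to the finite group $\mu(E)$ of roots of unity in $E$. By \v{C}ebotarev's density theorem the Frobenius elements $\{F_{x'}\}_{x'\in|X'|}$ are dense in $\pi_1(X')$, so the continuous character $\det\E'$ takes values in $\mu(E)$ and therefore has finite order. The only nontrivial input is Lafforgue's theorem in this combined form---purity of weight $0$ plus algebraicity in a single number field with the Weil-integer property; once that is granted, the remainder is a mechanical application of Kronecker and \v{C}ebotarev, and in fact does not even require the irreducibility of $\E'$.
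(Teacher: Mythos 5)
There is a genuine gap at the point where you invoke Kronecker's theorem. Lafforgue's results do not say that the Frobenius eigenvalues of $\E$ are algebraic \emph{integers}: for an irreducible lisse sheaf with finite-order determinant they are algebraic numbers all of whose complex absolute values equal $1$ and which are units at every finite place \emph{not dividing} $p$, but their valuations at the places above $p$ are in general nonzero. This is exactly what \cite[Thm.~VII.6]{La} gives, and it is how it is paraphrased in Proposition~\ref{p:Laurent Lafforgue}(i): the group $T(\BQbar )_0$ only requires being a unit \emph{outside} of $p$; see also \S\ref{ss:p-adic behavior}, where the Newton map recording precisely these $p$-adic valuations is observed to be nonzero on every connected component. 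If your integrality claim were correct, Kronecker would force every Frobenius eigenvalue of every such $\E$ to be a root of unity, which is absurd. Consequently $\det\E'(F_{x'})$, being a product of a proper sub-collection of these eigenvalues, does have archimedean absolute value $1$ and is a unit away from $p$, but nothing in your argument excludes a nonzero valuation at $p$, so Kronecker's theorem cannot be applied; ruling out this $p$-adic defect is essentially the whole content of the proposition.

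The paper's proof avoids Frobenius eigenvalues altogether: one reduces to the case where $\pi$ is Galois with group $G$, observes that $\bigotimes_{\sigma\in G}\sigma^*\det\E'$ is isomorphic to a tensor power of $\pi^*\det\E$ and hence has finite order, and then uses the structure of rank $1$ $\lambda$-adic local systems --- every such system is $\cA\otimes\cB$ with $\cA$ of finite order and $\cB$ pulled back from $\Spec\BF_p$ (essentially \cite[Prop.~1.3.4]{De}) --- to conclude that $\det\E'$ itself has finite order. If you want to keep the point-counting viewpoint, you would need this structure result (or some other control of the slopes at $p$, e.g.\ via crystalline companions) in place of the false integrality statement; as written, the argument does not go through.
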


\begin{proof}
We can assume that $\pi$ is a Galois covering; let $G$ be its Galois group. The sheaf $$\underset{\sigma\in G}\otimes\sigma^*\det\E'$$ has finite order because it is isomorphic to some tensor power of $\det\pi^*\E=\pi^*\det\E$. To deduce from this that $\det\E'$ has finite order, use that any rank 1 $\lambda$-adic local system on $X'$ can be written as $\cA\otimes\cB$, where $\cA$ is a rank 1 local system of finite order and $\cB$ is a pullback of a rank 1 local system on $\Spec\BF_p\,$; this is essentially \cite[Prop.~1.3.4]{De} (the only difference is that Proposition 1.3.4 of
\cite{De} is about Weil sheaves, but the proof remains the same).
\end{proof}

Let $\cT_\ell (X)$ denote the tensor category of  lisse $\BQ_\ell$-sheaves $\E$ on $X$ such that 
$\E\otimes_{\BQ_\ell}\BQbar_\lambda\in \cT_\lambda (X)$. It is easy to see that all objects of $\cT_\ell (X)$ are semisimple.
The Tannakian category of finite-dimensional representations of $\hat\Pi_\ell$ over $\BQ_\ell$ identifies with $\cT_\ell (X)$.

\section{A theorem in the spirit of Kazhdan-Larsen-Varshavsky \cite{KLV}}   \label{s:KLV}
In this section we fix an algebraically closed field $E$ of characteristic $0$; all schemes will be over $E$. We will write $\Pross$ instead of $\Pross (E)$.

\subsection{Recollections on Grothendieck semirings}
Given a group scheme $G$, let $\Rep (G)$ denote the category of its finite-dimensional representations. Let $K(G)$ denote the Grothendieck ring of $\Rep (G)$ and $K^+(G)\subset K(G)$ the Grothendieck semiring. If $G$ is pro-reductive then considering elements of $K(G)$ as characters, one identifies $K(G)\otimes E$ with the algebra of conjugation-invariant regular functions on $G$.

\begin{lem}   \label{l:normal subgr}
Let $G$ be pro-reductive. 

(i) The assignment $H\mapsto K^+(G/H)$ defines a bijection between the set of normal subgroups $H\subset G$ and the set of subsemirings $A\subset K^+(G)$ with the following property: if $a_1,a_2\in K^+(G)$ and $a_1+a_2\in A$ then $a_1,a_2\in A$.

(ii) $G$ has finite type if and only if the ring $K(G)$ is finitely generated.
\end{lem}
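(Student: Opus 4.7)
The plan is to construct an explicit inverse to $\Phi\colon H\mapsto K^+(G/H)$. Given a saturated subsemiring $A\subset K^+(G)$, I define
\[
H_A:=\bigcap_{[V]\in A}\Ker(G\to GL(V)).
\]
Injectivity of $\Phi$ is automatic, because a pro-reductive group scheme is recovered from its finite-dimensional representations via $H=\bigcap_{V\in\Rep(G/H)}\Ker V$. The content is then the equality $K^+(G/H_A)=A$, and the inclusion $A\subset K^+(G/H_A)$ is tautological.

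For the opposite inclusion I will first invoke the saturation property on the decomposition of any $[W]\in K^+(G/H_A)$ into irreducibles, reducing to the case where $W$ is an irreducible representation of $G/H_A$. Such a $W$ factors through some finite-type reductive quotient $\bar G:=G/K$ of $G/H_A$. Because the family of $A$-representations is jointly faithful on $G/H_A$ (by construction of $H_A$) and $\bar G$ is Noetherian, finitely many $V_1,\dots,V_n$ with $[V_i]\in A$ already have their joint kernel contained in $K$; hence $V:=V_1\oplus\cdots\oplus V_n$ is a faithful representation of $\bar G$. A Chevalley-type generation theorem then exhibits $W$ as a direct summand of some $V^{\otimes m}$. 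Since
\[
[V]^m=\sum_{(i_1,\dots,i_m)}[V_{i_1}\otimes\cdots\otimes V_{i_m}]\in A,
\]
saturation applied to the splitting $[V]^m=[W]+[\text{rest}]$ yields $[W]\in A$, as required.

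Part (ii) is easier. For the forward implication I will use the standard Chevalley fact that if $G$ is finite-type reductive and $V$ is a faithful representation of $G$, then every irreducible representation is a subquotient of some $V^{\otimes m}\otimes (V^*)^{\otimes n}$, so $K(G)$ is generated as a $\BZ$-algebra by $[V]$ and $[V^*]$. For the converse I write $G=\varprojlim G_i$ along the filtered system of its finite-type reductive quotients and use $K(G)=\varinjlim K(G_i)$: a finite set of $\BZ$-algebra generators for $K(G)$ lies in some $K(G_{i_0})$, which forces the inclusion $K(G_{i_0})\hookrightarrow K(G)$ to be surjective; this in turn forces the projection $G\to G_{i_0}$ to induce an equivalence of representation categories and hence to be an isomorphism.

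The main obstacle is the Chevalley-type step in part (i). For a \emph{semisimple} $\bar G$ the generation of $\Rep(\bar G)$ by a faithful $V$ under tensor product, direct sum, and direct summands is classical and does not involve $V^*$; in a genuinely reductive $\bar G$ the characters of the central torus cannot arise as summands of tensor powers of $V$ alone, so one normally needs $[V^*]\in A$ as well, and the saturation condition does not obviously supply duals. The delicate point is therefore to verify that, in the generality of the lemma, saturation is enough to force $[V^*]$ (or at any rate all the missing central characters) into $A$; this is where I expect the $\lambda$-structure on $K^+(G)$ to enter, since closure under $\lambda^{\dim V}$ provides determinants and, via saturation, the inverse characters needed to reduce the pro-reductive case to the pro-semisimple Chevalley statement.
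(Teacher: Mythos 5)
Your part (ii) and the injectivity half of (i) are sound, and (ii) is essentially the paper's own argument (the paper deduces $G'=G$ from $K(G')=K(G)$ using (i); your colimit formulation is the same computation). The genuine gap is exactly the one you flag at the end, and the fix you hope for does not exist. Note first that the saturation hypothesis already yields the $\lambda$-operations for free: in characteristic $0$ each $\Lambda^iV$ is a direct summand of $V^{\otimes i}$, so $[V]^i\in A$ forces $[\Lambda^iV]\in A$, in particular $[\det V]\in A$. But nothing ever produces $[\det V]^{-1}$, i.e.\ the inverse characters of a central torus, and in fact no argument can close this step at the stated level of generality: for $G=\BG_m$ the set $A\subset K^+(\BG_m)$ of classes of representations with only non-negative weights is a subsemiring satisfying the saturation condition (non-negativity of coefficients forbids cancellation), yet it is not of the form $K^+(\BG_m/H)$ for any normal $H$ (those subsemirings are spanned by the characters divisible by a fixed $n$). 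So surjectivity of $H\mapsto K^+(G/H)$ onto saturated subsemirings genuinely fails once $G^\circ$ contains a central torus; to complete your Chevalley-type step you must either restrict to pro-semisimple $G$ or build closure under duals into the hypothesis on $A$.

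In the pro-semisimple case your plan does close up, and it is worth recording why: there $\det V$ is a character of finite order $k$, so $[\det V]^{-1}=[\det V]^{k-1}\in A$ and $V^*\cong\Lambda^{\dim V-1}V\otimes(\det V)^{-1}$ lies in $A$; with duals available the classical generation theorem applies and saturation finishes the argument exactly as you describe. For comparison, the paper proves (i) in one line: using semisimplicity of $\Rep(G)$ it identifies a saturated subsemiring of $K^+(G)$ with a Serre subcategory of $\Rep(G)$ closed under tensor products and cites the Tannakian correspondence between such subcategories and normal subgroups. That citation, as literally phrased, also tacitly requires closure under duals (the same $\BG_m$ subcategory of non-negative weights is Serre and tensor-closed but corresponds to no quotient), and duals are automatic precisely in the pro-semisimple situation by the finite-order-determinant remark above. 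So the difficulty you isolated is a real subtlety in the lemma as stated, not a defect of your particular route; but as a proof of the statement in the generality claimed, your proposal (like any other) cannot be completed without strengthening the hypotheses.
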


\begin{proof}
 It is well known that for any group scheme $G$, the assignment $H\mapsto \Rep (G/H)$ defines a bijection between the set of normal subgroups $H\subset G$ and the set of those Serre subcategories of $\Rep (G)$ that are closed under tensor products. Statement (i) follows because if $G$ is pro-reductive the category $\Rep (G)$ is semisimple.

Let us prove the ``if" part of statement (ii). Suppose that the ring $K(G)$ is generated by $u_1,\ldots ,u_n\,$. Then $G$ has a quotient $G'$ of finite type such that $u_i\in K(G')\subset K(G)$ for all $i$. So $K(G')=K(G)$. By statement (i), this implies that $G'=G$.
\end{proof}

\subsection{The result of \cite{KLV}}
\begin{prop}  \label{p:TannakaKLV}
Let $G$ and $G'$ be connected pro-reductive groups. Then any semiring isomorphism $\phi :K^+(G)\iso K^+(G')$ is induced by an isomorphism $f:G\iso G'\,$, which is unique up to conjugation by elements of $G'(E)$.
\end{prop}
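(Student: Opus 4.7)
The plan is to reduce to the finite-type case, apply the theorem of \cite{KLV}, and then reassemble the resulting family using the coarse-Hom formalism of Section~\ref{ss:group schemes gen}.

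First, by Lemma~\ref{l:normal subgr}, $\phi$ induces an order-preserving bijection between the normal subgroups $H \subset G$ with $G/H$ of finite type and the analogous normal subgroups $H' \subset G'$, together with compatible semiring isomorphisms $\phi_H : K^+(G/H) \iso K^+(G'/H')$. The quotients $G/H$ and $G'/H'$ are connected reductive of finite type, so for each such $H$ the theorem of \cite{KLV} supplies an isomorphism $f_H : G/H \iso G'/H'$ inducing $\phi_H$, unique up to conjugation by an element of $(G'/H')(E)$.

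We thus obtain a compatible family of coarse isomorphisms between the finite-type quotients of $G$ and those of $G'$. By Proposition~\ref{p:coarse hom}, the functor $G_2 \mapsto \Hom_{\coarse}(G_1, G_2)$ commutes with filtering projective limits; applying this in both directions, the family $\{f_H\}$ assembles to a coarse homomorphism $f: G \to G'$, with a coarse inverse obtained symmetrically. Any representative of $f$ in its $G'^\circ(E)$-conjugacy class is an honest isomorphism of group schemes inducing $\phi$ on $K^+$.

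For uniqueness, if $f_1, f_2 : G \iso G'$ both induce $\phi$ then they yield the same trace on every finite-dimensional representation of $G'$. Since $G'$ is connected pro-reductive, characters separate $G'(E)$-conjugacy classes (the quotient scheme $[G']$ of \S\ref{ss:[G]} is the spectrum of $K(G') \otimes E$), so $f_1$ and $f_2$ are conjugate by an element of $G'(E)$. The substantive mathematical content sits in the cited theorem of \cite{KLV}; the remainder is formal. The only point that requires care is checking that the correspondence $H \leftrightarrow H'$ induced by $\phi$ is a bijection of \emph{posets}, so that an inverse limit is meaningful — this is immediate from the description $H \mapsto K^+(G/H) \subset K^+(G)$ in Lemma~\ref{l:normal subgr}.
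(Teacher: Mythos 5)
Your existence argument is correct but follows a different route from the paper: the paper rigidifies by choosing pinnings of $G$ and $G'$, so that on each finite-type quotient the \cite{KLV}-isomorphism compatible with the pinnings is literally unique; the family is then strictly (not just coarsely) compatible and passes to the limit with no further input. You instead work with coarse classes and invoke Proposition~\ref{p:coarse hom}. That can be made to work, but two points you treat as automatic do need a word: (a) compatibility of the classes of $f_{H}\circ\pi_H$ under the transition maps is not free — it follows from the uniqueness-up-to-conjugation clause of \cite{KLV} applied to the two maps $G/H_1\to G'/H'$ obtained from $f_{H_1}$ and $f_H$, both of which induce $\phi_H$; and (b) the assembled coarse class is only a coarse \emph{homomorphism} a priori, and "any representative is an honest isomorphism" is not obvious (a surjective endomorphism of a pro-algebraic group need not be injective). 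The clean fix inside your framework: build the coarse map $g$ in the other direction the same way, note that composition of coarse classes is well defined, check on each finite-type quotient (again via \cite{KLV} uniqueness) that $g\circ f$ and $f\circ g$ agree with the identity classes there, and then use the \emph{injectivity} half of the bijection in Proposition~\ref{p:coarse hom} to conclude that $g\circ f$ and $f\circ g$ are inner, hence $f$ is an isomorphism. The trade-off versus the paper: you avoid choosing pinnings but need the coarse-Hom limit formalism (for which the easy Proposition~\ref{p:coarse epi}-type argument suffices here), while the paper's pinning trick makes both compatibility and the limit step trivial.

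The uniqueness paragraph, as written, has a genuine gap. From $\chi\circ f_1=\chi\circ f_2$ for all characters $\chi$ of $G'$ you may conclude that $f_1$ and $f_2$ agree after composing with $G'\to[G']$, i.e.\ that $f_1(x)$ and $f_2(x)$ are conjugate for each (semisimple) point $x$ — with a conjugating element depending on $x$. Deducing that $f_1$ and $f_2$ are conjugate by a \emph{single} element of $G'(E)$ is exactly the element-conjugacy versus conjugacy issue that the paper is at pains about (\S\ref{ss:why diff}, and Larsen's examples show the inference fails for general homomorphisms); "characters separate conjugacy classes" of elements is not the statement you need. The correct argument here is: $\alpha:=f_2\circ f_1^{-1}$ is an automorphism of the connected group $G'$ inducing the identity on $K^+(G')$; on each finite-type quotient it is inner by the uniqueness part of \cite[Thm.~1.2]{KLV} (equivalently, it fixes all dominant weights, hence is trivial in the outer automorphism group); and then a limit argument — either the injectivity statement in Proposition~\ref{p:coarse hom} or the paper's pinning argument — produces one element of $G'(E)$ implementing $\alpha$. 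So the conclusion is true, but the step needs the finite-type rigidity plus a passage to the limit, not just separation of conjugacy classes by characters.
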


\begin{proof}
If $G$ and $G'$ are reductive (rather than pro-reductive) this is \cite[Thm.~1.2]{KLV}.

In general, note that by Lemma~\ref{l:normal subgr},  $\phi$ induces an isomorphism between the directed set $I$ parametrizing all quotients of $G$ having finite type and a similar directed set for $G'$. For $i\in I$, let $G_i$ denote the corresponding quotient of $G$ and let $G'_i$ denote the corresponding quotient of $G'$. Choose a pinning in $G$ and $G'$; it induces a pinning in each of the groups $G_i$ and $G'_i\,$. By \cite[Thm.~1.2]{KLV}, the isomorphism $K^+(G_i)\iso K^+(G'_i)$ induced by $\phi$ comes from a unique isomorphism $f_i :G_i\iso G'_i$ compatible with the pinnings. The isomorphisms $f_i$ are compatible with each other and define an isomorphism $f:G\iso G'$ compatible with the pinnings and inducing $\phi$. Clearly this is the unique isomorphism $f$ with these properties. If one does not require compatibility with the pinnings one gets uniqueness up to conjugation.
\end{proof}

The connectedness assumption in Proposition~\ref{p:TannakaKLV} is important. First of all,  Proposition~\ref{p:TannakaKLV} does not hold for finite groups (e.g., see \cite{EG}). One also has to keep in mind the following example.

\begin{ex}   \label{ex:why Adams}
Set $H:=SL(2n+1)$, $n\ge 1$. Equip $H$ with the action of $\BZ/2\BZ$ such that $1\in\BZ/2\BZ$ acts on $H$ as $h\mapsto (h^t)^{-1}$.
Set $G:=(\BZ/2\BZ)\ltimes H$. Let $Z$ denote the center of $H$. In \S\ref{sss:explaining Example} we will construct a nontrivial semiring automorphism $\phi$ of $K^+(G)$; moreover, $\phi$ induces a nontrivial automorphism of $K^+(G/Z)\subset K^+(G)$. On the other hand, it is easy to see that any automorphism of $G$ or $G/Z$ is inner, so the corresponding automorphism of the Grothendieck semiring is trivial.  Let us note that $\phi$ does not commute with the exterior square operation $\lambda^2:K^+(G)\to K^+(G)$, see Lemma~\ref{l:why Adams}(ii).
\end{ex}

\subsection{Formulation of the theorem}  \label{ss:form_thm}
To remove the connectedness assumption from Proposition~\ref{p:TannakaKLV},  we consider the problem of reconstructing a pro-reductive group $G$ from the pro-finite group $\Gamma=\pi_0 (G)$, the collection of lambda-semirings $K^+(G\times_{\Gamma}U)$, and the natural homomorphisms between them; here $U$ runs through the set of open subgroups of $\Gamma$. (Example~\ref{ex:why Adams} explains why we have to consider lambda-semirings rather than plain semirings.)

\subsubsection{Lambda-semirings} \label{sss:Lambda}
By a cancellation semiring we mean a semiring $A^+$ such that the equality $a+b=a+c$ implies that $b=c$; in this case the canonical homomorphism from $A^+$ to the corresponding ring $A$ is injective. By a \emph{lambda-semiring} we will mean a pair consisting of a cancellation semiring $A^+$ and a lambda-structure on the corresponding ring $A$ such that $\lambda^i(A^+)\subset A^+$ for all $i\in\BN$.
For instance, $K^+(G)$ and $K(G)$ are lambda-semirings. (In fact, we care only about lambda-semirings of this type and about morphisms between them).  The category of lambda-semirings (resp.~ lambda-rings) will be denoted by $\Lambda^+$ (resp.~$\Lambda$). There is a functor $\Lambda^+\to\Lambda$ left adjoint to the embedding $\Lambda\mono\Lambda^+$.

\subsubsection{The groupoid $\Pross_\Gamma\,$}   \label{sss:ProssGamma}
If $\Gamma$ is a pro-finite group, let $\Pross_\Gamma$ denote the groupoid (or category?) of pairs consisting of an object $G\in\Pross$ and an isomorphism $\pi_0 (G)\iso\Gamma$.

\subsubsection{The category $\Open_\Gamma$ and the functors $\tilde\BK^+_G\,$}   \label{sss:Open}
From now on we fix a pro-finite group $\Gamma$.

Let $\Open_\Gamma\,$ denote the category whose objects are open subgroups $U\subset\Gamma$ and morphisms $U_1\to U_2$ are $\Gamma$-equivariant maps $\Gamma/U_2\to\Gamma/U_1$. Such a map is the same as an element 
$\gamma\in \Gamma/U_1$ such that 
\begin{equation}  \label{e:Op-morphisms}
\gamma^{-1}U_2\gamma\subset U_1\,.
\end{equation}
Note that the automorphism group of any $U\in\Open_\Gamma$ equals $N(U)/U$, where $N(U)$ is the normalizer.

Any $G\in\Pross_\Gamma\,$ defines a functor $\tilde\BK^+_G :\Open_\Gamma\to\Lambda^+$, where $\Lambda^+$ is as in \S\ref{sss:Lambda}. 
Namely, we set 
$\tilde\BK^+_G(U):=K^+(G\times_\Gamma U)$; given $U_1\,$, $U_2\,$, and $\gamma$ satisfying \eqref{e:Op-morphisms}, we define the morphism
\begin{equation}   \label{e:induced by conj}
K^+(G\times_\Gamma U_1)\to K^+(G\times_\Gamma U_2)
\end{equation}
 to be the one corresponding to the homomorphism
\[
G\times_\Gamma U_2\to G\times_\Gamma U_1\, , \quad x\mapsto \gamma^{-1}x\gamma\, .
\]

\begin{rem}
Here is an equivalent way to define the map \eqref{e:induced by conj}. For $U\in \Open_\Gamma$ let $A(U)$ denote the algebra of functions on $\Gamma/U$ viewed as an algebra object in $\Rep(G)$. Let $A(U)\mbox{-mod}$ denote the category of $A(U)$-modules in $\Rep(G)$. 
One has a canonical equivalence 
\[
A(U)\mbox{-mod}\iso\Rep(G\times_\Gamma U).
\]
A morphism $U_1\to U_2$ in the category $\Open_\Gamma$ is the same as an algebra morphism $A(U_1)\to A(U_2)$ in $\Rep(G)$. The latter defines the base change functor $A(U_1)\mbox{-mod}\to A(U_2)\mbox{-mod}$. 
The corresponding map between Grothendieck semirings is the map \eqref{e:induced by conj}.
\end{rem}

\subsubsection{The groupoid $\Lambda^+_\Gamma$ and the functor $\BK^+$}  \label{sss:BK}
Let $\tilde\Lambda^+_\Gamma$ denote the category of all functors $\Open_\Gamma\to\Lambda^+$, where $\Lambda^+$ is as in \S\ref{sss:Lambda}. In \S\ref{sss:Open} we defined $\tilde\BK^+_G\in\tilde\Lambda^+_\Gamma\,$ for any $G\in\Pross_\Gamma\,$; in particular, we have $\tilde\BK^+_\Gamma\in\tilde\Lambda^+_\Gamma\,$.

Objects $F\in\tilde\Lambda^+_\Gamma$ equipped with a morphism $\tilde\BK^+_\Gamma\to F$ form a category. 
Let  $\Lambda^+_\Gamma$ denote the groupoid obtained from this category by removing its non-invertible morphisms.

Similarly (replacing semirings by rings), one defines $\tilde\Lambda_\Gamma$ and $\Lambda_\Gamma\,$.

For any $G\in\Pross_\Gamma$ define $\BK^+_G\in\Lambda^+_\Gamma$ to be the object $\tilde\BK^+_G\in\tilde\Lambda^+_\Gamma$ equipped with the morphism $\tilde\BK^+_\Gamma\to \tilde\BK^+_G$ induced by the canonical epimorphism $G\epi\Gamma$. Thus we get a functor
\begin{equation}   \label{e:BK}
\Pross_\Gamma\to \Lambda^+_\Gamma \, , \quad G\mapsto \BK^+_G\, .
\end{equation}

\subsubsection{Formulation of the theorem}  \label{sss:form_thm}
Let $\Pross_\Gamma^{prod}\subset\Pross_\Gamma$ denote the full subcategory of those $G\in\Pross_\Gamma$ for which $G^\circ$ is a product of almost-simple groups. 

Define the full subcategory $\PROSS_\Gamma\subset\Pross_\Gamma^{prod}$ as follows. 
Recall that by \S\ref{sss:class in H^2}, any $G\in\Pross_\Gamma$ defines an element $\nu\in H^2 (\Gamma ,Z )$, where $Z$ is the center of $G^\circ$. We say that $G\in\PROSS_\Gamma$ if $G\in\Pross_\Gamma^{prod}$ and the corresponding $\nu\in H^2 (\Gamma ,Z )$ has the following property: for any open subgroup $U\subset\Gamma$ and any $f\in \Hom (Z,E^\times )^U$ the element $f_*(\nu )\in H^2 (U,E^\times )$ equals $0$.

\begin{theorem}   \label{t:variant of KLV}
(i) The restriction of the functor~\eqref{e:BK} to  $\Pross_\Gamma^{prod}$ is faithful.

(ii) The restriction of the functor~\eqref{e:BK} to  $\PROSS_\Gamma$ is fully faithful.

(iii) Let $G,G'\in\Pross_\Gamma$ and suppose that the images of $G$ and $G'$ under the functor~\eqref{e:BK} are isomorphic. If one of the objects $G,G'$ belongs to $\Pross_\Gamma^{prod}$ (resp.~$\PROSS_\Gamma$) then so does the other.
\end{theorem}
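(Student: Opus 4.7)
The plan is to reduce all three parts to the connected-reductive reconstruction of Proposition~\ref{p:TannakaKLV}, and then control the additional data attached to $\pi_0$---a $\Gamma$-action on a Dynkin diagram, a $\Gamma$-equivariant central subgroup $Z$, and an extension class $\nu\in H^2(\Gamma,Z)$---using the $\Pross^{\prime\prime}$-description of \S\ref{sss:Pross' and Pross''}, the rigidity Proposition~\ref{p:rigidity}, and the cohomological hypothesis defining $\PROSS_\Gamma$.

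For part~(i), if $f_1,f_2\in\Hom_{\Pross_\Gamma}(G,G')$ induce the same morphism on $\BK^+$, form the automorphism $\sigma:=f_2\circ f_1^{-1}\in\Aut_{\Pross_\Gamma}(G')$ (permissible because $\Pross_\Gamma$ is a groupoid). Then $\sigma$ acts trivially on $K(G')\otimes E$, the algebra of $(G')^\circ$-conjugation-invariant regular functions on $G'$, and hence on $[G']$. Since $G'\in\Pross_\Gamma^{prod}$, the identity component $(G')^\circ$ is a product of almost-simple groups, so Proposition~\ref{p:rigidity} forces $\sigma=\id$ in $\Pross$, giving $f_1=f_2$ in $\Pross_\Gamma$.

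For part~(ii), given an isomorphism $\phi:\BK^+_G\iso\BK^+_{G'}$ in $\Lambda^+_\Gamma$, its evaluations at $U\in\Open_\Gamma$ are lambda-semiring isomorphisms $\phi_U:K^+(G_U)\iso K^+(G'_U)$ with $G_U:=G\times_\Gamma U$. Since $G^\circ=\varprojlim_U G_U$, the colimit of the $\phi_U$ is a lambda-semiring isomorphism $K^+(G^\circ)\iso K^+((G')^\circ)$, which by Proposition~\ref{p:TannakaKLV} comes from a group isomorphism $f^\circ$ unique up to $(G')^\circ$-conjugation. Compatibility of $\phi$ with the morphisms of $\Open_\Gamma$ arising from conjugation by lifts of $\gamma\in\Gamma$ forces $f^\circ$ to be $\Gamma$-equivariant modulo inner automorphisms, which identifies the $\Gamma$-equivariant Dynkin diagrams and centers $Z,Z'$ arising in the $\Pross^{\prime\prime}$-descriptions. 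It remains to match the extension classes $\nu,\nu'\in H^2(\Gamma,Z)$: these are detected by the Grothendieck lambda-semirings only through the $U$-invariant characters $\chi\in\Hom(Z,E^\times)^U$ that lift to characters of $G_U$, with non-lifting measured by $\chi_*(\nu|_U)\in H^2(U,E^\times)$. The $\PROSS_\Gamma$ hypothesis makes all these obstructions vanish, so using Proposition~\ref{p:coarse epi} to pass to finite-type quotients and the $Z$-conjugation equivalence relation built into $\Pross^{\prime\prime}$ to absorb the residual $H^1(\Gamma,Z)$-ambiguity (cf.\ \S\ref{sss:rem H^2}), one assembles $f^\circ$ into the required $\tilde f:G\iso G'$ in $\Pross_\Gamma$ inducing $\phi$. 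Part~(iii) is then immediate: the Dynkin diagram of $G^\circ$ (and hence the condition that $G^\circ$ be a product of almost-simple groups) is reconstructed from $\varinjlim_U K^+(G_U)$ by Proposition~\ref{p:TannakaKLV}, and the $\PROSS_\Gamma$-condition is, as just explained, equivalent to the vanishing of obstructions intrinsic to $\BK^+_G$.

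The main obstacle is the final step of (ii), verifying that the lambda-semiring data is sharp enough to recover the extension up to $\Pross$-equivalence. The plain Grothendieck semiring is already insufficient by Example~\ref{ex:why Adams} (Jantzen-type semiring automorphisms do not come from group automorphisms), so the lambda-structure is genuinely used; and even with the lambda-operations one only sees the extension through central characters, so the $\PROSS_\Gamma$ hypothesis is needed precisely to ensure that the residual $H^2$-ambiguity invisible to characters does not prevent the existence of $\tilde f$.
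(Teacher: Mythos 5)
Your overall architecture (Proposition~\ref{p:TannakaKLV} for the connected part, Proposition~\ref{p:rigidity} for faithfulness, reduction to finite type via Proposition~\ref{p:coarse epi}, the $\Pross^{\prime\prime}$-description for the extension data) matches the paper's, but the argument has genuine gaps at exactly the points where the real work lies. Already in (i), the identification of $K(G')\otimes E$ with the algebra of $(G')^\circ$-conjugation-invariant functions is false for disconnected $G'$: characters are invariant under the \emph{full} conjugation action, so $K(G')\otimes E$ is the function algebra of $[G']/\pi_0(G')$, not of $[G']$. Recovering the action on $[G']$ from the data really does require the whole system $U\mapsto K^+(G'\times_\Gamma U)$ together with the Adams operations; this is the content of the reconstruction lemma in \S\ref{ss:reconstructing [G]} (Corollary~\ref{c:in terms of [G]}), which your proof of (i) silently presupposes while justifying it with an incorrect identification.

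The more serious gap is in (ii). After producing $f^\circ:G^\circ\iso (G')^\circ$ you must show two things that your sketch does not address. First, that the given isomorphism of $\Lambda^+_\Gamma$-data, viewed as a $(\Gamma\times\BN)$-equivariant isomorphism $[G]\iso[G']$ preserving irreducible characters, is the identity at the adjoint level once it is the identity on $[G^\circ]$: this is Proposition~\ref{p:adjoint case}, whose proof is a delicate highest-weight computation (the PRV-type isomorphism, the characters $\beta_\omega$, the exceptional $A_{2n}$-orbit) in which $\BN$-equivariance is used to kill precisely the Jantzen-type automorphism of Example~\ref{ex:why Adams}; saying ``the lambda-structure is genuinely used'' is not a substitute for this argument. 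Second, your mechanism for matching the extensions is not a proof: knowing that $\chi_*(\nu|_U)$ and $\chi_*(\nu'|_U)$ all vanish (the $\PROSS_\Gamma$ hypothesis) says nothing about whether $\nu=\nu'$ in $H^2(\Gamma,Z)$ --- if the semiring data saw the extension only through these pushforwards, the functor could not be full \emph{and} injective on such extensions at once. What the paper actually does is reconstruct from $f$ a bijection $\varphi:[\tilde\Gamma_1]\iso[\tilde\Gamma_2]$ of $Z$-conjugation quotients of the extensions (Lemmas~\ref{l:[G_i]_gamma}, \ref{l:no kernel}, \ref{l:phi_gamma}), verify compatibility with the structures (a)--(d) --- it is only in checking multiplicativity (d) that the cohomological hypothesis enters, via extending central characters to $\tilde\Gamma_2$ (Proposition~\ref{p:compatibility}) --- and then prove Proposition~\ref{p:coarse extensions}, that such a coarse bijection lifts to an isomorphism of extensions; that last step rests on the fact that automorphism groups of connected Dynkin diagrams are cyclic or $S_3$ (Lemma~\ref{l:cyclic_Sylow}, the Zassenhaus-type analysis, and the separate $D_4$/$S_3$ case). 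None of this is present in your proposal, so the existence half of (ii) --- and with it the $\PROSS_\Gamma$-half of (iii), which you derive from it --- remains unproved.
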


The rest of \S\ref{s:KLV} is devoted to the proof of the theorem. 
Statement (i) is proved at the end of \S\ref{ss:reconstructing [G]}. 
Statement (ii) is proved in \S\ref{ss:gen case}.
Statement (iii) is proved in \S\ref{ss:Proof of iii}.

\begin{rem}
The definition of $\PROSS_\Gamma$ is somewhat technical. However, note that $\PROSS_\Gamma$ contains all 
$G\in
\Pross_\Gamma$ such that  $G^\circ$ is adjoint. Also note that if the strict cohomological dimension of $\Gamma$ is $\le 2$ then 
$\PROSS_\Gamma =\Pross_\Gamma^{prod}$. Indeed, for any open subgroup $U\subset\Gamma$ the group $H^2 (U,E^\times )$ is   isomorphic 
(non-canonically) to $H^2 (U,\BQ/\BZ )=H^3 (U,\BZ )$.
\end{rem}

\subsection{Proof of Theorem~\ref{t:variant of KLV}(iii)}   \label{ss:Proof of iii}
Let $G\in\Pross_\Gamma\,$. We have to show that knowing the corresponding functor~\eqref{e:BK} is enough to determine whether 
$G$ belongs to $\Pross_\Gamma^{prod}$ (resp.~$\PROSS_\Gamma$). For $\Pross_\Gamma^{prod}$ this immediately follows from 
Proposition~\ref{p:TannakaKLV}. For $\PROSS_\Gamma$ one also uses the equivalence (i)$\Leftrightarrow$(iii) in the following lemma:

\begin{lem}
Let $G\in\Pross_\Gamma\,$. Let $Z$ and $T$ be the center and the maximal torus of $G^\circ$. Let $\nu\in H^2 (\Gamma ,Z )$ be as in 
\S\ref{sss:class in H^2}. Let $U\subset\Gamma$ be an open subgroup. Then the following are equivalent:

(i) for every $f\in \Hom (Z,E^\times )^U$ the element $f_*(\nu )\in H^2 (U,E^\times )$ equals $0$;

(ii) for every dominant weight $\omega\in \Hom (T,\BG_m )^U$ one has $f_*(\nu )=0$, where $f\in \Hom (Z,E^\times )$  is the restriction of $\omega$;

(iii) every irreducible representation of $G^\circ$ whose isomorphism class is $U$-invariant can be extended to a representation of $G\times_\Gamma U$ in the same vector space.
\end{lem}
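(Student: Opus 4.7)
I plan to prove (i) $\Leftrightarrow$ (ii) and (i) $\Leftrightarrow$ (iii) separately, with Lemma~\ref{l:minuscule} doing the main work in both cases.

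For (i) $\Leftrightarrow$ (ii), the direction (i) $\Rightarrow$ (ii) is tautological, since restriction to $Z$ sends a $U$-invariant dominant weight to a $U$-invariant character of $Z$. For (ii) $\Rightarrow$ (i), given $f \in \Hom(Z,E^\times)^U$, observe that $U$ acts on the Dynkin diagram $\Delta$ through $\Gamma \to \Aut\Delta$ and that its image lies in $\Stab_f$ since $U$ fixes $f$. Applying Lemma~\ref{l:minuscule} component-wise on $\Delta$ (legitimate because $f$ factors through some finite-type quotient of $G^\circ$) produces a dominant $\Stab_f$-invariant, hence $U$-invariant, weight $\omega \in \Hom(T,\BG_m)^U$ with $\omega|_Z = f$, to which (ii) applies.

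The equivalence (i) $\Leftrightarrow$ (iii) rests on identifying the obstruction to extending a $U$-invariant irreducible representation $V$ of $G^\circ$ to $G \times_\Gamma U$ with $\chi_{V,*}(\nu|_U) \in H^2(U,E^\times)$, where $\chi_V:Z\to E^\times$ is the central character of $V$ (automatically $U$-invariant). Granting this, (i) $\Rightarrow$ (iii) is immediate; and for (iii) $\Rightarrow$ (i), given $f \in \Hom(Z,E^\times)^U$ the minuscule-weight construction above yields an irreducible $G^\circ$-module $V$ of highest weight some $\omega$ extending $f$, so that $V$ is $U$-invariant with $\chi_V = f$, and (iii) applied to $V$ gives $f_*(\nu|_U) = 0$.

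For the obstruction identification, I would unwind the description of $\nu$ from \S\ref{sss:class in H^2}: after fixing a pinning of $G$, the pinning-preserving subgroup $\tilde\Gamma \subset G(E)$ is an extension $0 \to Z \to \tilde\Gamma \to \Gamma \to 0$ of class $\nu$, and pulling back to $U$ yields $\tilde U \subset G(E)$ satisfying $\tilde U \cdot G^\circ = G \times_\Gamma U$ and $\tilde U \cap G^\circ = Z$. Extending $\rho_V$ to $G \times_\Gamma U$ is equivalent to providing a homomorphism $\tilde U \to GL(V)$ restricting to $\chi_V$ on $Z$ and intertwining $\rho_V$ under conjugation. Irreducibility plus $U$-invariance of $V$ furnishes via Schur a canonical projective representation $p:U \to PGL(V)$, and the sought lift exists iff the pullback extension $p^*\bigl(1 \to E^\times \to GL(V) \to PGL(V) \to 1\bigr)$ splits; by construction this pullback is canonically identified with the pushforward of $\tilde U$ along $\chi_V$, whose class is $\chi_{V,*}(\nu|_U)$. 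This identification is the main obstacle; the remainder is formal, the only minor technicality being the routine reduction to finite-type quotients of $G^\circ$ when invoking highest-weight theory.
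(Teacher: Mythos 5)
Your reduction of (ii)$\Rightarrow$(i) to Lemma~\ref{l:minuscule} (applied component-wise after factoring $f$ through a finite-type quotient) is exactly the paper's argument, and the formula you aim for --- a $U$-invariant irreducible $V$ extends to $G\times_\Gamma U$ if and only if $\chi_{V,*}(\nu|_U)=0$ --- is indeed the content behind the implications the paper declares ``clear''. But the argument you sketch for that identification, which you yourself single out as the main obstacle, fails as stated. The canonical projective representation $p:U\to PGL(V)$ furnished by Schur (conjugation by any $\tilde u\in\tilde U$ acts on $G^\circ$ through the pinned automorphism $\sigma_u$, which depends only on $u$) admits a canonical \emph{linear} lift: since the highest weight $\omega$ of $V$ is $\sigma_u$-invariant, the intertwiner between $\rho_V$ and $\rho_V\circ\sigma_u$ preserves the highest weight line and can be normalized to act there as the identity, and these normalized intertwiners $T_u$ multiply on the nose (the same device as in the proof of Lemma~\ref{l:keylemma}). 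Hence the pullback extension $p^*\bigl(1\to E^\times\to GL(V)\to PGL(V)\to 1\bigr)$ is \emph{always} split, so it cannot be canonically identified with the pushforward $\chi_{V,*}\tilde U$, whose class is $\chi_{V,*}(\nu|_U)$ and need not vanish; taken literally, your identification would make (iii) hold unconditionally.

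The repair is short and uses the canonical lift instead of fighting it. Writing $G\times_\Gamma U=(\tilde U\ltimes G^\circ)/\Ker(Z\times Z\to Z)$ as in \eqref{e:inverse construction}, an extension of $\rho_V$ is the same as a homomorphism $R:\tilde U\to GL(V)$ with $R(\tilde u)\rho_V(g)R(\tilde u)^{-1}=\rho_V(\sigma_u(g))$ and $R|_Z=\chi_V$. By Schur, $R(\tilde u)=c(\tilde u)\,T_u$ for a scalar function $c$ on $\tilde U$; multiplicativity of $R$ is equivalent to $c\in\Hom(\tilde U,E^\times)$, and since $T_u$ depends only on $u$ (so $T|_Z=1$), the condition $R|_Z=\chi_V$ is equivalent to $c|_Z=\chi_V$. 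Thus $V$ extends if and only if $\chi_V$ extends to a character of $\tilde U$, i.e.\ if and only if $\chi_{V,*}(\nu|_U)=0$. With this replacement your deductions (i)$\Rightarrow$(iii) and (iii)$\Rightarrow$(i) (the latter again via the minuscule-weight construction, which makes $\chi_V=f$) go through, and the proof then coincides in substance with the paper's.
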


\begin{proof}
To prove that (ii)$\Rightarrow$(i), it suffices to note that every $f\in \Hom (Z,E^\times )^U$ is a restriction of some dominant $\omega\in \Hom (T,\BG_m )^U$ (this is a pro-version of Lemma~\ref{l:minuscule}). The other implications are clear.
\end{proof}

\subsection{Reconstructing $[G]$ from $\BK^+_G\,$}   \label{ss:reconstructing [G]}
As mentioned in \S\ref{sss:group schemes pro}, a pro-finite group can be considered as a group scheme over $E$. Consider $\Gamma$ as a scheme over $E$ equipped with an action of $\Gamma\times\BN$, where $\BN$ is the multiplicative monoid of positive integers ($\Gamma$ acts by conjugation and $n\in\BN$ acts as raising to the power of $n$). Let $\Aff_\Gamma$ denote the category of affine schemes $Z$ over~$\Gamma$ equipped with a lift of the above action of $\Gamma\times\BN$ to an action on $Z$.

\begin{lem}
The functor 
$$\Pross_\Gamma\to\Aff_\Gamma, \quad G\mapsto [G]$$
admits a factorization
$\Pross_\Gamma\to\Lambda_\Gamma\to\Aff_\Gamma \, $,
where the first functor is the composition of the functor \eqref{e:BK} and the natural functor $\Lambda^+_\Gamma\to\Lambda_\Gamma\,$.
\end{lem}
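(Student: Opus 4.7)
The plan is to exhibit the factorization by directly constructing, from any $F\in\Lambda_\Gamma$, an affine $\Gamma$-scheme with a compatible $\Gamma\times\BN$-action, specializing to $[G]$ when $F=\BK_G$.

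First I would extract the base ring $\cO(\Gamma)$: the structural morphism $\tilde\BK_\Gamma\to\tilde F$ (part of the $\Lambda_\Gamma$-datum) supplies $\tilde\BK_\Gamma(\Gamma)\otimes_\BZ E=\cO(\Gamma)$, the ring of locally constant $E$-valued functions on $\Gamma$ viewed as a pro-algebraic group over $E$. For each open subgroup $U\subset\Gamma$, the specialization $(\tilde F(U)\otimes_\BZ E)|_{F=\BK_G}=K(G_U)\otimes E=\cO([G_U])^U$ identifies $\Spec(\tilde F(U)\otimes_\BZ E)$ with $[G_U]/\!/U$, the further GIT quotient of $[G_U]$ by the conjugation action of $U=\pi_0(G_U)$.

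To recover $\cO([G_U])$ itself (not just its $U$-invariant part) from $F$, I would use the identity
\[
\cO([G_U])\;=\;\varinjlim_{V\subset U\text{ open normal in }\Gamma}\cO([G_U])^V,
\]
valid because the pro-finite group $U$ acts continuously on $\cO([G_U])$ and $\bigcap V=\{e\}$. Each sub-ring $\cO([G_U])^V$ decomposes as $\bigoplus_{uV\in U/V}\cO([G_{uV}])^V$, where the summands are twisted-character rings in the sense of Mohrdieck--Springer (Appendix~\ref{s:twisted conjugacy}). For the identity coset this is $K(G_V)\otimes E=\tilde F(V)\otimes_\BZ E$. For a non-identity coset $uV$ one extracts $\cO([G_{uV}])^V$ from $\tilde F(U')\otimes_\BZ E$ with $U'=\langle u\rangle V$, by picking out the $uV$-sector and the appropriate isotypic component for the cyclic $U'/V$-action (realized via the self-morphisms $U'\to U'$ in $\Open_\Gamma$). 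Gluing the $[G_U]$ as $U$ varies, via the idempotents $e_U\in\cO(\Gamma)$, then produces $[G]$. The $\Gamma$-conjugation action on $[G]$ comes from the self-morphisms $U\to U$ in $\Open_\Gamma$ (parametrized by $N_\Gamma(U)/U$); the $\BN$-action by $n$-th power maps comes from the Adams operations $\psi^n$ built into the lambda-structure on each $\tilde F(U)$.

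The main obstacle is functorially extracting the non-identity Mohrdieck--Springer summands $\cO([G_{uV}])^V$ from $\tilde F$ and verifying that the resulting construction is independent of the auxiliary choice of $U'$. The use of the full lambda-ring structure (not merely the ring structure) is essential here: the Adams operations control the isotypic decompositions and simultaneously encode the $\BN$-action by power maps. Functoriality in $F$ then follows step-by-step from the functoriality of each ingredient.
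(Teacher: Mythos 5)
Your overall strategy coincides with the paper's: reconstruct $[G]$ as the inverse limit, over open normal $V\subset\Gamma$, of its quotients by $V$-conjugation, split each such quotient into the pieces lying over the cosets of $V$, and recover the piece over a coset from $F(U')\otimes E$, where $U'$ is the preimage of the cyclic subgroup generated by that coset. The gap is in the mechanism you propose for the non-identity cosets. For $F=\BK^+_{G}$ one has $F(U')\otimes E=K(G\times_\Gamma U')\otimes E$, which is precisely the ring of functions on $G\times_\Gamma U'$ invariant under conjugation by the \emph{whole} group $G\times_\Gamma U'$, i.e.\ under $G^\circ$ and all of $U'$. It carries no finer pieces: there is no residual $U'/V$-action on it whose isotypic components you could "pick out", and the self-morphisms of $U'$ in $\Open_\Gamma$ give an action of $N(U')/U'$ (see \S\ref{sss:Open}), not of $U'/V$. (The group $U'/V$ does act via $\Open_\Gamma$, but on $F(V)$; and $F(V)$ only sees the identity-coset part of $[G]$, so it cannot supply functions on the sector over $uV$ either.) As written, your recipe therefore yields only the $U'$-conjugation-invariant functions on the $uV$-sector, and the whole issue is whether this is strictly smaller than the ring $\cO([G]_{uV})^{V}$ you need — that is exactly the point left unaddressed.

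What closes the gap, and what the paper's proof uses, is the elementary fact recorded in \S\ref{ss:[G]}: any $\gamma\in\pi_0(G)$ acts trivially on its own fiber of $[G]\to\pi_0(G)$. Consequently, on the part of $[G]$ lying over the coset $uV$, invariance under $V$-conjugation already implies invariance under $U'=\langle u\rangle V$, so the $uV$-sector of $\Spec F(U')\otimes E$ is \emph{all} of $[G]_{uV}/V$ — no isotypic-component extraction is needed or possible; this sector is the paper's $Y_{F,V,\alpha}$. Three smaller corrections: $\tilde\BK^+_\Gamma(\Gamma)\otimes E$ is the ring of \emph{conjugation-invariant} locally constant functions on $\Gamma$ (i.e.\ $\cO(\underline{\Gamma})$), not $\cO(\Gamma)$ — the structure map to $\Gamma$ comes instead from the tautological maps $Y_{F,V}\to\Gamma/V$ in the limit over $V$; the gluing of the various $[G_U]$ "via idempotents of $\cO(\Gamma)$" is unnecessary (idempotents of $\cO(\Gamma)$ correspond to clopen subsets, not subgroups, and one simply works with $U_\alpha$ ranging over preimages of cyclic subgroups, then takes the limit over $V$); and the lambda-structure enters only through the Adams operations furnishing the $\BN$-action by power maps, not through any isotypic decomposition.
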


Before proving the lemma, let us recall that for any lambda-ring $A$ one has the Adams operations $\psi^n :A\to A$ defined as follows:
$\psi^n(a)$ is  the coefficient of $t^n$ in the formal series 
\[
-t \frac{d}{dt}\log (1+ \sum_{n> 0}\lambda^n(a) (-t)^n )\in A[[t]].
\]
Recall that each $\psi^n$ is a ring endomorphism, and one has $\psi^n\circ\psi^m=\psi^{mn}$, $\psi^1=\id_A\,$. If $H$ is a group, $\rho$ is an element of the Grothenideck ring of the category of finite-dimensional representations of $H$, and $\chi_{\rho} :H\to E$ is its character, then $\chi_{\psi^n({\rho} )} (h)=\chi_{\rho}(h^n)$. 
 
\begin{proof}[Proof of the lemma]
For every $U\in\Open_\Gamma\,$, let $\underline{U}$ denote the spectrum of the algebra of conjugation-invariant locally constant functions on $U$; equivalently, $\underline{U}$ is the quotient of $U$ by the conjugation action of $U$.

Now let $F\in\Lambda_\Gamma\,$. The corresponding object $Y_F\in\Aff_\Gamma$ is defined as follows.
For every $U\in\Open_\Gamma$ set $Z_{F,U}:=\Spec F(U)\otimes E$; this is an affine scheme over $\underline{U}$ equipped with an $\BN$-action (the latter is induced by the Adams operations on $F(U)\otimes E$). For every normal open subgroup $V\subset\Gamma$ and $\alpha\in\Gamma/V$ let $U_\alpha\subset\Gamma$ denote the preimage of the cyclic subgroup 
$\langle\alpha\rangle\subset\Gamma/V$ and let $Y_{V,\alpha}$ denote the preimage of $\alpha$ with respect to the map
$Z_{F,U_\alpha}\to\underline{U_\alpha}\to\langle\alpha\rangle$. Let $Y_{F,V}$ denote the disjoint union of $Y_{F,V,\alpha}\,$, $\alpha\in\Gamma/V$. If $V'\subset V$ then for each $\alpha'\in\Gamma/V'$ such that $\alpha'\mapsto\alpha$ we get a morphism $Y_{F,V',\alpha'}\to Y_{F,V,\alpha}\,$. So we get a morphism $Y_{F,V'}\to Y_{F,V}\,$. Finally, define $Y_F$ to be the projective limit of $Y_{F,V}\,$.

If $F$ corresponds to $G\in\Pross_\Gamma$ then $Y_{F,V}$ identifies with the quotient of $[G]$ by the conjugation action of~$V$ (indeed, 
if $f$ is a function on $G$ invariant under $V$-conjugation then the restriction of $f$ to $U_\alpha$ is invariant under $U_\alpha$-conjugation). So $Y_F$ identifies with $[G]$.
\end{proof}

\begin{cor}  \label{c:in terms of [G]}
Let $G_1,G_2\in\Pross_\Gamma\,$. Then an isomorphism $\BK^+_{G_1}\iso\BK^+_{G_2}$ is the same as a $(\Gamma\times\BN)$-equivariant isomorphism $[G_1]\iso [G_2]$ of schemes over $\Gamma$ with the following property: for any open subgroup $U\subset\Gamma$ a function on 
$[G_1]\times_\Gamma U$ is an irreducible character of $G_1\times_\Gamma U$ if and only if the corresponding function on 
$[G_2]\times_\Gamma U$ is an irreducible character of $G_2\times_\Gamma U$.
\qed
\end{cor}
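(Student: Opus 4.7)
My plan is to deduce the corollary directly from the preceding lemma, using one extra observation: in a semisimple category the classes of irreducible objects are characterized intrinsically in the Grothendieck semiring, as the nonzero elements that cannot be written as a sum of two nonzero elements. Since $\Rep(G_i\times_\Gamma U)$ is semisimple, this characterization applies, and any semiring isomorphism $K^+(G_1\times_\Gamma U)\iso K^+(G_2\times_\Gamma U)$ automatically matches irreducible characters with irreducible characters.

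For the direction ($\Rightarrow$), I would forget positive cones to view an isomorphism $\BK^+_{G_1}\iso\BK^+_{G_2}$ as one in $\Lambda_\Gamma$, and invoke the preceding lemma to produce a $(\Gamma\times\BN)$-equivariant isomorphism $[G_1]\iso[G_2]$ of schemes over $\Gamma$. The required irreducibility-preservation property then falls out of the observation above. For the direction ($\Leftarrow$), given such an equivariant $\varphi\colon[G_1]\iso[G_2]$, I would reconstruct the semiring isomorphisms $U$-by-$U$. For each $U\in\Open_\Gamma$, pullback along the restriction of $\varphi$ over $U$ gives a ring isomorphism $\varphi_U^*\colon\cO([G_2]\times_\Gamma U)\iso\cO([G_1]\times_\Gamma U)$. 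Irreducible characters span the respective Grothendieck rings over $\BZ$, and by hypothesis $\varphi_U^*$ interchanges these two sets of irreducible characters bijectively; thus $\varphi_U^*$ restricts to a ring isomorphism $K(G_2\times_\Gamma U)\iso K(G_1\times_\Gamma U)$ carrying irreducibles to irreducibles, and hence carrying $K^+$ bijectively onto $K^+$, i.e.\ a semiring isomorphism.

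The three remaining structural compatibilities I would then dispatch in turn. Naturality in $U$ comes from $\Gamma$-equivariance of $\varphi$, which intertwines the scheme-level avatars of the conjugation morphisms \eqref{e:induced by conj}. Compatibility with the $\lambda$-operations reduces, in characteristic $0$, via Newton's identities to compatibility with the Adams operations, which is precisely the $\BN$-equivariance of $\varphi$. Compatibility with the structure maps from $\tilde\BK^+_\Gamma$ is automatic because $\varphi$ is a morphism over $\Gamma$.

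I do not foresee a serious obstacle, as the hard work has already been done in the preceding lemma. The only point worth isolating -- and so the candidate ``main obstacle'' -- is the passage from the lambda-ring picture back to the lambda-semiring picture: the lemma recovers the scheme $[G]$ together with its $\BN$-action from the lambda-ring structure alone, but the positive cone $K^+\subset K$ is additional data. The corollary is precisely the statement that this extra data is encoded by singling out, inside $\cO([G_i]\times_\Gamma U)$, the distinguished functions that are irreducible characters, which is the content of the intrinsic characterization used above.
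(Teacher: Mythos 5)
Your proposal is correct and is essentially the argument the paper intends: the corollary is stated with \qed as an immediate consequence of the preceding lemma, namely that the lambda-ring data recover $[G]$ with its $(\Gamma\times\BN)$-action over $\Gamma$, combined with the observations that irreducibles are the additively indecomposable nonzero elements of the semiring $K^+$ and that the irreducible characters form a $\BZ$-basis of $K\subset\cO([G]\times_\Gamma U)$, so the positive cone is exactly the extra data recorded by the irreducibility-matching condition. Your reductions of the remaining compatibilities (naturality in $U$ from $\Gamma$-equivariance, lambda-operations from Adams operations via Newton's identities and torsion-freeness of $K$, and the structure maps from $\tilde\BK^+_\Gamma$ from $\varphi$ being over $\Gamma$) are all sound.
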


\begin{proof}[Proof of Theorem~\ref{t:variant of KLV}(i)]
Let $G\in\Pross_\Gamma\,$. Suppose that a $\Pross_\Gamma$-automorphism of $G$ induces the identity on $\BK^+_G\,$. Then it 
induces the identity on $[G]$ by Corollary~\ref{c:in terms of [G]}. It remains to apply Proposition~\ref{p:rigidity}.
\end{proof}

The rest of \S\ref{s:KLV} is devoted to the proof of Theorem~\ref{t:variant of KLV}(ii).

\subsection{Reduction of Theorem~\ref{t:variant of KLV}(ii) to the finite type case}   \label{ss:easy reduction}
For any open normal subgroup $U\subset\Gamma$ we have a functor
\begin{equation}   \label{e:2BK}
\Pross_{\Gamma /U}\to \Lambda^+_{\Gamma /U} \, , \quad G\mapsto \BK^+_G
\end{equation}
similar to \eqref{e:BK}. Let $\PROSS_{\Gamma /U}^{fin}\subset\Pross_\Gamma$ be the full subcategory of those objects of 
$\PROSS_{\Gamma /U}$ which are schemes of finite type.

\begin{prop}   \label{p:easy reduction}
Suppose that the restriction of the functor \eqref{e:2BK} to $\PROSS_{\Gamma /U}^{fin}$ is fully faithful for every open normal subgroup 
$U\subset\Gamma$. Then the restriction of the functor \eqref{e:BK} to $\PROSS_\Gamma$ is fully faithful.
\end{prop}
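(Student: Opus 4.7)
Faithfulness on $\PROSS_\Gamma$ follows immediately from Theorem~\ref{t:variant of KLV}(i), already proved at the end of \S\ref{ss:reconstructing [G]} via Corollary~\ref{c:in terms of [G]} and Proposition~\ref{p:rigidity}; so the task reduces to proving fullness. Given $G_1,G_2\in\PROSS_\Gamma$ and a $\Lambda^+_\Gamma$-isomorphism $\phi\colon\BK^+_{G_1}\iso\BK^+_{G_2}$, the plan is to construct a $\Pross_\Gamma$-isomorphism $f\colon G_1\iso G_2$ inducing $\phi$ by reducing to the finite-type case via cofiltered limits and then invoking Proposition~\ref{p:coarse hom}.

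First I would write each $G_j$ as a cofiltered projective limit $G_j=\varprojlim_{i\in I}G_{j,i}$, with $G_{j,i}\in\PROSS_{\Gamma/U_i}^{fin}$ for open normal subgroups $U_i\subset\Gamma$, using a common directed index set $I$ for both $j=1,2$. The common indexing uses Lemma~\ref{l:normal subgr}(i), which identifies subsemirings of $K^+(G_j\times_\Gamma V)$ with normal quotients, combined with the fact that $\phi$ lives over $\tilde\BK^+_\Gamma$ (thus preserving the $\pi_0$-data). Cofinality of the subsystem of $\PROSS^{fin}$-objects inside the system of all finite-type quotients of $G_j\in\PROSS_\Gamma$ should follow from compatibility of the defining $H^2$-vanishing condition under the central pushforward maps $Z_j\to Z_{j,i}$ (any $V$-invariant character of the quotient center pulls back to a $\tilde V$-invariant character of $Z_j$, where $\tilde V\subset\Gamma$ is the preimage of $V$). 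With such a system chosen, $\phi$ restricts to a compatible family of $\Lambda^+_{\Gamma/U_i}$-isomorphisms $\phi_i\colon\BK^+_{G_{1,i}}\iso\BK^+_{G_{2,i}}$.

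By the hypothesis of full faithfulness at finite level, each $\phi_i$ is induced by a $\Pross_{\Gamma/U_i}$-isomorphism $f_i\colon G_{1,i}\iso G_{2,i}$, and the faithfulness portion of the hypothesis makes $f_i$ unique. This uniqueness forces compatibility of $\{f_i\}$ with transition maps: for $i'\ge i$, the composition $G_{1,i'}\xrightarrow{f_{i'}}G_{2,i'}\twoheadrightarrow G_{2,i}$ descends to an isomorphism $G_{1,i}\iso G_{2,i}$ inducing $\phi_i$, which must coincide with $f_i$ by uniqueness. Hence $\{f_i\}$ defines an element of $\varprojlim_i\Isom_{\coarse}(G_{1,i},G_{2,i})$, and Proposition~\ref{p:coarse hom} (applied to the $\Isom$-subset of $\Hom_{\coarse}$, valid by an argument in the spirit of Proposition~\ref{p:coarse epi}) extracts the desired isomorphism $f\colon G_1\iso G_2$ in $\Pross_\Gamma$, which induces $\phi$ by construction.

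The main obstacle I anticipate is handling the \emph{coarseness} of $\Pross$: each $f_i$ is only defined up to $G_{2,i}^\circ$-conjugation, so a priori the family $\{f_i\}$ need not be coherent. This is exactly what the hypothesis $G_j\in\PROSS_\Gamma$ (rather than the weaker $G_j\in\Pross_\Gamma^{prod}$) resolves via the uniqueness of lifts at each finite level, which in turn traces back to Proposition~\ref{p:rigidity}. A secondary technical point to verify is that pro-finite-type quotients of an object of $\PROSS_\Gamma$ contain a cofinal family in $\PROSS_{\Gamma/U_i}^{fin}$, i.e., that the $\PROSS$-hypothesis propagates to a sufficient supply of finite-type quotients; this should be straightforward from the explicit description of objects of $\Pross''$ in \S\ref{sss:Pross''}.
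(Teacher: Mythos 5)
Your overall strategy is the paper's: use Lemma~\ref{l:normal subgr} to transport finite-type normal quotients along the given isomorphism of $\BK^+$'s, apply the finite-level hypothesis to get isomorphisms of the quotients that are unique and hence mutually compatible, and pass to the limit via Proposition~\ref{p:coarse epi} (your appeal to Proposition~\ref{p:coarse hom} is an acceptable variant). But one step is genuinely missing: to apply the hypothesis to a pair $(G_{1,i},G_{2,i})$ you must know that \emph{both} quotients lie in $\PROSS_{\Gamma/U_i}^{fin}$, since the hypothesis is full faithfulness of the functor restricted to that full subcategory and says nothing about a pair of which only one member is known to belong to it. Your ``common index set'' is produced by Lemma~\ref{l:normal subgr}(i), which only matches $\Norm(G_1)$ with $\Norm(G_2)$; it does not tell you that the subgroup of $G_2$ corresponding to an element of $\NORM(G_1)$ again gives a quotient in $\PROSS^{fin}$. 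That is exactly Theorem~\ref{t:variant of KLV}(iii) (membership in $\PROSS$ is detected by $\BK^+$), which is proved in \S\ref{ss:Proof of iii} before and independently of part (ii) and which the paper cites at precisely this point; your proposal never invokes it, and choosing cofinal $\PROSS^{fin}$-systems for $G_1$ and $G_2$ separately does not substitute for it, because the groupoids $\Lambda^+_{\Gamma/U}$ have no non-invertible morphisms with which to compare objects at different levels.

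The cofinality statement you defer as ``straightforward'' is also aimed at the wrong condition. What can fail for a general finite-type quotient of $G\in\PROSS_\Gamma$ is not primarily the $H^2$-condition but the requirement that the neutral connected component be a product of almost-simple groups: a quotient of a product of almost-simple groups need not again be such a product (e.g.\ $(SL_2\times SL_2)/\Delta$, with $\Delta$ the diagonal copy of $\mu_2$, is not). This is why the paper needs Lemma~\ref{l:cofinality}, whose proof chooses $\tilde H$ so that $\tilde H\cap G^\circ$ is a sub-product of the almost-simple factors of $G^\circ$. (Your sketch for the $H^2$-part is also not yet a proof: pulling a $V'$-invariant character of the quotient's center back to $Z$ only shows that the relevant class dies after inflation to the preimage of $V'$ in $\Gamma$, and inflation on $H^2$ need not be injective.) Both gaps can be repaired with material already in the paper, but they are where the actual content of the reduction lies.
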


For $G\in\Pross_\Gamma\,$, let $\Norm (G)$ denote the set of normal subgroups $H\subset G$ such that $G/H$ has finite type. Let  $\NORM (G)$ denote the set of all $H\in\Norm (G)$ such that $G/H\in\PROSS_{\Gamma /U}\,$, where $U:=\im (H\to\Gamma )$.
To prove the proposition, we need the following lemma.

\begin{lem}   \label{l:cofinality}
Let $G\in\PROSS_\Gamma$. Then for every $H\in\Norm (G)$ there exists $\tilde H\in\NORM (G)$ such that~$\tilde H\subset H$.
\end{lem}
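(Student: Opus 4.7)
Given $H\in\Norm(G)$, set $U:=\im(H\to\Gamma)$ and $K:=H\cap G^\circ$. The plan is to take $\tilde H$ to be the preimage in $H$ of a sufficiently small open normal subgroup $\tilde U\subset U$ of $\Gamma$. For any such choice, $\tilde H$ is normal in $G$ with $\tilde H\cap G^\circ=K$ and $\im(\tilde H\to\Gamma)=\tilde U$; in particular $(G/\tilde H)^\circ=G^\circ/K$ is still a product of almost-simple groups of finite type, so $\tilde H\in\Norm(G)$ and $Z':=Z((G/\tilde H)^\circ)=Z/(Z\cap K)$. What remains is to verify the $\PROSS$ condition for the cocycle class $\nu'\in H^2(\Gamma/\tilde U,Z')$ associated to $G/\tilde H$.

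Comparing the extensions $0\to Z\to\tilde\Gamma\to\Gamma\to 0$ and $0\to Z'\to\tilde\Gamma'\to\Gamma/\tilde U\to 0$ (where $\tilde\Gamma'=\tilde\Gamma/M$ with $M:=\tilde\Gamma\cap\tilde H$) yields the identity $\mathrm{infl}_{\Gamma\twoheadrightarrow\Gamma/\tilde U}(\nu')=(Z\twoheadrightarrow Z')_*(\nu)$ in $H^2(\Gamma,Z')$. Given an open $V\subset\Gamma/\tilde U$ with preimage $\tilde V\subset\Gamma$ and a $V$-invariant character $f\colon Z'\to E^\times$, set $f':=f\circ(Z\to Z')\colon Z\to E^\times$, which is $\tilde V$-invariant. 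The hypothesis $G\in\PROSS_\Gamma$ gives $f'_*(\nu|_{\tilde V})=0$ in $H^2(\tilde V,E^\times)$, and the Hochschild--Serre five-term sequence for $1\to\tilde U\to\tilde V\to V\to 1$ then places $f_*(\nu'|_V)$ in the image of the transgression $H^1(\tilde U,E^\times)^V\to H^2(V,E^\times)$.

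The main obstacle is to promote this containment to the assertion $f_*(\nu'|_V)=0$. For this I would use the $\PROSS_\Gamma$ condition more directly: it provides a continuous character $\chi\colon\tilde\Gamma|_{\tilde V}\to E^\times$ with $\chi|_Z=f'$, and triviality of $\chi|_{M\cap Z}$ is automatic from $f'|_{K\cap Z}=1$. Hence $f_*(\nu'|_V)$ equals the transgression of the induced character $\chi|_M$ of $M/(M\cap Z)$, which projects isomorphically onto $\tilde U$. The class of $\chi|_M$ in $H^1(\tilde U,E^\times)^V/\mathrm{image}(H^1(\tilde V,E^\times))$ is independent of the choice of $\chi$ (since different lifts differ by a character of $\tilde V$ trivial on $Z$), and equals $f_*(\nu'|_V)$ up to the transgression isomorphism. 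Showing this class vanishes amounts to showing that $\chi|_M$ extends to a continuous character of $\tilde V$; equivalently, the extension $\tilde V\to V$ must behave compatibly with the finitely many characters that arise.

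The crux of the proof, and where I expect the main difficulty to lie, is choosing $\tilde U$ small enough to kill this finite obstruction simultaneously for all pairs $(V,f)$. Since $G/\tilde H$ is of finite type, $Z'$ is a finite abelian group and only finitely many pairs $(V,f)$ need be considered; moreover, each obstruction class lies in the torsion group $H^2(V,E^\times)\cong\Hom(H_2(V,\BZ),\mu_\infty(E))$, which can be annihilated by passing to a suitable finite-index subcover. I therefore expect the proof to conclude by a Schur-multiplier-type computation showing that for every fixed $(V,f)$ the relevant transgression class is killed upon replacing $\tilde U$ by a sufficiently small open normal subgroup of $\Gamma$, and then combining these choices using finiteness.
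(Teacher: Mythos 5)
Your construction cannot work as stated, and the failure is at the first step, not in the cohomological bookkeeping you flag as the crux. You keep $\tilde H\cap G^\circ=H\cap G^\circ=K$, and you assert that $(G/\tilde H)^\circ=G^\circ/K$ ``is still a product of almost-simple groups''; this is false in general, and since $(G/\tilde H)^\circ$ does not depend on $\tilde U$ at all, no amount of shrinking $\tilde U$ can repair it. Concretely, take $G=\Gamma\times SL(2)\times SL(2)$ (split extension, so $G\in\PROSS_\Gamma$) and $H=U\times\mu_2^{\mathrm{diag}}$ with $U\subset\Gamma$ open normal and $\mu_2^{\mathrm{diag}}$ the diagonal central $\mu_2$; then $G^\circ/K=(SL(2)\times SL(2))/\mu_2^{\mathrm{diag}}$, which is \emph{not} a direct product of almost-simple groups (its two normal $SL(2)$'s meet in the center), so your $\tilde H$ never lies in $\NORM(G)$, whereas membership in $\PROSS_{\Gamma/\tilde U}$ requires first of all membership in $\Pross_{\Gamma/\tilde U}^{prod}$. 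This is exactly why the paper's proof shrinks \emph{inside} $G^\circ$ first: it chooses $H'\subset H\cap G^\circ$ normal in $G$ with $G^\circ/H'$ a \emph{finite product of almost-simple groups} (concretely, the product of those almost-simple factors of $G^\circ$ contained in $H\cap G^\circ$), and only then takes a small enough $\tilde H\in\Norm(G)$ with $\tilde H\cap G^\circ=H'$ (and $\tilde H\subset H$); in the example above $H'=\{1\}$ and $G/\tilde H\cong(\Gamma/\tilde U)\times SL(2)\times SL(2)$.

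On the $H^2$ condition your diagnosis is correct (the obstruction lies in the kernel of inflation $H^2(V,E^\times)\to H^2(\tilde V,E^\times)$, i.e.\ in the image of transgression), and killing it by shrinking is indeed what ``small enough'' accomplishes in the paper's argument; but two points need to be made precise. First, the reason a fixed obstruction dies after shrinking is not a general Schur-multiplier phenomenon: it is the hypothesis $G\in\PROSS_\Gamma$ applied to the \emph{open} subgroup $\tilde V\subset\Gamma$, which says the inflated class vanishes in continuous cohomology $H^2(\tilde V,E^\times)=\varinjlim_W H^2(\tilde V/W,E^\times)$, hence already in $H^2(\tilde V/W,E^\times)$ for some open normal $W=W(V,f)\subset\Gamma$. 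Second, your finiteness count is not justified as written: the characters $f$ of the finite group $Z'$ are finite in number, but the subgroups $V$ of $\Gamma/\tilde U$ proliferate as $\tilde U$ shrinks. The repair is to fix one finite level, observe that the obstruction attached to any subgroup at a deeper level is the inflation of the obstruction attached to its image at the fixed level, and then pass to the intersection of the finitely many $W(V,f)$; combined with the modification of $\tilde H\cap G^\circ$ described above, this closes the argument, and it is the content hidden in the paper's phrase ``take a small enough $\tilde H$''.
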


\begin{proof}
Find a subgroup $H'\subset H\cap G^\circ$ normal in $G$ such that $G^\circ/H$ is a finite product of almost simple groups. Then take a small enough $\tilde H\in\Norm (G)$ such that $\tilde H\cap G^\circ=H'$. 
\end{proof}

\begin{proof}[Proof of Proposition~\ref{p:easy reduction}]
Let $G_1,G_2\in\PROSS_\Gamma\,$. We have to show that each isomorphism $$\varphi :\BK^+_{G_1}\iso \BK^+_{G_2}$$ comes from a unique $\Pross_\Gamma$-isomorphism $G_1\iso G_2$. One constructs it as follows. By Lemma~\ref{l:normal subgr}, $\varphi$ defines an order-preserving bijection $\Norm (G_1)\iso \Norm (G_2)$. By Theorem~\ref{t:variant of KLV}(iii), the image of $\NORM (G_1)$ under this bijection equals $\NORM (G_2)$. If $H_1\in\NORM (G_1)$ and $H_2\in\NORM (G_2)$ correspond to each other then $\varphi$ induces  an isomorphism $\BK^+_{G_1/H_1}\iso \BK^+_{G_2/H_2}\,$. By the assumption of Proposition~\ref{p:easy reduction}, this is the same as a $\PROSS_{\Gamma/U}$-isomorphism 
\begin{equation}   \label{e:iso_H}
G_1/H_1\iso G_2/H_2\, ,
\end{equation}
where $U:=\im (H_1\to\Gamma )=\im (H_2\to\Gamma )$. By Lemma~\ref{l:cofinality} and Proposition~\ref{p:coarse epi}, the isomorphisms \eqref{e:iso_H} for all possible $H_1\in\NORM (G_1)$ define a $\PROSS_\Gamma$-isomorphism $G_1\iso G_2$.
\end{proof}

\subsection{The adjoint case}   \label{ss:adjoint case}
Let $\Pross_\Gamma^{ad}\subset\Pross_\Gamma$ be the full subcategory of those $G\in\Pross_\Gamma$ for which $G^\circ$ has trivial center. It is clear that $\Pross_\Gamma^{ad}\subset\PROSS_\Gamma\,$.
In this section we will prove the following particular case of Theorem~\ref{t:variant of KLV}(ii).

\begin{prop}  \label{p:2adjoint case}
The restriction of the functor~\eqref{e:BK} to  $\Pross_\Gamma^{ad}$ is fully faithful.
\end{prop}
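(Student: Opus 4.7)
The plan is as follows. By Proposition~\ref{p:easy reduction}, one may assume $\Gamma$ finite and $G_1,G_2\in \Pross_\Gamma^{ad}$ of finite type; note that $\Pross_\Gamma^{ad}\subset \PROSS_\Gamma$ trivially, since $Z(G^\circ)=0$ makes all relevant $H^2$ groups vanish. The first step is to extract the isomorphism $\varphi^\circ : K^+(G_1^\circ)\iso K^+(G_2^\circ)$ by restricting $\varphi$ to $\{e\}\in \Open_\Gamma$, and to apply Proposition~\ref{p:TannakaKLV} to get a pinning-preserving isomorphism $f^\circ :G_1^\circ \iso G_2^\circ$ realizing $\varphi^\circ$ (after fixing pinnings on each side). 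The morphisms $\{e\}\to \{e\}$ in $\Open_\Gamma$ implement the outer $\Gamma$-action on $K^+(G_i^\circ)$, so $\Gamma$-equivariance of $\varphi^\circ$ translates, via the uniqueness clause of Proposition~\ref{p:TannakaKLV}, into the statement that the induced isomorphism $\operatorname{Out} (G_1^\circ)\iso \operatorname{Out} (G_2^\circ)$ is $\Gamma$-equivariant.

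Next, extend $f^\circ$ to a group isomorphism $f:G_1\iso G_2$. Because each $G_i^\circ$ is a product of adjoint almost-simple groups, the pinning gives a splitting $\Aut (G_i^\circ)\simeq G_i^\circ \rtimes \operatorname{Out} (G_i^\circ)$, and the extension $1\to G_i^\circ \to G_i\to \Gamma\to 1$ is determined by its outer $\Gamma$-action: the obstruction lies in $H^2(\Gamma, Z(G_i^\circ))=0$, and automorphisms of such an extension fixing $G_i^\circ$ and $\Gamma$ are classified by $H^1(\Gamma, Z(G_i^\circ))=0$. Consequently, $f^\circ$ extends to a group isomorphism $f:G_1\iso G_2$, uniquely up to inner automorphism by an element of $G_1^\circ(E)$.

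The remaining (and most delicate) step is to verify that the induced map $\BK^+_f$ actually coincides with $\varphi$. Their composite $\sigma:=(\BK^+_f)^{-1}\circ\varphi$ is an automorphism of $\BK^+_{G_1}$ inducing the identity on $\BK^+_{G_1^\circ}$; by Corollary~\ref{c:in terms of [G]}, it corresponds to a $(\Gamma\times\BN)$-equivariant automorphism of $[G_1]$ over $\Gamma$ which restricts to the identity on $[G_1^\circ]$ and preserves the irreducible characters of every $G_1\times_\Gamma U$. I analyze each $\sigma_\gamma :[G_1]_\gamma\iso [G_1]_\gamma$ using the Mohrdieck--Springer description of $[G_1]_\gamma$ (recalled in Appendix~\ref{s:twisted conjugacy}) as a quotient of a $\gamma$-fixed torus by a $\gamma$-invariant Weyl group, and invoke an analog of Lemma~\ref{l:no kernel}: the triviality of $Z(G_1^\circ)$ forces $\sigma_\gamma = \id$ for every $\gamma$. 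Hence $\sigma=\id$, so $\BK^+_f=\varphi$ as required.

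The main obstacle is this final matching step: even knowing that the group-theoretic extension $f$ exists, one must align it with $\varphi$ on all twisted components $[G_1]_\gamma$. The adjointness assumption enters essentially here (cf.\ Example~\ref{ex:why Adams}, which shows that already in the simplest non-adjoint situation one finds nontrivial semiring automorphisms that do not come from group automorphisms).
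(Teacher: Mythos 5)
Your overall architecture is the same as the paper's: reduce to $\Gamma$ finite and finite type by Proposition~\ref{p:easy reduction}, apply Proposition~\ref{p:TannakaKLV} to the $\Gamma$-equivariant isomorphism $K^+(G_1^\circ)\iso K^+(G_2^\circ)$, use adjointness to extend it to a $\Pross_\Gamma$-isomorphism $G_1\iso G_2$ (the paper phrases this via Dynkin diagrams; your $H^2(\Gamma,Z)=H^1(\Gamma,Z)=0$ argument is an acceptable substitute), and then reduce everything to the claim that an automorphism of $\BK^+_{G_1}$ inducing the identity on $K^+(G_1^\circ)$ is trivial, i.e.\ to Proposition~\ref{p:adjoint case} via Corollary~\ref{c:in terms of [G]}.

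But that last claim is where essentially all the content of the proposition sits, and the mechanism you invoke for it does not work. Lemma~\ref{l:no kernel} (and any analog of it) concerns multiplication by elements of the center $Z$ of $G^\circ$ acting on $[G]_\gamma$; in the adjoint case $Z$ is trivial, so the lemma is vacuous, and in any case the automorphism $\sigma_\gamma$ of $[G_1]_\gamma$ is not a priori given by multiplication by anything. The twist that actually has to be excluded is multiplication by $\varepsilon_i=\check\omega_i(-1)$, an element of the maximal torus of the \emph{adjoint} group which has nothing to do with the center. Moreover, Example~\ref{ex:why Adams} is precisely a counterexample to your claimed conclusion without $\BN$-equivariance: there the automorphism $\phi$ restricts to a nontrivial automorphism of $K^+(G/Z)$, i.e.\ already in the adjoint case one has a $\Gamma$-equivariant automorphism of $[G_1]$ over $\Gamma$ which fixes $[G_1^\circ]$, preserves irreducible characters of all $G_1\times_\Gamma U$, and is not the identity. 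So ``triviality of $Z(G_1^\circ)$'' cannot force $\sigma_\gamma=\id$; only the $\BN$-equivariance (the Adams/lambda structure) does, and extracting this requires the character analysis of Lemma~\ref{l:keylemma}: write $\chi_\omega\circ f=\chi_\omega\cdot\beta_\omega$, establish the multiplicativity relations for $\beta_\omega$ via the PRV-type isomorphism \eqref{e:need reference}, isolate the unique exceptional possibility (type $A_{2n}$, $\sigma$ of even order, $f([\sigma])=[\sigma\cdot\varepsilon_i]$), and only then rule it out by $\BN$-equivariance through the computation showing $\prod_{j=0}^{2m-1}\sigma^j(\varepsilon_i)\ne 1$. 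Your proposal lists $\BN$-equivariance among the hypotheses but never uses it, so the decisive step is missing; relatedly, the adjointness assumption is what makes the extension step clean, not what trivializes the twisted components.
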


\subsubsection{Reduction to Proposition~\ref{p:adjoint case}}   \label{sss:reduction to}
Let $G_1\, ,G_2\in\Pross_\Gamma^{ad}$. We have to prove that any isomorphism
\begin{equation} \label{e:given_iso}
\BK^+_{G_1}\iso \BK^+_{G_2}
\end{equation} 
comes from a unique $\Pross_\Gamma$-isomorphism $G_1\iso G_2\,$. By Proposition~\ref{p:easy reduction}, we can assume that $\Gamma$ is finite and $G_1^\circ\, , G_2^\circ$ have finite type.

As a part of the data defining the isomorphism \eqref{e:given_iso}, we have a $\Gamma$-equivariant isomorphism 
$K^+(G_1^\circ )\iso K^+(G_2^\circ )$. By Proposition~\ref{p:TannakaKLV}, it comes from a unique $\Gamma$-equivariant $\Pross$-isomorphism $G_1^\circ \iso G_2^\circ$. Since $G_1^\circ$ and $G_2^\circ$ are adjoint, this is the same as a $\Gamma$-equivariant isomorphism 
$f:\Delta_1\iso\Delta_2\,$, where $\Delta_i$ is the Dynkin diagram of $G_i\,$. Again using the adjointness assumption, we see that $f$ comes from a unique $\Pross_\Gamma$-isomorphism $\varphi :G_1\iso G_2$. It remains to show that $\varphi$ induces the given isomorphism \eqref{e:given_iso}. Equivalently, we have to show that any automorphism of $\BK^+_{G_1}$ inducing the identity on $K^+(G_1^\circ )$ is trivial.
By Corollary~\ref{c:in terms of [G]}, this is equivalent to the following proposition.

\begin{prop}  \label{p:adjoint case}
Let $\Gamma$ be a finite group and 
$G\in\Pross_\Gamma\,$. Assume that $G^\circ$ is an adjoint semisimple group. Let $f:[G]\iso [G]$ be a $(\Gamma\times\BN )$-equivariant isomorphism of schemes over $\Gamma$ such that

(i) $f$ induces the identity on $[G^\circ]$;

(ii) for any subgroup $U\subset\Gamma$, $f$ preserves the set of irreducible characters of $G\times_\Gamma U$.

Then $f$ is the identity map.
\end{prop}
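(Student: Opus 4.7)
The plan is to show that $f^*$ fixes each irreducible character of $G$ individually; since such characters span $\cO([G]) \otimes E$ as an $E$-vector space, this yields $f^* = \id$ on $\cO([G])$ and hence $f = \id$.

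First I would reduce to the case $\Gamma$ cyclic and $G \cong G^\circ \rtimes \Gamma$. For each $\gamma \in \Gamma$, the restriction of $f$ to $[G \times_\Gamma \langle\gamma\rangle] \supset [G]_\gamma$ satisfies the same hypotheses with $\Gamma$ replaced by $\langle\gamma\rangle$ (condition~(ii) passes to open subgroups), and since $G^\circ$ is adjoint, $H^2(\langle\gamma\rangle, Z(G^\circ)) = 0$ splits the extension as a semidirect product with $\Gamma$ acting by pinning-preserving automorphisms. By condition~(i), $f^*V$ and $V$ have isomorphic $G^\circ$-restrictions for every irreducible $V$; by Clifford theory, $f^*V \cong V \otimes \tau_V$ for a character $\tau_V$ of the stabilizer $\Gamma_V \subset \Gamma$ of a $G^\circ$-constituent of $V|_{G^\circ}$.

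The heart of the argument is to show $\tau_V = 1$. Consider the canonical extension $\widetilde\fg$ of the adjoint representation: the Lie bracket $\widetilde\fg \otimes \widetilde\fg \to \widetilde\fg$ is $G$-equivariant and surjective, so $\widetilde\fg$ is a direct summand of $\widetilde\fg \otimes \widetilde\fg$ as $G$-representation. The multiplicativity $\tau_{\mathrm{constituent}} = \tau_{V_1} \tau_{V_2}$ for $\Gamma$-invariant constituents of tensor products of $\Gamma$-invariant $V_1, V_2$ (immediate from $f^*$ being a ring automorphism via the identity $(V_1 \otimes \tau_1) \otimes (V_2 \otimes \tau_2) = (V_1 \otimes V_2) \otimes (\tau_1 \tau_2)$) yields $\tau_{\widetilde\fg} = \tau_{\widetilde\fg}^2$, forcing $\tau_{\widetilde\fg} = 1$. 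Since $G^\circ$ is adjoint, $\fg$ is a faithful representation, so tensor powers of $\fg$ generate every irreducible $G^\circ$-representation; combined with multiplicativity and the easy fact that $\tau_\sigma = 1$ for any 1-dimensional $G$-representation $\sigma$ (because $\chi_\sigma$ depends only on the $\Gamma$-projection, which $f$ preserves), this gives $\tau_{\tilde W} = 1$ for every extension $\tilde W$ of every $\Gamma$-invariant irreducible $G^\circ$-representation $W$. For $V$ with proper stabilizer $\Gamma_V \subsetneq \Gamma$, I would apply the preceding argument to the subgroup $H := G \times_\Gamma \Gamma_V$ using condition~(ii) with $U = \Gamma_V$: writing $V = \operatorname{Ind}_H^G \tilde W$ and decomposing $V|_H = \bigoplus_{s \in \Gamma/\Gamma_V} s \cdot \tilde W$, the conclusion applied to $H$ gives $(f|_H)^*(s \cdot \tilde W) = s \cdot \tilde W$ for each summand; comparing this with $(f^*V)|_H = \bigoplus_s s \cdot (\tilde W \otimes \tau_V)$ and matching summands (using $\Gamma$ abelian) forces $\tilde W \otimes \tau_V \cong \tilde W$, hence $\tau_V = 1$.

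The main obstacle is the bookkeeping in the previous paragraph, particularly handling the different extensions of a given $\Gamma$-invariant $G^\circ$-representation appearing in tensor powers of $\widetilde\fg$, and reconstructing $\tau_V = 1$ from the cyclic-subgroup case in the non-$\Gamma$-invariant setting. The adjointness of $G^\circ$ enters crucially in two places: making $\fg$ faithful (so its tensor powers generate all irreducible $G^\circ$-representations) and ensuring $Z(G^\circ) = 1$ (keeping Clifford theory transparent, with obstructions in $H^2$ vanishing). The $\BN$-equivariance hypothesis enters indirectly via Corollary~\ref{c:in terms of [G]} to ensure the correspondence between $f$ and a $\lambda$-semiring automorphism of $K^+(G)$, thereby ruling out the Jantzen-type semiring automorphisms of Appendix~\ref{s:twisted conjugacy} and Example~\ref{ex:why Adams} that violate $\lambda$-compatibility.
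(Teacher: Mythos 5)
Your argument has a genuine gap at its central step: the claim that $\tau_{\mathrm{constituent}}=\tau_{V_1}\tau_{V_2}$ is ``immediate from $f^*$ being a ring automorphism''. It is not, and in fact it is false without using $\BN$-equivariance. From $f^*(V_1\otimes V_2)=(V_1\otimes V_2)\otimes\tau_1\tau_2$ you only get an equality of \emph{multisets} of irreducible constituents; a semiring automorphism fixing $K^+(G^\circ)$ may permute distinct constituents that are different extensions of the same irreducible $G^\circ$-module, and then no individual constituent need have $\tau=\tau_1\tau_2$. This is exactly what happens for the Jantzen automorphism $\phi$ of Example~\ref{ex:why Adams}: for $G=(\BZ/2\BZ)\ltimes SL(2n+1)$ (or its adjoint quotient, which is the relevant case here), $\phi$ is a genuine semiring automorphism, fixes all characters of $G^\circ$, preserves irreducibility for every subgroup $U\subset\Gamma$, and is $\Gamma$-equivariant -- yet $\phi(\tilde\fg)=\tilde\fg\otimes\chi$ with $\chi\neq 1$, since the adjoint highest weight is $\omega_1+\omega_{2n}$ and the corresponding central character is nontrivial. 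This is perfectly consistent with $\tilde\fg$ being a summand of $\tilde\fg\otimes\tilde\fg$, because \emph{both} extensions occur there ($\tilde\fg$ as a quotient of $\Lambda^2\tilde\fg$ via the bracket, $\tilde\fg\otimes\chi$ as a quotient of $S^2\tilde\fg$ via the traceless symmetric product, which anticommutes with $x\mapsto -x^t$), and $\phi$ simply swaps them. So your ``immediate'' multiplicativity would prove the proposition using only $\Gamma$-equivariance, contradicting this example; and your only appeal to $\BN$-equivariance -- the closing remark that $\lambda$-compatibility ``rules out the Jantzen-type automorphisms'' -- is precisely the statement to be proved, with no mechanism supplied.

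The paper's proof shows where the real work lies. One only gets the relation $\beta_{\omega''}=\beta_\omega\beta_{\omega'}$ when $\Tr\bigl(\sigma,\Hom_{G^\circ}(V_{\omega''},V_\omega\otimes V_{\omega'})\bigr)\neq 0$ (see \eqref{e:nonzero trace}): it is the trace of $\sigma$ on the multiplicity space, not mere nonvanishing of the multiplicity, that lets one compare characters on the twisted component $[G]_\sigma$. These traces are computed via the $\Gamma$-equivariant PRV-type isomorphism \eqref{e:need reference} with $(U\fn_-)_\eta$, and the computation leaves exactly one undetermined case -- type $A_{2n}$ with the color-swapping involution, where Lemma~\ref{l:keylemma} pins down the only possible deviation as $f([\sigma])=[\sigma\cdot\check\omega_i(-1)]$. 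Only then is $\BN$-equivariance used, explicitly, to exclude it: $f([\sigma^{2m}])=[1]$ would force $\prod_{j}\sigma^j(\varepsilon_i)=1$, which fails (equivalently, Lemma~\ref{l:why Adams}(ii): the Jantzen automorphism does not commute with $\psi^2$). To repair your approach you would have to restrict your multiplicativity to constituents detected by a nonzero trace of $\sigma$ on the multiplicity space (or otherwise insert an explicit Adams-operation computation at the point where the $A_{2n}$ ambiguity appears); the preliminary reductions to cyclic $\Gamma$ and the Clifford-theoretic bookkeeping in your proposal are fine and parallel the paper, but they are not where the difficulty lives.
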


The rest of \S\ref{ss:adjoint case} is devoted to the proof of Proposition~\ref{p:adjoint case}.

\subsubsection{Remark}  \label{sss:normal subgr}
Lemma~\ref{l:normal subgr}(i) implies that for every normal subgroup $H\subset G$ there is a normal subgroup $H'\subset G$ such that
$f:[G]\iso [G]$ induces an isomorphism $[G/H]\iso [G/H']$. It is clear that $H'$ is unique. Since $f$ induces the identity on $[G^\circ]$ one has
\[
H'\cap G^\circ =H\cap G^\circ.
\]
Since $f$ is a map over $\Gamma$ one has
\[
\im (H'\to\Gamma ) =\im (H'\to\Gamma ).
\]

\subsubsection{Easy reductions}
Let $\Delta$ denote the Dynkin diagram of $G^\circ$.

For each $\sigma\in\im(\Gamma\to\Aut\Delta)$ we have to show that $f:[G]\iso [G]$ acts trivially on the preimage of $\sigma$ in $[G]$. 
From now on, we fix $\sigma$.
We can assume that $\im(\Gamma\to\Aut\Delta)$ is generated by $\sigma$ (otherwise replace $G$ by the preimage of the cyclic subgroup generated by $\sigma$).

We can also assume that $\Gamma$ acts transitively on the set of connected components $\pi_0(\Delta )$. Otherwise, we have nontrivial subgroups $H_1\, ,H_2\subset G^\circ$ normal in $G$ such that $H_1\cap H_2=\{ 1\}$. By \S~\ref{sss:normal subgr}, the map $f:[G]\iso [G]$ induces isomorphisms $f_i:[G/H_i]\iso [G/H_i]$, $i\in\{1,2\}$, with properties similar to those of $f$. The map $[G]\to [G/H_1]\times [G/H_2]$ is injective, so to prove that $f$ is the identity, it suffices to show this for $f_1$ and $f_2\,$.

Set $\Gamma_0:=\Ker (\Gamma\to\Aut\Delta)$. Then there is a unique normal subgroup $H\subset G$ such that $H\cap G^\circ=\{1\}$ and 
$\im (H\to\Gamma )=\Gamma_0\,$. The subgroup $H'$ corresponding to $H$ by \S\ref{sss:normal subgr} has the same properties, so $H'=H$.
Thus to prove Proposition~\ref{p:adjoint case}, it suffices to prove a similar statement for $G/H$ instead of $G$ and $\Gamma/\Gamma_0$ instead of $\Gamma$. Therefore we can assume that $\Gamma_0=\{1\}$.

\emph{From now on we assume that $\Gamma\subset\Aut\Delta$ is the cyclic subgroup generated by $\sigma$ and that the action of $\Gamma$ on $\pi_0(\Delta )$ is transitive. }

\subsubsection{Some notation}
Fix a pinning of $G^\circ$ (in particular, we fix a maximal subtorus $T\subset  G^\circ$).
Then $\Gamma$ acts on $G^\circ$ preserving the pinning, and $G$ identifies with $\Gamma\ltimes G^\circ$.

The image of $g\in G$ in $[G]$ will be denoted by $[g]$. In particular, for any $t\in T$ we have $\sigma\cdot t\in \Gamma\ltimes G^\circ=G$ and 
$[\sigma\cdot t]\in [G]$.

\subsubsection{The key lemma}
Let $[G]_\sigma$ denote the preimage of $\sigma$ in $[G]$. Our goal is to show that the map $[G]_\sigma\to [G]_\sigma$ induced by $f$ is equal to the identity. The next lemma says that this is ``almost true" even if $f$ is not assumed to be $\BN$-equivariant (the only exception essentially comes from Example~\ref{ex:why Adams}).

\begin{lem}    \label{l:keylemma}
Let $f:[G]\iso [G]$ be as in Proposition~\ref{p:adjoint case} except that $(\Gamma\times\BN )$-equivariance is replaced by $\Gamma$-equivariance. Suppose that the map $[G]_\sigma\to [G]_\sigma$ induced by $f$ is \emph{not equal} to the identity. Then

(a) $\sigma$ has even order $2m$, and there exists a vertex $i\in\Delta$ connected to $\sigma^m (i)$ by an edge;

(b) for any such $i$ one has
\begin{equation}   \label{e:f([sigma])}
f([\sigma])=[\sigma\cdot \varepsilon_i],\quad \varepsilon_i:=\check\omega_i (-1)\,;
\end{equation}
where $\check\omega_i :\BG_m\to T$ is the $i$-th fundamental coweight.
\end{lem}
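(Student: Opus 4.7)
My plan is to identify $[G]_\sigma$ with a torus quotient via Mohrdieck--Springer, reinterpret $f$ as a translation on that torus using the character-preservation hypothesis, and then squeeze out the Dynkin-theoretic conclusion from the fact that $G^\circ$ is adjoint.

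First I will use Appendix~\ref{s:twisted conjugacy} (Mohrdieck--Springer) to identify $[G]_\sigma$ with $(T^\sigma)^\circ/W_\sigma$, where $W_\sigma$ is the appropriate twisted Weyl group, with the distinguished point $[\sigma]$ corresponding to $1$. Under this identification, multiplication (in $T$) by any $z \in (T^\sigma)^\circ$ descends to an automorphism of $[G]_\sigma$ that depends only on the class of $z$ in $(T^\sigma)^\circ/((1-\sigma)T\cap T^\sigma)$. Next I will describe the irreducible characters of $G$ which are nonzero on $[G]_\sigma$: by Clifford theory, these are precisely the extensions $\tilde{\rho}_\lambda$ of $\sigma$-invariant irreducibles $\rho_\lambda$ of $G^\circ$, where $\lambda$ ranges over the $\sigma$-invariant dominant weights $P^\sigma_+$ of $T$. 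Extensions exist because $\Gamma=\langle\sigma\rangle$ is cyclic and $E^\times$ is divisible, so $H^2(\Gamma,E^\times)=0$, and for each $\lambda$ the $n:=|\Gamma|$ extensions are permuted simply transitively by tensoring with characters $\chi\colon\Gamma\to\mu_n(E)$.

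Because $f$ acts as the identity on $[G^\circ]$, the map $f^*$ on restricted characters must send the restriction to $[G]_\sigma$ of each $\chi_{\tilde{\rho}_\lambda}$ to the restriction of another extension $\chi_{\tilde{\rho}_\lambda\otimes\chi_\lambda}$ of the same $\rho_\lambda$; this defines an assignment $\lambda\mapsto\chi_\lambda\in\widehat{\Gamma}$. Since $f^*$ is a ring homomorphism of the algebra generated by these restricted characters, and the restriction map from $K(G)\otimes E$ to functions on $[G]_\sigma$ respects tensor products up to terms supported off $[G]_\sigma$ (the non-$\sigma$-invariant constituents), the assignment $\lambda\mapsto\chi_\lambda(\sigma)$ is multiplicative in $\lambda$ and hence comes from a single homomorphism $P^\sigma\to E^\times$, i.e.\ from a single element $z\in(T^\sigma)^\circ/((1-\sigma)T\cap T^\sigma)$ with $\lambda(z)=\chi_\lambda(\sigma)$ for all $\lambda\in P^\sigma$. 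Translating back, $f$ acts on $[G]_\sigma$ precisely by translation by $z$, and in particular $f([\sigma])=[\sigma\cdot z]$.

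Now I turn to the combinatorial conclusion. Since $G^\circ$ is adjoint, the weight lattice equals the root lattice, so the condition that $\lambda(z)$ be an $n$-th root of unity for every $\lambda\in P^\sigma_+$ says that $z$ pairs to $\mu_n$ with every $\sigma$-invariant element of $Q$. I will analyze this orbit by orbit: for each $\sigma$-orbit of simple factors of $G^\circ$, of size $k$, the stabilizer $\sigma^k$ acts on a single factor as a diagram automorphism $\tau$ of order $n/k$, and the $\sigma$-invariant sublattice of $Q$ on this orbit is the diagonal copy of $Q_0^\tau$. A case check on finite Dynkin diagrams (the same analysis that underlies the foldings producing twisted affine root systems) shows that $(T_0^\tau)^\circ/(1-\tau)T_0$ is trivial unless $\tau$ is a nontrivial involution interchanging two adjacent vertices $i$ and $\tau(i)$ of the diagram (the $A^{(2)}_{2n}$-folding situation); in that remaining case the group is of order $2$, generated by $\check{\omega}_i(-1)$. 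For such an involution $\tau$ to exist in $\langle\sigma\rangle$, the order of $\sigma$ must be even, say $2m$, with $\tau=\sigma^m$; this gives part (a), and the generator of the nontrivial class is exactly $\varepsilon_i=\check{\omega}_i(-1)$, which gives part (b).

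The main obstacle will be the case-by-case classification in the last paragraph: one must verify that, across all pairs $(\Delta_0,\tau)$ of a finite connected Dynkin diagram and a diagram automorphism, the quotient $(T_0^\tau)^\circ/(1-\tau)T_0$ is nontrivial only in the one configuration described above, and in that case is generated by the asserted element. This is a finite combinatorial check but is where the arithmetic of twisted affine Dynkin data enters, and it is the reason the conclusion takes the form $\varepsilon_i=\check{\omega}_i(-1)$ rather than some more general element of $T$.
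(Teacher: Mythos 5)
The first half of your argument --- Clifford theory for $G=\Gamma\ltimes G^\circ$, the observation that $f^*$ twists each extension $\tilde\rho_\lambda$ by a character $\chi_\lambda$ of $\Gamma$, and multiplicativity of $\lambda\mapsto\chi_\lambda(\sigma)$ via the Cartan component of a tensor product --- is exactly the paper's starting point (your $\chi_\lambda$ is the paper's $\beta_\omega$). The gap is in how you pin down the translation element $z$. From what you have actually established, the only constraints on $z\in T_\sigma=\Hom(Q^\sigma,\BG_m)$ are that $\lambda\mapsto\lambda(z)$ is a homomorphism taking values in $n$-th roots of unity, i.e.\ that $z$ is an $n$-torsion point of the coinvariant torus; this is far too weak. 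Concretely, take $G^\circ$ adjoint of type $A_3$ with $\sigma$ the diagram involution: the orbit $O=\{1,3\}$ is non-exceptional, yet the order-two element $z$ with $(\alpha_1+\alpha_3)(z)=-1$ and $\alpha_2(z)=1$ satisfies every condition you have derived, while the lemma asserts that $f$ must then be the identity on $[G]_\sigma$. Moreover the group you propose to check case by case, $(T_0^\tau)^\circ/(1-\tau)T_0$ (read modulo the intersection, since $(1-\tau)T_0\not\subset (T_0^\tau)^\circ$), is up to a finite kernel the coinvariant torus $T_{0,\tau}$ itself --- a positive-dimensional torus, never trivial --- so the advertised finite combinatorial check is not examining the right object; and no purely root-datum-theoretic group can do the job, because the needed restriction on $z$ is not of that nature: it must come from the hypothesis that $f$ preserves irreducible characters, applied beyond the Cartan component.

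This is precisely what the paper's proof supplies and your proposal omits. Comparing $f^*$ on both sides of the decomposition of $\chi_\omega\chi_{\omega'}$ on the $\sigma$-component gives $\beta_{\omega''}=\beta_\omega\beta_{\omega'}$ whenever $\Tr\bigl(\sigma,\Hom_{G^\circ}(V_{\omega''},V_\omega\otimes V_{\omega'})\bigr)\ne 0$; via the $\Gamma$-equivariant isomorphism of this multiplicity space with $(U\fn_-)_\eta$ (the PRV/Kostant-type isomorphism \eqref{e:need reference}), one checks that $\Tr(\sigma,(U\fn_-)_{-\alpha_O})\ne 0$ for every non-exceptional orbit $O$ and $\Tr(\sigma,(U\fn_-)_{-2\alpha_O})\ne 0$ for the exceptional one. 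Combined with multiplicativity this forces $\beta(\alpha_O)=1$ for non-exceptional $O$ and $\beta(\alpha_{O_{exc}})^2=1$; only after these relations does adjointness (so that $P^\sigma=Q^\sigma$, with basis the orbit sums $\alpha_O$) pin $z$ down to $1$ or to $\varepsilon_i=\check\omega_i(-1)$ with $i\in O_{exc}$, which is what yields both (a) and (b). Without an argument of this kind your proof makes no essential use of the character-preservation hypothesis after the multiplicativity step, and the desired conclusion is simply false as a statement about torsion points of $T_\sigma$ alone.
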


Note that the situation described in Lemma~\ref{l:keylemma}(a) is essentially unique (namely, each connected component of $\Gamma$ has type $A_{2n}$ and the kernel of the action of $\Gamma$ on $\pi_0 (\Delta )$ equals $\{1,\sigma^m\}$). 

Assuming the lemma, one finishes the proof of Proposition~\ref{p:adjoint case} as follows. We have to show that if $f$ is $\BN$-equivariant then the situation of Lemma~\ref{l:keylemma} is impossible. Indeed, combining $\BN$-equivariance with \eqref{e:f([sigma])}, we would get
$[(\sigma\cdot \varepsilon_i)^{2m}]=f([\sigma^{2m}])=f([1])=[1]$, so $(\sigma\cdot \varepsilon_i)^{2m}=1$ or equivalently,
\begin{equation}   \label{e:impossible}
\prod_{j=0}^{2m-1}\sigma^j (\varepsilon_i )=1.
\end{equation}
On the other hand, since the situation described in Lemma~\ref{l:keylemma}(a) is essentially unique, it is easy to check that 
\eqref{e:impossible} does not hold. Thus it remains to prove  the lemma.

\begin{proof}[Proof of Lemma~\ref{l:keylemma}]
Let $\Hom^\sigma (T ,\BG_m)$ denote the group of $\sigma$-invariant weights of $G$.
For any dominant 
$\omega\in\Hom^\sigma (T , ,\BG_m)$, let $V_\omega$ be the unique (up to isomorphism) representation of 
$G=\Gamma\ltimes G^\circ$ such that $V_\omega$ is irreducible as a $G^\circ$-module and the action of $\Gamma$ on the highest line of $V_\omega$ is trivial (the notion of highest line makes sense because we chose a pinning of $G^\circ$). 

Let $\chi_\omega$ denote the character of $V_\omega\,$. Then $\chi_\omega\circ f$ is an irreducible character of $G$ whose restriction to $G^\circ$ is equal to that of $\chi_\omega\,$. So $\chi_\omega\circ f=\chi_\omega\cdot\beta_\omega$ for some 
$\beta_\omega\in\Hom (\Gamma ,\BG_m )$. 

For any dominant $\omega,\omega',\omega''\in\Hom^\sigma (T , ,\BG_m)$ we have the $\Gamma$-module 
$\Hom_{G^\circ} (V_{\omega''\,},V_{\omega}\otimes V_{\omega'})$. It is clear that
\begin{equation}  \label{e:nonzero trace}
\beta_{\omega''}=\beta_{\omega}\cdot\beta_{\omega'} \quad\mbox{ whenever }\Tr (\sigma,\Hom_{G^\circ} (V_{\omega''\,},V_{\omega}\otimes V_{\omega'}))\ne 0.
\end{equation}
 In particular,
\begin{equation}  \label{e:beta is multiplicative}
\beta_{\omega+\omega'}=\beta_{\omega}\cdot\beta_{\omega'}\,. 
\end{equation}

If $\eta\in\Hom^\sigma (T ,\BG_m)$ is fixed and $\omega,\omega'\in\Hom^\sigma (T ,\BG_m)$ are dominant enough then one has a
$\Gamma$-equivariant isomorphism
\begin{equation} \label{e:need reference}
\Hom_{G^\circ} (V_{\omega+\omega'+\eta\,},V_{\omega}\otimes V_{\omega'})\iso (U\fn_-)_\eta \, ,
\end{equation}
where the Lie algebra $\fn_-$ is defined using the pinning and $(U\fn_-)_\eta$ is the graded component corresponding to $\eta$. The construction of the isomorphism is given, e.g., in \cite[\S 1]{PY}, which goes back to \cite[\S 2.2]{PRV}.

Now let $O$ be an orbit of $\Gamma$ in the set of vertices of $\Delta$ and let $\alpha_O$ denote the sum of the simple roots of $G^\circ$ corresponding to the elements of $O$. Say that $O$ is \emph{exceptional\,} if there exist two vertices of $O$ connected by an edge. It is easy to check that $\Tr (\sigma,(U\fn_-)_\eta)\ne 0$ if either $\eta= -\alpha_O$ with $O$ non-exceptional or $\eta= -2\alpha_O$ with $O$ exceptional. Combining this with \eqref{e:need reference} and \eqref{e:nonzero trace}, we see that if $\omega,\omega'\in\Hom^\sigma (T ,\BG_m)$ are dominant enough then
\begin{equation}   \label{e:non-exceptional}
\beta_{\omega+\omega'-\alpha_O}=\beta_{\omega}\cdot\beta_{\omega'} \quad \mbox{ if } O \mbox{ is non-exceptional,}
\end{equation}
\begin{equation}   \label{e:exceptional}
\beta_{\omega+\omega'-2\alpha_O}=\beta_{\omega}\cdot\beta_{\omega'} \quad \mbox{ if } O \mbox{ is exceptional.}
\end{equation}

By assumption, the restriction of $f$ to $[G]_\sigma$ is not equal to the identity. So there is an irreducible character $\chi$ of $G$ such that $\chi\circ f$ and $\chi$ have different restrictions to $[G]_\sigma\,$. If $\chi$ has nonzero restriction to $[G]_\sigma$ then $\chi=\chi_\omega\cdot\mu$ for some $\mu\in\Hom (\Gamma,\BG_m)$ and some dominant $\omega\in\Hom^\sigma (T , ,\BG_m)$. 
So $\beta_\omega \ne 1$ for some $\omega$. By  \eqref{e:beta is multiplicative}, \eqref{e:non-exceptional}, \eqref{e:exceptional}, this can happen only if there exists an exceptional orbit $O_{exc}\subset\Gamma$ (it is obviously unique); moreover, in this case for any 
dominant $\omega\in\Hom^\sigma (T , ,\BG_m)$ one has
\begin{equation}   \label{e:the only beta}
\beta_\omega (\sigma)= (-1)^{(\omega,\check\omega_i)},
\end{equation}
where $\check\omega_i$ is the fundamental coweight corresponding to any $i\in O_{exc}\,$.

It remains to prove formula~\eqref{e:f([sigma])}. To do this, it suffices to check that
\begin{equation}   \label{e:chif([sigma])}
(\chi\circ f)([\sigma])=\chi ([\sigma\cdot \varepsilon_i]),\quad \varepsilon_i:=\check\omega_i (-1).
\end{equation}
It is enough to do this if $\chi=\chi_\omega\,$. Then \eqref{e:chif([sigma])} follows from \eqref{e:the only beta}.
\end{proof}

\begin{rem}
The above proof of Proposition~\ref{p:adjoint case} is straightforward but strange. The theorem of Jantzen discussed in \S\ref{ss:Satz 9} of Appendix~\ref{s:twisted conjugacy} sheds some light on what is going on. But this theorem itself is mysterious.
\end{rem}

\subsection{The general case}   \label{ss:gen case}
\subsubsection{What is to be proved}   \label{sss:What is to be proved}
Let $G_1,G_2\in\PROSS_\Gamma\,$. We have to prove that any isomorphism
\begin{equation} \label{e:2given_iso}
\BK^+_{G_1}\iso \BK^+_{G_2}
\end{equation} 
comes from a unique $\Pross_\Gamma$-isomorphism $G_1\iso G_2\,$. By Proposition~\ref{p:easy reduction}, we can assume that $\Gamma$ is finite and $G_1^\circ\,$,$G_2^\circ$ have finite type.

We think of \eqref{e:2given_iso} in terms of Corollary~\ref{c:in terms of [G]}, i.e., as 
as a $(\Gamma\times\BN)$-equivariant isomorphism $$f:[G_1]\iso [G_2]$$ of schemes over $\Gamma$ with the following property: for any subgroup $U\subset\Gamma$ a function on 
$[G_1]\times_\Gamma U$ is an irreducible character of $G_1\times_\Gamma U$ if and only if the corresponding function on 
$[G_2]\times_\Gamma U$ is an irreducible character of $G_2\times_\Gamma U$.

Note that just as in \S\ref{sss:reduction to}, the isomorphism \eqref{e:2given_iso} induces a
$\Gamma$-equivariant $\Pross$-isomorphism 
\begin{equation}   \label{e:G_i^circ is same}
G_1^\circ \iso G_2^\circ\, .
\end{equation}

\subsubsection{$G_i$ in terms of finite group extensions}

We will use the equivalence $\Pross\iso\Pross^{\prime\prime}$ form \S\ref{ss:Pross and variants}.
Recall that by \S\ref{sss:Pross''}, an object of $\Pross^{\prime\prime}$ is a collection of data a)-d).
Because of the isomorphism~\eqref{e:G_i^circ is same}, the data a)-c) corresponding to $G_1$ and $G_2$ are the same; these data are a finite Dynkin diagram $\Delta$ equipped with $\Gamma$-action and a $\Gamma$-equivariant quotient $Z$ of the group 
$Z_\Delta\,$. We also have data d), i.e., extensions
\begin{equation}   \label{e:the two extensions}
0\to Z\to\tilde\Gamma_i\to\Gamma\to 0, \quad\quad i\in\{1,2\}.
\end{equation}

By formula~\eqref{e:inverse construction}, the groups $G_i$ are expressed in terms of these data as follows:
\begin{equation}  \label{e:2inverse construction}
G_i:=(\tilde\Gamma_i\ltimes G^\circ)/\Ker (Z\times Z\overset{m}\longrightarrow Z), \quad\quad i\in\{1,2\},
\end{equation}
where $G^\circ:=G_\Delta/\Ker (Z_\Delta\epi Z)\,$.

\subsubsection{The sets $[\tilde\Gamma_i]$}   \label{sss:[tildeGamma]}
Let $[\tilde\Gamma_i]$ denote the quotient of $\tilde\Gamma_i$ by the conjugation action of $Z$. This is a set equipped with the following pieces of structure:

(a) the map $[\tilde\Gamma_i]\to\Gamma$;

(b) the action of $\Gamma\times\BN$ on $[\tilde\Gamma_i]$ (namely, $\Gamma$ acts by conjugation and $n\in\BN$ acts by raising to the $n$-th power);

(c) the action of $Z$ on $[\tilde\Gamma_i]$ by left (or equivalently, right) multiplication;

(d) for every subgroup $\Gamma'\subset\Gamma$, the group structure on $(\Gamma'\underset{\Gamma}\times [\tilde\Gamma_i])/A$, where 
$A:=\Ker (Z\epi Z_{\Gamma'})$; the group structure comes from the identification of $(\Gamma'\underset{\Gamma}\times [\tilde\Gamma_i])/A$ with the group $(\Gamma'\underset{\Gamma}\times \tilde\Gamma_i)/A\,$.

For $\gamma\in\Gamma$ let $[\tilde\Gamma_i]_\gamma$ denote the fiber of $[\tilde\Gamma_i]$ over $\gamma$; this is a torsor over 
$Z_\gamma\, $, where $Z_\gamma$ is the quotient of $Z$ by the subgroup of elements of the form $\gamma (z)/z$, $z\in Z$. Recall that $[G_i]_\gamma$ denotes the fiber of $[G_i]$ over $\gamma$; this is a scheme on which $Z_\gamma$ acts by left (or equivalently, right translations).

\begin{lem}   \label{l:[G_i]_gamma}
Let $\gamma\in\Gamma$. Then $[G_i]_\gamma$ canonically identifies with the $[\tilde\Gamma_i]_\gamma$-twist of the 
$Z_\gamma$-space $[G_0]_\gamma\,$, where $G_0:=\Gamma\ltimes G^\circ$.
\end{lem}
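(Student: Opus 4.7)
The plan is to exhibit both sides as explicit quotients of $\tilde\Gamma_{i,\gamma}\times G^\circ$ and match them, using the semidirect-product description \eqref{e:2inverse construction} of $G_i$.

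First, I would identify the fiber $G_{i,\gamma}$ explicitly. Since $G_i=(\tilde\Gamma_i\ltimes G^\circ)/\Ker(Z\times Z\overset{m}\to Z)$ and the projection $G_i\to\Gamma$ factors through $\tilde\Gamma_i\to\Gamma$, the fiber over $\gamma$ canonically identifies with the contracted product
$$G_{i,\gamma}\;\iso\;\tilde\Gamma_{i,\gamma}\times^{Z}G^\circ,$$
where $Z$ acts on $\tilde\Gamma_{i,\gamma}$ by right multiplication and on $G^\circ$ by left multiplication (this is precisely the quotient by the anti-diagonal $Z$ whose kernel appears in \eqref{e:2inverse construction}). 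The multiplication map $(t,g)\mapsto tg$ realizes this isomorphism.

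Next, I would compute the $G^\circ$-conjugation on this fiber. Since the semidirect product is built using the canonical $\Gamma$-action on $G^\circ$ preserving the pinning, any lift $t\in\tilde\Gamma_{i,\gamma}$ satisfies $tht^{-1}=\gamma(h)$ for $h\in G^\circ$. A direct calculation then gives
$$h(tg)h^{-1}\;=\;t\cdot\gamma^{-1}(h)\,g\,h^{-1}.$$
In the presentation $\tilde\Gamma_{i,\gamma}\times^{Z}G^\circ$, conjugation by $h$ thus fixes the first factor and acts on the second by $\gamma$-twisted conjugation $g\mapsto\gamma^{-1}(h)\,g\,h^{-1}$. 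The quotient of $G^\circ$ by this twisted conjugation is, by the same argument applied to the split extension, exactly $[G_0]_\gamma$. Hence
$$[G_i]_\gamma\;\iso\;\tilde\Gamma_{i,\gamma}\times^{Z}[G_0]_\gamma.$$

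Finally, I would verify this equals $[\tilde\Gamma_i]_\gamma\times^{Z_\gamma}[G_0]_\gamma$. Taking $h=z\in Z$ in the previous formula shows that the residual left-multiplication action of $Z$ on $[G_0]_\gamma$ is trivial on elements of the form $\gamma^{-1}(z)z^{-1}=w\gamma(w)^{-1}$ (with $w=\gamma^{-1}(z)$), so it factors through $Z_\gamma=Z/\{w\gamma(w)^{-1}\}$. The same elements generate the relation $t\sim ztz^{-1}=t\cdot\gamma^{-1}(z)z^{-1}$ defining $[\tilde\Gamma_i]_\gamma$ as a quotient of $\tilde\Gamma_{i,\gamma}$. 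Therefore passing from $\tilde\Gamma_{i,\gamma}\times^{Z}[G_0]_\gamma$ to $[\tilde\Gamma_i]_\gamma\times^{Z_\gamma}[G_0]_\gamma$ imposes only relations that are already forced by the contracted-product identifications, and the two spaces coincide canonically.

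The main obstacle is just bookkeeping of conventions: one must fix once and for all the sign of the $\Gamma$-action ($tzt^{-1}=\gamma(z)$ versus $\gamma^{-1}(z)$), the side on which $Z$ acts, and the precise generators of the subgroup one quotients by to form $Z_\gamma$; with consistent conventions the identification is a routine direct computation and the naturality in $\gamma$ and in the extension \eqref{e:the two extensions} is manifest.
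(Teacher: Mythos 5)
Your argument is correct and is essentially the paper's own proof, which is just the one-line observation that $\tilde\Gamma_i\ltimes G^\circ=\tilde\Gamma_i\times_\Gamma G_0$ combined with formula \eqref{e:2inverse construction}; your contracted-product computation (conjugation acting only on the $G^\circ$ factor by $\gamma$-twisted conjugation, and the residual $Z$-action factoring through $Z_\gamma$) is exactly the detail the paper leaves to the reader.
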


\begin{proof}
Use formula \eqref{e:2inverse construction} and note that 
$\tilde\Gamma_i\ltimes G^\circ=\tilde\Gamma_i\times_\Gamma G_0\,$.
\end{proof}

\subsubsection{The bijection $[\tilde\Gamma_1]\iso [\tilde\Gamma_2\,]$}
Set $G_{ad}:=G_1/Z=G_2/Z=G_0/Z$.
The isomorphism \eqref{e:2given_iso} induces an isomorphism $\BK^+_{G_1/Z}\iso \BK^+_{G_2/Z}$ or equivalently, an automorphism of $\BK^+_{G_{ad}}\,$. By Proposition~\ref{p:adjoint case}, the latter is equal to the identity. So our isomorphism $f:[G_1]\iso [G_2]$ is an isomorphism over
$[G_{ad}]$. 

For each $\gamma\in\Gamma$, Lemma~\ref{l:[G_i]_gamma} describes  $[G_i]_\gamma$ in terms of the $Z_\gamma$-torsor $[\tilde\Gamma_i]_\gamma\,$.

\begin{lem}  \label{l:phi_gamma}
For each $\gamma\in\Gamma$, the isomorphism $f_\gamma :[G_1]_\gamma\iso[G_2]_\gamma$ induced by $f$ comes from an isomorphism of $Z_\gamma$-torsors $\varphi_\gamma :[\tilde\Gamma_1]_\gamma\iso [\tilde\Gamma_2]_\gamma\,$. 
\end{lem}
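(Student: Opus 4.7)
By Lemma~\ref{l:[G_i]_gamma}, $[G_i]_\gamma$ is canonically the $[\tilde\Gamma_i]_\gamma$-twist of $[G_0]_\gamma$, and in particular projects to $[G_0]_\gamma/Z_\gamma=[G_{\rm ad}]_\gamma$. As already observed in \S\ref{sss:What is to be proved}, the isomorphism $\BK^+_{G_1}\iso\BK^+_{G_2}$ induces an automorphism of $\BK^+_{G_{\rm ad}}$ which must be trivial by Proposition~\ref{p:adjoint case}; thus $f_\gamma$ is a morphism over $[G_{\rm ad}]_\gamma$.

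The idea is to construct $\varphi_\gamma$ as the restriction of $f_\gamma$ over a canonically chosen point of $[G_{\rm ad}]_\gamma$. Since $G_{\rm ad}=G_i/Z$ is canonically $\Gamma\ltimes G^\circ_{\rm ad}$ (independent of $i\in\{1,2\}$ by \eqref{e:2inverse construction}) and comes with a canonical splitting $s\colon \Gamma\hookrightarrow G_{\rm ad}$, the point $\bar p_0:=[s(\gamma)]\in[G_{\rm ad}]_\gamma$ is intrinsic. Under the identification of Lemma~\ref{l:[G_i]_gamma}, the fiber of $[G_i]_\gamma\to[G_{\rm ad}]_\gamma$ over $\bar p_0$ is naturally the $Z_\gamma$-torsor $[\tilde\Gamma_i]_\gamma$ (embedded in $[G_i]_\gamma$ as the image of $\tilde\Gamma_i\hookrightarrow G_i$ modulo conjugation). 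Since $f_\gamma$ lies over $[G_{\rm ad}]_\gamma$, it carries this fiber in $[G_1]_\gamma$ to the analogous fiber in $[G_2]_\gamma$, producing a scheme isomorphism $\varphi_\gamma\colon[\tilde\Gamma_1]_\gamma\iso[\tilde\Gamma_2]_\gamma$.

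What remains is to upgrade this scheme isomorphism to one of $Z_\gamma$-torsors, i.e.\ to verify $Z_\gamma$-equivariance. The $Z_\gamma$-action on $[\tilde\Gamma_i]_\gamma$ is the restriction of the $Z$-action on $[G_i]_\gamma$ by left translation (equivalently right translation, by \S\ref{ss:[G]}), so it suffices to establish that $f$ itself is $Z$-equivariant as a morphism $[G_1]\iso[G_2]$. I would argue this from the input semiring isomorphism: for corresponding irreducible representations $V,V'$ of $G_1,G_2$ with $f^*\chi_{V'}=\chi_V$, the identification $G_1^\circ\iso G_2^\circ$ of \eqref{e:G_i^circ is same} yields $V|_{G_1^\circ}\cong V'|_{G_2^\circ}$ as $G^\circ$-modules, and hence the central endomorphism $V(z)$ attached to $z\in Z$ coincides with $V'(z)$ under this identification. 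Since characters separate points of $[G_i]$ and the $Z$-action on characters is determined by these central endomorphisms, one concludes that $f$ intertwines the two translation actions.

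\emph{Main obstacle.} The delicate point is the last step. The equality $f^*\chi_{V'}=\chi_V$ is only scalar (a matching of traces of lifted operators up to conjugation), while $Z$-equivariance asks for matching the full $Z_\gamma$-eigendecomposition of $\chi_V|_{[G_i]_\gamma}$ into eigenfunctions $a_\kappa$ indexed by central characters $\kappa$ of $V|_{G_i^\circ}$; these individual eigencomponents involve the trace of the $G_i$-action on the $G_i^\circ$-isotypic pieces of $V$, data that goes beyond $V|_{G_i^\circ}$ alone. Bridging this gap is where care is needed: the argument must exploit both the ring structure on $K(G_i)$ and the fact that $f^*$ restricts to the identity on the Grothendieck semiring of $G_i^\circ$-representations, so that the $Z$-grading on characters (detectable via multiplication) is transported correctly by $f^*$.
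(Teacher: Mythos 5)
Your reduction to a morphism over $[G_{ad}]_\gamma$ is the same as the paper's, but after that your route has a genuine gap, and you have named it yourself: you never prove the $Z_\gamma$-equivariance that turns the bijection on the special fiber into a torsor isomorphism (nor that $f_\gamma$ is globally the map induced by it). The character-theoretic argument you sketch does not close this: $f^*$ matching irreducible characters and restricting to the identity on $K^+(G^\circ)$ controls traces, not the $Z$-isotypic decomposition of $V$ as a $G_i^\circ$-module with its $\tilde\Gamma_i$-action, and in fact ``$f$ is $Z$-equivariant'' is essentially equivalent to the lemma itself, so this route makes no real progress. There is also a secondary unproved assertion: that the fiber of $[G_i]_\gamma\to[G_{ad}]_\gamma$ over $[s(\gamma)]$ is a \emph{full} $Z_\gamma$-torsor. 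Lemma~\ref{l:no kernel} only says the $Z_\gamma$-action on all of $[G_i]_\gamma$ has trivial kernel; freeness on that one special fiber amounts to the injectivity of $Z/(\gamma-1)Z\to T/(\gamma-1)T$, which is true (e.g.\ because $\gamma$ permutes the simple roots, so the root lattice is a permutation module for $\langle\gamma\rangle$ and the relevant $H^1$ vanishes) but requires an argument you do not give.

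The point you are missing, and which is how the paper's proof works, is that no equivariance has to be extracted from the semiring data at all: it comes for free from geometry. By Lemma~\ref{l:no kernel} the $Z_\gamma$-action on the irreducible scheme $[G_i]_\gamma$ has trivial kernel, hence is free on a dense open subset, so $[G_i]_\gamma\to[G_{ad}]_\gamma$ is a $Z_\gamma$-torsor over a dense open subset of $[G_{ad}]_\gamma$. Irreducibility of $[G_i]_\gamma$ then says that its generic fiber is the spectrum of a field $L_i$, Galois over $K=k([G_{ad}]_\gamma)$ with group $Z_\gamma$ acting by the torsor (translation) action. Choosing any isomorphism $[G_1]_\gamma\iso[G_2]_\gamma$ induced by some torsor isomorphism $[\tilde\Gamma_1]_\gamma\iso[\tilde\Gamma_2]_\gamma$ (these exist, since $Z_\gamma$-torsors over a point are isomorphic) and composing with $f_\gamma^{-1}$, one gets a $K$-automorphism of $L_1$; but $\Aut_K(L_1)=\Gal(L_1/K)=Z_\gamma$, i.e.\ it is a translation. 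Hence $f_\gamma$ agrees with a twist-induced isomorphism on a dense open subset, hence everywhere, which is exactly the assertion of the lemma — with no pointwise fiber analysis and no appeal to the ring structure of $K(G_i)$ beyond what was already used to put $f$ over $[G_{ad}]$.
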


\begin{proof}
$f_\gamma$ is an ismorphism over $[G_{ad}]_\gamma$. The GIT quotient of $[G_i]_\gamma$ by 
$Z_\gamma$ equals $[G_{ad}]_\gamma$. Moreover, by Lemma~\ref{l:no kernel}, the action of 
$Z_\gamma$ on $[G_i]_\gamma$ has trivial kernel, so the morphism 
$[G_i]_\gamma\to [G_{ad}]_\gamma$ is a $Z_\gamma$-torsor over a dense open subset of 
$[G_{ad}]_\gamma\,$. The lemma follows.
\end{proof}

The isomorphisms $\varphi_\gamma\,$ from Lemma~\ref{l:phi_gamma} define a bijection 
$\varphi :[\tilde\Gamma_1]\iso [\tilde\Gamma_2\,]$.

\begin{prop}   \label{p:compatibility}
$\varphi:[\tilde\Gamma_1]\iso [\tilde\Gamma_2\,]$ is compatible with the structures (a)-(d) from 
\S\ref{sss:[tildeGamma]}.
\end{prop}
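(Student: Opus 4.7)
The plan is to handle the four structures (a)–(d) in increasing order of difficulty, where parts (a)–(c) are essentially immediate from the construction and part (d) is the real content.

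Parts (a) and (c) follow directly from the fiberwise definition of $\varphi$ in Lemma~\ref{l:phi_gamma}. Since each $\varphi_\gamma$ is defined on $[\tilde\Gamma_i]_\gamma$, the map $\varphi$ automatically lives over $\Gamma$, giving (a). The $Z_\gamma$-torsor isomorphism property of $\varphi_\gamma$ translates directly to $Z$-equivariance on $[\tilde\Gamma_i]$, giving (c). For (b), the $\Gamma$-equivariance of $\varphi$ follows from the $\Gamma$-equivariance of $f$ together with the naturality of the identification in Lemma~\ref{l:[G_i]_gamma} under $\Gamma$-conjugation (conjugation by $\gamma'$ carries $[\tilde\Gamma_i]_\gamma$ to $[\tilde\Gamma_i]_{\gamma'\gamma\gamma'^{-1}}$ and $[G_i]_\gamma$ to $[G_i]_{\gamma'\gamma\gamma'^{-1}}$ compatibly). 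The $\BN$-equivariance follows from the $\BN$-equivariance of $f$ once one observes that the $n$-th power map $[\tilde\Gamma_i]_\gamma\to [\tilde\Gamma_i]_{\gamma^n}$ corresponds under Lemma~\ref{l:[G_i]_gamma} to the $n$-th power map $[G_i]_\gamma\to [G_i]_{\gamma^n}$ (both being induced by the $n$-th power on the semidirect product $\tilde\Gamma_i\ltimes G^\circ$).

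For (d), I first reduce to the case $\Gamma'=\Gamma$. Given a subgroup $\Gamma'\subset\Gamma$, consider the restriction $G_i\times_\Gamma\Gamma'\in\Pross_{\Gamma'}$ (which corresponds in $\Pross_{\Gamma'}^{\prime\prime}$ to the data $(\Gamma',\Delta,Z,\tilde\Gamma_i\times_\Gamma\Gamma')$), and the restricted isomorphism $f':[G_1\times_\Gamma\Gamma']\iso[G_2\times_\Gamma\Gamma']$. The hypothesis of Corollary~\ref{c:in terms of [G]} is preserved under this restriction: $f'$ is $(\Gamma'\times\BN)$-equivariant and preserves irreducible characters of $(G_i\times_\Gamma\Gamma')\times_{\Gamma'}U$ for all $U\subset\Gamma'$. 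The bijection $\varphi'$ it induces is just the restriction of $\varphi$, so compatibility of $\varphi$ with (d) for $(\Gamma,\Gamma')$ reduces to compatibility of $\varphi'$ with (d) for $(\Gamma',\Gamma')$.

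The heart of the argument, and the main obstacle, is then to show that when $\Gamma'=\Gamma$, the bijection $\varphi:\tilde\Gamma_1/A\iso\tilde\Gamma_2/A$ (an identification of central extensions of $\Gamma$ by $Z_\Gamma$, already compatible with the projections, with the $Z_\Gamma$-action, and with the $\Gamma$-action) is multiplicative. My approach is to exploit the character-preserving hypothesis by Clifford theory. Given a $\Gamma$-invariant dominant weight $\omega$ of $G^\circ$ whose restriction to $Z$ factors through $Z_\Gamma$, the associated irreducible representation of $G^\circ$ defines, via its various extensions, a torsor over $\Hom(\Gamma,E^\times)$ whose isomorphism class is the pushforward $f_*(\nu)\in H^2(\Gamma,E^\times)$ of the extension class $\nu\in H^2(\Gamma,Z)$; the set of irreducible characters of $G_i$ lying over $\omega$ is precisely this torsor. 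Since $f$ preserves irreducible characters and the $\omega$-isotypic components, the pushforwards of the extension classes $\nu_1,\nu_2$ agree for \emph{all} such characters $f:Z\to E^\times$, and by Lemma~\ref{l:minuscule} such $f$'s separate extension classes in $H^2(\Gamma,Z_\Gamma)$. To upgrade agreement of classes to multiplicativity of the specific map $\varphi$, I combine this with the already-established compatibility with (b): $\BN$-equivariance determines $\varphi$ on each cyclic subgroup of $\tilde\Gamma_i/A$ (since on cyclic subgroups, multiplication reduces to power operations), and the comparison of characters on $(\Gamma''\times\BN)$-conjugacy classes for every two-generated subgroup $\Gamma''\subset\Gamma$ pins down the commutator structure, forcing multiplicativity on all of $\tilde\Gamma_i/A$.
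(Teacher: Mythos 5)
Your treatment of (a)--(c) and the reduction of (d) to the case $\Gamma'=\Gamma$ agrees with the paper, and you have correctly identified the right ingredients for the remaining step (Lemma~\ref{l:minuscule} and the hypothesis that $f$ preserves irreducible characters). But the core of (d) --- multiplicativity of $\varphi$ --- is not established. First, the claim that the characters of $Z$ arising from Lemma~\ref{l:minuscule} ``separate extension classes in $H^2(\Gamma,Z_\Gamma)$'' is false in general: the map $H^2(\Gamma,Z)\to\prod_{\chi}H^2(\Gamma,E^\times)$ induced by all characters $\chi$ of $Z$ can have a large kernel. For instance, with trivial action, $\Gamma=(\BZ/2\BZ)^2$, $Z=\BZ/2\BZ$ gives $H^2(\Gamma,Z)\cong(\BZ/2\BZ)^3$ while $H^2(\Gamma,E^\times)\cong\BZ/2\BZ$. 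Moreover, even if the classes of $\tilde\Gamma_1$ and $\tilde\Gamma_2$ in $H^2$ did coincide, that would only show the two extensions are abstractly isomorphic, whereas the proposition is a statement about the \emph{specific} bijection $\varphi$. Second, the proposed upgrade --- that $\BN$-equivariance determines $\varphi$ on cyclic subgroups and that comparing characters over two-generated subgroups ``pins down the commutator structure'' --- is not an argument: nothing in it controls the deviation from multiplicativity, and handling non-cyclic subgroups is exactly where the difficulty lies (this is deferred in the paper to Proposition~\ref{p:coarse extensions}, which uses the special structure of automorphism groups of connected Dynkin diagrams).

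The mechanism that actually closes the gap, and the place where the cohomological condition in the definition of $\PROSS_\Gamma$ must be used, is different. By compatibility with (c) one writes $\varphi(\tilde\gamma\tilde\gamma')=z(\gamma,\gamma')\,\varphi(\tilde\gamma)\varphi(\tilde\gamma')$ with $z(\gamma,\gamma')\in Z$, and one must kill the \emph{element} $z(\gamma,\gamma')$, not a cohomology class. One first proves that $\chi\circ\varphi\in\Hom(\tilde\Gamma_1,\BG_m)$ for every $\chi\in\Hom(\tilde\Gamma_2,\BG_m)$: extend $\chi|_Z$ to a $\Gamma$-invariant dominant weight $\omega$ (Lemma~\ref{l:minuscule}), form the representation $\rho_{\chi,\omega}$ of $G_2$ extending $V_\omega$ with $\tilde\Gamma_2$ acting on the highest line by $\chi$, and use that $f$ carries its irreducible character to that of some $\rho_{\chi',\omega}$ of $G_1$; one checks $\chi'=\chi\circ\varphi$. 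Hence $\chi(z(\gamma,\gamma'))=1$ for all such $\chi$. Now the $\PROSS_\Gamma$ condition guarantees that \emph{every} character of $Z$ extends to a character of $\tilde\Gamma_2$, so $z(\gamma,\gamma')$ is killed by all characters of the (pro)finite abelian group $Z$ and therefore equals $1$. Pointwise separation of elements of $Z$ by its characters is what is available and suffices; the $H^2$-level separation your argument relies on does not hold, and your proposal never invokes the $\PROSS_\Gamma$ condition, without which the statement would not be provable by this route.
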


\begin{proof}
Compatibility with (a) and (c) is clear. Compatibility with (b) follows from $(\Gamma\times\BN )$-equivariance of the isomorphism $f:[G_1]\iso [G_2]$.

Let us prove compatibility with (d). It is here that we use the cohomological condition from the definition of
$\PROSS_\Gamma\,$.

Without loss of generality, we can assume that the subgroup 
$\Gamma'$ from (d) equals $\Gamma$. We can also assume that the action of $\Gamma$ on $Z$ is trivial (otherwise replace $G_i$ by $G_i/A$, where $A:=\Ker (Z\epi Z_\Gamma$). Under these assumptions, 
$\varphi$ is a bijection $\tilde\Gamma_1\iso\tilde\Gamma_2$, and  we have to prove that this bijection is a group isomorphism.

$\varphi$ is compatible with (c), so for every $\gamma,\gamma'\in\Gamma_1$ there exists $z(\gamma,\gamma')\in Z$ such that
\[
\varphi (\tilde\gamma\cdot \tilde\gamma')=z(\gamma,\gamma')\cdot\varphi (\tilde\gamma)\varphi (\tilde\gamma') 
\]
for all $\tilde\gamma,\tilde\gamma'\in\tilde\Gamma_1$ such that $\tilde\gamma\mapsto\gamma$ and $\tilde\gamma'\mapsto\gamma'$. The goal is to show that $z(\gamma ,\gamma')=1$. 

We will prove that
\begin{equation}   \label{e:chars to chars}
\mbox{if } \chi\in\Hom (\tilde\Gamma_2\, ,\BG_m)\mbox{ then } \chi\circ\varphi\in\Hom (\tilde\Gamma_1\, ,\BG_m).
\end{equation}
 This will imply that $\chi (z(\gamma,\gamma'))=1$ for any 
$\chi\in\Hom (\tilde\Gamma_2\, ,\BG_m)$. By the cohomological condition from the definition of $\PROSS_\Gamma$ (see \S\ref{sss:form_thm}), any homomorphism $Z\to\BG_m$ can be extended to a homomorphism  $\tilde\Gamma_2\to\BG_m\,$. So $z(\gamma ,\gamma')$ is killed by all characters of $Z$, which means that $z(\gamma ,\gamma')=1$. 

Thus it remains to prove \eqref{e:chars to chars}. Let $\chi\in\Hom (\tilde\Gamma_2\, ,\BG_m)$. 
By Lemma~\ref{l:minuscule}, the restriction of $\chi$ to $Z$ can be extended to a $\Gamma$-invariant dominant weight $\omega$ of the maximal torus $T\subset G^\circ$. 
Let $V_\omega$ denote the irreducible $G^\circ$-module with highest weight $\omega$ and 
$l_\omega\subset V_\omega\,$ the highest line. There is a unique way to extend the $G^\circ$-action on $V_\omega$ to an action of $G_2$ in the same vector space so that the action of the subgroup 
$\tilde\Gamma_2\subset G_2$ on $l_\omega$ is given by $\chi\in\Hom (\tilde\Gamma_2\, ,\BG_m)$. This representation of $G_2$ will be denoted by $\rho_{\chi ,\omega}\,$.

Recall that $f:[G_1]\iso [G_2]$ transforms irreducible characters of $G_2$ to those of $G_1\,$. In particular,  $f$ transforms the character of $\rho_{\chi ,\omega}$ to the character of some irreducible representation 
$\rho$ of $G_1\,$. The restriction of $\rho$ to $G_1^\circ$ is isomorphic to $V_\omega\,$, so $\rho$ has to be isomorphic to $\rho_{\chi' ,\omega}$ for some $\chi'\in\Hom (\tilde\Gamma_1\, ,\BG_m)$. It is easy to see that $\chi\circ\varphi=\chi'$. So $\chi\circ\varphi\in\Hom (\tilde\Gamma_1\, ,\BG_m)$, which proves \eqref{e:chars to chars}.
\end{proof}

Proposition~\ref{p:compatibility} reduces the proof of Theorem~\ref{t:variant of KLV} to the following

\begin{prop}  \label{p:coarse extensions}
Any bijection $\varphi:[\tilde\Gamma_1]\iso [\tilde\Gamma_2\,]$ compatible with the structures (a)-(d) from \S\ref{sss:[tildeGamma]} comes from an isomorphism between the extensions \eqref{e:the two extensions}. The latter is unique up to composing with conjugations by elements of $Z$.
\end{prop}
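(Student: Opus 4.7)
Translate into cohomology via $2$-cocycles: choose sections $s_i\colon\Gamma\to\tilde\Gamma_i$ of \eqref{e:the two extensions} with $\varphi([s_1(\gamma)])=[s_2(\gamma)]$ in $[\tilde\Gamma_2]_\gamma$ (possible by (a), (c)), and let $c_i\in Z^2(\Gamma,Z)$ be the resulting $2$-cocycles. An extension-isomorphism $\phi\colon\tilde\Gamma_1\iso\tilde\Gamma_2$ lifting $\varphi$ corresponds to a $1$-cochain $\beta\colon\Gamma\to Z$ with $\phi(s_1(\gamma))=\beta(\gamma)s_2(\gamma)$, subject to $\delta\beta=c_1-c_2$ (for $\phi$ to be a group homomorphism) and $\beta(\gamma)\in(\gamma-1)Z$ for every $\gamma$ (for $\phi$ to actually descend to $\varphi$, since the stabilizer of any point in the torsor $[\tilde\Gamma_2]_\gamma$ under the left $Z$-action is exactly $(\gamma-1)Z$). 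Two such $\beta$'s are $Z$-conjugate iff their difference is a $1$-coboundary, so uniqueness modulo $Z$-conjugation reduces to a vanishing in $H^1(\Gamma,Z)$.

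\emph{Existence.} Compatibility of $\varphi$ with (c) and with (d) for $\Gamma'=\{1\}$ (or equivalently (b), (c) applied to the unit) gives $\varphi|_{[\tilde\Gamma_i]_1}=\id_Z$. For $\gamma\in\Gamma$ of order $n$ and a lift $\tilde\gamma\in\tilde\Gamma_1$, structure (b) then yields $\varphi(\tilde\gamma)^n=\tilde\gamma^n$ in $Z$; since the class in $H^2(\langle\gamma\rangle,Z)\cong Z^{\langle\gamma\rangle}/N_\gamma Z$ of the cyclic extension $\tilde\Gamma_i|_{\langle\gamma\rangle}$ is represented by the $n$-th power of any lift of $\gamma$, this gives $[c_1]|_{\langle\gamma\rangle}=[c_2]|_{\langle\gamma\rangle}$ in $H^2(\langle\gamma\rangle,Z)$. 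Structure (d) applied to each $\Gamma'\subset\Gamma$ supplies the further coherence needed so that a global trivializing cochain $\beta$ with $\beta(\gamma)\in(\gamma-1)Z$ can be assembled. To carry out this assembly, decompose $Z$ according to the $\Gamma$-orbits on the simple factors of $G^\circ$ and apply Shapiro's lemma: each summand is induced from the stabilizer of one simple factor, whose action on the corresponding center factors through the automorphism group of a connected Dynkin diagram — cyclic or $S_3$, and in particular with cyclic Sylow subgroups. The $H^2$-analogue of Lemma~\ref{l:cyclic_Sylow} (Proposition~\ref{p:2coarse extensions} of Appendix~\ref{s:Zassenhaus}) then produces the required global $\beta$.

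\emph{Uniqueness.} Two extension-isomorphisms $\phi_1,\phi_2$ lifting $\varphi$ differ by a $1$-cocycle $u\in Z^1(\Gamma,Z)$, and $Z$-conjugacy is equivalent to $u\in B^1(\Gamma,Z)$. Since $\phi_1(\tilde\gamma)$ and $\phi_2(\tilde\gamma)$ both lift $\varphi([\tilde\gamma])\in[\tilde\Gamma_2]$, they lie in a common $Z$-conjugation orbit, which for any lift of $\gamma\in\Gamma$ is a single $(\gamma-1)Z$-coset; hence $u(\gamma)\in(\gamma-1)Z$, i.e.\ $u|_{\langle\gamma\rangle}\in B^1(\langle\gamma\rangle,Z)$ for every cyclic $\langle\gamma\rangle\subset\Gamma$. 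Lemma~\ref{l:cyclic_Sylow} (after the same Shapiro reduction as above) then forces $[u]=0$ in $H^1(\Gamma,Z)$. The chief obstacle in the whole proof is the existence step: structure (b) controls cyclic restrictions in $H^2(\langle\gamma\rangle,Z)$ but says nothing about the global class, while structure (d) only controls the global class in the coarser group $H^2(\Gamma,Z_\Gamma)$; it is precisely the cyclic-Sylow property of Dynkin automorphism groups that allows one to combine these partial inputs into a genuine trivialization in $H^2(\Gamma,Z)$ with the correct cyclic-coboundary behaviour.
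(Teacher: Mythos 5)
Your translation into cochains and your uniqueness argument are fine, and the uniqueness half coincides with the paper's: the difference of two lifts is a $1$-cocycle $u$ with $u(\gamma)\in(\gamma-1)Z$ for all $\gamma$, and after the Shapiro reduction to connected $\Delta$ Lemma~\ref{l:cyclic_Sylow} kills it. The existence half, however, has a genuine gap. You compress structures (b) and (d) into two cohomological facts --- the restrictions of $[c_1-c_2]$ to cyclic subgroups vanish, and (d) controls classes in the coarser groups $H^2(\Gamma',Z_{\Gamma'})$ --- and then assert that an ``$H^2$-analogue of Lemma~\ref{l:cyclic_Sylow}'' assembles a global $\beta$ with $\delta\beta=c_1-c_2$ and $\beta(\gamma)\in(\gamma-1)Z$. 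No such analogue exists in the generality you need: the cyclic-Sylow hypothesis constrains only $\im(\Gamma\to\Aut Z)$, while $\Gamma_0:=\Ker(\Gamma\to\Aut Z)$ can be arbitrary. Already for $\Gamma=(\BZ/p\BZ)^2$ acting trivially on $Z=\BZ/p\BZ$ ($p$ odd) there are nonzero classes in $H^2(\Gamma,Z)$ (e.g.\ the Heisenberg extension) whose restriction to every cyclic subgroup vanishes; so ``cyclic restrictions agree'' plus cyclic Sylow subgroups of the image cannot by themselves produce $\beta$. What saves the day is exactly the information your distillation discards: over $\Gamma_0$ the conjugation action of $Z$ on the fibers of $[\tilde\Gamma_i]\to\Gamma$ is trivial, so there $\varphi$ is, by (d), an honest isomorphism of extensions; the paper subtracts the two extensions, uses (d) for $\Gamma'=\Gamma_0$ and for the subgroups generated by $\Gamma_0$ and one element to split off $\Gamma_0$ and descend to $\Gamma/\Gamma_0\subset\Aut\Delta$, and only then exploits the dichotomy: if $\Gamma/\Gamma_0$ is cyclic, structure (b) is used at the level of elements of the torsors (the section sends $\sigma^n$ to the image of $u^n$, whence $u^m=1$ and a splitting outright --- strictly more than equality of restricted $H^2$-classes), and the only non-cyclic case is $\Delta$ of type $D_4$ with $\Gamma/\Gamma_0=S_3$, where $H^1=H^2=0$ explicitly. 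Your proposal contains no counterpart of this reduction, and the sentence ``structure (d) supplies the further coherence needed'' is precisely the step that is never carried out.

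Concerning your appeal to Proposition~\ref{p:2coarse extensions}: quoting it is legitimate (the paper notes after its own proof that the proposition follows from it), but then the correct derivation is short and makes your cocycle bookkeeping superfluous: $\varphi$ is by definition an isomorphism in the groupoid of \S\ref{sss:coarse}; after the Shapiro reduction to connected $\Delta$, the group $\Gamma/\Ker(\Gamma\to\Aut Z)$ embeds into the automorphism group of a connected Dynkin diagram, hence has cyclic Sylow subgroups, and full faithfulness in part (a) of Proposition~\ref{p:2coarse extensions} gives both existence and uniqueness. As written, though, you neither perform this clean reduction nor prove the assembly step yourself, and describing Proposition~\ref{p:2coarse extensions} as an ``$H^2$-analogue of Lemma~\ref{l:cyclic_Sylow}'' misrepresents its content: its proof is not a descent of vanishing from cyclic subgroups but requires reducing to trivial $\Gamma_0$ via an $H^3$-obstruction argument, the stable-element Lemma~\ref{l:coh lemma}, and the cyclic case treated through $\BN$-equivariance (Remark~\ref{r:cyclic case}).
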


\begin{proof}
By the definition of $\PROSS_\Gamma$ (see \S\ref{sss:form_thm}), the group 
$G^\circ=G_1^\circ=G_2^\circ$ is a product of almost-simple groups. So by Shapiro's lemma, we are reduced to the case that $G^\circ$ is almost-simple, which means that $\Delta$ is connected.

Now the uniqueness statement of the proposition follows from  Lemma~\ref{l:cyclic_Sylow} applied to the action of $\Gamma$ on $Z$.

Let us prove existence. Subtracting the two extensions of $\Gamma$ by $Z$, we can assume that the second extension is trivial and trivialized. Then the bijection 
$\varphi^{-1}:[\tilde\Gamma_2]\iso [\tilde\Gamma_1]$ yields a section
$s:\Gamma\to [\tilde\Gamma_1]$ with the following properties:

(i) $s$ is $(\Gamma\times\BN )$-equivariant;

(ii) for any subgroup $\Gamma'\subset\Gamma$, the map $\Gamma'\to (\tilde\Gamma_1\times_\Gamma\Gamma')/\Ker (Z\epi Z_{\Gamma'})$ corresponding to $s$ is a group homomorphism.

The problem is to lift $s$ to a group homomorphism ${\bf s}:\Gamma\to\tilde\Gamma_1\,$.

Let us now reduce the problem to the case where the homomorphism $\Gamma\to\Aut\Delta$ is injective.
Set $\Gamma_0:=\Ker (\Gamma\to\Aut\Delta)$. Restricting $s$ to $\Gamma_0$, we get a map $s_0:\Gamma_0\to\tilde\Gamma_1\,$. Applying property (ii) for $\Gamma'=\Gamma_0\,$, we see that $s_0$ is a homomorphism. By property (i), $s_0$ is $\Gamma$-equivariant, so the subgroup $s_0(\Gamma_0)\subset\Gamma$ is normal. Moreover,
\begin{equation}   \label{e:Gamma_0-equivariance}
s(\gamma_0\gamma )=s_0(\gamma_0)\cdot s(\gamma) \quad\mbox{ for }\gamma_0\in\Gamma_0\, ;
\end{equation}
to see this, apply property (ii) for $\Gamma'$ being the subgroup generated by $\gamma$ and 
$\Gamma_0\,$.

The extension $0\to Z\to\tilde\Gamma_1\to\Gamma\to 0$ is induced from the extension
$$0\to Z\to\tilde\Gamma_1/s(\Gamma_0)\to\Gamma/\Gamma_0\to 0.$$
Let $[\tilde\Gamma_1/s(\Gamma_0)]]$ denote the quotient set of $\tilde\Gamma_1/s(\Gamma_0)$ by the conjugation action of $Z$. By \eqref{e:Gamma_0-equivariance}, the composition 
$\Gamma\overset{s}\longrightarrow[\tilde\Gamma_1]\to [\tilde\Gamma_1/s(\Gamma_0)]$ factors as
$\Gamma\epi\Gamma/\Gamma_0\overset{s'}\longrightarrow[\tilde\Gamma_1/s(\Gamma_0)]$. The map $s'$ has properties similar to the above-properties (i)-(ii), and lifting $s$ to
${\bf s}:\Gamma\to\tilde\Gamma_1$ reduces to a similar problem for $s'$.

Thus we can assume that the homomorphism $\Gamma\to\Aut\Delta$ is injective. Let us consider two cases.

a) Assume that $\Gamma$ is cyclic. Choose a generator $\sigma\in\Gamma$ and a lift of $s(\sigma )\in [\tilde\Gamma_1]$ to an element $u\in\tilde\Gamma_1$. Since $s$ is $\BN$-equivariant, $s(\sigma^n)$ is equal to the image of $u^n$ in $\tilde\Gamma_1\,$. Applying property (ii) for 
$\Gamma'=\{ 1\}$, we see that $s(1)=1$. So if $\sigma$ has order $m$ then $u^m=s(\sigma^m )=s(1)=1$.
Now it is clear that we can define ${\bf s}$ by ${\bf s}(\sigma^n):=u^n$.

b) Assume that $\Gamma$ is not cyclic. Since $\Gamma\subset\Aut\Delta$ and $\Delta$ is connected, we see that $\Delta$ has type $D_4$ and $\Gamma=\Aut\Delta=S_3\,$. Moreover, the $S_3$-module $Z$ is either zero or isomorphic to the $S_3$-module $(\BF_2)^3/\BF_2\,$. Then $H^2(\Gamma ,Z)=H^1(\Gamma ,Z)=0$, so our extension 
$$0\to Z\to\tilde\Gamma_1\to\Gamma\to 0$$
has a splitting ${\bf s}:\Gamma\to\tilde\Gamma_1\,$, which is unique up to $Z$-conjugation. 
Let $s'$ denote the composition 
$$\Gamma\overset{\bf s}\longrightarrow\tilde\Gamma_1\to [\tilde\Gamma_1].$$
We have to show that $s=s'$. It suffices to check that for any cyclic subgroup $C\subset\Gamma$ one has $s|_C=s'|_C\,$. Both $s$ and $s'$ come from splittings $C\to\tilde\Gamma$: for $s'$ this is clear and for $s$ this follows from case a) considered above, Finally, a splitting $C\to\tilde\Gamma$ is unique up to $Z$-conjugation because $H^1(\Gamma ,Z)=0$.
\end{proof}

Let us note that Proposition~\ref{p:coarse extensions} immediately follows from the more general and natural Proposition~\ref{p:2coarse extensions} formulated in Appendix~\ref{s:Zassenhaus}.

\section{Proof of Theorem~\ref{t:main}}   \label{s:main}
Just as in Theorem~\ref{t:main}, let $X$ denote an irreducible smooth variety over $\BF_p \,$. We will use the symbols $\lambda,\lambda'$ to denote non-Archimedean places of $\BQbar$ not dividing $p$ (no relation to the word ``lambda-semiring"!).

In \S\ref{ss:LaDr} we recall the $\lambda$-independence results from \cite{La,De2,Dr}.
In \S\ref{ss:main in terms of K} we reformulate Theorem~\ref{t:main} using the language of \S\ref{s:KLV}.
In \S\ref{ss:curves}-\ref{ss:arbitrary curves} we prove Theorem~\ref{t:main} for curves by combining \S\ref{ss:LaDr} with the main result of \S\ref{s:KLV} and a well known cohomological property of smooth affine curves over $\BF_p$\,.
In \S\ref{ss:general} we prove Theorem~\ref{t:main} in general; the general case is reduced to that of curves using Hilbert irreducibility.

\subsection{Recollections on existence of companions}  \label{ss:LaDr}
In \S\ref{sss:rep hat-Pi-lambda} we defined the Tannakian category $\cT_\lambda (X)$ for each non-Archimedean place $\lambda$ of $\BQbar$ coprime to $p$. Recall that $\cT_\lambda (X)$ is the tensor category of semisimple $\lambda$-adic representations of $\Pi$ with the following property: the determinant of each irreducible component has finite order. We will use the following facts from \cite{La,De2,Dr}.

\begin{theorem}  \label{t:Langl}
Let $\lambda$ be a non-Archimedean place of $\BQbar$ coprime to $p$, and let $\E\in\cT_\lambda (X)$ . Then

(i) for each $x\in |X|$, the polynomial $\det (1-t\cdot F_x\, ,\E )$ belongs to $\BQbar [t]$;

(ii) for every non-Archimedean place $\lambda'$ of $\BQbar$  coprime to $p$, there exists $\E'\in\cT_{\lambda'} (X)$ such that
\[
\det (1-t\cdot F_x\, ,\E )=\det (1-t\cdot F_x\, ,\E' ) \quad\mbox{ for all } x\in |X|.
\]
\end{theorem}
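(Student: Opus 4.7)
The plan is to reduce to the case where $\E$ is irreducible (hence has finite-order determinant, by definition of $\cT_\lambda(X)$) and then split the argument along $\dim X = 1$ versus $\dim X > 1$. Semisimplicity of $\cT_\lambda(X)$ reduces both (i) and (ii) to irreducibles: in (i) the characteristic polynomial factors as a product over the irreducible constituents, and in (ii) one assembles an $\E'$ by taking a direct sum of companions of each constituent. So from now on assume $\E$ is an irreducible object of $\cT_\lambda(X)$.

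For curves, i.e.\ the case $\dim X = 1$, I would invoke L.~Lafforgue's proof of the Langlands correspondence for $GL_n$ over function fields \cite{La}. That correspondence matches each such irreducible $\E$ with a cuspidal automorphic representation $\pi$ of $GL_n(\BA_F)$ whose local Satake parameters at each unramified place $x$ compute $\det(1-t\cdot F_x, \E)$. Since $\pi$ is defined over a number field (its Hecke eigenvalues generate a number field $E_\pi \subset \BQbar$), one immediately gets (i): the polynomial $\det(1-t\cdot F_x, \E)$ lies in $E_\pi[t]\subset\BQbar[t]$. For (ii), choose any non-Archimedean place $\lambda'$ of $\BQbar$ not dividing $p$; the Langlands correspondence applied to $\pi$ with the $\lambda'$-adic realization now produces an irreducible $\lambda'$-adic local system $\E'$ on $X$ whose Frobenius characteristic polynomials agree with those of $\E$ at every closed point, and $\E'\in\cT_{\lambda'}(X)$ (the finite-order determinant condition transfers because $\det\E$ corresponds via class field theory to a finite-order Hecke character of $\pi$).

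For the higher-dimensional case, $\dim X > 1$, I would appeal to the main theorem of \cite{Dr}, which takes as input the curve case of (i)--(ii) and outputs the same statements on any smooth variety. The strategy of \cite{Dr} is roughly: given $\E$ on $X$, consider its restrictions to all smooth curves $C \hookrightarrow X$ (or to curves mapping to $X$); the companion local systems $\E'_C \in \cT_{\lambda'}(C)$ exist by the curve case, and a gluing/descent argument, controlled by a suitable density of curves (e.g.\ via Bertini together with a Hilbert-irreducibility type input), produces the global $\E'$ on $X$. Statement (i) in dimension $> 1$ likewise follows by testing characteristic polynomials on curves passing through each closed point.

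The principal obstacle in the write-up is the higher-dimensional half: producing the companion on all of $X$ rather than separately on individual curves requires exactly the compatibility/descent encoded in Deligne's conjecture, which is the hard theorem of \cite{Dr}. In the curve case the real content is Lafforgue's Langlands correspondence; the point is that, once those two black boxes are in place, the deduction of Theorem~\ref{t:Langl} as stated is essentially formal, given the semisimple reduction above and the definition of $\cT_\lambda(X)$ in \S\ref{sss:rep hat-Pi-lambda}.
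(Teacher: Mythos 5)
Your proposal is correct and takes essentially the same route as the paper: the paper proves this theorem purely by attribution, quoting \cite[Thm.~VII.7(i)]{La} for (i) and obtaining (ii) as a combination of \cite[Thm.~VII.7(ii)]{La}, \cite[Thm.~3.1]{De2} and \cite[Thm.~1.1]{Dr}, which is the same semisimple dévissage plus Lafforgue-for-curves plus Drinfeld-for-higher-dimension that you describe. Two small points of bookkeeping: the paper also records Deligne's finiteness theorem \cite{De2} among the inputs to (ii), and your gloss of (i) in dimension $>1$ by ``testing on curves'' would need care, since the finite-order-determinant condition defining $\cT_\lambda$ is not stable under pullback to curves (cf.\ \S\ref{sss:variant of hatPi}); in the paper (i) is instead taken directly from \cite[Thm.~VII.7(i)]{La}, which is stated for normal varieties of any dimension, so nothing is lost at the black-box level.
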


Statement (i) is \cite[Thm.~VII.7(i)]{La}. Statement (ii) is a combination of \cite[Thm.~VII.7(ii)]{La}, \cite[Thm.~3.1]{De2} ,  and  \cite[Thm.~1.1]{Dr}. These results were deduced in \cite{La,De2,Dr} from the main theorem of~\cite{La} (namely, the Langlands conjecture for $GL(n)$ over function fields). The latter now has a shorter proof, see~\cite{VLa2}.

\begin{rem}
In the proof of \cite[Thm.~1.1]{Dr} it is essential that $X$ is assumed smooth. According to \cite[Conjecture~1.2.10(v)]{De}, the result should remain valid if $X$ is only assumed normal.
\end{rem}

In the situation of Theorem~\ref{t:Langl}(ii) we say that $\E$ and $\E'$ are \emph{compatible} or that  $\E'$ is a \emph{$\lambda'$-adic companion\,} of $\E$. By \v {C}ebotarev density, $\E'$ is unique up to isomorphism. Note that $\E'$ is irreducible if and only if $\E$ is:
the ``only if" statement follows from Theorem~\ref{t:Langl}(ii), and the ``if" statement follows by symmetry.

By \S\ref{sss:rep hat-Pi-lambda}, the lambda-semiring $K^+(\hat\Pi_{(\lambda )} )=K^+(\hat\Pi_\lambda )$ identifies with the Grothendieck semiring of $\cT_\lambda (X)$. Associating to a $\lambda$-adic representation of $\Pi$ the restriction of its character to $\Pi_{\Fr}\subset\Pi$, we realize $K^+(\hat\Pi_{(\lambda )} )$ as a lambda-subsemiring of the algebra of functions $\Pi_{\Fr}\to \BQbar_\lambda$ invariant under $\Pi$-conjugation.\footnote{Functions $\Pi_{\Fr}\to \BQbar_\lambda$ form a lambda-ring. The corresponding Adams operation $\psi^n$ is defined by $(\psi^n f)(g):=f(g^n)$.}

\begin{cor}     \label{c:Langl}
i) $K^+(\hat\Pi_{(\lambda )} )$ is a lambda-subsemiring of the algebra of functions $\Pi_{\Fr}\to \BQbar$. 

(ii) This subsemiring does not depend on $\lambda$.
\end{cor}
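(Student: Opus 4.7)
The plan is to derive both parts directly from Theorem~\ref{t:Langl} by unpacking the identification, already given in the preamble, of $K^+(\hat\Pi_{(\lambda)})$ with a lambda-subsemiring of the $\BQbar_\lambda$-valued class functions on $\Pi_{\Fr}$.

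For (i), by additivity and the fact that $\cT_\lambda(X)$ is semisimple, it suffices to check that for each \emph{irreducible} $\E\in\cT_\lambda(X)$ the restriction of $\chi_\E$ to $\Pi_{\Fr}$ takes values in $\BQbar$. If $\tilde x\in|\tilde X|$ lies over $x\in|X|$ and $g=F_{\tilde x}^n\in\Pi_{\Fr}$, then $\chi_\E(g)=\Tr(F_x^n,\E)=\sum_i\alpha_i^n$, where the $\alpha_i\in\BQbar_\lambda$ are the roots of $\det(1-tF_x,\E)$. By Theorem~\ref{t:Langl}(i) this polynomial has coefficients in $\BQbar$, so its roots lie in $\BQbar\subset\BQbar_\lambda$, and hence so do their power sums. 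That the image is a \emph{lambda}-subsemiring of functions $\Pi_{\Fr}\to\BQbar$ (and not merely a subsemiring) is automatic once the values are in $\BQbar$: the lambda-structure on functions $\Pi_{\Fr}\to\BQbar$ is determined by the Adams operations $\psi^n f(g)=f(g^n)$ (which make sense since $\Pi_{\Fr}$ is closed under taking $n$-th powers), the lambda-structure on functions $\Pi_{\Fr}\to\BQbar_\lambda$ is defined by the same formula, and the inclusion $\BQbar\mono\BQbar_\lambda$ is a morphism of lambda-rings.

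For (ii), let $\lambda,\lambda'$ be two non-Archimedean places of $\BQbar$ coprime to $p$. For any irreducible $\E\in\cT_\lambda(X)$, Theorem~\ref{t:Langl}(ii) produces a companion $\E'\in\cT_{\lambda'}(X)$ satisfying $\det(1-tF_x,\E)=\det(1-tF_x,\E')$ for every $x\in|X|$. By Newton's identities this equality of characteristic polynomials implies $\Tr(F_x^n,\E)=\Tr(F_x^n,\E')$ for every $n\in\BN$, i.e.\ $\chi_\E$ and $\chi_{\E'}$ have the same restriction to $\Pi_{\Fr}$. Summing over isomorphism classes of irreducibles, the subsemiring of $\{f\colon\Pi_{\Fr}\to\BQbar\}$ obtained from $\cT_\lambda(X)$ is contained in the one obtained from $\cT_{\lambda'}(X)$; interchanging the roles of $\lambda$ and $\lambda'$ gives equality, and compatibility of the companion correspondence with direct sums, tensor products, and exterior powers (all of which are encoded in the Frobenius characteristic polynomials via Newton's identities) shows that the two lambda-subsemirings coincide.

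There is no substantive obstacle: the corollary is essentially a translation of Theorem~\ref{t:Langl} into the lambda-semiring language of \S\ref{s:KLV}. The only points requiring (trivial) verification are that the character-valued functions on $\Pi_{\Fr}$ are recovered from the Frobenius characteristic polynomials by Newton's identities, and that the map $\BQbar\mono\BQbar_\lambda$ is compatible with the Adams (equivalently, lambda) structures on both sides.
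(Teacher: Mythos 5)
Your proposal is correct and is exactly the paper's argument: the paper sets up the identification of $K^+(\hat\Pi_{(\lambda)})$ with a lambda-subsemiring of class functions on $\Pi_{\Fr}$ in the paragraph preceding the corollary and then proves the corollary by the single line ``Follows from Theorem~\ref{t:Langl}'', which your write-up merely unpacks (algebraicity of the Frobenius polynomials for (i), companions plus Newton's identities and density of $\Pi_{\Fr}$ for (ii)). No gaps; the routine verifications you flag (Adams operations $\psi^n f(g)=f(g^n)$ computing characters of $\psi^n$, and $\BQbar\mono\BQbar_\lambda$ respecting the lambda-structure) are precisely what the paper leaves implicit.
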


\begin{proof}
Follows from Theorem~\ref{t:Langl}.
\end{proof}

\begin{rem}  \label{r:from Pi to U}
Let $U\subset\Pi$ be an open subgroup. Then $U_{\Fr}=\Pi_{\Fr}\cap U$. Applying Corollary~\ref{c:Langl} to $\tilde X/U$ instead of $X$, one identifies the lambda-ring
\begin{equation}  \label{e:from Pi to U}
K^+(\hat\Pi_{(\lambda )}\times_\Pi U )
\end{equation}
with a subsemiring of the algebra of functions $\Pi_{\Fr}\cap U\to \BQbar$ invariant under $U$-conjugation; moreover, this subsemiring does not depend on $\lambda$. 
\end{rem}

\subsection{Reformulating Theorem~\ref{t:main} in terms of \S\ref{s:KLV}}   \label{ss:main in terms of K}
To any pro-finite group $\Gamma$ we associated in \S\ref{sss:ProssGamma} a groupoid $\Pross_\Gamma\,$; it depends on the choice of an algebraically closed ground field $E$ of charactersitic 0.
Now take $\Gamma =\Pi$ and $E=\BQbar$. Then $\hat\Pi_{(\lambda )}$ is an object of $\Pross_\Pi\,$.

In \S\ref{sss:BK} we defined a functor
\begin{equation}  \label{e:3BK}
\Pross_\Pi\to \Lambda^+_\Pi \, , \quad G\mapsto \BK^+_G\, .
\end{equation}
In particular, for $G=\hat\Pi_{(\lambda )}$ we get $\BK^+_{\hat\Pi_{(\lambda )}}\,$.
According to the definition from \S\ref{sss:BK}, $\BK^+_{\hat\Pi_{(\lambda )}}$ is the following collection of data: the lambda-rings \eqref{e:from Pi to U} for all open subgroups $U\subset\Pi$, the ``natural" homomorphisms between them, and the homomorphisms $ K^+(U)\to K^+(\hat\Pi_{(\lambda )}\times_\Pi U )$. By Remark~\ref{r:from Pi to U}, these data do not depend on $\lambda$. So given $\lambda$ and $\lambda'$ we have a canonical isomorphism
\begin{equation}    \label{e:iso between BKs}
\BK^+_{\hat\Pi_{(\lambda )}}\iso\BK^+_{\hat\Pi_{(\lambda' )}}
\end{equation}
in the category $\Lambda^+_\Pi$ defined in \S\ref{sss:BK}.

Now our main Theorem~\ref{t:main} can be reformulated as follows.

\begin{theorem}  \label{t:2main}
There exists a unique isomorphism 
\begin{equation}   \label{e:iso to construct}
\hat\Pi_{(\lambda )}\iso \hat\Pi_{(\lambda' )}
\end{equation}
in the category $\Pross_\Pi$ inducing the isomorphism \eqref{e:iso between BKs}.
 \end{theorem}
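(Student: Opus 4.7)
The plan is to deduce Theorem \ref{t:2main} from the reconstruction Theorem \ref{t:variant of KLV}(ii) applied to $\Gamma = \Pi$. The required isomorphism \eqref{e:iso between BKs} in $\Lambda^+_\Pi$ is already at hand via Remark \ref{r:from Pi to U} and the existence of companions (Theorem \ref{t:Langl}), so Theorem \ref{t:variant of KLV}(ii) will produce a unique $\Pross_\Pi$-isomorphism \eqref{e:iso to construct} as soon as one verifies that both $\hat\Pi_{(\lambda)}$ and $\hat\Pi_{(\lambda')}$ lie in the full subcategory $\PROSS_\Pi$.

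The inclusion $\hat\Pi_{(\lambda)}\in\Pross_\Pi^{prod}$ is immediate from Proposition \ref{p:2simply connected}: the neutral component is simply connected, hence a product of almost-simple algebraic groups. The remaining task is the cohomological condition defining $\PROSS_\Pi$: for every open $U\subset\Pi$ and every $U$-fixed character $f:Z\to E^\times$ of the center $Z$ of $\hat\Pi_{(\lambda)}^\circ$, the class $f_*(\nu|_U)\in H^2(U,E^\times)$ must vanish. When $\dim X=1$ this is automatic: for $\Pi$ the fundamental group of a smooth curve over $\BF_p$ one has $H^2(U,\BQ/\BZ)=0$ for every open $U\subset\Pi$, so by the remark following Theorem \ref{t:variant of KLV} we get $H^2(U,E^\times)\simeq H^2(U,\BQ/\BZ)=H^3(U,\BZ)=0$, whence $\PROSS_\Pi=\Pross_\Pi^{prod}$ holds identically.

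For general smooth $X$ with $\dim X>1$ the strict cohomological dimension bound fails, and the cohomological condition has to be verified by reduction to the curve case. The plan is to invoke Hilbert irreducibility in its geometric form over $\BF_p$: one produces a family of smooth curves $C$ (over finite extensions of $\BF_p$) equipped with morphisms $C\to X$ such that the induced maps $\pi_1(C)\epi\Pi\times_{\Pi}U$ are surjective for enough choices that the joint restriction maps in $H^2(-,\BQ/\BZ)$ detect the obstruction class. For each such curve the one-dimensional case applies and forces $f_*(\nu|_U)$ to pull back to zero; assembling this vanishing across the family yields $f_*(\nu|_U)=0$ in $H^2(U,E^\times)$ itself, and we conclude $\hat\Pi_{(\lambda)}\in\PROSS_\Pi$.

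The main obstacle --- and where the real work lies --- is precisely this higher-dimensional descent. One must produce a sufficiently rich geometric family of curves via a Bertini--Poonen-style argument over $\BF_p$, verify the needed surjectivity on $\pi_1$, and show that the relevant cohomology classes are actually jointly detected by restriction to those curves, all while preserving $\Pi$- and $\Gal(\BQbar/\BQ)$-equivariance. Everything else, by contrast, is an application of the machinery already set up in \S\ref{s:KLV} and \S\ref{s:completion}.
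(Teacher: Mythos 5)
Your treatment of the curve case matches the paper: for affine curves $H^2(U,\BQ/\BZ)=0$ gives $\PROSS_\Pi=\Pross_\Pi^{prod}$, and Theorem \ref{t:variant of KLV}(ii) applies (note, though, that this vanishing is proved only for \emph{affine} curves; for complete curves the paper needs the extra step of \S\ref{ss:arbitrary curves}, passing through an open embedding $X\mono X^1$, so ``$\dim X=1$ is automatic'' is not quite right as stated). The genuine gap is in your plan for $\dim X>1$. You propose to establish the theorem by verifying that $\hat\Pi_{(\lambda)}$ satisfies the cohomological condition defining $\PROSS_\Pi$ (see \S\ref{sss:form_thm}), i.e.\ that $f_*(\nu)\in H^2(U,E^\times)$ vanishes for all open $U\subset\Pi$ and all $U$-invariant characters $f$ of the center, by restricting to curves produced by Hilbert irreducibility. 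This has two problems. First, the detection mechanism you invoke does not exist: a surjection $\pi_1(C)\times_\Pi U\epi$ (a quotient of) $U$ does not induce an injection on $H^2(-,\BQ/\BZ)$, and no Lefschetz-type injectivity on degree-two group cohomology is available here; so vanishing of the restricted classes on curves gives you nothing about $f_*(\nu)$ itself. Second, the paper never proves (and does not need) $\hat\Pi_{(\lambda)}\in\PROSS_\Pi$ for $\dim X>1$; the truth of Theorem \ref{t:2main} does not imply that cohomological condition, so your route demands strictly more than what is known, and it is unclear it can be carried out at all.

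The paper's actual argument in \S\ref{ss:general} is structured precisely to avoid this. It works with the finite-type quotients $\hat\Pi_{(\lambda)}/H$, $H\in\Norm(\hat\Pi_{(\lambda)})$, and proves the finite-type analogue of the theorem (Proposition \ref{p:main-finite}) directly: Hilbert irreducibility in the form of \cite[Prop.~2.17, Thm.~2.15(i)]{Dr} produces, for each finite subset $S\subset\Pi_{\Fr}$, a curve $C\to X$ with $\hat\Pi^C_{(\lambda)}\epi\hat\Pi_{(\lambda)}/H$ surjective and $\Pi_{\Fr}^{(C)}\supset S$ (Lemma \ref{l:Hilbert}); the already-proved case of curves then yields an isomorphism $\hat\Pi_{(\lambda)}/H\iso\hat\Pi_{(\lambda')}/H'$ matching the Frobenius data on $\Pi_{\Fr}^{(C)}$ (Lemma \ref{l:main-finite}); a finiteness/pigeonhole argument over finite $S$ upgrades this to all of $\Pi_{\Fr}$; and finally the compatible family $\varphi_H$ (rigid by Proposition \ref{p:rigidity}) is assembled via Proposition \ref{p:coarse epi}. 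So the reduction to curves happens at the level of the finite-type quotients together with their Frobenius data, not at the level of the cohomological criterion for $\PROSS_\Pi$, and Theorem \ref{t:variant of KLV}(ii) is only ever applied over curves. To repair your proposal you would need to replace your third paragraph by an argument of this kind.
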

 
 Recall that by Proposition~\ref{p:2simply connected}, $\hat\Pi_{(\lambda )}^\circ$ is simply connected, so it is a product of almost-simple groups. In other words, $\hat\Pi_{(\lambda )}$ belongs to the full subcategory
 $\Pross_\Pi^{prod}\subset\Pross_\Pi\,$. Similarly, $\hat\Pi_{(\lambda' )}\in\Pross_\Pi^{prod}$. So uniqueness in Theorem~\ref{t:2main} follows from 
 Theorem~\ref{t:variant of KLV}(i), which says that the restriction of the functor~\eqref{e:3BK} to  $\Pross_\Pi^{prod}$ is faithful.

 Since $\hat\Pi_{(\lambda )}$ and $\hat\Pi_{(\lambda' )}$ are in $\Pross_\Pi^{prod}\,$,  Theorem~\ref{t:variant of KLV}(ii) would immediately imply existence of an isomorphism~\eqref{e:iso to construct} \emph{if} one has the equality
\begin{equation}   \label{e:dream}
\PROSS_\Pi =\Pross_\Pi^{prod} \, .
\end{equation}
In the next subsection we will show that \eqref{e:dream} does hold if $X$ is an affine curve.

\subsection{Proof of Theorem~\ref{t:2main} for affine curves}   \label{ss:curves}
It suffices to show that if $X$ is an affine curve then \eqref{e:dream} holds. According to the definition of 
$\PROSS_\Pi$ (see \S\ref{sss:form_thm}), it is enough to check that for any open subgroup $U\subset\Pi$ one has
$H^2 (U,\BQbar^\times )=0$. This is equivalent to proving that 
\begin{equation}   \label{e:cohom-vanishing}
H^2 (U,\BQ /\BZ )=0
\end{equation}
or equivalently, that $H^3 (U,\BZ )=0$. The latter fact is well known (e.g., see Theorem~8.3.17 of \cite{NSW}, which says that the fundamental group of a connected smooth affine curve over $\BF_p\,$ has strict cohomological dimension 2).

For completeness, let us give a proof of \eqref{e:cohom-vanishing}.
Without loss of generality, we can assume that $U=\Pi$ (otherwise replace $X$ by a finite etale covering).
Since $H^2 (\Pi,\BQ /\BZ )\subset H^2_{\et} (X, \BQ/\BZ )$, it suffices to prove the following

\begin{prop}   \label{p:cohom-vanishing}
Let $X$ be a smooth affine curve over $\BF_p\,$. Then 

(i) the etale cohomological dimension of $X$ is $\le 2$;

(ii) $H^2_{\et} (X, \BQ/\BZ )=0$. 
\end{prop}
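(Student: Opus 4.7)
The plan is to run the Leray spectral sequence for the structure map $f : X \to \Spec \BF_p$,
\[
E_2^{r,s} = H^r(\Gal(\bar\BF_p/\BF_p), H^s_{\et}(\bar X, \mathcal{F})) \Rightarrow H^{r+s}_{\et}(X, \mathcal{F}),
\]
where $\bar X := X \times_{\BF_p}\bar\BF_p$. Part (i) follows by combining the classical $\cd(\BF_p) \le 1$ (since $\Gal(\bar\BF_p/\BF_p) = \hat\BZ$) with $\cd(\bar X) \le 1$: for torsion sheaves of order prime to $p$ this is Artin's affine vanishing theorem, and for $p$-primary sheaves it follows from the Artin--Schreier sequence $0 \to \BZ/p \to \BG_a \xrightarrow{F-1} \BG_a \to 0$ together with the affine vanishing $H^i(Y_{\et}, \BG_a) = H^i(Y, \cO_Y) = 0$ for $i \ge 1$, plus d\'evissage from $\BZ/p$ to $\BZ/p^n$. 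Leray then yields $\cd(X) \le 2$.

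For (ii), the cd bounds of (i) force $E_2^{2,0} = E_2^{0,2} = 0$ in total degree $2$, so $H^2_{\et}(X, \BQ/\BZ)$ equals $E_2^{1,1} = H^1(\hat\BZ, H^1_{\et}(\bar X, \BQ/\BZ))$. Since geometric Frobenius $\phi$ topologically generates $\hat\BZ$, one has $H^1(\hat\BZ, M) = M/(\phi - 1)M$ for any discrete torsion module $M$, so the task reduces to proving that $\phi - 1$ acts surjectively on $H^1_{\et}(\bar X, \BQ/\BZ)$. Moreover, the $p$-primary part is immediate without even invoking Leray: the Artin--Schreier argument from (i), applied directly to the affine curve $X$, already shows $H^2_{\et}(X, \BZ/p^n) = 0$ for all $n$, so the $p$-primary component of $H^2_{\et}(X, \BQ/\BZ)$ vanishes.

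The main substantive step, and where I expect the real work to sit, is the prime-to-$p$ surjectivity of $\phi - 1$. Fix $\ell \ne p$ and let $\bar X^c$ be the smooth compactification of $\bar X$, $\bar S := \bar X^c \setminus \bar X$. The Gysin localization sequence exhibits $V_\ell := H^1_{\et}(\bar X, \BQ_\ell)$ as a $\phi$-stable extension of a subspace of $H^0(\bar S, \BQ_\ell(-1))$ (Frobenius-pure of weight $2$) by $H^1_{\et}(\bar X^c, \BQ_\ell)$ (Frobenius-pure of weight $1$, by the Weil conjectures for curves); in particular no Frobenius eigenvalue on $V_\ell$ equals $1$, so $\phi - 1$ is an automorphism of $V_\ell$. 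Because $\bar X$ is an affine curve, $H^2_{\et}(\bar X, \BZ_\ell) = 0$, so the long exact sequence attached to $0 \to \BZ_\ell \to \BQ_\ell \to \BQ_\ell/\BZ_\ell \to 0$ identifies $H^1_{\et}(\bar X, \BQ_\ell/\BZ_\ell)$ with $V_\ell/T_\ell$, where $T_\ell$ is the image of the $\phi$-stable lattice $H^1_{\et}(\bar X, \BZ_\ell)$ in $V_\ell$. Surjectivity of $\phi - 1$ on $V_\ell$ descends at once to the quotient $V_\ell/T_\ell$, finishing the proof.
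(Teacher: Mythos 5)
Your proof is correct and follows essentially the same route as the paper: geometric cohomological dimension $\le 1$ plus $\cd(\hat\BZ)=1$ for (i), Artin--Schreier for the $p$-primary part, and the fact that the Frobenius weights on $H^1_{\et}(\bar X,\BQ_\ell)$ are $1$ or $2$ (so $\phi-1$ is invertible) for the prime-to-$p$ part. The only difference is bookkeeping: the paper passes from $\BQ_\ell/\BZ_\ell$ to $\BQ_\ell$ coefficients on $H^2_{\et}(X,-)$ using (i), while you do it on the geometric $H^1$ using $H^2_{\et}(\bar X,\BZ_\ell)=0$; both are fine.
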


\begin{proof}
The etale cohomological dimension of $X\otimes\bar\BF_p$ is $\le 1$. This implies (i).

By Artin-Schreier, $H^2_{\et} (X, \BF_p)=0$. So $H^2_{\et} (X, \BQ_p/\BZ_p )=0$.

 It remains to show that for any prime $\ell \ne p$ one has $H^2_{\et} (X, \BQ_\ell /\BZ_\ell )=0$. 
 By (i), $H^2_{\et} (X, \BQ_\ell /\BZ_\ell )$ is a quotient of $H^2_{\et} (X, \BQ_\ell )$. Finally,
$H^2_{\et} (X, \BQ_\ell )=H^1 (\Gal (\bar\BF_p/\BF_p) ,H^1_{\et} (X\otimes\bar\BF_p\, ,\BQ_\ell ))=0$
 because the weights of the geometric Frobenius on $H^1_{\et} (X\otimes\bar\BF_p\, ,\BQ_\ell )$ equal $2$ or $1$.
\end{proof}

\begin{rem}
One can ask whether $H^2_{\et} (X, \BQ_p/\BZ_p )=0$ if $X$ is a regular integral scheme quasi-finite over $\Spec\BZ [p^{-1}]$. According to \cite[Thm. 10.3.6]{NSW}, the positive answer to this question is equivalent to  Leopoldt's conjecture for the field of rational functions on $X$ and the prime $p$. More general conjectures of this type are formulated and discussed in \cite{Sch, J}.
\end{rem}

\subsection{Proof of Theorem~\ref{t:2main} for curves}   \label{ss:arbitrary curves}
We already proved the uniqueness part of Theorem~\ref{t:2main}. In the case of affine curves we also proved the existence part of Theorem~\ref{t:2main}. So Theorem~\ref{t:2main} for arbitrary curves follows from the next statement.

\begin{prop}
Let $X^1$ be an irreducible smooth variety over $\BF_p$ equipped with an open embedding $X\mono X^1$. Suppose that the existence part of Theorem~\ref{t:2main} holds for $X$. Then it holds for $X^1$.
\end{prop}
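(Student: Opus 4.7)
The open embedding $X\mono X^1$ induces a natural surjection $\Pi\epi\Pi^1$, where $\Pi^1:=\Aut(\tilde X^1/X^1)$ for a compatible choice of universal cover. This yields, for every non-Archimedean $\lambda\nmid p$, a surjective homomorphism $\hat\Pi_{(\lambda)}\epi\hat\Pi^1_{(\lambda)}$ in $\Pross(\BQbar)$, dual to the fully faithful pullback embedding $\cT_\lambda(X^1)\mono\cT_\lambda(X)$. My plan is to take the isomorphism $\Phi\colon\hat\Pi_{(\lambda)}\iso\hat\Pi_{(\lambda')}$ furnished by the existence hypothesis for $X$ and to descend it through these quotient maps to an isomorphism $\Phi^1\colon\hat\Pi^1_{(\lambda)}\iso\hat\Pi^1_{(\lambda')}$ compatible with the analogue of~\eqref{e:diagram of sets} for $X^1$.

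To effect the descent, I will invoke Lemma~\ref{l:normal subgr}(i) to translate the statement that $\Phi$ sends $\ker(\hat\Pi_{(\lambda)}\epi\hat\Pi^1_{(\lambda)})$ onto $\ker(\hat\Pi_{(\lambda')}\epi\hat\Pi^1_{(\lambda')})$ into the statement that $\Phi$ identifies the subsemirings $K^+(\hat\Pi^1_{(\lambda)})\subset K^+(\hat\Pi_{(\lambda)})$ and $K^+(\hat\Pi^1_{(\lambda')})\subset K^+(\hat\Pi_{(\lambda')})$. By Corollary~\ref{c:Langl}, both ambient semirings are canonically realized as the \emph{same} subsemiring of functions on $\Pi_{\Fr}$, so the task reduces to checking coincidence of the two subsemirings as subsets of this common function algebra. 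The key input will be Theorem~\ref{t:Langl}(ii) applied directly to $X^1$ (available because $X^1$ is smooth): every $\E\in\cT_\lambda(X^1)$ admits a $\lambda'$-companion $\E'\in\cT_{\lambda'}(X^1)$, and the characters of $\E|_X$ and $\E'|_X$ on $\Pi_{\Fr}$ then agree, forcing the two subsemirings to coincide.

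Once $\Phi^1$ is constructed, I will verify its compatibility with Frobenius. For $F_{\tilde x}$ with $\tilde x\in|\tilde X|\subset|\tilde X^1|$, compatibility is directly inherited from that of $\Phi$ via the surjections $\hat\Pi_{(\lambda)}\epi\hat\Pi^1_{(\lambda)}$ and $\Pi\epi\Pi^1$. For $\tilde x\in|\tilde X^1|\setminus|\tilde X|$, the image of $F_{\tilde x}$ in $[\hat\Pi^1_{(\lambda)}](\BQbar)$ is determined by the traces $\Tr(F_{\tilde x},\E)$ for $\E\in\cT_\lambda(X^1)$; since $\Phi^1$ matches each such $\E$ with its $\lambda'$-companion $\E'$ on $X^1$ (this follows from the subsemiring identification above combined with \v{C}ebotarev density, which forces the descended matching to agree with the $X^1$-companion), the companion identity $\Tr(F_{\tilde x},\E)=\Tr(F_{\tilde x},\E')$ yields the required compatibility. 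I expect the main obstacle to be the subsemiring identification in the preceding paragraph, as the hypothesis only supplies matching of characters at closed points of $X$ while sheaves in $\cT_\lambda(X^1)$ carry additional non-ramification data at points of $X^1\setminus X$; the crucial bridge is the unconditional availability of $\lambda'$-companions on the smooth variety $X^1$ itself.
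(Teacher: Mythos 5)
Your proposal is correct and follows essentially the same route as the paper's proof: identify $\hat\Pi^1_{(\lambda)}$ as the quotient of $\hat\Pi_{(\lambda)}$ cut out by the subsemiring $K^+(\hat\Pi^1_{(\lambda)})\subset K^+(\hat\Pi_{(\lambda)})$, invoke existence of companions on the smooth variety $X^1$ (Corollary~\ref{c:Langl}, i.e.\ Theorem~\ref{t:Langl}(ii) for $X^1$) to see that this subsemiring is independent of $\lambda$ inside the common algebra of functions on $\Pi_{\Fr}$, and descend the given isomorphism via Lemma~\ref{l:normal subgr}. The paper compresses the final verification into ``the latter has the required property''; your explicit \v{C}ebotarev argument matching the descended correspondence with the $X^1$-companions (so that Frobenii over $X^1\setminus X$ are also respected) is exactly the justification left implicit there.
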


\begin{proof}
Let $\Pi^1$ denote the fundamental group of $X^1$; more precisely, $\Pi^1$ is the quotient of $\Pi$ classifying finite etale coverings of $X^1$.
Let $\hat\Pi_{(\lambda )}^1$ and $\hat\Pi_{(\lambda' )}^1$ denote the corresponding quotients of $\hat\Pi_{(\lambda )}$ and 
$\hat\Pi_{(\lambda' )}$. Fix an isomorphism $\hat\Pi_{(\lambda )}\iso\hat\Pi_{(\lambda' )}$ over $\Pi$ inducing the isomorphism  
\eqref{e:iso between BKs} (it exists by assumption). By Corollary~\ref{c:Langl}, $K^+(\hat\Pi_{(\lambda )}^1 )$ and $K^+(\hat\Pi_{(\lambda' )}^1 )$ are equal to each other as subsemirings of $K^+(\hat\Pi_{(\lambda )} )=K^+(\hat\Pi_{(\lambda' )} )$. So the isomorphism 
$\hat\Pi_{(\lambda )}\iso\hat\Pi_{(\lambda' )}$ induces an isomorphism $\hat\Pi_{(\lambda )}^1\iso\hat\Pi_{(\lambda' )}^1$. The latter has the required property. 
\end{proof}

\subsection{Proof of Theorem~\ref{t:main} in general} \label{ss:general} 
\subsubsection{Normal subgroups}   \label{sss:normals}

For any pro-semisimple group $G$, let $\Norm (G)$ denote the set of normal subgroups $H\subset G$ such that $G/H$ has finite type. Internal automorphisms of $G$ act trivially on $\Norm (G)$, so $\Norm (G)$ makes sense even if $G$ is an object of the ``coarse" groupoid $\Pross$.

Thus one has $\Norm (\hat\Pi_{(\lambda)})$ and $\Norm (\hat\Pi_{\lambda})$.
By the definition of $\hat\Pi_{(\lambda)}\,$, one has a canonical bijection
$\Norm (\hat\Pi_{(\lambda)})\iso\Norm (\hat\Pi_{\lambda}).$

On the other hand, Lemma~\ref{l:normal subgr} provides a description of $\Norm (\hat\Pi_{(\lambda)})$ in terms of $K^+(\hat\Pi_{(\lambda )} )$.
So by Corollary~\ref{c:Langl}, the set $\Norm (\hat\Pi_{(\lambda)})=\Norm (\hat\Pi_{\lambda})$ does not depend on $\lambda$.

\subsubsection{A finite-type variant of Theorem~\ref{t:main}}   \label{sss:main-finite}
Fix $\lambda$ and $\lambda'$. Let $H\in \Norm (\hat\Pi_{(\lambda)})$ and let $H'\in \Norm (\hat\Pi_{(\lambda')})$ be the image of $H$ under the canonical bijection 
\begin{equation}   \label{e:NormNorm}
\Norm (\hat\Pi_{(\lambda)})\iso\Norm (\hat\Pi_{(\lambda')})
\end{equation}
defined in \S\ref{sss:normals} above. In other words, one has the equality
\begin{equation}   \label{e:HH'}
K^+(\hat\Pi_{(\lambda)}/H)=K^+(\hat\Pi_{(\lambda')}/H'),
\end{equation}
in which both sides are considered as subsets of the set of functions $\Pi_{\Fr}\to\BQbar\,$.

Set $U:=\im (H\to\Pi )=\im (H'\to\Pi )$; this is an open normal subgroup of $\Pi$.
In \S\ref{sss:proof of p:main-finite} we will prove the following variant of Theorem~\ref{t:main}. 

\begin{prop}   \label{p:main-finite}
There exists an isomorphism $\varphi_H :\hat\Pi_{(\lambda)}/H\iso\hat\Pi_{(\lambda')}/H'$ in the category $\Pross_{\Pi/U}$ which sends the canonical map
$\Pi_{\Fr}\to [\hat\Pi_{(\lambda)}/H](\BQbar )$ to the similar map $\Pi_{\Fr}\to [\hat\Pi_{(\lambda')}/H'](\BQbar )$ . 
\end{prop}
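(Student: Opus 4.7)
The plan is to apply Theorem~\ref{t:variant of KLV}(ii) to $G_\lambda := \hat\Pi_{(\lambda)}/H$ and $G_{\lambda'} := \hat\Pi_{(\lambda')}/H'$, viewed as objects of $\Pross_{\Pi/U}$. By Remark~\ref{r:from Pi to U}, the equality \eqref{e:HH'} is compatible with restriction to open subgroups $V\subset\Pi$ containing $U$, so it canonically promotes to an isomorphism $\BK^+_{G_\lambda}\iso\BK^+_{G_{\lambda'}}$ in $\Lambda^+_{\Pi/U}$; by Corollary~\ref{c:in terms of [G]} any $\Pross_{\Pi/U}$-isomorphism realizing it automatically transports the canonical map $\Pi_{\Fr}\to [G_\lambda](\BQbar)$ to the analogous map for $G_{\lambda'}$, so it remains only to produce such an isomorphism. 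Since $\hat\Pi_{(\lambda)}^\circ$ is simply connected by Proposition~\ref{p:2simply connected}, its quotient $G_\lambda^\circ$ is a product of almost-simple groups, whence $G_\lambda\in\Pross_{\Pi/U}^{prod}$; by Theorem~\ref{t:variant of KLV}(ii) and (iii) it suffices to verify that $G_\lambda\in\PROSS_{\Pi/U}$.

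Let $Z$ be the center of $G_\lambda^\circ$ and $\nu\in H^2(\Pi/U,Z)$ the class of \S\ref{sss:class in H^2}. We must show that for every subgroup $V\subset\Pi/U$ and every $V$-invariant $f\in\Hom(Z,E^\times)^V$ one has $f_*\nu=0\in H^2(V,E^\times)$. For this I would use Hilbert irreducibility: find a smooth geometrically irreducible \emph{affine} curve $C$ over $\BF_p$ and a morphism $\iota:C\to X$ such that the composition
\[
\hat{\pi_1(C)}_{(\lambda)}\;\longrightarrow\;\hat\Pi_{(\lambda)}\;\twoheadrightarrow\;G_\lambda
\]
is surjective. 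Classical Hilbert irreducibility, applied to the connected finite \'etale cover of $X$ corresponding to $U$, supplies a curve with $\pi_1(C)\twoheadrightarrow\Pi/U$; a further Bertini-type refinement, avoiding the proper Zariski-closed subgroups of the finite-type semisimple group $G_\lambda$ that surject onto $\Pi/U$, upgrades this to Zariski-density in $G_\lambda$. Setting $K:=\Ker(\hat{\pi_1(C)}_{(\lambda)}\to G_\lambda)$ and $U_C:=\iota_*^{-1}(U)$, we obtain an identification $G_\lambda\iso\hat{\pi_1(C)}_{(\lambda)}/K$ in $\Pross_{\pi_1(C)/U_C}=\Pross_{\Pi/U}$.

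Because $C$ is a smooth affine curve, Proposition~\ref{p:cohom-vanishing}(ii) applied to every connected finite \'etale cover of $C$ yields $H^2(V',\BQ/\BZ)=0$ for every open $V'\subset\pi_1(C)$; since $E^\times/\mu_\infty$ is a $\BQ$-vector space whose continuous cohomology over a profinite group vanishes in positive degrees, this gives $H^2(V',E^\times)=0$ as well. Consequently $\hat{\pi_1(C)}_{(\lambda)}/K\in\PROSS_{\pi_1(C)/U_C}$, whence $G_\lambda\in\PROSS_{\Pi/U}$; Theorem~\ref{t:variant of KLV}(ii) then produces the desired $\varphi_H$.

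The main obstacle is the strengthened form of Hilbert irreducibility used above: ordinary Hilbert irreducibility only delivers surjectivity onto the finite quotient $\Pi/U$, whereas the reduction demands Zariski-density of the image of $\pi_1(C)$ inside the finite-type algebraic group $G_\lambda$. Establishing this requires combining classical Hilbert irreducibility with a Bertini-type genericity argument, exploiting that $G_\lambda$ is of finite type to reduce Zariski-density to avoidance of a suitable finite collection of proper closed subgroups (together with \v{C}ebotarev density applied to the finite quotients corresponding to those subgroups).
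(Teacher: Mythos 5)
Your overall architecture (reduce to a curve via Hilbert irreducibility) is in the spirit of the paper, but the key step is a non sequitur. You claim that because $\hat{\pi_1(C)}_{(\lambda)}\epi G_\lambda$ with $C$ a smooth affine curve and $H^2(V',E^\times)=0$ for every open $V'\subset\pi_1(C)$, the finite-type quotient $G_\lambda$ lies in $\PROSS_{\pi_1(C)/U_C}=\PROSS_{\Pi/U}$. But for an object of $\Pross_{\Pi/U}$ with $\pi_0=\Pi/U$ \emph{finite}, the defining condition of $\PROSS_{\Pi/U}$ (see \S\ref{sss:form_thm}) is that $f_*(\nu)$ vanish in $H^2(V,E^\times)$ for subgroups $V$ of the \emph{finite} group $\Pi/U$; these cohomology groups are in general nonzero, and the class $\nu$ of $G_\lambda$ depends only on $G_\lambda$ together with $\pi_0(G_\lambda)\cong\Pi/U$, so realizing $G_\lambda$ as a quotient of the curve's completion changes nothing. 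What the affine-curve vanishing gives is only that the \emph{inflation} of $f_*(\nu)$ to $H^2(\tilde V,E^\times)$ ($\tilde V\subset\pi_1(C)$ the preimage of $V$) is zero, and inflation has a large kernel (the transgression image of $\Hom(\tilde V\cap\Ker,\,E^\times)$), so $f_*(\nu)$ itself need not vanish. Equivalently, by the lemma in \S\ref{ss:Proof of iii}, membership in $\PROSS_{\Pi/U}$ asks that every $V$-invariant irreducible representation of $G_\lambda^\circ$ extend to $G_\lambda\times_{\Pi/U}V$; the extension you get from the curve lives on $\hat{\pi_1(C)}_{(\lambda)}\times_{\pi_1(C)}\tilde V$ and need not factor through the finite-type quotient. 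The paper is careful about exactly this: Lemma~\ref{l:cofinality} only provides a \emph{cofinal} family of normal subgroups $\tilde H$ with $G/\tilde H\in\PROSS^{fin}$, never all of them, and the given $H$ cannot be replaced, since Proposition~\ref{p:main-finite} is about the specific quotient $\hat\Pi_{(\lambda)}/H$.

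The paper's own proof of Proposition~\ref{p:main-finite} avoids verifying any $\PROSS$-membership for $\hat\Pi_{(\lambda)}/H$. It uses Hilbert irreducibility (via \cite[Prop.~2.17, Thm.~2.15(i)]{Dr}, exploiting that $\Pi/K$ has an open pro-$\ell$ subgroup — a cleaner mechanism than your Bertini/\v{C}ebotarev sketch, which would have to contend with infinitely many proper closed subgroups) to find $C$ with $\hat\Pi^C_{(\lambda)}\epi\hat\Pi_{(\lambda)}/H$ and $\Pi^{(C)}_{\Fr}\supset S$ for a prescribed finite $S$; it then transfers the already-proved curve case of the full theorem, applied to the \emph{profinite} completion $\hat\Pi^C_{(\lambda)}$ (which does lie in $\PROSS$), matching kernels via \eqref{e:HH'}. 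Since one curve only controls Frobenius compatibility on $\Pi^{(C)}_{\Fr}$, a final finiteness argument (there are only finitely many coarse isomorphisms $\hat\Pi_{(\lambda)}/H\iso\hat\Pi_{(\lambda')}/H'$) lets $S$ exhaust $\Pi_{\Fr}$. Unless you can actually prove that the given quotient lies in $\PROSS_{\Pi/U}$ — which the paper's structure strongly suggests is false in general — your shortcut through Theorem~\ref{t:variant of KLV}(ii) does not go through, and some version of the paper's transfer-plus-finiteness argument is needed.
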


\begin{rem}    \label{r:main-finite}
Let $\Norm^{prod} (\hat\Pi_{(\lambda)})$ denote the set of all $H\in\Norm (\hat\Pi_{(\lambda)})$ such that $(\hat\Pi_{(\lambda)}/H)^\circ$ is a product of almost-simple groups. If $H\in\Norm^{prod} (\hat\Pi_{(\lambda)})$ then the isomorphism $\varphi_H$ from Proposition~\ref{p:main-finite} is unique by Proposition~\ref{p:rigidity}. It follows that if $H,H_1\in\Norm^{prod} (\hat\Pi_{(\lambda)})$ and $H_1\subset H$ then the isomorphisms $\varphi_H$ and $\varphi_{H_1}$ are compatible. 
\end{rem}

Let us deduce Theorem~\ref{t:main} from Proposition~\ref{p:main-finite}. 
Recall that by Proposition~\ref{p:2simply connected}, $\hat\Pi_{(\lambda )}^\circ$ and $\hat\Pi_{(\lambda' )}^\circ$ are products of almost-simple groups. By Proposition~\ref{p:rigidity}, this implies uniqueness in Theorem~\ref{t:main}. This also implies that $\Norm^{prod} (\hat\Pi_{(\lambda)})$ is cofinal in $\Norm (\hat\Pi_{(\lambda)})$ and $\Norm^{prod} (\hat\Pi_{(\lambda')})$ is cofinal in $\Norm (\hat\Pi_{(\lambda')})$. By Proposition~\ref{p:main-finite} or by Theorem~\ref{t:variant of KLV}(iii), the image of $\Norm^{prod} (\hat\Pi_{(\lambda)})$ under the bijection \eqref{e:NormNorm} equals $\Norm^{prod} (\hat\Pi_{(\lambda')})$. So by Proposition~\ref{p:coarse epi}, the compatible family of isomorphisms 
$\varphi_H\,$, $H\in\Norm (\hat\Pi_{(\lambda)})$, provided by Proposition~\ref{p:main-finite} and Remark~\ref{r:main-finite} yields an isomorphism $\hat\Pi_{(\lambda)}\iso \hat\Pi_{(\lambda')}$ with the properties required in Theorem~\ref{t:main}. 

\medskip

Thus it remains to prove Proposition~\ref{p:main-finite}. We will deduce it from a similar statement for curves.

\subsubsection{Curves on $X$}
Let $C$ be a connected smooth curve over $\BF_p$ equipped with a morphism $f:C\to X$. Choose a universal covering $\tilde C\to C$ and a lift of $f:C\to X$ to a morphism $\tilde f:\tilde C\to\tilde X$. Set $\Pi^C:=\Aut (\tilde C/C)$; then there is a unique homomorphism $\Pi^C\to\Pi$ that makes the map $\tilde f:\tilde C\to\tilde X$ equivariant with respect to $\Pi^C$. It induces homomorphisms $\hat\Pi_{(\lambda)}^C\to\hat\Pi_{(\lambda)}$ and
$\hat\Pi_{(\lambda')}^C\to\hat\Pi_{(\lambda')}\,$.

\begin{lem}   \label{l:tfae}
The following properties of a pair $(C,f)$ are equivalent:

(i) the homomorphism $\hat\Pi_{(\lambda)}^C\to\hat\Pi_{(\lambda)}/H$ is surjective;

(ii) the map $K^+(\hat\Pi_{(\lambda)}/H)\to K^+(\hat\Pi_{(\lambda)}^C)$ takes irreducible elements to irreducible ones;

(i$'$) the homomorphism $\hat\Pi_{(\lambda')}^C\to\hat\Pi_{(\lambda')}/H'$ is surjective;

(ii$'$) the map $K^+(\hat\Pi_{(\lambda')}/H')\to K^+(\hat\Pi_{(\lambda')}^C)$ takes irreducible elements to irreducible ones.
\end{lem}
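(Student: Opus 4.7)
The plan is to establish the three equivalences (i)$\Leftrightarrow$(ii), (i$'$)$\Leftrightarrow$(ii$'$), and (ii)$\Leftrightarrow$(ii$'$) separately. By symmetry only the first and third need argument.

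For (i)$\Leftrightarrow$(ii) I would prove the following general Tannakian statement: for any homomorphism $\varphi:G_1\to G_2$ between pro-semisimple group schemes over an algebraically closed field of characteristic~$0$, $\varphi$ is surjective if and only if the pullback under $\varphi$ of every irreducible representation of $G_2$ remains irreducible. Writing $G_2$ as a filtering projective limit of its finite-type quotients reduces the claim to the case where $G_1,G_2$ are semisimple of finite type. The ``only if'' direction is immediate. For ``if,'' suppose the pullback preserves irreducibility. If $V_1,V_2$ are non-isomorphic irreducibles of $G_2$, then $V_1\otimes V_2^*$ contains no trivial $G_2$-summand, so every irreducible $G_2$-summand of $V_1\otimes V_2^*$ is non-trivial, hence upon restriction to $G_1$ stays non-trivial and irreducible; therefore $(V_1\otimes V_2^*)^{G_1}=0$, i.e.\ $V_1|_{G_1}\not\cong V_2|_{G_1}$. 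Thus the restriction functor $\Rep(G_2)\to\Rep(G_1)$ is fully faithful, and by semisimplicity its essential image is closed under subobjects, so by Tannakian reconstruction $\varphi$ is surjective. Applying this to $\varphi:\hat\Pi_{(\lambda)}^C\to\hat\Pi_{(\lambda)}/H$ yields (i)$\Leftrightarrow$(ii), and the same argument yields (i$'$)$\Leftrightarrow$(ii$'$).

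For (ii)$\Leftrightarrow$(ii$'$) I would use Corollary~\ref{c:Langl} and Remark~\ref{r:from Pi to U} to embed $K^+(\hat\Pi_{(\lambda)})$ and $K^+(\hat\Pi_{(\lambda)}^C)$ into the algebras of $\BQbar$-valued functions on $\Pi_{\Fr}$ and on $\Pi_{\Fr}\cap f_*(\Pi^C)$, respectively, with images \emph{independent of} $\lambda$. By Lemma~\ref{l:normal subgr}(i), $K^+(\hat\Pi_{(\lambda)}/H)$ is then a sub-lambda-semiring of $K^+(\hat\Pi_{(\lambda)})$, and the defining equality~\eqref{e:HH'} says it coincides with $K^+(\hat\Pi_{(\lambda')}/H')$ as a subset of functions on $\Pi_{\Fr}$. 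The functorial homomorphism $\Pi^C\to\Pi$ carries Frobenius elements into Frobenius elements (\S\ref{sss:Pi_Fr}), and under the above identifications the canonical arrow
\[
K^+(\hat\Pi_{(\lambda)}/H)\to K^+(\hat\Pi_{(\lambda)}^C)
\]
becomes restriction of functions along the induced map $\Pi^C_{\Fr}\to\Pi_{\Fr}$; this description is manifestly $\lambda$-independent. Since irreducibility of an element of $K^+$ of a pro-semisimple group is the property of being indecomposable as a sum, it is intrinsic to the element viewed as a function, so conditions (ii) and (ii$'$) make the same assertion.

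The only point that needs verification is that the Corollary~\ref{c:Langl}-type identification is functorial in the pair $(C,f)$, i.e.\ that pulling back a $\lambda$-adic local system along $f:C\to X$ corresponds to restriction of characters along $\Pi^C_{\Fr}\to\Pi_{\Fr}$. This is routine: if $\tilde c\in|\tilde C|$ maps to $\tilde x\in|\tilde X|$ under $\tilde f$, then the image of $F_{\tilde c}$ in $\Pi$ equals $F_{\tilde x}^{[\kappa(c):\kappa(x)]}$, and correspondingly the Frobenius trace of $f^*\E$ at $c$ equals the trace of $F_x^{[\kappa(c):\kappa(x)]}$ on $\E$. I do not anticipate any substantial obstacle beyond this bookkeeping; the real input is Corollary~\ref{c:Langl} (hence the results of \cite{La,De2,Dr}) and the standard Tannakian fact used for (i)$\Leftrightarrow$(ii).
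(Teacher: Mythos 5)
Your strategy is the same as the paper's: establish (i)$\Leftrightarrow$(ii) by a Tannakian criterion and (ii)$\Leftrightarrow$(ii$'$) by $\lambda$-independence (Corollary~\ref{c:Langl} together with \eqref{e:HH'}); the second half of your argument, including the bookkeeping identifying the pullback map with restriction of characters along $\Pi^C_{\Fr}\to\Pi_{\Fr}$, is fine. The genuine gap is in your proof of (ii)$\Rightarrow$(i). In the full-faithfulness step you argue that every irreducible $G_2$-summand of $V_1\otimes V_2^*$ is non-trivial, ``hence upon restriction to $G_1$ stays non-trivial and irreducible.'' Your hypothesis only guarantees that it stays \emph{irreducible}; the trivial character is irreducible, so nothing rules out a non-trivial irreducible of $G_2$ restricting to the trivial representation of $G_1$. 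In fact the general statement you formulate is false in the generality needed here: semisimple groups are not assumed connected (finite groups qualify), and for any non-surjective homomorphism $G_1\to G_2$ with $G_2$ finite abelian every irreducible of $G_2$ is a character, hence pulls back to an irreducible. This is relevant to the lemma itself, since $H$ may contain $\hat\Pi_{(\lambda)}^{\circ}$, so that $\hat\Pi_{(\lambda)}/H$ is a finite abelian quotient of $\Pi$, and a curve $C$ for which $\Pi^C$ maps onto a proper subgroup then satisfies (ii) while (i) fails. The correct criterion is the standard one (\cite[Prop.~2.21]{DM} plus semisimplicity): the homomorphism is surjective if and only if restriction is fully faithful, i.e.\ irreducibles stay irreducible \emph{and} non-isomorphic irreducibles stay non-isomorphic.

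The repair costs nothing downstream: the strengthened condition (irreducibility and distinctness both preserved by the map $K^+(\hat\Pi_{(\lambda)}/H)\to K^+(\hat\Pi_{(\lambda)}^C)$) is just as visibly $\lambda$-independent as (ii), because by Corollary~\ref{c:Langl} applied to $X$ and to $C$, together with \eqref{e:HH'}, the two semirings and the map between them are literally the same for $\lambda$ and $\lambda'$ once realized as functions on Frobenius elements. With that modification your argument collapses to the paper's own proof, which simply asserts (i)$\Leftrightarrow$(ii) and invokes Corollary~\ref{c:Langl} with \eqref{e:HH'} for (ii)$\Leftrightarrow$(ii$'$); note that the paper's ``clear'' glosses over exactly the point where your write-up goes wrong, and the implication actually used later in the proof of Proposition~\ref{p:main-finite} is the delicate direction (ii$'$)$\Rightarrow$(i$'$), so the distinctness clause (or some equivalent strengthening) cannot be dispensed with.
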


\begin{proof}
It is clear that (i)$\Leftrightarrow$(ii) and (i$'$)$\Leftrightarrow$(ii$'$). By Corollary~\ref{c:Langl}, 
$K^+(\hat\Pi_{(\lambda)})$,  $K^+(\hat\Pi_{(\lambda)}^C)$, and the map $K^+(\hat\Pi_{(\lambda)})\to K^+(\hat\Pi_{(\lambda)}^C)$ do not depend on $\lambda$. Combining this with \eqref{e:HH'}, we see that (ii)$\Leftrightarrow$(ii$'$).
\end{proof}

The set of all elements of $\Pi_{\Fr}$ which are $\Pi$-conjugate to elements of 
$\im (\Pi_{\Fr}^C\to \Pi_{\Fr})$ will be denoted by $\Pi_{\Fr}^{(C)}\subset\Pi_{\Fr}\,$.

\begin{lem}   \label{l:main-finite}
Suppose that the equivalent conditions of Lemma~\ref{l:tfae} hold. Then there exists an isomorphism
$\hat\Pi_{(\lambda)}/H\iso\hat\Pi_{(\lambda')}/H'$ in the category $\Pross_{\Pi/U}$ which sends the composition
$$\Pi_{\Fr}^{(C)}\mono\Pi_{\Fr}\to [\hat\Pi_{(\lambda)}/H](\BQbar )$$ 
to the similar map $\Pi_{\Fr}^{(C)}\to [\hat\Pi_{(\lambda')}/H'](\BQbar )$. 
\end{lem}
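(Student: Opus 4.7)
\smallskip

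The plan is to deduce the lemma from Theorem~\ref{t:2main} for curves, which has already been established in \S\ref{ss:arbitrary curves}. Applied to $C$, that theorem furnishes an isomorphism $\psi\colon \hat\Pi^C_{(\lambda)}\iso\hat\Pi^C_{(\lambda')}$ in $\Pross_{\Pi^C}$ that induces the canonical $\lambda$-independent identification of $\BK^+_{\hat\Pi^C_{(\lambda)}}$ with $\BK^+_{\hat\Pi^C_{(\lambda')}}$, and in particular is compatible with the canonical diagrams $\Pi^C_{\Fr}\to[\hat\Pi^C_{(\lambda)}](\BQbar)$ and $\Pi^C_{\Fr}\to[\hat\Pi^C_{(\lambda')}](\BQbar)$.

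Next I would descend $\psi$ to a map between the given finite-type quotients. Let $K\subset\hat\Pi^C_{(\lambda)}$ and $K'\subset\hat\Pi^C_{(\lambda')}$ be the kernels of the homomorphisms to $\hat\Pi_{(\lambda)}/H$ and $\hat\Pi_{(\lambda')}/H'$ respectively; these exist and are surjections by hypothesis (i) of Lemma~\ref{l:tfae}, and so (by full faithfulness of the pullback on representations) the induced embeddings
\[
K^+(\hat\Pi_{(\lambda)}/H)\hookrightarrow K^+(\hat\Pi^C_{(\lambda)})\,,\quad K^+(\hat\Pi_{(\lambda')}/H')\hookrightarrow K^+(\hat\Pi^C_{(\lambda')})
\]
realize $K,K'$ via the correspondence of Lemma~\ref{l:normal subgr}(i). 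By Corollary~\ref{c:Langl} and Remark~\ref{r:from Pi to U}, the right-hand sides are canonically identified with a common subsemiring of the algebra of functions $\Pi^C_{\Fr}\to\BQbar$, and this identification is the one induced by $\psi$. Under this identification, the two subsemirings on the left coincide: both are the image of the equality of subrings \eqref{e:HH'} under the restriction map of functions $\Pi_{\Fr}\to\Pi^C_{\Fr}$. Therefore $\psi(K)=K'$, and $\psi$ descends to an isomorphism $\bar\psi\colon\hat\Pi_{(\lambda)}/H\iso\hat\Pi_{(\lambda')}/H'$ in $\Pross$. Since $\psi$ is in $\Pross_{\Pi^C}$ and the composition $\Pi^C\to\pi_0(\hat\Pi_{(\lambda)}/H)=\Pi/U$ is the canonical surjection (and similarly for $\lambda'$), the descended $\bar\psi$ lies in $\Pross_{\Pi/U}$.

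Finally I would check Frobenius compatibility on $\Pi_{\Fr}^{(C)}$. By functoriality of \S\ref{sss:Pi_Fr}, for any $\tilde x\in|\tilde C|$ the Frobenius $F_{\tilde x}\in\Pi^C_{\Fr}$ maps to $F_{\tilde f(\tilde x)}\in\Pi_{\Fr}$. Because $\psi$ matches the curve Frobenius diagrams, pushing through the quotient maps $\hat\Pi^C_{(\lambda)}\epi\hat\Pi_{(\lambda)}/H$ and $\hat\Pi^C_{(\lambda')}\epi\hat\Pi_{(\lambda')}/H'$ shows that $\bar\psi$ matches the images of $F_{\tilde f(\tilde x)}$ in $[\hat\Pi_{(\lambda)}/H](\BQbar)$ and $[\hat\Pi_{(\lambda')}/H'](\BQbar)$. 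The maps $\Pi_{\Fr}\to[\hat\Pi_{(\lambda)}/H](\BQbar)$ and $\Pi_{\Fr}\to[\hat\Pi_{(\lambda')}/H'](\BQbar)$ are $\Pi$-equivariant (factoring through $\Pi/U$), and $\bar\psi$ is $\Pi/U$-equivariant, so the match extends from $\im(\Pi^C_{\Fr}\to\Pi_{\Fr})$ to its set of $\Pi$-conjugates, namely $\Pi_{\Fr}^{(C)}$.

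I expect the main obstacle to be the second step, specifically verifying that the two subsemirings $K^+(\hat\Pi_{(\lambda)}/H)$ and $K^+(\hat\Pi_{(\lambda')}/H')$ really do produce the same subring of functions on $\Pi^C_{\Fr}$ under the $\psi$-induced identification. This rests precisely on combining the hypothesis \eqref{e:HH'} (equality as subrings of functions on $\Pi_{\Fr}$) with the injectivity of the restriction $\Pi_{\Fr}\to\Pi^C_{\Fr}$ on these subrings, which in turn comes from the surjectivity condition (i) of Lemma~\ref{l:tfae}; once these ingredients are aligned, the remainder of the argument is essentially formal.
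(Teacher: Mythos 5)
Your proof is correct and takes essentially the same route as the paper: apply the already-proved curve case of Theorem~\ref{t:2main} to $C$, use the equality \eqref{e:HH'} (made precise via Corollary~\ref{c:Langl} and Lemma~\ref{l:normal subgr}(i)) to see that the resulting isomorphism matches the preimages of $H$ and $H'$ (your $K,K'$ are the paper's $H^C,(H')^C$), descend using the surjectivity from Lemma~\ref{l:tfae}(i), and obtain the Frobenius compatibility on $\Pi_{\Fr}^{(C)}$ by functoriality and $\Pi$-equivariance. You merely spell out the step the paper compresses into ``Using \eqref{e:HH'}, we see that the isomorphism takes $H^C$ to $(H')^C$.''
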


\begin{proof}
Let $H^C\subset\hat\Pi_{(\lambda)}^C$ denote the preimage of $H\subset\hat\Pi_{(\lambda)}\,$.
Let $(H')^C\subset\hat\Pi_{(\lambda')}^C$ denote the preimage of $H'\subset\hat\Pi_{(\lambda')}\,$.
By assumption, we have isomorphisms
\begin{equation}    \label{e:H and H^C}
\hat\Pi_{(\lambda)}^C/H^C\iso\hat\Pi_{(\lambda)}/H , \quad \hat\Pi_{(\lambda')}^C/(H')^C\iso\hat\Pi_{(\lambda')}/H'.
\end{equation}

We already proved Theorem~\ref{t:main} for $C$, so there exists an isomorphism $\hat\Pi_{(\lambda)}^C\iso\hat\Pi_{(\lambda')}^C$ which sends the diagram $\Pi_{\Fr}^C\to [\hat\Pi_{(\lambda)}^C] (\BQbar)\epi\Pi^C$ to the similar diagram 
$\Pi_{\Fr}^C\to [\hat\Pi_{(\lambda')}^C] (\BQbar)\epi\Pi^C$. Using \eqref{e:HH'}, we see that the above isomorphism takes $H^C$ to $(H')^C$. So by \eqref{e:H and H^C}, we get an isomorphism $\hat\Pi_{(\lambda)}/H\iso\hat\Pi_{(\lambda')}/H'$. It has the required property.
\end{proof}

\begin{lem}   \label{l:Hilbert}
For any finite $S\subset\Pi_{\Fr}$ there exists a pair $(C,f)$ such that $\Pi_{\Fr}^{(C)}\supset S$ and the equivalent conditions of Lemma~\ref{l:tfae} hold. 
\end{lem}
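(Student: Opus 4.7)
The plan is to construct $(C,f)$ by a Bertini-type argument, arranging $C$ to pass through prescribed closed points of $X$ and to pick up enough monodromy to surject onto $\hat\Pi_{(\lambda)}/H$. Set $G := \hat\Pi_{(\lambda)}/H$, which is a semisimple algebraic group of finite type over $\BQbar$, and let $\rho : \Pi \to G(\BQbar)$ be the canonical continuous homomorphism; its image is Zariski dense. By \v{C}ebotarev, $\rho(\Pi_{\Fr})$ is Zariski dense in $G$, and by the Noetherian property of the closed-subgroup scheme of $G$ some finite $T \subset \Pi_{\Fr}$ already satisfies that $\rho(T)$ Zariski-generates $G$. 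I enlarge $S$ to contain $T$ (permissible since the desired conclusion only strengthens as $S$ grows). For each $s \in S$, write $s = F_{\tilde x_s}^{n_s}$ with $\tilde x_s \in |\tilde X|$ and $n_s \ge 1$, and let $x_s \in |X|$ be the image.

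If $\dim X = 1$, I take $C = X$; otherwise I apply Poonen's Bertini theorem over finite fields to produce a smooth connected curve $C \subset X$ containing every $x_s$, and let $f : C \hookrightarrow X$ be the inclusion. The inclusion $\Pi_{\Fr}^{(C)} \supset S$ is then immediate: each $x_s$ is a closed point of $C$ with the same residue field $\kappa(x_s)$ as in $X$, so any lift $\tilde c_s \in |\tilde C|$ of $x_s$ yields $F_{\tilde c_s}^{n_s} \in \Pi^C_{\Fr}$ whose image in $\Pi$ is $F_{\tilde y_s}^{n_s}$, where $\tilde y_s := \tilde f(\tilde c_s) \in |\tilde X|$ is a lift of $x_s$ and is therefore $\Pi$-conjugate to $\tilde x_s$, making $F_{\tilde y_s}^{n_s}$ a $\Pi$-conjugate of $s$.

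The main obstacle is to verify condition (i) of Lemma~\ref{l:tfae}, namely that $\rho(\im(\Pi^C \to \Pi))$ is Zariski dense in $G$. The delicacy is that while $\rho(S)$ Zariski-generates $G$ and $S \subset \Pi_{\Fr}^{(C)}$, the corresponding elements in $\im(\Pi^C)$ differ from $S$ by $\Pi$-conjugation, and conjugating individual Zariski generators by distinct elements of $G(\BQbar)$ may a priori shrink the Zariski closure of the subgroup they generate. To bypass this, the Bertini construction must be strengthened to a Hilbert-irreducibility-style input: for a sufficiently generic $C$ through the prescribed points, the pullback to $C$ of any fixed faithful lisse $\BQbar_\lambda$-sheaf attached to $G$ preserves the full monodromy. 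This decomposes into (a) surjectivity of $\im(\Pi^C) \to \pi_0(G) = \Pi/U$, a classical finite-quotient Hilbert irreducibility over $\BF_p$ (e.g.\ Wiesend's); and (b) preservation of the connected geometric monodromy $G^\circ$, a Bertini-for-fundamental-groups result in the spirit of Katz. Equivalently, one may verify condition (ii) of Lemma~\ref{l:tfae} directly, by showing that each irreducible representation of $\hat\Pi_{(\lambda)}/H$ remains irreducible on pullback to a generic enough $C$ through the prescribed points; this irreducibility-preservation is the crux of the lemma.
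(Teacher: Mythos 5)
Your reductions are fine as far as they go: taking $C=X$ when $\dim X=1$, the observation that a curve through the points $x_s$ automatically gives $\Pi_{\Fr}^{(C)}\supset S$ (by the functoriality of \S\ref{sss:Pi_Fr} and the fact that any two lifts of $x_s$ to $\tX$ are $\Pi$-conjugate), and the remark that surjectivity onto $\pi_0(G)=\Pi/U$ together with preservation of the connected geometric monodromy suffices for condition (i) of Lemma~\ref{l:tfae} (indeed the Zariski closure of the image of the geometric fundamental group automatically contains $G^\circ$, since $G$ is semisimple). But the proposal stops exactly where the lemma begins: the existence of a smooth curve through the prescribed closed points whose arithmetic fundamental group still has Zariski-dense image is asserted as ``a Hilbert-irreducibility-style input'' / ``a Bertini-for-fundamental-groups result in the spirit of Katz'' without a precise statement, a proof, or an applicable citation. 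Poonen's Bertini gives smoothness through prescribed points but says nothing about monodromy; Lefschetz-type or space-filling-curve results do not by themselves produce a curve through an arbitrary prescribed finite set of closed points of a (possibly affine) $X$ with the required density. So the crux of the lemma --- which you yourself identify as ``the crux'' --- is left unproven, and the plan is in effect circular at that point.

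What makes the statement provable, and what your plan never isolates, is a finiteness property of the target: the paper sets $K:=\Ker\bigl(\Pi\to(\hat\Pi_{\lambda}/\underline{H})(\BQbar_{\lambda})\bigr)$ and uses that $\Pi/K$, being a compact $\ell$-adic analytic group, has an open pro-$\ell$ subgroup. This is precisely the hypothesis under which Drinfeld's variant of Hilbert irreducibility (\cite[Thm.~2.15(i), Prop.~2.17]{Dr}) produces a pair $(C,f)$ with $\Pi_{\Fr}^{(C)}\supset S$ and $\Pi^C\to\Pi/K$ \emph{surjective}, which immediately gives condition (i) of Lemma~\ref{l:tfae}; no separate treatment of the finite quotient and of $G^\circ$ is then needed. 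Without identifying such a condition on the quotient of $\Pi$ (surjectivity onto an arbitrary quotient of $\Pi$ along a curve is simply false in general), your step (b) is not something one can ``arrange for a sufficiently generic $C$''; it is a theorem that has to be invoked or proved, and it is the one the paper quotes from \cite{Dr}.
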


\begin{proof}
Let $\underline{H}\subset\hat\Pi_{\lambda}$ be the normal subgroup corresponding to $H\subset\hat\Pi_{(\lambda)}\,$. Set 
$$K:=\Ker (\Pi\to(\hat\Pi_{\lambda}/H)(\BQbar_{\lambda})).$$
Let $\ell$ be the prime such that $\lambda$ divides $\ell$, then  $\Pi/K$ has an open subgroup which is  pro-$\ell\,$. So applying a variant of Hilbert irreducibility (more precisely, \cite[Prop.~2.17]{Dr} combined with  \cite[Thm.~2.15(i)]{Dr}), we see that there exists a pair $(C,f)$ such that
$\Pi_{\Fr}^{(C)}\supset S$ and the map $\Pi^C\to\Pi/K$ is surjective. Then condition (i) of Lemma~\ref{l:tfae} clearly holds. 
\end{proof}

\subsubsection{Proof of Proposition~\ref{p:main-finite}}   \label{sss:proof of p:main-finite}
Combining Lemma~\ref{l:main-finite} and Lemma~\ref{l:Hilbert}, we see that for any finite subset $S\subset\Pi_{\Fr}$ there exists an isomorphism
$\hat\Pi_{(\lambda)}/H\iso\hat\Pi_{(\lambda')}/H'$ in the category $\Pross_{\Pi/U}$ which sends the composition
$$S\mono\Pi_{\Fr}\to [\hat\Pi_{(\lambda)}/H](\BQbar )$$ to the similar map $S\to [\hat\Pi_{(\lambda')}/H'](\BQbar )$. Since the number of isomorphisms $\hat\Pi_{(\lambda)}/H\iso\hat\Pi_{(\lambda')}/H'$ in the category $\Pross_{\Pi/U}$ is finite, we can remove the condition that $S$ is finite and then take $S=\Pi_{\Fr}\,$. \qed

\section{The group scheme $\hat\Pi^{\mot}$ and a related conjecture}  \label{s:mot}

In this section we define variants of $\hat\Pi_\lambda$ and $\hat\Pi$ denoted by $\hat\Pi_\lambda^{\mot}$ and $\hat\Pi^{\mot}$ (where ``mot" stands for ``weakly motivic"). Unlike $\hat\Pi$ itself, $\hat\Pi^{\mot}$ has a chance to depend functorially on the smooth variety $X$ (see 
\S\ref{ss:functoriality in X}). The main goal of this section is to explain why I cannot prove this (see \S\ref{ss:why diff}). Another goal is to give an unconditional definition of ``motivic Langlands parameter", see \S\ref{ss:motivic Langlands parameter}.

\subsection{Definition of $\hat\Pi_\lambda^{\mot}$ and $\hat\Pi^{\mot}$}   \label{ss:def of mot}
 Recall that $\alpha\in\BQbar$ is said to be a \emph{$p$-Weil number} if $\alpha$ is a unit outside of $p$ and there exists $n\in\BZ$ such that 
 $|\phi (\alpha)|=p^{n/2}$ for all homomorphisms $\phi :\BQbar\to\BC$. Let  $\cW_p\subset\BQbar^\times$ denote the subgroup of $p$-Weil numbers; it contains the subgroup $\mu_\infty (\BQbar)\subset\BQbar^\times$ formed by all roots of unity.
 
Define a group scheme $D$ over $\BQbar$ by $D:=\Hom (\cW_p\, ,\BG_m )$. For each non-Archimedean place $\lambda$ of $\BQbar$ not dividing $p$, set 
$D_\lambda :=D\otimes_{\BQbar}\BQbar_\lambda\,$. Note that the group $\pi_0 (D)=D/D^\circ$ identifies with the group
$\Hom (\mu_\infty (\BQbar)\, ,\BG_m)=\hat\BZ$.

On the other hand, one has a canonical homomorphism 
\begin{equation}  \label{e:2to hat Z}
\Pi\to\Gal (\bar\BF_p/\BF_p)=\hat\BZ
\end{equation}
with finite cokernel. 
Composing it with the epimorphism $\hat\Pi_\lambda\epi\Pi$, one gets a homomorphism $\hat\Pi_\lambda\to\hat\BZ$.
 Now define $\hat\Pi_\lambda^{\mot}$ to be the fiber product of  $\hat\Pi_\lambda$ and $D_\lambda$ over $\hat\BZ$.

  Similarly, define $\hat\Pi^{\mot}\in\Prored (\BQbar)$ to be the fiber product of  $\hat\Pi$ and $D$ over $\hat\BZ$. (The groupoid $\Prored (\BQbar)$ was defined in \S\ref{sss:coarsecat}.) The images of 
  $\hat\Pi^{\mot}$ and $\hat\Pi_\lambda^{\mot}$ in $\Prored (\BQbar_\lambda )$ are canonically isomorphic.
  
 One has canonical isomorphisms
\[
(\hat\Pi^{\mot})^\circ\iso \hat\Pi^\circ\times D^\circ, \quad
\pi_0 (\hat\Pi^{\mot})\iso\pi_0 (\hat\Pi)=\Pi,
\]
\[
[\hat\Pi^{\mot}]\iso [\hat\Pi ]\underset{\hat\BZ}\times D\, ,
\]
and similar isomorphisms for $\hat\Pi_\lambda^{\mot}$. The subgroup $D^\circ\subset\hat\Pi^{\mot}$ is central.

The epimorphism $D(\BQbar )\epi \hat\BZ$ has a canonical splitting over $\BZ$, namely the composition 
\begin{equation}  \label{e:split over Z}
\BZ\to\Hom (\cW_p\, ,\cW_p )\mono\Hom (\cW_p\, ,\BQbar^\times )=D(\BQbar ).
\end{equation}
Consider $\BZ$ as a group scheme\footnote{This group scheme is not affine. As a scheme, it is just a disjoint union of points labeled by integers.} over $\BQbar$; this allows us to consider the group schemes $\hat\Pi_\lambda \underset{\hat\BZ}\times\BZ$ and $\hat\Pi \underset{\hat\BZ}\times\BZ$ (which are the ``Weil versions" of $\hat\Pi_\lambda$ and $\hat\Pi$). 
The epimorphism $\hat\Pi_\lambda^{\mot}\epi\hat\Pi_\lambda$ has a canonical splitting over $\hat\Pi_\lambda \underset{\hat\BZ}\times\BZ$ 
induced by \eqref{e:split over Z}. Similarly, the epimorphism $\hat\Pi^{\mot}\epi\hat\Pi$ has a splitting 
$\hat\Pi\underset{\hat\BZ}\times\BZ\to\hat\Pi^{\mot}$, which is canonical to the extent possible (i.e., up to $\hat\Pi^{\circ}$-conjugation). 
  
\subsection{Relation between $\hat\Pi_\lambda^{\mot}$ and $\hat\Pi_\lambda^{\red}$} \label{ss:mot&red}
Recall that $\hat\Pi_\lambda^{\red}$ denotes the $\lambda$-adic pro-reductive completion of $\Pi$ (see \S\ref{ss:pro-reductive}).
Consider the composition
\[
\cW_p\mono\Hom_{\cont}(\hat\BZ ,\BQbar_\lambda^\times )\to\Hom_{\cont}(\Pi ,\BQbar_\lambda^\times )=
\Hom (\hat\Pi_\lambda^{\red},\BG_m ),
\]
in which the second arrow comes from \eqref{e:2to hat Z}. It induces a homomorphism $\hat\Pi_\lambda^{\red}\to D_\lambda\,$, where 
$D_\lambda$ was defined in \S\ref{ss:def of mot}. Combining it with the epimorphism $\hat\Pi_\lambda^{\red}\to\hat\Pi_\lambda\,$, one gets a canonical homomorphism 
\begin{equation}   \label{e:red to mot} 
\hat\Pi_\lambda^{\red}\to\hat\Pi_\lambda^{\mot}\,.
\end{equation}

\begin{prop}   \label{p:mot&red}
(i) The homomorphism \eqref{e:red to mot} is surjective.

(ii) A finite-dimensional representation of $\hat\Pi_\lambda^{\red}$ is a representation of $\hat\Pi_\lambda^{\mot}$ if and only if the corresponding $\lambda$-adic local system on $X$ is weakly motivic.
\end{prop}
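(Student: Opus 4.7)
The plan is to reduce both parts to the structural description of $\hat\Pi_\lambda^{\red}$ from Proposition~\ref{p:pro-reductive}, namely
\[
\hat\Pi_\lambda^{\red}\iso\hat\Pi_\lambda\underset{\hat\BZ}\times\Hom(\BZbar_\lambda^\times,\BG_m),
\]
together with the analogous fiber-product description $\hat\Pi_\lambda^{\mot}=\hat\Pi_\lambda\times_{\hat\BZ}D_\lambda$. Under these identifications, the map \eqref{e:red to mot} is $\id_{\hat\Pi_\lambda}\times_{\hat\BZ}r$, where $r:\Hom(\BZbar_\lambda^\times,\BG_m)\to\Hom(\cW_p,\BG_m)=D$ is the restriction homomorphism induced by the inclusion $\cW_p\hookrightarrow\BZbar_\lambda^\times$.

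For (i), it suffices to show that $r$ is surjective; the statement for the fiber product is then formal. On $\BQbar_\lambda$-points, surjectivity of $r$ amounts to the extension property $\Hom(\BZbar_\lambda^\times,\BQbar_\lambda^\times)\twoheadrightarrow\Hom(\cW_p,\BQbar_\lambda^\times)$, which holds because $\BQbar_\lambda^\times$ is divisible and hence injective in the category of abelian groups. Since both sides are reduced group schemes over the algebraically closed field $\BQbar_\lambda$ of characteristic zero, surjectivity on points implies scheme-theoretic surjectivity.

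For (ii), I would reduce to an irreducible local system $\mathcal{E}$ of some rank $n$. Because $\BZbar_\lambda^\times/\mu_\infty$ is divisible, one can find $\alpha\in\BZbar_\lambda^\times$ so that the rank-one local system $\mathcal{L}_\alpha$ (coming from the character $\Pi\to\hat\BZ\to\BQbar_\lambda^\times$, $F_x\mapsto\alpha^{d_x}$) is an $n$-th root of $\det\mathcal{E}$ modulo finite-order twists; this produces a decomposition $\mathcal{E}\cong\mathcal{E}_0\otimes\mathcal{L}_\alpha$ with $\det\mathcal{E}_0$ of finite order, so $\mathcal{E}_0\in\cT_\lambda(X)$ is a representation of $\hat\Pi_\lambda$. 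By Lafforgue's theorem (Theorem~\ref{t:Langl}) together with its consequences for purity, the Frobenius eigenvalues of $\mathcal{E}_0$ at any $x\in|X|$ are $q_x$-Weil numbers. Hence $\mathcal{E}$ is weakly motivic iff $\alpha^{d_x}\in\cW_p$ for some (equivalently, every) $x\in|X|$, iff $\alpha\in\cW_p$ (an elementary verification using that an element of $\BZbar_\lambda^\times\cap\BQbar$ whose $d$-th power is in $\cW_p$ is itself in $\cW_p$). On the other hand, by Cartier duality $\Hom(D,\BG_m)=\cW_p$, so the character of $\hat\Pi_\lambda^{\red}$ associated to $\alpha$ factors through $\hat\Pi_\lambda^{\mot}$ precisely when $\alpha\in\cW_p$. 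Combining these two equivalences proves (ii).

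The main obstacle is the bookkeeping around the decomposition $\mathcal{E}\cong\mathcal{E}_0\otimes\mathcal{L}_\alpha$: one must produce $\mathcal{L}_\alpha$ in the restricted form of a character factoring through $\Pi\to\hat\BZ$ (built into Proposition~\ref{p:pro-reductive}), observe via Proposition~\ref{p:Weil-2} that irreducibility passes correctly between $\mathcal{E}$ and $\mathcal{E}_0$ after finite-order twisting, and legitimately invoke the $q_x$-Weil-number property for objects of $\cT_\lambda(X)$, which beyond the curve case requires the full strength of Lafforgue's work together with the construction of compatible systems.
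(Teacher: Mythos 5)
Your overall architecture is the same as the paper's (whose proof is literally ``combine Proposition~\ref{p:pro-reductive} and Lafforgue''): identify $\hat\Pi_\lambda^{\red}$ with $\hat\Pi_\lambda\times_{\hat\BZ}\Hom(\BZbar_\lambda^\times,\BG_m)$, observe that \eqref{e:red to mot} is $\id\times_{\hat\BZ}r$ with $r$ dual to $\cW_p\hookrightarrow\BZbar_\lambda^\times$, and untwist an irreducible $\E$ as $\E_0\otimes\mathcal{L}_\alpha$ with $\E_0\in\cT_\lambda(X)$. Part (i) and the ``Cartier duality'' step in (ii) are fine (the kernel of \eqref{e:red to mot} is $\{1\}\times\Hom(\BZbar_\lambda^\times/\cW_p,\BG_m)$, it acts on $\E$ through evaluation at $\alpha$, and this is trivial iff $\alpha\in\cW_p$). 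Two smaller points: the existence of the decomposition $\det\E\cong(\text{finite order})\otimes(\text{pullback from a point})$, which is what makes $\alpha$ exist, needs \cite[Prop.~1.3.4]{De} (as in the proof of Proposition~\ref{p:Weil-2}), not just divisibility of $\BZbar_\lambda^\times/\mu_\infty$; and the purity/unit-outside-$p$ statement for $\E_0$ is Lafforgue's Thm.~VII.6--VII.7 (cf.\ Proposition~\ref{p:Laurent Lafforgue}), whereas Theorem~\ref{t:Langl} as stated only records algebraicity and companions.

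The genuine gap is the sentence ``$\E$ is weakly motivic iff $\alpha^{d_x}\in\cW_p$ for some (equivalently, every) $x$, iff $\alpha\in\cW_p$''. Weak motivicity demands that the eigenvalues be $q_x$-Weil numbers, not $p$-Weil numbers, and these differ: a $q_x$-Weil number is a $p$-Weil number whose $p$-weight is divisible by $d_x=\deg x$. If the field of constants of $X$ is $\BF_{p^2}$ (so every $d_x$ is even) and $\alpha=p^{1/4}$, then $\alpha^{d_x}=p^{d_x/4}$ lies in $\cW_p$ for every $x$ (weight $d_x/2$), yet $\mathcal{L}_\alpha$ is not weakly motivic since $p^{d_x/4}=q_x^{1/4}$ has non-integral $q_x$-weight; so your first ``iff'' fails. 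The auxiliary lemma you invoke --- $\beta\in\BZbar_\lambda^\times\cap\BQbar$ with $\beta^d\in\cW_p$ implies $\beta\in\cW_p$ --- is false for the same $\beta=p^{1/4}$, $d=2$ (indeed $\beta^2=p^{1/2}\in\cW_p$). The repair is short and removes all dependence on degrees: since $|\phi(\alpha^{d_x})|=q_x^{n/2}$ is equivalent to $|\phi(\alpha)|=p^{n/2}$, and algebraicity and ``unit outside $p$'' pass between $\alpha$ and $\alpha^{d_x}$, one has that $\alpha^{d_x}$ is a $q_x$-Weil number (for one, hence every, $x$) if and only if $\alpha$ is a $p$-Weil number. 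Combined with the weight-$0$ statement for $\E_0$, this gives directly ``$\E$ weakly motivic $\Leftrightarrow\alpha\in\cW_p$'', after which your argument concludes as intended.
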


According to \cite[Def.~1.8]{Dr},  ``weakly motivic" means that for every closed point $x\in X$ all eigenvalues of the geometric Frobenius of $x$ are $q_x$-Weil numbers, where $q_x$ is the order of the residue field of $x$.

\begin{proof}
Combine Proposition~\ref{p:pro-reductive} and \cite[Thm.~VII.7(i-ii)]{La}.
\end{proof}

\subsection{The maps $\Pi\to\hat\Pi_\lambda^{\mot} (\BQbar_\lambda )$ and $\Pi_{\Fr}\to [\hat\Pi^{\mot} (\BQbar )]$}
One has a canonical continuous homomorphism $$\Pi\to\hat\Pi_\lambda^{\red} (\BQbar_\lambda )$$ with Zariski-dense image. Composing it with the epimorphism \eqref{e:red to mot}, one gets a canonical continuous homomorphism 
\begin{equation}   \label{e:Pi to lambda-mot}
\Pi\to\hat\Pi_\lambda^{\mot} (\BQbar_\lambda )
\end{equation}
with Zariski-dense image.

On the other hand, the canonical maps $\Pi_{\Fr}\to [\hat\Pi (\BQbar )]$ and
\[
\Pi_{\Fr}\to\BZ\to\Hom (\cW_p\, ,\cW_p )\mono\Hom (\cW_p\, ,\BQbar^\times )=D(\BQbar )
\]
define a canonical map
\begin{equation}  \label{e:Pi-Fr to mot}
\Pi_{\Fr}\to ([\hat\Pi ]\underset{\hat\BZ}\times D)(\BQbar )=[\hat\Pi^{\mot}] (\BQbar ).
\end{equation}

The maps $\Pi_{\Fr}\to [\hat\Pi^{\mot}] (\BQbar_\lambda )$ obtained from \eqref{e:Pi to lambda-mot} and \eqref{e:Pi-Fr to mot} are the same. So the map \eqref{e:Pi-Fr to mot} has Zariski-dense image.

\begin{prop}    \label{p:2mot&red}
Let $H$ be an algebraic group  over $\BQbar_\lambda\,$. Then the map 
$$\Hom (\Pi_\lambda^{\mot},H)\to\Hom_{\cont}(\Pi , H(\BQbar_\lambda )$$ 
induced by \eqref{e:Pi to lambda-mot} is injective. A continuous homomorphism $\rho :\Pi\to H(\BQbar_\lambda )$ belongs to its image if and only if the Zariski closure of $\rho (\Pi )$ is reductive and for every $x\in |X|$ the eigenvalues of $\rho (F_x)$ in each representation of $H$ are $q_x$-Weil numbers (here $q_x$ is the order of the residue field of $x$). 
\end{prop}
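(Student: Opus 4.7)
The plan is to reduce everything to Proposition~\ref{p:mot&red}, which characterizes both the surjection $\hat\Pi_\lambda^{\red}\epi\hat\Pi_\lambda^{\mot}$ and the finite-dimensional representations of $\hat\Pi_\lambda^{\mot}$ as precisely those giving rise to weakly motivic local systems on $X$. Injectivity of the map in question is immediate from the fact, noted just before the proposition, that the image of the canonical map $g:\Pi\to\hat\Pi_\lambda^{\mot}(\BQbar_\lambda)$ is Zariski-dense: any two morphisms of $\BQbar_\lambda$-group schemes $\hat\Pi_\lambda^{\mot}\to H$ that agree on this image must coincide.

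For the necessity of the two conditions on $\rho$, suppose $\rho=f\circ g$ for some $f:\hat\Pi_\lambda^{\mot}\to H$. Since $g$ has Zariski-dense image, the Zariski closure of $\rho(\Pi)$ equals the scheme-theoretic image of $f$, which is a quotient of a pro-reductive group and hence reductive. For each representation $V$ of $H$, pullback along $f$ yields a representation of $\hat\Pi_\lambda^{\mot}$; by Proposition~\ref{p:mot&red}(ii) the associated $\lambda$-adic local system on $X$ is weakly motivic, which is exactly the Weil-number condition on the eigenvalues of $\rho(F_x)$ on $V$.

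The content lies in the sufficiency direction. Assume $\rho$ satisfies the two hypotheses, and let $H'\subset H$ be its (reductive) Zariski closure. Viewing $\rho$ as a continuous homomorphism $\rho':\Pi\to H'(\BQbar_\lambda)$ with Zariski-dense image, I would invoke the universal property of the pro-reductive completion (\S\ref{ss:pro-reductive}, analogous to \S\ref{sss:pro-algebraic}) to obtain a unique factorization $\tilde\rho:\hat\Pi_\lambda^{\red}\to H'$. It then remains to descend $\tilde\rho$ through the surjection $\hat\Pi_\lambda^{\red}\epi\hat\Pi_\lambda^{\mot}$ of Proposition~\ref{p:mot&red}(i). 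For this I would fix a faithful finite-dimensional representation $W$ of $H$; the composition $H'\mono H\to GL(W)$ is then a closed immersion. The composition $\hat\Pi_\lambda^{\red}\to H'\mono GL(W)$ corresponds to the $\lambda$-adic local system on $X$ attached to $\rho$ and $W$, whose Frobenius eigenvalues are $q_x$-Weil numbers by hypothesis, so this local system is weakly motivic. Proposition~\ref{p:mot&red}(ii) then gives that this composition factors through $\hat\Pi_\lambda^{\mot}$; since $H'\mono GL(W)$ is a closed immersion, the kernel of $\hat\Pi_\lambda^{\red}\to GL(W)$ coincides with that of $\tilde\rho$ and contains the kernel of $\hat\Pi_\lambda^{\red}\epi\hat\Pi_\lambda^{\mot}$, whence $\tilde\rho$ itself factors. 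The main obstacle is really just the correct packaging of Proposition~\ref{p:mot&red}; once that result is in hand, everything else is formal.
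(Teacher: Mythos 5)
Your proposal is correct and follows the same route as the paper: the paper's own proof is simply ``Follows from Proposition~\ref{p:mot&red}'', and your argument is exactly the expected unpacking of that reduction (Zariski-density of the image of \eqref{e:Pi to lambda-mot} for injectivity, and, for sufficiency, the universal property of $\hat\Pi_\lambda^{\red}$ plus a faithful representation of $H$ to see that the kernel of $\hat\Pi_\lambda^{\red}\epi\hat\Pi_\lambda^{\mot}$ is killed). No gaps.
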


\begin{proof}
Follows from Proposition~\ref{p:mot&red}.
\end{proof}

\subsection{On functoriality of $\hat\Pi_\lambda^{\mot}$ and $\hat\Pi^{\mot}$ with respect to  $X$}
\label{ss:functoriality in X}

\subsubsection{A general construction}   \label{sss:Levi-Maltsev}
Let $H_1\to H_2$ be a homomorphism of group schemes. Let $H_i^{\red}$ denote the maximal pro-reductive quotient of $H_i\,$. Note that the composition $H_1\to H_2\epi H_2^{\red}$ does not always factor through $H_1^{\red}$. However, choose a splitting $H_1^{\red}\to H_1$ (by the Levi-Maltsev theorem, it exists and is unique up to $H_1^\circ$-conjugation); composing it with the homomorphisms
$H_1\to H_2\epi H_2^{\red}$, one gets a homomorphism $H_1^{\red}\to H_2^{\red}\,$, whose $(H_2^{\red})^\circ$-conjugacy class does not depend on the choice of the splitting $H_1^{\red}\to H_1\,$.

\subsubsection{On functoriality of $\hat\Pi_\lambda^{\mot}$ with respect to  $X$}
Suppose we have pairs $(X,\tilde X)$ and $(X',\tilde X')$ as in \S\ref{sss:some} and a morphism
$(X',\tilde X')\to (X,\tilde X)$. Set $\Pi:=\Aut (\tilde X/X)$,  $\Pi':=\Aut (\tilde X'/X')$, and 
let $f:\Pi'\to\Pi$ be the unique homomorphism that makes the map $\tilde X'\to\tilde X$ equivariant with respect to $\Pi'$. For each non-Archimedean place $\lambda$ of $\BQbar$ not dividing $p$, the homomorphism $f:\Pi'\to\Pi$ induces a homomorphism between the $\lambda$-adic pro-algebraic completions of $\Pi'$ and $\Pi$. By \S\ref{sss:Levi-Maltsev}, the latter induces a $(\Pi_\lambda^{\red})^\circ$-conjugacy class of homomorphisms $\hat f_\lambda^{\red}:\widehat{\Pi'}^{\red}_\lambda\to\hat\Pi_\lambda^{\red}\,$. By Proposition~\ref{p:mot&red}(i), one has epimorphisms $\widehat{\Pi'}^{\red}_\lambda\epi\widehat{\Pi'}^{\mot}_\lambda$ and $\hat\Pi_\lambda^{\red}\epi\hat\Pi_\lambda^{\mot}$. Using
Proposition~\ref{p:mot&red}(ii) and the fact that the class of weakly motivic $\lambda$-adic local systems is obviously stable under pullbacks\footnote{Note that this statement becomes wrong if one replaces ``weakly motivic" by the property that the determinant of each irreducible component has finite order. So one does not get a homomorphism $\widehat{\Pi'}_\lambda\to\hat\Pi_\lambda\,$, in general.}, we see that $\hat f_\lambda^{\red}:\widehat{\Pi'}_\lambda^{\red}\to\hat\Pi_\lambda^{\red}$ induces a $(\Pi_\lambda^{\mot})^\circ$-conjugacy class of homomorphisms 
$\hat f_\lambda^{\mot}:\widehat{\Pi'}^{\mot}_\lambda\to\hat\Pi_\lambda^{\mot}\,$.

\subsubsection{On functoriality of  $\hat\Pi^{\mot}$ with respect to  $X$}
Now suppose that $X$ and $X'$ are smooth, then $\hat\Pi^{\mot}$ and $\widehat{\Pi'}^{\mot}$ are defined. For each $\lambda$ as above, let 
$\hat f_{(\lambda)}:\widehat{\Pi'}^{\mot}\to\hat\Pi^{\mot}$ be the homomorphism corresponding to 
$\hat f_\lambda^{\mot}:\widehat{\Pi'}^{\mot}_\lambda\to\hat\Pi_\lambda^{\mot}\,$ by Proposition~\ref{p:reductive coarse}; it is well-defined up to $(\hat\Pi^{\mot})^\circ$-conjugation.

\begin{conj}  \label{c:functoriality in X}
The $(\hat\Pi^{\mot})^\circ$-conjugacy class of $\hat f_{(\lambda )}^{\mot}$ does not depend on $\lambda$.
\end{conj}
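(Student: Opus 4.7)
\emph{Plan.} I would proceed by a relative version of the reconstruction strategy used for Theorem~\ref{t:main}. Using Theorem~\ref{t:main} and its extension to $\hat\Pi^{\mot}$ (see \S\ref{ss:weakly motivic}), fix $\lambda$-independent representatives of $\hat\Pi^{\mot}$ and $\widehat{\Pi'}^{\mot}$ in $\Prored(\BQbar)$. The conjecture then becomes: the two specific homomorphisms
$$\hat f_{(\lambda)},\ \hat f_{(\lambda')}\colon \widehat{\Pi'}^{\mot} \to \hat\Pi^{\mot}$$
are $(\hat\Pi^{\mot})^\circ$-conjugate.

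The first step I would carry out is to show that $\hat f_{(\lambda)}$ and $\hat f_{(\lambda')}$ induce the same morphism of affine schemes $[\widehat{\Pi'}^{\mot}] \to [\hat\Pi^{\mot}]$. Both morphisms fit into the commutative square
$$
\begin{CD}
\Pi'_{\Fr} @>>> \Pi_{\Fr} \\
@VVV @VVV \\
[\widehat{\Pi'}^{\mot}](\BQbar) @>>> [\hat\Pi^{\mot}](\BQbar)
\end{CD}
$$
in which the vertical maps and the top horizontal map are $\lambda$-independent (by Theorem~\ref{t:main} applied to $X$ and to $X'$, together with the functoriality of $\Pi_{\Fr}$ recalled in \S\ref{sss:Pi_Fr}). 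Since $\Pi'_{\Fr}$ has Zariski-dense image in $[\widehat{\Pi'}^{\mot}]$, the bottom horizontal map is thereby forced to be $\lambda$-independent; equivalently, $\hat f_{(\lambda)}$ and $\hat f_{(\lambda')}$ induce the same pullback map $K(\hat\Pi^{\mot}) \to K(\widehat{\Pi'}^{\mot})$ on Grothendieck rings.

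The hard part is then to lift this equality on $[\cdot]$ to genuine $(\hat\Pi^{\mot})^\circ$-conjugacy of the homomorphisms themselves. This is the natural analog, for $\Hom$-sets, of Proposition~\ref{p:rigidity}; but for maps between \emph{distinct} pro-reductive groups the analog fails in general: two homomorphisms $H_1 \to H_2$ can be \emph{element-conjugate} — inducing the same morphism $[H_1] \to [H_2]$ — without being globally $H_2^\circ$-conjugate, as Larsen's examples (\cite{L1,L2}) show already for compact Lie groups. A tempting reduction is to restrict along many curves $C \hookrightarrow X'$, chosen via Hilbert irreducibility as in \S\ref{ss:general}, where full $\lambda$-independence of companions provides additional rigidity, and then to try to reassemble the desired $(\hat\Pi^{\mot})^\circ$-conjugacy from the family of restrictions. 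However, without a genuinely finer invariant than the character — something of Tannakian or motivic nature, or some use of integrality of Frobenius eigenvalues — ruling out Larsen-type ambiguities for arbitrary almost-simple factors of $(\hat\Pi^{\mot})^\circ$ seems out of reach. This is precisely the bottleneck that keeps the statement conjectural.
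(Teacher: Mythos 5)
You have not proved the statement, and indeed you could not have: Conjecture~\ref{c:functoriality in X} is stated as a conjecture in the paper, and the author offers no proof of it. What you did carry out matches the paper's own discussion exactly. The easy step you complete — that the two homomorphisms $\hat f_{(\lambda)}$ and $\hat f_{(\lambda')}$ induce the same morphism $[\widehat{\Pi'}^{\mot}]\to[\hat\Pi^{\mot}]$, because Frobenius images are $\lambda$-independent and Zariski-dense in $[\widehat{\Pi'}^{\mot}]$ — is precisely what the paper asserts in \S\ref{ss:why diff} (``it is easy to see that the map $[\hat f_{(\lambda)}]$ does not depend on $\lambda$''). And the obstruction you name is the same one the paper names: equality of the induced maps on GIT quotients (equivalently, on characters) only gives \emph{element-conjugacy} in Larsen's sense, and the examples of \cite{L1,L2,W1,W2} show this is strictly weaker than $(\hat\Pi^{\mot})^\circ$-conjugacy; Proposition~\ref{p:rigidity} is an automorphism-rigidity statement for a single group and has no analog for homomorphisms between distinct pro-reductive groups. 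Your suggestion of restricting to curves via Hilbert irreducibility runs into the same wall: it again only controls characters, hence element-conjugacy. So your assessment is accurate as an analysis of why the statement remains open, but it should be presented as that, not as a proof attempt; note also that the paper records the two cases where the conjecture is actually known (when $f(\Pi')\subset\Pi$ has finite index, or when the image of $X'\to X$ is a single point), which your plan does not recover beyond the general reduction.
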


\subsection{The difficulty}   \label{ss:why diff}
The conjecture is easy to prove if the subgroup $f (\Pi')\subset\Pi$ has finite index or if the image of the map $X'\to X$ has a single point. However, I cannot prove the conjecture in general\footnote{E.g., I cannot prove it if $X$ is a surface and $X'$ is a curve on it such that the image of $\Pi'$ in $\Pi$ does not have finite index.}: although it is easy to see that the map 
$[\hat{f}_{(\lambda )}]:[\widehat{\Pi '}^{\mathrm{mot}}]\to [\hat{\Pi }^{\mathrm{mot}}]$ does not depend on $\lambda $, this is not enough to prove the conjecture (see the examples from \cite{L1,L2,W1,W2}, which show that ``element-conjugacy'' in the sense of \cite{L1,L2} is not the same as
conjugacy).

\subsection{Unconditional definition of ``motivic Langlands parameter"} \label{ss:motivic Langlands parameter}
\subsubsection{Definition of $\Par (H,f)$}
Let $H$ be an algebraic group over $\BQbar$ equipped with a homomorphism $f: \Pi\to \pi_0(H)$. E.g., one can take $H$ to be the Langlands dual of a reductive group scheme over $X$ with connected fibers (in this case $f$ is an epimorphism).

In this situation let $\Par (H,f)$ denote the set of $H^\circ$-conjugacy classes of homomorphisms $\rho :\hat\Pi^{\mot}\to H$ such that the composition 
$\hat\Pi^{\mot}\overset{\rho}\longrightarrow H\to\pi_0(H)$ is equal to the composition $\hat\Pi^{\mot}\to\Pi\overset{f}\longrightarrow\pi_0(H)$. Elements of $\Par (H,f)$ could be called \emph{motivic Langlands parameters}.\footnote{The name is borrowed from  \cite[\S 12.2.4]{VLa2}. 
V.~Lafforgue introduces there a notion of motivic Langlands parameter using the standard conjectures, while our definition of $\Par (H,f)$ is unconditional.}

In the rest of \S\ref{ss:motivic Langlands parameter} we rephrase the definition of $\Par (H,f)$ in possibly more understandable terms.
This part of the article is not included into the journal version, so \emph{it has not been checked by the reviewers.}

\subsubsection{Definition of $\Par_{\lambda} (H,f)$}    \label{sss:Parlambda}
Let $\lambda$ be a non-Archimedean place of $\BQbar$ coprime to $p$. Then one has a canonical bijection $\Par (H,f)\iso\Par_{\lambda} (H,f)$, where $\Par_\lambda (H,f)$ is the set of $H^\circ (\BQbar_{\lambda})$-conjugacy classes of homomorphisms 
$\rho :\hat\Pi_{\lambda}^{\mot}\to H\otimes_{\BQbar}\BQbar_{\lambda}$ such that the composition 
$\hat\Pi_{\lambda}^{\mot}\overset{\rho}\longrightarrow H\otimes_{\BQbar}\BQbar_{\lambda}\to\pi_0(H)$ is equal to the composition $\hat\Pi_{\lambda}^{\mot}\to\Pi\overset{f}\longrightarrow\pi_0(H)$.  By Proposition~\ref{p:2mot&red}, such a homomorphism is the same as a continuous homomorphism $\rho :\Pi\to H (\BQbar_{\lambda})$ with the following properties:

(a) the composition 
$\Pi\overset{\rho}\longrightarrow H (\BQbar_{\lambda})\to\pi_0(H)$ is equal to $f$;

(b) the Zariski closure of $\rho (\Pi )$ is reductive;

(c) for every $x\in |X|$ the eigenvalues of $\rho (F_x)$ in each representation of $H$ are $q_x$-Weil numbers (here $q_x$ is the order of the residue field of $x$). 

\subsubsection{Strong compatibility}   \label{sss:Strong compatibility}
Let $\lambda_1$ and $\lambda_2$ be non-Archimedean places of $\BQbar$ coprime to $p$. Say that an element of $\Par_{\lambda_1}(H,f)$ and an  element of $\Par_{\lambda_2}(H,f)$ are \emph{strongly compatible} if they correspond to the same element of $\Par (H,f)$. In 
\S\ref{sss:2Par'}-\ref{sss:elem reform} we will rephrase strong compatibility in more concrete terms. 

To this end, we will define in \S\ref{sss:2Par'}-\ref{sss:2Parlambda to Par'} a huge set $\Par'(H,f)$ and an injective map 
$$\Par_\lambda (H,f)\mono \Par'(H,f).$$
By Proposition~\ref{p:Parlambda to Par'}(iii), strong compatibility is equivalent to having equal images in $ \Par'(H,f)$.
The definitions of $\Par'(H,f)$ and the map $\Par_\lambda (H,f)\mono \Par'(H,f)$ do not depend on Theorem~\ref{t:main} (in particular, they do not require smoothness of $X$). 

Theorem~\ref{t:main} is equivalent to the statement that given non-Archimedean places $\lambda_1$ and $\lambda_2\,$, every element of 
$\Par_{\lambda_1} (H,f)$ is strongly compatible with some element of $\Par_{\lambda_2} (H,f)$. Recall that the proof of Theorem~\ref{t:main} is based on the main theorem of  \cite{Dr} (which is proved using a deep theorem of L.~Lafforgue~\cite{La} and only assuming that $X$ is smooth).

\subsubsection{Definition of $\Par'(H,f)$}   \label{sss:2Par'}
Recall that $D:=\Hom (\cW_p\, ,\BG_m )$, where $\cW_p\subset\BQbar^\times$ is the subgroup of $p$-Weil numbers; the group $\pi_0 (D)=D/D^\circ$ identifies with $\Hom (\mu_\infty (\BQbar)\, ,\BG_m)=\hat\BZ$. 

Let $\Par'(H,f)$ be the set of isomorphism classes of the following data:

(i) an affine group scheme $K$ over $\BQbar$ equipped with an isomorphism $\pi_0 (K)\iso\Pi$; the group $K^\circ$ is required to be semisimple and simply connected;

(ii) an $H^\circ$-conjugacy class of homomorphisms 
\begin{equation}   \label{e:KD to H}
K\underset{\hat\BZ}\times D\to H
\end{equation}
such that the corresponding homomorphism
$\Pi\iso\pi_0 (K\underset{\hat\BZ}\times D)\to\pi_0 (H)$ equals $f$ and $\Ker (K^\circ\to H)$ is finite;

(iii) a map 
\begin{equation}   \label{e:Frob map}
\Pi_{\Fr}\to [K](\BQbar )
\end{equation}
over $\Pi$, which is equivariant with respect to $\Pi$-conjugation and has Zariski-dense image (as usual, $[K]$ denotes the GIT quotient of $K$ by the conjugation action of $K^\circ$).

\subsubsection{The map $\Par_\lambda (H,f)\to \Par'(H,f)$}   \label{sss:2Parlambda to Par'}
Let $\hat\Pi_{(\lambda )}$ be a model over $\BQbar$ for the pro-semisimple group scheme $\hat\Pi_\lambda$ (i.e., $\hat\Pi_\lambda =\hat\Pi_{(\lambda )}\otimes_{\BQbar}\BQbar_\lambda$); the choice of $\hat\Pi_{(\lambda )}$ will not matter. Set 
$\hat\Pi_{(\lambda )}^{\mot}:=\hat\Pi_{(\lambda )}\underset{\hat\BZ}\times D$.

We will think of elements of $\Par_\lambda (H,f)$ as $H^\circ (\BQbar )$-conjugacy classes of homomorphisms $\rho :\hat\Pi_{(\lambda )}^{\mot}\to H$ inducing the map $f:\Pi=\pi_0(\hat\Pi_{(\lambda )}^{\mot})\to\pi_0 (H)$. Given such $\rho$, define $K=K_\rho$ to be the quotient of 
$\hat\Pi_{(\lambda )}^{\mot}$ by the neutral connected component of $\Ker (\hat\Pi_{(\lambda )}^\circ\overset{\rho}\longrightarrow H)$. Since 
$\hat\Pi_{(\lambda )}^\circ$ is simply connected, so is $K^\circ$. Data (i)-(iii) from \S\ref{sss:2Par'} come from the isomorphism $\pi_0 (\hat\Pi_{(\lambda )}^{\mot})\iso\Pi$, the homomorphism $\rho :\hat\Pi_{(\lambda )}^{\mot}\to H$, and the canonical map  $\Pi_{\Fr}\to \hat\Pi_{(\lambda )}(\BQbar )$.

\begin{prop}   \label{p:Parlambda to Par'}
(i) The map $\Par_\lambda (H,f)\to \Par'(H,f)$ is injective.

(ii) An element of $\Par'(H,f)$ belongs to its image if and only if the corresponding map \eqref{e:Frob map} comes from a continuous homomorphism $\varphi :\Pi\to K(\BQbar_{\lambda})$ with Zariski-dense image. 
Such $\varphi$ is unique up to $K^\circ (\BQbar_\lambda )$-conjugation.

(iii) Strong compatibility in the sense of \S\ref{sss:Strong compatibility} is equivalent to having equal images in $ \Par'(H,f)$.
\end{prop}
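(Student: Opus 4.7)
The strategy is to establish uniqueness of $\varphi$ in (ii) first, then derive injectivity (i) from it, handle existence in (ii), and finally deduce (iii) from the $\lambda$-independence of the construction in \S\ref{sss:2Parlambda to Par'}. The three pillars are: the universal property of $\hat\Pi_\lambda$, the equivalence $\Pross_\Pi(\BQbar)\iso\Pross_\Pi(\BQbar_\lambda)$ from Proposition~\ref{p:reductive coarse}, and the faithfulness of $G\mapsto\BK^+_G$ on $\Pross_\Pi^{prod}$ from Theorem~\ref{t:variant of KLV}(i) (whose translation in terms of $[G]$ is provided by Corollary~\ref{c:in terms of [G]}).

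For the uniqueness of $\varphi$, given two continuous homomorphisms $\varphi_1,\varphi_2:\Pi\to K(\BQbar_\lambda)$ with Zariski-dense image inducing the same $\phi:\Pi_{\Fr}\to[K](\BQbar)$, the universal property of $\hat\Pi_\lambda$ extends each $\varphi_i$ to an epimorphism $\tilde\varphi_i:\hat\Pi_\lambda\to K\otimes_{\BQbar}\BQbar_\lambda$, and Proposition~\ref{p:reductive coarse} descends these to $\Pross_\Pi$-morphisms $\bar\varphi_i:\hat\Pi_{(\lambda)}\to K$. By Proposition~\ref{p:2simply connected} and the simple-connectedness of $K^\circ$ built into the definition of $\Par'(H,f)$, both $\hat\Pi_{(\lambda)}$ and $K$ lie in $\Pross_\Pi^{prod}$. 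Since the image of $\Pi_{\Fr}$ in $[\hat\Pi_{(\lambda)}](\BQbar)$ is Zariski-dense and both $\bar\varphi_i$ push it to $\phi$, the two induced maps $[\hat\Pi_{(\lambda)}]\to[K]$ coincide; as irreducible characters are functions on $[\cdot]$, the induced $\Lambda^+_\Pi$-morphisms $\BK^+_{\hat\Pi_{(\lambda)}}\to\BK^+_K$ also coincide, and faithfulness forces $\bar\varphi_1=\bar\varphi_2$ in $\Pross_\Pi$, i.e. $\varphi_1$ and $\varphi_2$ are $K^\circ(\BQbar_\lambda)$-conjugate. Existence in (ii) is then routine: given $\varphi$ with the stated properties, the universal property yields $\hat\Pi_\lambda\to K\otimes\BQbar_\lambda$, hence $\hat\Pi_{(\lambda)}\to K$ by Proposition~\ref{p:reductive coarse}; crossing with $D$ over $\hat\BZ$ and composing with \eqref{e:KD to H} produces the required $\rho\in\Par_\lambda(H,f)$, and direct inspection of the construction of $K_\rho$ shows that its image in $\Par'(H,f)$ is the prescribed triple.

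Statement (i) is then a formal corollary: if $\rho_1,\rho_2\in\Par_\lambda(H,f)$ have equal images in $\Par'(H,f)$, an isomorphism witnessing this equality identifies the triples $(K_{\rho_i},\bar\rho_i,\phi_{\rho_i})$ compatibly, and the uniqueness established above forces the quotient maps $\hat\Pi_{(\lambda)}\to K_{\rho_1}=K_{\rho_2}$ to be $K^\circ$-conjugate, whence $\rho_1$ and $\rho_2$ are $H^\circ$-conjugate. For (iii), the composite $\Par(H,f)\iso\Par_\lambda(H,f)\to\Par'(H,f)$ is manifestly $\lambda$-independent, since $K_\rho$ and its Frobenius map are manufactured from $\rho:\hat\Pi^{\mot}\to H$ and the canonical map $\Pi_{\Fr}\to[\hat\Pi](\BQbar)$, neither of which involves $\lambda$; hence strongly compatible elements share an image in $\Par'(H,f)$, and the converse is the injectivity of (i). The one genuine obstacle in the whole proof is the uniqueness in (ii), where the specter of ``element-conjugate but not conjugate'' homomorphisms flagged in \S\ref{ss:why diff} threatens; it is exorcised precisely because $K$ belongs to $\Pross_\Pi^{prod}$, activating the Kazhdan-Larsen-Varshavsky-style rigidity developed in \S\ref{s:KLV}.
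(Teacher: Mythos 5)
Your argument is essentially the paper's, only reorganized: the paper proves (i) first (Zariski-density of the image of $\Pi_{\Fr}$ pins down the morphism $[\hat\Pi_{(\lambda )}^{\mot}]\to [K]$ and hence the kernel, and Proposition~\ref{p:rigidity} — applicable because $K^\circ$ is simply connected — shows any automorphism of $K$ acting trivially on $[K]$ is inner) and then deduces the uniqueness of $\varphi$ in (ii) from (i), whereas you prove the uniqueness first and deduce (i); the existence part of (ii) via the universal property of $\hat\Pi_\lambda$ and Proposition~\ref{p:reductive coarse}, and part (iii) via $\lambda$-independence of the construction plus (i), coincide with the paper.

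One step as written needs repair. Theorem~\ref{t:variant of KLV}(i) asserts faithfulness of $G\mapsto\BK^+_G$ on the \emph{groupoid} $\Pross_\Pi^{prod}$, whose morphisms are isomorphisms (up to conjugation); your $\bar\varphi_1,\bar\varphi_2:\hat\Pi_{(\lambda)}\to K$ are epimorphisms with (a priori) nontrivial kernel, so they are not morphisms of that groupoid and ``faithfulness forces $\bar\varphi_1=\bar\varphi_2$ in $\Pross_\Pi$'' does not literally apply. The gap is small and the fix is exactly the paper's route: since the two induced maps $[\hat\Pi_{(\lambda)}]\to[K]$ coincide, the pulled-back character subsemirings $\bar\varphi_1^*K^+(K)=\bar\varphi_2^*K^+(K)$ inside $K^+(\hat\Pi_{(\lambda)})$ coincide, so by Lemma~\ref{l:normal subgr}(i) the kernels coincide; the discrepancy is then an automorphism of $K$ inducing the identity on $[K]$, which is inner by Proposition~\ref{p:rigidity} (here simple-connectedness of $K^\circ$, i.e.\ $K\in\Pross_\Pi^{prod}$, is what is really used — the same rigidity that underlies Theorem~\ref{t:variant of KLV}(i) via Corollary~\ref{c:in terms of [G]}). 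With that bridge inserted, your uniqueness claim, and hence (i) and (iii) as you derive them, are correct.
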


\begin{proof}
(i) The image of $\Pi_{\Fr}$ in $[\hat\Pi_{(\lambda )}^{\mot}](\BQbar_{\lambda} )$ is Zariski-dense, so from the map \eqref{e:Frob map} one can reconstruct the morphism $[\hat\Pi_{(\lambda )}^{\mot}]\to [K]$ and therefore the subgroup $\Ker (\hat\Pi_{(\lambda )}^{\mot}\to K)$. On the other hand, by Proposition~\ref{p:rigidity} (which is applicable because $K^\circ$ is simply connected) any automorphism of $K$ inducing the identity on $[K]$ is an inner automorphism coresponding to an element of $K^\circ (\BQbar )$.

(ii) Suppose that $\varphi$ exists. Then $\varphi$ defines a homomorphism $\hat\Pi_\lambda\to K\otimes_{\BQbar}\BQbar_{\lambda}$. After conjugating $\varphi$ by an element of $K^\circ (\BQbar_\lambda )$, we get a homomorphism $\hat\Pi_{(\lambda )}\to K$ and then a homomorphism $$\hat\Pi_{(\lambda )}^{\mot}:=\hat\Pi_{(\lambda )}\underset{\hat\BZ}\times D\to K\underset{\hat\BZ}\times D.$$
Composing it with \eqref{e:KD to H}, we get the desired homomorphism $\hat\Pi_{(\lambda )}^{\mot}\to H$. Uniqueness of $\varphi$ follows from~(i).

(iii) Strong compatibility clearly implies having equal images in $ \Par'(H,f)$. The converse statement follows from (i).
\end{proof}

\subsubsection{An elementary reformulation}  \label{sss:elem reform}
Data (i)-(ii) from \S\ref{sss:2Par'} are, in fact, ``elementary". Here is a way to think of them.

Suppose we are given a triple $(K, \phi :\pi_0 (K)\iso\Pi, \rho :K\underset{\hat\BZ}\times D\to H )$ satisfying the conditions from 
\S\ref{sss:2Par'}(i-ii). Set $G:=\im (K^{\circ}\to H)$. Then $G\subset H$ is a connected semisimple subgroup, and $K^\circ$ identifies with its universal cover $\tilde G$.

One has $\im (K\underset{\hat\BZ}\times D\to H)\subset N_H(G)$, where $N_H(G)$ is the normalizer of $G$ in $H$. The homomorphism
$\rho :K\underset{\hat\BZ}\times D\to H$ induces a homomorphism\footnote{Such a homomorphism is the same as a homomorphism 
$\bar\rho :\Pi\underset{\hat\BZ}\times\BZ \to N_H(G)/G$ with open kernel such that for all (or some) $x\in X$ the element $\bar\rho (F_x)$ is semisimple and its eigenvalues in all representations of $N_H(G)/G$ are $q_x$-Weil numbers.} 
$\bar\rho :\Pi\underset{\hat\BZ}\times D\to N_H(G)/G$. Note that $\bar\rho$ has the following property: the homomorphism 
$D^\circ\to N_H(G)/G$ induced by $\bar\rho$ has a lift\footnote{Such a lift is unique. It is easy to show that it exists if the composition $\BG_m\overset{w}\longrightarrow D^\circ\to N_H(G)/G$ lifts to a homomorphism $\BG_m\to Z_H(G)$; here $w:\BG_m\to D$ corresponds to the homomorphism $\cW_p\to\BZ$ that takes a $p$-Weil number $\alpha$ to its weight, i.e., to $2\log_p |\alpha |$.} to a homomorphism $D^\circ\to Z_H(G)$, where $Z_H(G)$ is the centralizer of $G$ in $H$.

Now suppose we are given a connected semisimple subgroup $G\subset H$ and a homomorphism 
$$\bar\rho :\Pi\underset{\hat\BZ}\times\BZ \to N_H(G)/G$$ satisfying the above ``lifting property". Let us describe all triples $(K,\phi ,\rho )$ as above corresponding to $G$ and $\bar\rho$. To this end, choose a pinning $\pi$ of $G$ (in the sense of \S\ref{sss:pinnings}), and let 
$N_H(G,\pi )$ be the normalizer of $(G,\pi )$ in $H$. Let $Z(G)$ be the center of $G$. The $\bar\rho$-pullback of the exact sequence
\begin{equation} \label{e:2Par'1}
0\to Z(G)\to N_H(G,\pi)\to N_H(G)/G\to 0
\end{equation} 
is an extension of $\Pi\underset{\hat\BZ}\times D$ by $Z(G)$; because of the ``lifting property", it splits over 
$D^\circ =\Ker (\Pi\underset{\hat\BZ}\times D\to\Pi )$ and therefore descends to an extension\footnote{One can get the same extension using the homomorphism~\eqref{e:split over Z}: namely, the pullback of the exact sequence \eqref{e:2Par'1} via the composition 
$\Pi\underset{\hat\BZ}\times\BZ\to\Pi\underset{\hat\BZ}\times D\overset{\bar\rho}\longrightarrow N_H(G)/G$ 
is an extension of 
$\Pi\underset{\hat\BZ}\times\BZ$ by $Z(G)$, which is the same as an extension of $\Pi$ by $Z(G)$.}
\begin{equation}   \label{e:2Par'2}
0\to Z(G)\to \check\Pi\to \Pi\to 0.
\end{equation}
Note that by the definition of the extension~\eqref{e:2Par'2}, we have a canonical homorphism 
\begin{equation} \label{e:2Par'3}
\check\Pi\underset{\hat\BZ}\times D \to N_H(G,\pi).
\end{equation}

It is easy to check that given $G$ and $\bar\rho$ as above (in particular, $\bar\rho$ should have the ``lifting property"), 
 \emph{the groupoid of all triples $(K,\phi ,\rho )$ corresponding to $G$ and $\bar\rho$ is canonically equivalent to the groupoid of lifts of the extension \eqref{e:2Par'2} to an extension}
\begin{equation}   \label{e:2Par'4}
0\to Z(\tilde G)\to\tilde\Pi\to\Pi\to 0,
\end{equation}
where $\Pi$ acts on $Z(\tilde G)$ via the composition $\Pi\overset{\bar\rho}\longrightarrow N_H(G)/G\to\Aut Z(\tilde G)$. The triple 
$(K,\phi ,\rho )$ corresponding to such a lift is as follows: $K$ is the central term of the extension of $\Pi$ by $\tilde G$ induced by 
\eqref{e:2Par'4}, and the homomorphism $K\underset{\hat\BZ}\times D\to H$ comes from the compositions
\[
K^\circ =\tilde G\to G\mono H \quad \mbox{and} \quad \tilde\Pi\underset{\hat\BZ}\times D\to\check\Pi\underset{\hat\BZ}\times D \to N_H(G,\pi),
\]
where the last arrow is \eqref{e:2Par'3}.

\section{On $\hat\Pi_{(\lambda)}$ for $\lambda$ dividing $p$}   \label{s:crys}
Let $p$, $X$,  $\tX$, and $\Pi$ be as be as in \S\ref{sss:some}. Assume that $X$ is smooth.

\subsection{The category $\FIsocd (X)$ and the group $\pi_1^{\FIsocd} (X)$}   \label{ss:FIsocd}
\subsubsection{The Tannakian category $\FIsocd (X)$}
Let $\FIsocd (X)$ denote the category of \emph{overconvergent $F$-isocrystals} on $X$. This is a Tannakian category over $\BQ_p$ (usually a non-neutral one). One should think of $\FIsocd (X)$ as a $p$-adic analog of the category of lisse $\BQ_\ell$-sheaves on $X$.

In \cite[\S 1.3]{Ke2} and especially in \cite{Ke3} one can find a brief discussion of $\FIsocd (X)$ and the category of  \emph{convergent $F$-isocrystals}\footnote{$\FIsocd (X)$ is a full subcategory of the Tannakian category $\FIsoc (X)$ closed under tensor products and duals but usually not under subobjects. If $X$ is proper then $\FIsocd (X)=\FIsoc (X)$.}  $\FIsoc (X)\supset \FIsocd (X)$, as well as many references.

\subsubsection{The group scheme $\pi_1^{\FIsocd} (X)$}  
We will define an affine group scheme $\pi_1^{\FIsocd} (X)$, which is a crystalline analog of $\tilde\Pi_\ell\otimes_{\BQ_\ell}\BQbar_\ell$, where $\tilde\Pi_\ell$ is as in \S\ref{sss:0pro-algebraic}.

Let us recall some material from \cite[Appendix B]{DK}, which goes back to R.~Crew's article \cite{Crew-mono}. We use the notation of  \cite{DK}, which is different\footnote{In particular, the group that we denote by $\pi_1^{\FIsocd} (X)$ is \emph{different} from (but closely related to) the group denoted by $\pi_1^{\FIsocd} (X)$ in formula (2.5.4) on p.~446 of \cite{Crew-mono}.} from that of \cite{Crew-mono}.

Set 
\begin{equation}   \label{e:FIsoc on tilde X}
\FIsocd (\tX ):=\underset{U}{\underset{\longrightarrow}\lim} \FIsocd (\tX /U),
\end{equation}   
where $U$ runs through the set of open subgroups of $\Pi$. Fix an algebraic closure $\BQbar_p$ of $\BQ_p\,$. Fix a fiber functor 
\[
\tilde\xi :\FIsocd (\tX )\to\Vect_{\BQbar_p}\, , 
\]
where $\Vect_{\BQbar_p}$ is the category of finite-dimensional vector spaces over $\BQbar_p\,$. The existence of $\tilde\xi$ is guaranteed by a general theorem of Deligne~\cite{De3}; one can also construct $\tilde\xi$ by choosing a closed point $\tilde x\in\tX$ and a fiber functor $\FIsocd (\tilde x )\to\Vect_{\BQbar_p}\,$. 

Let $\xi :\FIsocd (X )\to\Vect_{\BQbar_p}$ be the composition of $\tilde\xi$ with the pullback functor from $X$ to $\tX$. We set 
$\pi_1^{\FIsocd} (X,\tilde\xi):=\Aut \xi$; usually, we write simply $\pi_1^{\FIsocd} (X)$. 
This is an affine group scheme over $\BQbar_p\,$, and one has a canonical equivalence 
$\FIsocd (X )\otimes_{\BQ_p}\BQbar_p \iso\Rep_{\BQbar_p} (\pi_1^{\FIsocd} (X))$, where $\Rep_{\BQbar_p}$ is the category of finite-dimensional representations over $\BQbar_p\,$.

One has a canonical epimorphism 
\begin{equation}   \label{e:crys to usual}
\pi_1^{\FIsocd} (X)\epi\Pi
\end{equation}
defined as follows. For every normal open subgroup $U\subset\Pi$, the Tannakian category $\FIsocd (X )$ identifies with the category of $(\Pi/U)$-equivariant objects of $\FIsocd (\tX /U )$, denoted by $\FIsocd (\tX /U )^{\Pi/U}$. So
\begin{equation}   \label{e:supset}
\FIsocd (X )=\FIsocd (\tX /U )^{\Pi/U}\supset (\Vect_{\BQ_p})^{\Pi/U}=\Rep_{\BQ_p} (\Pi/U).
\end{equation}
The restriction of $\xi$ to $\Rep_{\BQbar_p} (\Pi/U)$ is the tautological fiber functor, so the inclusion \eqref{e:supset} induces an epimorphism 
$\pi_1^{\FIsocd} (X)\epi\Pi/U$. In the limit, one gets \eqref{e:crys to usual}.

For each open subgroup $U\subset\Pi$ one has the group $\pi_1^{\FIsocd} (\tX /U)$ defined similarly to $\pi_1^{\FIsocd} (X)$; by \cite[Prop.~B.7.6(ii)]{DK}, one has a canonical isomorphism
\begin{equation}    \label{e:replacing Pi by U}
\pi_1^{\FIsocd} (\tX /U)\iso\pi_1^{\FIsocd} (X)\times_{\Pi}U.
\end{equation}

Define $\pi_1^{\FIsocd} (\tX )$ to be the projective limit of $\FIsocd (\tX /U )$ with respect to $U$. Then one has a canonical exact sequence
\begin{equation}   \label{e:pi1Fisocd of tilde X}
0\to \pi_1^{\FIsocd} (\tX )\to\pi_1^{\FIsocd} (X )\to\Pi\to 0.
\end{equation}

\begin{prop} \label{p:Crew}
The neutral connected component of $\pi_1^{\FIsocd} (X )$ equals $\pi_1^{\FIsocd} (\tX )$.
\end{prop}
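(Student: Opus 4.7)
The strategy is to show that the canonical epimorphism $G := \pi_1^{\FIsocd}(X) \twoheadrightarrow \Pi$ from the exact sequence~\eqref{e:pi1Fisocd of tilde X} is precisely the quotient $G \twoheadrightarrow G/G^\circ = \pi_0(G)$. Once this is established, writing $N := \pi_1^{\FIsocd}(\tX) = \ker(G \twoheadrightarrow \Pi)$, the containment $G^\circ \subseteq N$ forces $N = G^\circ$, which is the desired statement. The containment $G^\circ \subseteq N$ itself is formal: the composition $G^\circ \hookrightarrow G \twoheadrightarrow \Pi$ has connected image in the profinite group $\Pi$, whose neutral connected component as a group scheme over $\BQbar_p$ is trivial.

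To establish that $G \twoheadrightarrow \Pi$ is the maximal profinite quotient of $G$, it suffices to show that every continuous homomorphism from $G$ to a finite group factors through $\Pi$. A finite quotient of $G$ corresponds, by Tannakian reconstruction, to a Tannakian subcategory of $\FIsocd(X) \otimes_{\BQ_p} \BQbar_p$ equivalent to $\Rep_{\BQbar_p}(H)$ for some finite group $H$; such a subcategory yields a quotient factoring through $\Pi$ if and only if it is contained in the essential image of the embedding $\Rep_{\BQbar_p}(\Pi) \hookrightarrow \FIsocd(X) \otimes \BQbar_p$ from~\eqref{e:supset}. Thus the proposition reduces to the assertion that every overconvergent $F$-isocrystal on the smooth variety $X/\BF_p$ whose Tannakian monodromy group is finite is \emph{isotrivial}, i.e., arises from a finite \'etale cover of $X$ equipped with a compatible $F$-structure.

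The main (and essentially only) obstacle is this last assertion. It is a classical result going back to R.~Crew (see \cite{Crew-mono} and the exposition in~\cite{Ke3}); one concrete route is to observe that a finite-monodromy $F$-isocrystal is automatically unit-root, invoke Tsuzuki's equivalence between unit-root overconvergent $F$-isocrystals on $X$ and continuous $\BQ_p$-rep\-re\-sent\-a\-tions of $\pi_1^{\et}(X)$, and note that a continuous $p$-adic representation whose Zariski closure in $GL_n$ is finite must itself have finite image. With this input the proposition follows as outlined above.
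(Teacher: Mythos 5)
The paper does not actually prove this proposition: it attributes the result to R.~Crew and refers to \cite[Prop.~B.7.6(i)]{DK} for a proof, so there is no in-text argument to compare with; your outline is correct and is essentially the standard argument underlying that citation. The formal part is fine: $G^\circ\subseteq\Ker(G\epi\Pi)$ because $\Pi$ is pro-finite, and since $G/G^\circ$ is the limit of the finite quotients of $G$, everything reduces to showing that an object of $\FIsocd(X)\otimes_{\BQ_p}\BQbar_p$ with finite monodromy group is isotrivial. Two points in your treatment of that key input deserve tightening. First, ``finite monodromy implies unit-root'' is true but not a one-word observation: restrict to a closed point $x$; the monodromy group of the restriction is a closed subgroup of that of the original object, and by \cite[Ch.~VI, Prop.~3.4.2.1]{Sa} (quoted after \S\ref{sss:Spec finite field}) it is the Zariski closure of the cyclic group generated by the Frobenius, so finiteness forces the Frobenius eigenvalues at $x$ to be roots of unity and hence all Newton slopes to vanish. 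Second, the equivalence you quote is not quite Tsuzuki's statement: in the overconvergent setting the representations occurring are those with finite \emph{local} monodromy, and Tsuzuki treats curves (the higher-dimensional case needs further input). For the present purpose you can sidestep this: the underlying \emph{convergent} unit-root isocrystal has finite monodromy as well, so Crew's correspondence already yields a finite-image representation of $\Pi$ and a finite \'etale cover on which the convergent isocrystal is trivial with its Frobenius structure; Kedlaya's full faithfulness of $\FIsocd(X)\to\FIsoc(X)$ \cite{Ke} promotes that trivialization to the overconvergent category, after which \eqref{e:supset} and equivariant descent finish the argument. With these adjustments your proof is complete and agrees with the argument of \cite{Crew-mono} and \cite[Prop.~B.7.6]{DK}.
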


This result is due to R.~Crew \cite{Crew-mono}. For a proof see \cite[Prop.~B.7.6(i)]{DK}.

\subsection{Frobeniuses}
\subsubsection{The isomorphism $\Fr_X^*\iso\Id$}
The absolute Frobenius $\Fr_X :X\to X$ induces a tensor functor $\Fr_X^*:\FIsocd (X)\to\FIsocd (X)$. One has a canonical tensor isomorphism 
$F:\Fr_X^*\iso\Id_{\FIsocd (X)}$ (this is the $F$ from the word ``$F$-isocrystal"). For each $n\in\BN$ it induces a tensor isomorphism
\begin{equation}   \label{e:F^n}
F^n: (\Fr_X^n)^*\iso\Id_{\FIsocd (X)}\, .
\end{equation}

\subsubsection{The case $X=\Spec\BF_{p^n}\,$}   \label{sss:Spec finite field}
If $X=\Spec\BF_{p^n}$ then \eqref{e:F^n} is a tensor automorphism of $\Id_{\FIsocd (X)}\,$. It defines a $\BQbar_p$-point of (the center of) 
$\pi_1^{\FIsocd} (\Spec\BF_{p^n})$, which is called the \emph{geometric Frobenius}. 

\subsubsection{Remark}
In fact, according to the easy Proposition~3.4.2.1 of \cite[Ch.~VI]{Sa}, the group scheme $\pi_1^{\FIsocd} (\Spec\BF_{p^n})$ identifies with the pro-algebraic completion of the abstract group $\BZ$ so that the geometric Frobenius defined in \S\ref{sss:Spec finite field} is equal to the image of $1\in\BZ$.

\subsubsection{General case}   \label{sss: Frobenius general}
Now let $X$ be arbitrary. Then we will associate to any  $\tilde x\in |\tX |$ a $\pi_1^{\FIsocd}(\tX)$-conjugacy class in $\pi_1^{\FIsocd}(X)$.

Recall that $\pi_1^{\FIsocd}(X)$ is a shorthand for $\pi_1^{\FIsocd}(X,\tilde\xi )$, where $\tilde\xi :\FIsocd (\tX )\to\Vect_{\BQbar_p}$ is a fiber functor. Define $\FIsocd (\{\tilde x\})$ similarly to \eqref{e:FIsoc on tilde X}. Let $R:\FIsocd (X)\to\FIsocd (\{\tilde x\})$ be the pullback functor. Given a fiber functor $\tilde\eta:\FIsocd (\{\tilde x\})\to\Vect_{\BQbar_p}$ and an isomorphism $f:\tilde\xi\iso\tilde\eta\circ R$, let
$F_{\tilde x}^{\tilde\eta ,f}\in\pi_1^{\FIsocd}(X,\tilde\xi )$ be the image of the geometric Frobenius element of $\pi_1^{\FIsocd} (\{x\},\tilde\eta )$ under the composition
\[
\pi_1^{\FIsocd} (\{ x\},\tilde\eta )\to \pi_1^{\FIsocd}(X,\tilde\eta\circ R )\iso\pi_1^{\FIsocd}(X,\tilde\xi ).
\]
Then the $\pi_1^{\FIsocd}(\tX )$-conjugacy class of $F_{\tilde x}^{\tilde\eta ,f}$ does not depend on $\tilde\eta ,f$. We call it the geometric Frobenius of $\tilde x$ and denote it by $F_{\tilde x}\,$. It is easy to check that the map $\tilde x\mapsto F_{\tilde x}$ is $\Pi$-equivariant.

\subsection{The groups $\hat\Pi_{\lambda}$ and $\hat\Pi_{(\lambda)}$ for $\lambda |p$}    \label{lambda |p}
\subsubsection{The Tannakian subcategory $\cT_p (X)\subset\FIsocd (X)$}  \label{sss:cT_p}
Let $\cT_p (X)\subset\FIsocd (X)$ be the full subcategory of semisimple objects $M\in\FIsocd (X)$ such that 
for every connected finite etale covering $\pi :X'\to X$, 
the determinant of each irreducible component of $\pi^*M\otimes_{\BQ_p}\BQbar_p\in\FIsocd (X')\otimes_{\BQ_p}\BQbar_p$ has finite order. 
In fact, it is enough to check the latter property when $\pi$ is an isomorphism: this is proved similarly to Proposition~\ref{p:Weil-2} but using \cite[Lemma 6.1]{Abe2011} instead of \cite[Prop.~1.3.4]{De}. In \S\ref{sss:cT_|p} we will see that the subcategory $\cT_p (X)\subset\FIsocd (X)$ is Tannakian.

\subsubsection{The field $\BQbar_\lambda\,$}
Fix an algebraic closure $\BQbar$ of $\BQ$. For each non-Archimedean place $\lambda$ define the field $\BQbar_\lambda$ just as in \S\ref{sss:lambda-completion}: namely, $\BQbar_\lambda$ is the union of the completions $E_\lambda$ for all subfields $E\subset\BQbar$ finite over $\BQ$.

\subsubsection{$\cT_\lambda (X)$, $\hat\Pi_{\lambda}\,$, and $\hat\Pi_{(\lambda)}$ for $\lambda |p$} \label{sss:cT_|p}
Let $\lambda$ be a $p$-adic place of $\BQbar$. Then $\BQbar_\lambda$ is an algebraic closure of~$\BQ_p\,$. Set
\begin{equation}   \label{e:cT lambda}
\cT_\lambda (X):=\cT_p (X)\otimes_{\BQ_p}\BQbar_\lambda\, ,
\end{equation}
where  $\cT_p (X)$ is as in \S\ref{sss:cT_p}. Equivalently, $\cT_\lambda (X)\subset \FIsocd (X)\otimes_{\BQ_p}\BQbar_\lambda$ is the 
full subcategory of semisimple objects $M\in\FIsocd (X)\otimes_{\BQ_p}\BQbar_\lambda$ with the following property: for every connected finite etale covering $\pi :X'\to X$, the determinant of each irreducible component of $\pi^*M$ has finite order. (It is enough to check this property if $\pi$ is an isomorphism.)

If we fix a fiber functor $\tilde\xi :\FIsocd (\tX )\to\Vect_{\BQbar_\lambda}\,$, then one has the affine group scheme $\pi_1^{\FIsocd}(X,\tilde\xi )$ over $\BQbar_\lambda\,$. Let $\hat\Pi_{\lambda}$ denote the maximal pro-semisimple quotient of $\pi_1^{\FIsocd}(X,\tilde\xi )$. Then one has a canonical fully faithful tensor functor $\Rep_{\BQbar_\lambda} (\hat\Pi_{\lambda})\mono \FIsocd (X)\otimes_{\BQ_p}\BQbar_\lambda$ whose essential image equals $\cT_\lambda (X)$. So  $\cT_\lambda (X)$ is a Tannakian subcategory of $\FIsocd (X)\otimes_{\BQ_p}\BQbar_\lambda$ canonically equivalent to $\Rep_{\BQbar_\lambda} (\hat\Pi_{\lambda})$. It follows that the subcategory $\cT_p (X)\subset\FIsocd (X)$ from 
\S\ref{sss:cT_p} is Tannakian.

The definition of $\hat\Pi_{\lambda}$ involves a choice of $\tilde\xi$. But Proposition~\ref{p:Crew} implies that $\hat\Pi_{\lambda}$ does not depend on $\tilde\xi$ if considered as an object of the ``coarse" groupoid $\Pross (\BQbar_\lambda )$. 

Let $\hat\Pi_{(\lambda)}\in\Pross (\BQbar )$ denote the image of $\hat\Pi_{\lambda}\in\Pross (\BQbar_\lambda )$ under the functor inverse to the equivalence $\Pross (\BQbar )\iso \Pross (\BQbar_\lambda )$.

By Proposition~\ref{p:Crew}, one has
\begin{equation}  \label{e:pi_0=Pi}
\pi_0(\hat\Pi_{(\lambda})=\Pi .
\end{equation}

If $\Pi'\subset\Pi$ is an open subgroup one can define $\Pi'_{(\lambda)}$ by applying the previous construction to $\tX/\Pi'$ instead of $X$; on the other hand, $\Pi'_{(\lambda)}$ canonically identifies with $\Pi_{(\lambda)}\times_{\Pi}\Pi'$, see formula \eqref{e:replacing Pi by U}.

\subsubsection{The scheme $[\hat\Pi_{(\lambda)}]$}
Recall that for any pro-reductive group $G$, the symbol $[G]$ denotes the GIT quotient of $G$ by the conjugation action of the neutral connected component $G^\circ$ (see \S\ref{sss:charFr}-\ref{sss:coarsecat}). In particular, we have the scheme $[\hat\Pi_{(\lambda)}]$ over $\BQbar$. By \eqref{e:pi_0=Pi}, we have a canonical action of $\Pi$ on $[\hat\Pi_{(\lambda)}]$ and a canonical $\Pi$-equivariant map 
$[\hat\Pi_{(\lambda)}]\epi\Pi$.

By \S\ref{sss: Frobenius general}, we have the geometric Frobenius map
\begin{equation}  \label{e:tilde X to []}
|\tX |\to [\hat\Pi_{(\lambda)}] (\BQbar_\lambda ), \quad \tilde x\mapsto F_{\tilde x\,};
\end{equation}
this map is $\Pi$-equivariant.

\subsubsection{The set $\tilde\Pi_{\Fr}$\,}

In \S\ref{sss:Pi_Fr} we defined a subset $\Pi_{\Fr}\subset\Pi$. Set $\tilde\Pi_{\Fr}:=\BN\times |\tX |$. One has the canonical $\Pi$-equivariant surjection
\begin{equation}   \label{e:tildePiFr to Pi}
\tilde\Pi_{\Fr}\epi\Pi_{\Fr} 
\end{equation}
that takes $( n,\tilde x )\in\tilde\Pi_{\Fr}$ to the $n$-th power of the geometric Frobenius of $\tilde x$ in $\Pi$. 

The map 
$\tilde\Pi_{\Fr}\to\Pi\times\tX$ induced by \eqref{e:tildePiFr to Pi} is injective, and it may be convenient to think of $\tilde\Pi_{\Fr}$ as a subset of $\Pi\times\tX$ rather than as $\BN\times |\tX |$. If $\Pi'\subset\Pi$ is an open subgroup then the subset $\tilde\Pi'_{\Fr}\subset\Pi'\times\tX$ is equal to 
$\tilde\Pi_{\Fr}\times_{\Pi}\Pi'$.

The map  \eqref{e:tildePiFr to Pi} has a canonical lift to a $\Pi$-equivariant map
\begin{equation}   \label{e:tildePiFr to []}
\tilde\Pi_{\Fr}\to [\hat\Pi_{(\lambda)}] (\BQbar_\lambda ), 
\quad( n,\tilde x )\mapsto F_{\tilde x}^n,
\end{equation}
where $F_{\tilde x}$ is as in \eqref{e:tilde X to []}.

\subsection{Recollections on existence of companions. II}
For each non-Archimedean place $\lambda$ of $\BQbar$ we have the Tannakian category $\cT_\lambda (X)$ over $\BQbar_\lambda\,$: it was defined in \S\ref{sss:rep hat-Pi-lambda} if $\lambda$ is coprime to $p$,  and by formula~\eqref{e:cT lambda} if~$\lambda |p$. Here is a generalization of Theorem~\ref{t:Langl}.  

\begin{theorem}  \label{t:AE}
Let $\lambda$ be a non-Archimedean place of $\BQbar$, and let $\E\in\cT_\lambda (X)$ . Then

(i) for each $x\in |X|$, the polynomial $\det (1-t\cdot F_x\, ,\E )$ belongs to $\BQbar [t]$;

(ii) for every non-Archimedean place $\lambda'$ of $\BQbar$  coprime to $p$, the object $\E$ has a $\lambda'$-companion, i.e.,
an object $\E'\in\cT_{\lambda'} (X)$ such that $\det (1-t\cdot F_x\, ,\E )=\det (1-t\cdot F_x\, ,\E' )$ for all $x\in |X|$; moreover, if $\dim X=1$ then $\E'$ exists even if $\lambda'|p$;

(iii) $\E'$ is unique up to isomorphism (if it exists);

(iv) if $\E$ is irreducible then so is $\E'$ (if $\E'$ exists).
\end{theorem}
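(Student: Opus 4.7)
The plan is to reduce all four statements to Theorem~\ref{t:Langl} (which handles the case $\lambda, \lambda' \nmid p$) by invoking Abe's crystalline Langlands correspondence \cite{A} to bridge the $p$-adic and $\ell$-adic worlds, combined with restriction-to-curves arguments to pass from $\dim X>1$ to curves. First I would reduce to $\E$ irreducible, using that $\cT_\lambda(X)$ is semisimple by definition.

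Statement (iii) is formal: by \v{C}ebotarev density $\Pi_{\Fr}$ is dense in $\Pi$, so a semisimple object of $\cT_{\lambda'}(X)$ is determined up to isomorphism by the collection of polynomials $\{\det(1-t\cdot F_x, \cdot)\}_{x \in |X|}$.  For statement (i), I only need to handle $\lambda \mid p$. When $\dim X = 1$, Abe's theorem attaches to an irreducible $\E$ a cuspidal automorphic representation whose Satake parameters are algebraic, giving $\det(1-t\cdot F_x,\E)\in\BQbar[t]$. For $\dim X > 1$, pick a smooth curve $f:C\to X$ through $x$; the pullback $f^*\E$ lies in $\cT_\lambda(C)$ by the crystalline analog of Proposition~\ref{p:Weil-2} (cf.\ \cite[Lemma~6.1]{Abe2011}), and $\det(1-t\cdot F_x,\E)$ can be read off from a lift $\tilde x\in|C|$, reducing to the curve case.

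For statement (ii) I split into cases. The case $\lambda,\lambda'\nmid p$ is Theorem~\ref{t:Langl}(ii). For $\dim X=1$: when $\lambda\mid p$, $\lambda'\nmid p$ use the crystalline-to-\'etale direction of Abe; when $\lambda\nmid p$, $\lambda'\mid p$ use Abe's converse; if both divide $p$, compose through an intermediate $\ell$-adic companion for $\ell\ne p$. The one remaining case is $\dim X>1$, $\lambda\mid p$, $\lambda'\nmid p$: on every smooth curve $f:C\to X$ Abe produces an $\ell$-adic companion $(f^*\E)'$ of $f^*\E$, and a Drinfeld-type argument patterned on \cite{Dr} (enough curves via Hilbert irreducibility, glued together using the uniqueness part (iii) and the rigidity of the coarse category $\Pross$ from \S\ref{ss:Pross and variants}) assembles these into a global $\lambda'$-adic companion $\E'$ on $X$; membership of $\E'$ in $\cT_{\lambda'}(X)$ is then verified on a sufficiently general curve.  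Finally, (iv) is deduced as follows: if $\E$ is irreducible but $\E'=\E'_1\oplus\E'_2$, apply (ii) in the reverse direction to obtain $\lambda$-companions $\E_i$ of $\E'_i$, so $\E_1\oplus\E_2$ is a $\lambda$-companion of $\E'$ and hence, by (iii), isomorphic to $\E$, contradicting irreducibility.

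The main obstacle is the last case of (ii), namely producing an $\ell$-adic companion on a higher-dimensional $X$ of a $p$-adic $F$-isocrystal. Abe's theorem gives this only on curves, and the descent from curves to $X$ is the nontrivial step: unlike the purely \'etale situation of \cite{Dr}, one is transporting data across the $p$/$\ell$ divide, so the Hilbert-irreducibility/gluing machinery must be combined carefully with the uniqueness and rigidity results of \S\ref{s:KLV}--\S\ref{s:completion} in order to ensure coherent gluing across all curves in $X$.
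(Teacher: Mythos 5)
Note first that the paper does not actually re-prove this theorem: its proof is a list of precise citations (the case $\lambda,\lambda'\nmid p$ is Theorem~\ref{t:Langl}; statement (i) for $\lambda|p$ and the hard case of (ii) are \cite[Thm.~4.2]{AE}; the crystalline companion on curves is \cite[Thm.~4.4.1]{A}; uniqueness for $\lambda'|p$ is \cite[Prop.~A.3.1]{A} via \cite[Thm.~2.6]{AE}). Your attempt to sketch actual proofs contains two genuine errors, not merely missing references. For (iii), your \v{C}ebotarev argument works only when $\lambda'$ is coprime to $p$, since only then is an object of $\cT_{\lambda'}(X)$ a representation of $\Pi$; but (iii) must also cover the case $\lambda'|p$ (which occurs when $\dim X=1$ in (ii)), where $\E'$ is an overconvergent $F$-isocrystal and is not given by a homomorphism out of $\Pi$, so density of Frobenius elements in $\Pi$ says nothing. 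Uniqueness there is a real theorem, namely \cite[Prop.~A.3.1]{A} made applicable by the \v{C}ebotarev-type density statement \cite[Thm.~2.6]{AE} for isocrystals.

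For (iv), your reverse-companion argument fails in exactly the case where (iv) is new: $\lambda|p$, $\lambda'\nmid p$, $\dim X>1$. Applying ``(ii) in the reverse direction'' there would require crystalline companions of $\ell$-adic local systems on a higher-dimensional $X$, which statement (ii) does not provide and which is precisely the open problem emphasized in \S\ref{ss:crystalline analog}. The argument that works (and is what \cite{AE} do) restricts to a curve $C\subset X$ on which $\E$ remains irreducible --- this is supplied by the Lefschetz theorem for overconvergent $F$-isocrystals --- and then uses the two-way companion correspondence on curves. Relatedly, for the hard case of (ii) ($\dim X>1$, $\lambda|p$, $\lambda'\nmid p$) your plan of ``curves plus Abe plus Drinfeld-style gluing'' is indeed the strategy of \cite[Thm.~4.2]{AE}, but the ingredients that make the hypotheses of Drinfeld's gluing theorem verifiable are the Lefschetz theorem and the density/uniqueness results for isocrystals, not the rigidity of the coarse groupoid $\Pross$ from \S\ref{ss:Pross and variants}, which plays no role here; as written, the hardest step of your proof is a gesture at an argument that constitutes a separate paper rather than something deducible from the tools of \S\ref{s:KLV}--\S\ref{s:completion}.
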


If $\lambda$ and $\lambda'$ are coprime to $p$ this is Theorem~\ref{t:Langl}. The rest is due to Abe \cite{A} and Abe-Esnault \cite{AE}; in addition to \cite{La},  it is based on the  crystalline version of the Langlands correspondence for $GL(n)$ over function fields 
\cite[Thm. 4.2.2]{A}. The precise references to \cite{A,AE} are below.

If $\lambda |p$ then statement (i) of Theorem~\ref{t:AE} is shown in the proof of \cite[Thm. 4.2]{AE}.

If $\lambda |p$ and $\lambda'$ is coprime to $p$ then statement (ii) is a part of \cite[Thm. 4.2]{AE}; if $\lambda'|p$ and $\dim X=1$ this is \cite[Thm. 4.4.1]{A}.

If $\lambda'$ is coprime to $p$ then statement (iii) holds by \v {C}ebotarev density. If $\lambda'| p$ then (iii) follows from \cite[Prop.~A.3.1]{A}, which is applicable by \cite[Thm. 2.6]{AE}.

If $\lambda |p$ and $\lambda'$ is coprime to $p$ then statement (iv) is a part of \cite[Thm. 4.2]{AE}. A similar argument works for any $\lambda$ and $\lambda'$.

\begin{cor}  \label{c:algebraicity}
Let $\lambda$ be a place of $\BQbar$ dividing $p$. Then the map \eqref{e:tildePiFr to []} factors as 
\[
\tilde\Pi_{\Fr}\epi\Pi_{\Fr}\to [\hat\Pi_{(\lambda)}] (\BQbar)\mono [\hat\Pi_{(\lambda)}] (\BQbar_\lambda ),
\]
where the first arrow is the map \eqref{e:tildePiFr to Pi}.
\end{cor}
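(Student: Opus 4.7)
The plan is to establish two claims: (A) that the map \eqref{e:tildePiFr to []} depends only on the image of $(n,\tilde x)$ in $\Pi_{\Fr}$, and (B) that its values lie in $[\hat\Pi_{(\lambda)}](\BQbar)$. Both will be checked against characters of representations. Since $\hat\Pi_{(\lambda)}$ is pro-semisimple, the coordinate algebra $\Gamma([\hat\Pi_{(\lambda)}],\cO )$ is generated over $\BQbar$ by the characters $\chi_V$ of the finite-dimensional representations $V$ of $\hat\Pi_{(\lambda)}$; via the equivalence $\Pross (\BQbar )\iso\Pross (\BQbar_\lambda )$ and the Tannakian description recalled in \S\ref{sss:cT_|p}, such representations correspond to the objects of $\cT_\lambda (X)$. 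By the construction of the Frobenius class in \S\ref{sss: Frobenius general}, one has the key identity $\chi_V(F_{\tilde x}^n)=\Tr (F_x^n,V)$ for every $V\in\cT_\lambda (X)$.

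For (B), Theorem~\ref{t:AE}(i) asserts that $\det (1-tF_x,V)\in\BQbar [t]$, so all eigenvalues of $F_x$ on $V$ lie in $\BQbar$; consequently $\Tr (F_x^n,V)$, being a power sum of such eigenvalues, itself lies in $\BQbar$. Every generator $\chi_V$ of $\Gamma ([\hat\Pi_{(\lambda)}],\cO )$ therefore takes a $\BQbar$-value at $F_{\tilde x}^n$, and the corresponding $\BQbar$-algebra homomorphism $\Gamma ([\hat\Pi_{(\lambda)}],\cO )\to\BQbar_\lambda$ factors through $\BQbar$.

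For (A), fix any non-Archimedean place $\lambda'$ of $\BQbar$ coprime to $p$ and, given $V\in\cT_\lambda (X)$, invoke Theorem~\ref{t:AE}(ii) to produce a companion $V'\in\cT_{\lambda'}(X)$, satisfying $\Tr (F_y^m,V)=\Tr (F_y^m,V')$ for all $y\in |X|$ and $m\in\BN$. On the $\ell$-adic side, the canonical homomorphism $\Pi\to\hat\Pi_{\ell'}(\BQ_{\ell'})$ of \S\ref{sss:thecompletion} (with $\ell'$ the rational prime below $\lambda'$) lets one express $\Tr (F_x^n,V')$ as the value of the character of $V'$ on the image in $\hat\Pi_{\ell'}(\BQ_{\ell'})$ of the element $F_{\tilde x}^n\in\Pi$; this trace therefore depends only on the class $F_{\tilde x}^n\in\Pi_{\Fr}$, and the companion identity transports this dependence to $V$ itself. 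Since characters separate points of $[\hat\Pi_{(\lambda)}]$, claim~(A) follows.

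The mild obstacle distinguishing this from Proposition~\ref{p:algebraicity} is the absence of a canonical homomorphism $\Pi\to\hat\Pi_\lambda (\BQbar_\lambda )$ when $\lambda |p$, which forbids a direct transplantation of the $\ell$-adic argument; the detour through an $\ell$-adic companion, furnished by the unconditional half of Theorem~\ref{t:AE}(ii) (going from $p$-adic to $\ell$-adic, and hence available independently of the existence of crystalline companions for $\dim X>1$), is what makes the proof go through.
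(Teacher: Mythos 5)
Your route is the same as the paper's: Theorem~\ref{t:AE}(i) for landing in $[\hat\Pi_{(\lambda)}](\BQbar)$, and Theorem~\ref{t:AE}(ii) with an auxiliary place $\lambda'$ prime to $p$ (the unconditional direction, from crystalline to \'etale) to see that the map depends only on the image in $\Pi_{\Fr}$; this is exactly the paper's two-line argument. However, the way you justify the two reductions to character values contains a step that is false as stated. You claim that the coordinate algebra of $[\hat\Pi_{(\lambda)}]$ is generated by the characters of finite-dimensional representations of $\hat\Pi_{(\lambda)}$, and later that these characters separate points of $[\hat\Pi_{(\lambda)}]$. Both fail for disconnected groups: $[G]$ is the GIT quotient by $G^\circ$-conjugation only, so $\cO([G])$ consists of $G^\circ$-conjugation-invariant functions, whereas characters are invariant under all of $G$-conjugation. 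Already for a finite group scheme $G$ (so $G^\circ=1$) one has $[G]=G$, while characters only see conjugacy classes; and $\hat\Pi_{(\lambda)}$ is as disconnected as possible, with $\pi_0=\Pi$. So neither generation nor separation holds for characters of $\hat\Pi_{(\lambda)}$ alone.

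The repair is the mechanism the paper uses elsewhere: bring in all open subgroups $U\subset\Pi$, i.e., all finite \'etale covers $\tX/U$ of $X$ (which are smooth, so Theorem~\ref{t:AE} applies to them as well). Characters of representations of $\hat\Pi_\lambda\times_\Pi U$ are traces of Frobenius on objects of $\cT_\lambda(\tX/U)$, and by the reconstruction of $[G]$ from the character rings $K(G\times_\Pi U)\otimes\BQbar_\lambda$ (the Lemma of \S\ref{ss:reconstructing [G]} and Corollary~\ref{c:in terms of [G]}; see also Remark~\ref{r:injectivity}), these functions, together with the locally constant functions pulled back from $\Pi=\pi_0(\hat\Pi_{(\lambda)})$, do control $[\hat\Pi_{(\lambda)}]$. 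The pulled-back functions are automatically $\BQbar$-valued at any point, and in your claim (A) the two Frobenius points being compared lie over the same element $\gamma\in\Pi$, so only separation within the fiber over $\gamma$ is needed; for that one uses the $U$'s containing $\gamma$ (preimages of cyclic subgroups of finite quotients of $\Pi$), exactly as in the construction of $Y_F$ in \S\ref{ss:reconstructing [G]}. With this modification (replace ``representations of $\hat\Pi_{(\lambda)}$'' by ``representations of $\hat\Pi_\lambda\times_\Pi U$ for all open $U\subset\Pi$'' throughout), your argument for both (A) and (B) goes through and coincides with the intended proof.
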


\begin{proof}
By Theorem~\ref{t:AE}(i), the image of the map \eqref{e:tildePiFr to []} is contained in $[\hat\Pi_{(\lambda)}] (\BQbar )$. Applying Theorem~\ref{t:AE}(ii) for some $\lambda'$ coprime to $p$, we see that the map \eqref{e:tildePiFr to []} factors through $\Pi_{\Fr}\,$.
\end{proof}

\subsection{The theorem}
For any non-Archimedean place $\lambda$ of $\BQbar$, we have a diagram of sets
\begin{equation}   \label{e:2diagram of sets}
\Pi_{\Fr}\to [\hat\Pi_{(\lambda)}] (\BQbar)\epi\Pi\, ;
\end{equation}
namely, the map $\Pi_{\Fr}\to [\hat\Pi_{(\lambda)}](\BQbar)$ comes from Corollary~\ref{c:algebraicity} if $\lambda |p$ and from Proposition~\ref{p:algebraicity} if 
$\lambda$ is coprime to~$p$.

\begin{theorem}  \label{t:p-main}
Assume that $\dim X=1$. Let $\lambda$ and $\lambda'$ be non-Archimedean places of $\BQbar$. Then there exists a unique isomorphism 
$\hat\Pi_{(\lambda)}\iso\hat\Pi_{(\lambda')}$ in the category $\Pross (\BQbar )$ 
which sends diagram \eqref{e:2diagram of sets} to a similar diagram $\Pi_{\Fr}\to [\hat\Pi_{(\lambda')}] (\BQbar)\epi\Pi\,$. 
\end{theorem}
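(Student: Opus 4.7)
The plan is to mimic the proof of Theorem~\ref{t:main} for curves given in Sections~\ref{ss:curves}--\ref{ss:arbitrary curves}, substituting Theorem~\ref{t:AE} of Abe and Abe--Esnault for Theorem~\ref{t:Langl} of L.~Lafforgue. Uniqueness is immediate: once $\hat\Pi_{(\lambda)}^\circ$ is known to be a product of almost-simple groups, Proposition~\ref{p:rigidity} together with Zariski-density of the image of $\Pi_{\Fr}$ in $[\hat\Pi_{(\lambda)}]$ yields it as in \S\ref{ss:rigidity}. Moreover, the reduction of Section~\ref{ss:arbitrary curves} from arbitrary smooth curves to affine ones---which hinges only on the equality of Grothendieck semirings viewed as subrings of functions on $\Pi_{\Fr}$---carries over verbatim once the crystalline analog of Corollary~\ref{c:Langl} is in place. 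So I may assume $X$ is an affine curve.

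The crystalline analog of Corollary~\ref{c:Langl} is the next step. The canonical equivalence $\cT_\lambda (X)\simeq \Rep_{\BQbar_\lambda} (\hat\Pi_\lambda )$ of \S\ref{sss:cT_|p} identifies $K^+(\hat\Pi_{(\lambda )})$ with the Grothendieck semiring of $\cT_\lambda (X)$. Restricting Frobenius traces to $\Pi_{\Fr}$---which lands in $\BQbar$ by Corollary~\ref{c:algebraicity} when $\lambda\mid p$ and by Proposition~\ref{p:algebraicity} otherwise---realizes $K^+(\hat\Pi_{(\lambda)})$ as a lambda-subsemiring of the ring of $\Pi$-conjugation-invariant functions $\Pi_{\Fr}\to \BQbar$; Theorem~\ref{t:AE}(ii)-(iii) makes this subsemiring $\lambda$-independent. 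Applied to $\tX/U$ for each open $U\subset\Pi$, the argument produces a canonical isomorphism $\BK^+_{\hat\Pi_{(\lambda)}}\iso \BK^+_{\hat\Pi_{(\lambda')}}$ in the category $\Lambda^+_\Pi$ of \S\ref{sss:BK}.

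To finish, I would invoke Theorem~\ref{t:variant of KLV}(ii). This requires $\hat\Pi_{(\lambda)},\hat\Pi_{(\lambda')}\in \PROSS_\Pi$. Since $X$ is an affine curve, Proposition~\ref{p:cohom-vanishing} gives $H^2(U,\BQbar^\times)=0$ for every open $U\subset\Pi$, so $\PROSS_\Pi=\Pross_\Pi^{prod}$ as in Section~\ref{ss:curves}. For $\lambda$ coprime to $p$, Proposition~\ref{p:2simply connected} gives $\hat\Pi_{(\lambda)}\in\Pross_\Pi^{prod}$; for $\lambda\mid p$, the previous paragraph combined with Theorem~\ref{t:variant of KLV}(iii) transports this property from any $\lambda'$ coprime to $p$. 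Theorem~\ref{t:variant of KLV}(ii) then yields a $\Pross_\Pi$-isomorphism $\hat\Pi_{(\lambda)}\iso \hat\Pi_{(\lambda')}$ inducing the $\BK^+$-isomorphism. Compatibility with diagram~\eqref{e:2diagram of sets} follows from Corollary~\ref{c:in terms of [G]}: the image of $F\in\Pi_{\Fr}$ in $[\hat\Pi_{(\lambda)}](\BQbar)$ is determined by the irreducible character values at $F$, and these agree on both sides by construction.

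The main obstacle is the product-of-almost-simples property for $\lambda\mid p$. I do not see a direct crystalline analog of the simply-connectedness argument underlying Proposition~\ref{p:2simply connected} (which in the $\ell$-adic case used continuity and the existence of small open subgroups in $H(\BQ_\ell)$ on which an etale cover splits). So the bootstrap through Theorem~\ref{t:variant of KLV}(iii) from a place $\lambda'$ coprime to $p$ appears essential; in particular, the proof of Theorem~\ref{t:p-main} seems unavoidably to pass through the Kazhdan--Larsen--Varshavsky style reconstruction of Section~\ref{s:KLV} rather than proceeding by a direct comparison of the crystalline and $\ell$-adic pro-semisimple completions.
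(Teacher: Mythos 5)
Your proposal is correct and follows essentially the same route as the paper: reduce to the curve case of the semiring comparison with Theorem~\ref{t:AE} replacing Theorem~\ref{t:Langl}, then apply Theorem~\ref{t:variant of KLV}, with Theorem~\ref{t:variant of KLV}(iii) transporting the product-of-almost-simples property from a place coprime to $p$ to a place dividing $p$. That last bootstrap is exactly the ``only difference'' the paper itself singles out in its proof of Theorem~\ref{t:p-main}, so nothing further is needed.
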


\begin{proof}
In \S\ref{s:main} we already proved this if $\lambda$ and $\lambda'$ are coprime to $p$. It remains to consider the case that $\lambda' |p$ and 
$\lambda$ is coprime to $p$. It is treated similarly to \S\ref{ss:main in terms of K}-\ref{ss:arbitrary curves}.  The only difference is as follows. At the end of \S\ref{ss:main in terms of K} we used Proposition~\ref{p:2simply connected} to conclude that both $\hat\Pi_{(\lambda )}$ and 
$\hat\Pi_{(\lambda' )}$ are in $\Pross_\Pi^{prod}$. But now we are assuming that $\lambda' |p$, so Proposition~\ref{p:2simply connected} only tells us that $\hat\Pi_{(\lambda )}$ is in $\Pross_\Pi^{prod}$. However, this implies that $\hat\Pi_{(\lambda' )}$ is in $\Pross_\Pi^{prod}$ by Theorem~\ref{t:variant of KLV}(iii).
\end{proof}

If either $\lambda |p$ or $\lambda'|p$ then I cannot remove the assumption $\dim X=1$ because it appears in the second part of Theorem~\ref{t:AE}(ii).

\begin{cor}   \label{c:p-simply connected}
If $\dim X=1$ then $\hat\Pi_{(\lambda)}^\circ$ is simply connected for every non-Archimedean place $\lambda$ of $\BQbar$.
\end{cor}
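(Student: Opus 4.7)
The plan is to reduce the $p$-adic case to the already-established case of $\lambda$ coprime to $p$ by invoking Theorem~\ref{t:p-main}. The proof is essentially a one-liner once that theorem is available; the role of the hypothesis $\dim X = 1$ is precisely to make Theorem~\ref{t:p-main} applicable.

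First, I would record that simply connectedness of the neutral component is an invariant of the isomorphism class in the coarse groupoid $\Pross(\BQbar)$. Indeed, if $G_1, G_2$ are pro-semisimple groups and $f : G_1 \iso G_2$ represents a $\Pross(\BQbar)$-isomorphism, then $f$ restricts to an honest group scheme isomorphism $G_1^\circ \iso G_2^\circ$ (the ambiguity in $f$ is modulo inner automorphisms by $G_2^\circ$, which act on $G_2^\circ$ by inner automorphisms and thus preserve the isomorphism type). Moreover, the equivalence $\Pross(\BQbar) \iso \Pross(\BQbar_\lambda)$ from \S\ref{sss:coarsecat} is implemented by base change, which trivially preserves simple connectedness of a connected semisimple group scheme.

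Second, I would pick any non-Archimedean place $\lambda'$ of $\BQbar$ coprime to $p$. By Proposition~\ref{p:2simply connected}, $\hat\Pi_{\lambda'}^\circ$ is simply connected. Transporting through the equivalence of the previous paragraph, $\hat\Pi_{(\lambda')}^\circ \in \Pross(\BQbar)$ is also simply connected.

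Finally, given a place $\lambda$ dividing $p$, I would invoke Theorem~\ref{t:p-main} (which is applicable precisely because $\dim X = 1$) to obtain an isomorphism $\hat\Pi_{(\lambda)} \iso \hat\Pi_{(\lambda')}$ in $\Pross(\BQbar)$. Combined with the preceding step, this shows $\hat\Pi_{(\lambda)}^\circ$ is simply connected. The case $\lambda$ coprime to $p$ is the content of the second paragraph, so nothing more is required. There is no genuine obstacle: the substantive work is already contained in Theorem~\ref{t:p-main} and Proposition~\ref{p:2simply connected}, and the only mild point to verify is the isomorphism invariance of simple connectedness through the coarse groupoid, which is immediate.
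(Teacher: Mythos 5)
Your proof is correct and is essentially the same as the paper's: the paper also uses Theorem~\ref{t:p-main} to reduce to a place $\lambda$ coprime to $p$ and then applies Proposition~\ref{p:2simply connected}. The extra remarks you make (invariance of simple connectedness under $\Pross(\BQbar)$-isomorphism and under the base-change equivalence) are the routine checks the paper leaves implicit.
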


\begin{proof}
By Theorem~\ref{t:p-main}, we can assume that $\lambda$ is coprime to $p$. In this case the statement follows from the easy 
Proposition~\ref{p:2simply connected}.
\end{proof}

\begin{quest}
Let $\dim X>1$ and $\lambda |p$. Is it still true that $\hat\Pi_{(\lambda)}^\circ$ is simply connected? Equivalently, is it true that the group 
$\pi_1^{\FIsocd} (\tX )$ from the exact sequence \eqref{e:pi1Fisocd of tilde X} is simply connected? 
\end{quest}

\appendix

\section{Proof of Proposition~\ref{p:coarse hom}}  \label{s:coarse hom}
\subsection{The goal of this Appendix}  \label{ss:goal}
Let $E$ be an algebraically closed field. Let $G_1$ and $G_2$ be group schemes over $E$. Recall that $\Hom_{\coarse} (G_1\, ,G_2)$ denotes the quotient of the set $\Hom(G_1\, ,G_2)$ by the conjugation action of $G_2^\circ (E)$. The goal of this Appendix is to prove the following statement, which is clearly equivalent to 
Proposition~\ref{p:coarse hom}.

\begin{prop}   \label{p:2coarse hom}
Let $N$ denote the set of all normal subgroups $H\subset G_2$ such that $G_2/H$ has finite type. Then the canonical map
\begin{equation} \label{e:bijection}
\Hom_{\coarse} (G_1\, ,G_2)\to\underset{H\in N}{\underset{\longleftarrow}{\lim}}\Hom_{\coarse} (G_1\, ,G_2/H)
\end{equation}
is bijective.
\end{prop}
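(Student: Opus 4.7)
The plan is to prove the bijection \eqref{e:bijection} by separately establishing surjectivity and injectivity.

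For surjectivity, I fix a compatible family $(\xi_H)_{H \in N}$, let $S_H \subset \Hom(G_1, G_2/H)$ be the preimage of $\xi_H$, and observe that each $S_H$ is a single $(G_2/H)^\circ(E)$-conjugacy class. The central point is that the transition maps $S_{H'} \to S_H$ for $H' \subset H$ in $N$ are surjective: given $s_H \in S_H$ and any $s_{H'}^{0} \in S_{H'}$, the reduction of $s_{H'}^{0}$ and $s_H$ differ in $S_H$ by conjugation with some $g \in (G_2/H)^\circ(E)$; since $(G_2/H')^\circ \to (G_2/H)^\circ$ is a quotient morphism of smooth $E$-groups, it is surjective on $E$-points in characteristic zero, so $g$ lifts to $\tilde g \in (G_2/H')^\circ(E)$, and then $\mathrm{Int}(\tilde g) \circ s_{H'}^{0} \in S_{H'}$ maps to $s_H$. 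Surjective transitions together with a Zorn / transfinite induction argument then produce a compatible family in $\varprojlim_H S_H$, which defines a homomorphism $f \in \Hom(G_1, G_2) = \varprojlim_H \Hom(G_1, G_2/H)$ whose image is $(\xi_H)$.

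For injectivity, suppose $f, f' : G_1 \to G_2$ are such that $\pi_H \circ f$ is $(G_2/H)^\circ(E)$-conjugate to $\pi_H \circ f'$ for every $H \in N$. I would consider the affine transporter scheme $\mathrm{Trans}(f, f') \subset G_2^\circ$ whose $R$-points, for an $E$-algebra $R$, are $\{ g \in G_2^\circ(R) : g \cdot f_R \cdot g^{-1} = f'_R \}$, together with the analogous closed subschemes $T_H \subset (G_2/H)^\circ$ of finite type. There is a natural identification $\mathrm{Trans}(f,f') = \varprojlim_H T_H$ of schemes over $E$. By hypothesis $T_H(E) \ne \emptyset$, so $T_H \ne \emptyset$ as a scheme; and since $\mathcal{O}(\mathrm{Trans}(f,f')) = \varinjlim_H \mathcal{O}(T_H)$ and a filtered direct limit of nonzero $E$-algebras is nonzero, $\mathrm{Trans}(f,f')$ is itself nonempty as a scheme. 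To upgrade this to an $E$-point $g \in \mathrm{Trans}(f,f')(E)$, I would exploit that each $T_H$ is smooth---being a torsor for the smooth centralizer $Z_H := Z_{(G_2/H)^\circ}(\pi_H \circ f)$---and replace the inverse system by the Noetherian-stabilized ``eventual image'' closed subschemes $T_H^{\mathrm{ev}} := \bigcap_{H' \subset H} \overline{\mathrm{im}(T_{H'} \to T_H)}$. These are nonempty, and the induced transitions $T_{H'}^{\mathrm{ev}} \to T_H^{\mathrm{ev}}$ are dominant; by Chevalley constructibility their images contain dense opens, and the torsor structure permits correcting candidate lifts along centralizer fibers, so a careful transfinite selection would furnish a compatible family of $E$-points.

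The main obstacle is precisely this last step: passing from $\mathrm{Trans}(f,f') \ne \emptyset$ as a scheme to the existence of an $E$-point. A direct appeal to quasi-compactness of the topological space of the affine scheme $G_2^\circ$ produces a point of $\mathrm{Trans}(f,f')$ with possibly non-algebraic residue field, and the naive transition maps $T_{H'}(E) \to T_H(E)$ need not be surjective---the obstruction being that the centralizer morphism $Z_{H'}(E) \to Z_H(E)$ can fail to be surjective in general. Overcoming this requires genuinely using the smoothness of the $T_H$ and the pro-algebraic structure of $G_2^\circ$, rather than any purely formal limit-theoretic manipulation.
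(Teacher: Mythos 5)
Your surjectivity argument breaks down at its central step. You correctly show that the transition maps $S_{H'}\to S_H$ are surjective (lifting the conjugating element along $(G_2/H')^\circ(E)\to (G_2/H)^\circ(E)$ is fine), but the assertion that ``surjective transitions together with a Zorn / transfinite induction argument'' yield a point of $\varprojlim_H S_H$ is false at this level of generality: the poset $N$ need not have a countable cofinal subset, and a projective limit of non-empty sets with surjective transition maps over an uncountable directed poset can be empty --- the paper's own example is $\varprojlim_S \Inj(S,\BN)$ over finite subsets $S\subset\BR$, whose limit is the empty set of injections $\BR\mono\BN$. So some algebro-geometric finiteness must be injected, and your proposal does not supply it for the surjectivity half at all; you only acknowledge the analogous difficulty in the injectivity half, where you leave it unresolved (``careful transfinite selection'').

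The missing ingredient, used in the paper for both halves, is a set-theoretic lemma (Lemma~\ref{l:set-theor}) combined with the descending chain condition. The lemma says: if each $Y_H\ne\emptyset$, the transitions are surjective, the maps $Y_{H\cap H'}\to Y_H\times Y_{H'}$ are injective, but the limit is empty, then (starting from a Zorn-maximal partially compatible family) one can extract, for some fixed $H'$, elements $\xi_i\in Y_{H_i}$ with $H_1\supset H_2\supset\cdots$ such that the sets $F_i\subset Y_{H'}$ of elements compatible with $\xi_i$ form an \emph{infinite strictly decreasing} chain. For surjectivity each $F_i$ is an orbit of an algebraic subgroup $K_i\subset \Gamma_{H'}:=(G_2^\circ/(H'\cap G_2^\circ))(E)$, and the $K_i$ decrease, hence stabilize --- contradiction. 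For injectivity, your ``eventual images'' are essentially the paper's intermediate objects, but no closures or Chevalley constructibility are needed: each transporter is a coset of a centralizer, so its image under a homomorphism of algebraic groups is already closed, and Noetherianity makes the images stabilize to non-empty closed sets $Y_H$ with surjective transitions; the same lemma then would produce an infinite strictly decreasing chain of Zariski-closed subsets of a variety, which is impossible. Note that the smoothness and torsor structure you invoke are not the relevant inputs --- what is needed is the Noetherian property packaged with the Zorn argument as above. As it stands, your surjectivity proof rests on a false principle and your injectivity proof stops exactly where the real work begins.
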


\subsection{A set-theoretical lemma}
\begin{rem}
Let us recall that the projective limit of an \emph{uncountable} directed system of non-empty sets with respect to surjective maps can be empty. For instance, for any finite subset $S\subset\BR$ let $\Inj (S,\BN)$ denote the set of injections $S\mono\BN$. The projective limit of the sets 
$\Inj (S,\BN)$ is the set of injections $\BR\mono\BN$, which is empty.
\end{rem}

Now let $N$ be a poset in which any two elements $H_1\, , H_2$ have an infinum (e.g., the poset $N$ from Proposition~\ref{p:2coarse hom} has this property). We will use the symbol ``$\subset$" for the order relation and the symbol ``$\cap$" for the infinum.

\begin{lem}  \label{l:set-theor}
Let $(Y_H)$ be a projective system of sets indexed by $H\in N$ in which all transition maps\footnote{Here we follow the convention that if $H'\subset H$ then we have a transition map $Y_{H'}\to Y_H$ (rather than $Y_H\to Y_{H'}$).} are surjective. Suppose that each $Y_H$ is non-empty, but their projective limit is empty. In addition, suppose that all maps
\begin{equation}  
Y_{H\cap H'}\to Y_H\times Y_{H'}\, , \quad H,H'\in N
\end{equation}
are injective. Then for some $H'\in N$ there exists a sequence of pairs
\begin{equation}   \label{e:sequence of pairs}
(H_i\, ,\xi_i), \quad i\in\BN, \; H_i\in N,\;\xi_i\in Y_{H_i}
\end{equation}
with the following properties:

(a) $H_i\supset H_{i+1}$ and $\xi_{i+1}\mapsto\xi_i$ for all $i\in\BN$;

(b) set $F_i:=\{y\in Y_{H'}\,|\, (\xi_i\, ,y)\in Y_{H_i\cap H'}\}$, where $Y_{H_i\cap H'}$ is a shorthand for $\im (Y_{H_i\cap H'}\to Y_{H_i}\times Y_{H'})$; 
then the inclusions
\begin{equation}   \label{e:descending chain}
F_1\supset F_2\supset\ldots
\end{equation}
are strict.
\end{lem}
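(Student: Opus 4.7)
The plan is to argue by contradiction: assuming no $H'$ and sequence $(H_i, \xi_i)$ with strictly decreasing $F_i$ exist, I would construct an element of $\varprojlim_{H \in N} Y_H$, contradicting the hypothesis. The main tool would be Zorn's lemma applied to the poset $P$ whose elements are pairs $s = (D_s, (\eta_H)_{H \in D_s})$ with $D_s \subset N$ closed under finite intersections and $\eta_H \in Y_H$ compatible in the sense that $(\eta_{H_1}, \eta_{H_2})$ lies in $Y_{H_1 \cap H_2}$ (viewed inside $Y_{H_1} \times Y_{H_2}$) for all $H_1, H_2 \in D_s$. Chains admit obvious unions as upper bounds, so a maximal element $s^*$ exists; the target is $D_{s^*} = N$.

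Suppose $H^* \in N \setminus D_{s^*}$. For each $H \in D_{s^*}$ I would consider
\[
B_H := \{ y \in Y_{H^*} : (\eta_H^{s^*}, y) \in Y_{H \cap H^*} \} \subset Y_{H^*}.
\]
Surjectivity of the transition maps gives $B_H \neq \emptyset$, and the injectivity hypothesis gives $B_{H'} \subset B_H$ whenever $H' \subset H$ in $D_{s^*}$. If $\bigcap_{H \in D_{s^*}} B_H$ is non-empty, any $\eta_{H^*}$ in it would extend $s^*$: injectivity produces, for each $H \in D_{s^*}$, a unique $\eta_{H \cap H^*} \in Y_{H \cap H^*}$ above $(\eta_H^{s^*}, \eta_{H^*})$, and a routine verification shows that these together with $\eta_{H^*}$ assemble into a compatible system on $D_{s^*} \cup \{H^*\} \cup \{H \cap H^* : H \in D_{s^*}\}$, contradicting maximality of $s^*$.

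It thus remains to produce an element of $\bigcap_{H \in D_{s^*}} B_H$, and this is where the assumed failure of the lemma enters. If the intersection were empty, pick for each $y \in Y_{H^*}$ some $H_y \in D_{s^*}$ with $y \notin B_{H_y}$. Build inductively a descending chain $H_{(0)} \supset H_{(1)} \supset \cdots$ in $D_{s^*}$: start with arbitrary $y_0 \in Y_{H^*}$ and $H_{(0)} := H_{y_0}$; given $H_{(n)}$, pick $y_{n+1} \in B_{H_{(n)}}$ (non-empty) and set $H_{(n+1)} := H_{(n)} \cap H_{y_{n+1}}$, which stays in $D_{s^*}$ by closure. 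Since $H_{(n+1)} \subset H_{y_{n+1}}$ forces $B_{H_{(n+1)}} \subset B_{H_{y_{n+1}}}$ while $y_{n+1} \notin B_{H_{y_{n+1}}}$, we get $y_{n+1} \in B_{H_{(n)}} \setminus B_{H_{(n+1)}}$, so $B_{H_{(0)}} \supsetneq B_{H_{(1)}} \supsetneq \cdots$ strictly. Taking $H_i := H_{(i-1)}$, $\xi_i := \eta_{H_i}^{s^*}$ (compatibility provided by $s^*$), and $H' := H^*$ yields exactly the forbidden sequence, since $F_i = B_{H_{(i-1)}}$.

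The main subtlety is the double use of injectivity: it makes the $B_H$'s monotone in $H$ (powering the descent construction) and supplies the unique preimages needed to glue the extension at the Zorn step. Closure of $D_{s^*}$ under finite intersections then provides the room to intersect with each $H_{y_{n+1}}$ without leaving the partial system, which is what makes the descent argument stay inside the hypotheses of the lemma.
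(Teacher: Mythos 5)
Your proof is correct and takes essentially the same route as the paper's: Zorn's lemma applied to pairs consisting of an intersection-closed subset of $N$ and a compatible family over it, with maximality forcing the intersection of the sets $B_H$ (the paper's $F_H$) to be empty, from which the strictly decreasing chain is extracted inductively. You merely make explicit two steps the paper leaves implicit --- the gluing of $\eta_{H^*}$ into the maximal system via the unique preimages supplied by injectivity, and the inductive construction of the chain $H_{(0)}\supset H_{(1)}\supset\cdots$ --- the only cosmetic slip being that the monotonicity $B_{H'}\subset B_H$ for $H'\subset H$ needs only compatibility of the partial system and functoriality of the transition maps, not injectivity.
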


\begin{proof}
Consider all pairs $(S,\xi )$, where $S\subset N$ is a subset closed under finite infimums and 
$$\xi\in\underset{H\in S}{\underset{\longleftarrow}{\lim}} Y_H\,.$$ Such pairs form a poset satisfying the conditions of Zorn's lemma. So it has a maximal element $(S_{\max}\, ,\xi_{\max})$. By assumption, $S_{\max}\ne N$. 

Now let $H'\in N\setminus S_{\max}$. For each $H\in S_{\max}$ let $\xi_H\in Y_H$ be the image of $\xi$ and let
\[
F_H:=\{y\in Y_{H'}\, |\, (\xi_H\, ,y)\in Y_{H\cap H'}\}.
\]
It is clear that  if $H_1\subset H_2$ then $F_{H_1}\supset F_{H_2}\,$. Since the transition maps in our projective systems are surjective, each
$F_H$ is non-empty. On the other hand, the intersection of all sets $F_H$, $H\in S_{\max}\,$, is empty because the pair $(S_{\max}\, ,\xi_{\max})$ is maximal. So there exists a sequence of pairs \eqref{e:sequence of pairs} with properties (a) and (b).
\end{proof}

\subsection{Proof of surjectivity of the map \eqref{e:bijection}}
Fix an element of the set 
$$\underset{H\in S}{\underset{\longleftarrow}{\lim}} \Hom_{\coarse}(G_1\, ,G_2/H).$$
Then for each $H\in N$ we get an element of $\Hom_{\coarse}(G_1\, ,G_2/H)$; let $Y_H\subset\Hom (G_1\, ,G_2/H)$ denote its preimage. Clearly 
$Y_H$ is non-empty and is equipped with a transitive action of the group $$\Gamma_H:=(G_2^\circ/(H\cap G_2^\circ ))(E).$$
The pairs $(\Gamma_H , Y_H)$ form a projective system. If $H'\subset H$ then the transition map $\Gamma_{H'}\to\Gamma_H$ is clearly surjective, so the map $Y_{H'}\to Y_H$ is also surjective by transitivity of the action of $\Gamma_H$ on $Y_H\,$.

We have to show that the projective limit of the sets $Y_H$ is non-empty. Assume the contrary. Then we are in the situation of 
Lemma~\ref{l:set-theor}. Let $H_i\,$, $\xi_i\in Y_{H_i}\,$, and $F_i\subset Y_{H'}$ be as in the lemma. 
Let $K_i\subset \Gamma_{H'}$ denote the image of the stabilizer of $\xi_i$ in $\Gamma_{H_i\cap H'\,}$, then $F_i$ is an orbit of $K_i\,$ acting on 
$Y_{H'}\,$. The groups $K_i$ form a decreasing chain of algebraic subgroups of the algebraic group $\Gamma_{H'\,}$, so $K_i=K_{i+1}$ for $i$ big enough. Then $F_i=F_{i+1}\,$, so we get a contradiction.
\qed

\subsection{Proof of injectivity of the map \eqref{e:bijection}}
Let $f,\tilde f\in\Hom (G_1\, , G_2)$. For each $H\in N$ let $f_H\,,\tilde f_H\in\Hom (G_1\, ,G_2/H)$ denote the images of $f$ and $\tilde f$; 
set $\Gamma_{H}:=(G_2^\circ/(H\cap G_2^\circ ))(E)$ and 
let 
$Z_H$ denote the set of all $g\in \Gamma_H$ such that $\tilde f_H$ is $g$-conjugate to $f_H\,$. Each $Z_H$ is an algebraic variety. The  varieties $Z_H$ form a projective system. The problem is to show that if each variety $Z_H$ is non-empty then so is their projective limit.

If $H'\subset H$ set $Z_{H',H}:=\im (Z_{H'}\to Z_H)\,$. Note that the subset $Z_{H'}\subset\Gamma_{H'}$ is a left coset with respect to an algebraic subgroup  of $\Gamma_{H'}$. Since $Z_{H',H}$ is the image of $Z_{H'}$ under an algebraic group homomorphism $\Gamma_{H'}\to\Gamma_H\,$, we see that $Z_{H',H}$ is closed in $Z_H\,$. If $H''\subset H'$ then $Z_{H'',H}\subset Z_{H',H}\,$. Since $Z_H$ is Noetherian, there exists a subset 
$Y_H\subset Z_H$ such that $Z_{H',H}=Y_H$ for $H'$ small enough. Clearly $Y_H\ne\emptyset$. The sets $Y_H$ form a projective system of sets with surjective transition maps, and the problem is to show that the projective limit is non-empty.

Assume the contrary. Then we can apply Lemma~\ref{l:set-theor} and get an infinite decreasing chain \eqref{e:descending chain}. In our situation it is formed by Zariski-closed subsets of an algebraic variety, so we get a contradiction.
\qed

\section{On some results of S.~Mohrdieck, T.~A.~Springer, and J.~C.~Jantzen}   \label{s:twisted conjugacy}
We fix a ground field $E$, which is algebraically closed field and of characteristic 0. As before, the GIT quotient of a reductive group $G$ by the conjugation action of $G^\circ$ is denoted by $[G]$.

In \S\ref{sss:[G]} we recall an explicit description of $[G]$, which goes back to \cite{Mo,Sp}. In \S\ref{ss:Satz 9} we recall a mysterious theorem of J.~C.~Jantzen; then we use it to construct the automorphism promised in Example~\ref{ex:why Adams}.

\subsection{An explicit description of $[G]$} \label{sss:[G]}
We will recall an explicit description of $[G]$ for any reductive group $G$. If $G$ is connected it amounts to the usual identification of $[G]$ with~$T/W$.

Fix a Borel $B\subset G^\circ$ and a maximal subtorus $T\subset B$. Let $N_G(T)$ denote the normalizer of $T$ in $G$. Similarly, we have $N_G(B)$ and $N_{G^\circ}(T)$. Set $N_G(T,B):=N_G(T)\cap N_G(B)$. We have exact sequences
\begin{equation}  \label{e:NTB}
0\to T\to N_G(T,B)\to\pi_0 (G)\to 0,
\end{equation}
\[
0\to T\to N_{G^\circ}(T)\to W\to 0,
\]
where $W$ is the Weyl group of $G^\circ$. They are both contained in the exact sequence
\begin{equation}  \label{e:big_exact}
0\to T\to N_G(T)\to\pi_0 (G) \ltimes W\to 0.
\end{equation}

Let $[N_G(T,B)]$ denote the GIT quotient of $N_G(T,B)$ by the conjugation action of $T=N_G(T,B)^\circ$.

For any $\sigma\in\pi_0 (G)$ let  $[G]_\sigma$ denote the preimage of $\sigma$ in $[G]$. Define $N_G(T,B)_\sigma$ and 
$[N_G(T,B)]_\sigma$ similarly. 

The group $\pi_0 (G)$ acts on $W$. Let $W^\sigma\subset W$ denote the subgroup of elements fixed by $\sigma\in\pi_0 (G)$. Looking at \eqref{e:big_exact}, we see that  if $w\in W^\sigma$ and $g$ belongs to the preimage of $w$ in $N_{G^\circ}(T)$ then conjugation by $g$ preserves $N_G(T,B)_\sigma\,$. Thus one gets an action of  $W^\sigma$ on $[N_G(T,B)]_\sigma\,$. The GIT quotient $[N_G(T,B)]_\sigma/W^\sigma$ canonically maps to $[G]_\sigma\,$.

\begin{prop}   \label{p:Mohrdieck}
The canonical map $[N_G(T,B)]_\sigma/W^\sigma\to [G]_\sigma$ is an isomorphism for any $\sigma\in\pi_0 (G)\,$.
\end{prop}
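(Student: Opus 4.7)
The plan is to reduce the statement, by a choice of lift of $\sigma$ to a pinning-preserving automorphism of $G^\circ$, to the theorem of Mohrdieck--Springer on the twisted adjoint quotient (presumably recalled or referenced elsewhere in this appendix).

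First I would check that $N_G(T,B)_\sigma$ is non-empty: any $\gamma\in G_\sigma$ can be modified by $G^\circ$-conjugation to lie in $N_G(B)$, and then further to lie in $N_G(T,B)$, using the fact that any two pairs (Borel, maximal torus in Borel) in $G^\circ$ are $G^\circ$-conjugate. Fix such a lift $\gamma_0 \in N_G(T,B)_\sigma$; conjugation by $\gamma_0$ induces an automorphism $\sigma_*$ of $G^\circ$ preserving $(T,B)$, whose action on $W$ agrees with the intrinsic action of $\sigma \in \pi_0(G)$ used to define $W^\sigma$.

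Using $\gamma_0$ to trivialize the torsors, I would identify $G_\sigma \iso G^\circ$ via $g\gamma_0 \mapsto g$, and $N_G(T,B)_\sigma \iso T$ via $t\gamma_0\mapsto t$. Under these identifications the conjugation action of $h\in G^\circ$ on $G_\sigma$ becomes the $\sigma_*$-twisted conjugation on $G^\circ$, namely $g \mapsto h\, g\, \sigma_*(h)^{-1}$; the conjugation action of $T$ on $N_G(T,B)_\sigma$ becomes the corresponding $\sigma_*$-twisted action of $T$ on itself; and for $w \in W^\sigma$ any lift $n \in N_{G^\circ}(T)$ satisfies $n\gamma_0 n^{-1} = (n\,\sigma_*(n)^{-1})\,\gamma_0 \in N_G(T,B)_\sigma$, so the induced $W^\sigma$-action on $[N_G(T,B)]_\sigma$ corresponds to the $\sigma_*$-twisted Weyl action $t \mapsto n\, t\, \sigma_*(n)^{-1}$ on $T$ (which is well-defined on $T/(\text{image of } t \mapsto t\,\sigma_*(t)^{-1})$).

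Under this dictionary the proposition becomes exactly the Mohrdieck--Springer theorem: the GIT quotient of $G^\circ$ by $\sigma_*$-twisted conjugation is canonically the GIT quotient of $T$ by the $\sigma_*$-twisted action of $W^\sigma$. I would therefore invoke that theorem (in the very form recalled in this appendix). The main obstacle is bookkeeping: checking that the ``intrinsic'' $W^\sigma$-action on $[N_G(T,B)]_\sigma$ defined in the setup really agrees, after the trivialization by $\gamma_0$, with the $\sigma_*$-twisted Weyl action, and that everything is independent of the choice of $\gamma_0$ (two such lifts differ by a $T$-conjugation, which acts trivially on $[G]_\sigma$ and $[N_G(T,B)]_\sigma$). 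Once this is sorted out, both bijectivity on points and the scheme-theoretic isomorphism reduce to Mohrdieck--Springer.
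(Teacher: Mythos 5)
Your reduction of the coset statement to a twisted Chevalley restriction theorem is sound, and it is essentially the paper's route: the lift $\gamma_0\in N_G(T,B)_\sigma$ exists by the exactness of \eqref{e:NTB}, your dictionary between the intrinsic $W^\sigma$-action and the $\sigma_*$-twisted Weyl action is correct (for $w\in W^\sigma$ one indeed has $n\sigma_*(n)^{-1}\in T$), and trivializing the single coset by $\gamma_0$ neatly replaces the paper's preliminary reduction to the case where $\pi_0(G)$ is generated by $\sigma$ and the extension splits.

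There is, however, a genuine gap at the point where you ``invoke that theorem (in the very form recalled in this appendix)''. No such general form is recalled or available: \cite[Thm.~1.1(i)]{Mo} and \cite[Thm.~1]{Sp} are proved for a \emph{simply connected semisimple} group equipped with a \emph{diagram (pinning-preserving) automorphism}, whereas in Proposition~\ref{p:Mohrdieck} the group $G^\circ$ is an arbitrary connected reductive group, and your $\sigma_*=\mathrm{Int}(\gamma_0)|_{G^\circ}$ a priori only preserves $(T,B)$. The twisted Chevalley statement for general reductive $G^\circ$ is exactly the content of the proposition, so as written the final step is circular; this is precisely the step the paper's proof supplies with ``then one reduces to the case where $G^\circ$ is semisimple and simply connected''. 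To fill it in: first replace $\gamma_0$ by $t\gamma_0$ with $t\in T$ so that conjugation by $\gamma_0$ preserves a pinning (possible since $T\to T/Z(G^\circ)$ is surjective on points of an algebraically closed field); then $\sigma_*$ lifts to a finite-order diagram automorphism of $Z^\circ\times G^{\mathrm{sc}}$, where $Z^\circ$ is the radical, $G^{\mathrm{sc}}$ the simply connected cover of the derived group, and $G^\circ=(Z^\circ\times G^{\mathrm{sc}})/\mu$ with $\mu$ finite central. Since central translations commute with twisted conjugation, the twisted quotient of a product is the product of twisted quotients (the torus factor being immediate), and coinvariants of tori are right exact, taking $\mu$-invariants of the Mohrdieck--Springer isomorphism for $Z^\circ\times G^{\mathrm{sc}}$ descends it to $G^\circ$. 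With this reduction supplied, your argument is complete and coincides with the paper's proof.
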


\begin{proof}
The statement easily reduces to the case where $\pi_0 (G)$ is generated by $\sigma$ and the epimorphism $G\epi\pi_0 (G)$ admits a splitting. Then one reduces to the case where $G^\circ$ is semisimple and simply connected. In this crucial case the statement is just a reformulation of
 \cite[Thm.~1.1(i)]{Mo} or  \cite[Thm.~1]{Sp}. 
\end{proof}

\subsubsection{Remark}
In \cite{Mo, Sp} one can find more information about $[G]$ (e.g., if $G^\circ$ is semisimple and simply connected then for each $\sigma$, the scheme  $[G]_\sigma$ is isomorphic to some affine space).

\subsubsection{Remark}
The scheme $[N_G(T,B)]$ does not depend on the choice of $(T,B)$ (up to \emph{canonical} isomorphism). The same is true for the map $[N_G(T,B)]\to [G]$ and the action of 
 $W^\sigma$ on $[N_G(T,B)]_\sigma\,$.

\subsubsection{A reformulation of Proposition~\ref{p:Mohrdieck}}   \label{sss:fancy}
Consider the disjoint union $\bigsqcup\limits_\sigma W^\sigma$ as a group scheme over
the (finite)  scheme $\pi_0 (G)$. This group scheme acts on $[N_G(T,B)]$ over $\pi_0 (G)$, and Proposition~\ref{p:Mohrdieck} says that the GIT quotient by its action identifies  with~$[G]$.

\subsubsection{Generalization to pro-reductive groups}
The description of $[G]$ from \S\ref{sss:fancy} immediately implies a similar description of $[G]$ if $G$ is pro-reductive rather than reductive (of course, in this case $\pi_0 (G)$ is pro-finite rather than finite).

\subsection{Satz 9 of C.~J.~Jantzen's thesis} \label{ss:Satz 9}
In \S\ref{sss:Satz 9} we formulate a remarkable theorem of C.~J.~Jantzen.
In \S\ref{sss:explaining Example} we use it to construct the automorphism promised in Example~\ref{ex:why Adams}.

\subsubsection{Jantzen's theorem}   \label{sss:Satz 9}
Let $H$ be a connected simply connected almost-simple group over an algebraically closed field of characteristic 0
equipped with a pinning (in the sense of \S\ref{sss:pinnings}); in particular, the pinning involves a choice of a maximal subtorus $T\subset H$.
Let $\sigma$ be a nontrivial automorphism of the Dynkin diagram\footnote{Note that in this situation $H$ is simply laced, while $H^\sigma$ is not.} of $H$,  then $\sigma$ acts on $H$ and $T$.  Let $H^\sigma$ denote the subgroup of fixed points. Let $H_\sigma$ denote the simply connected group whose root system is dual to that of $H^\sigma$. As explained in \cite{JJ,KLP}, the maximal torus of $H_\sigma$ canonically identifies with
$T_\sigma :=\Coker (\sigma-1: T\to T)$, and  dominant weights of $H_\sigma$ identify with $\sigma$-invariant  dominant weights of $H$. Let $\omega$ be such a weight, and let
$V_\omega$ (resp.~$V'_\omega$) be the corresponding finite-dimensional irreducible $H$-module (resp. $H_\sigma$-module). 

Let $H\langle\sigma\rangle$ denote the semidirect product of $H$ and the cyclic subgroup of $\Aut H$ generated by $\sigma$. There is a unique way to extend the action of $H$ on $V_\omega$ to an action of $H\langle\sigma\rangle$ on the same vector space so that 
the action of $\sigma\in H\langle\sigma\rangle$ on the highest weight subspace of $V_\omega$ is trivial. Restricting the character of this representation of $H\langle\sigma\rangle$ to $T\cdot \sigma\subset H\langle\sigma\rangle$, one gets a regular function on $T_\sigma\,$.
A mysterious theorem from C.~J.~Jantzen's  thesis \cite[p.30, Satz~9]{JJ} says that this function is equal to the character of $V'_\omega\,$. The short and easily available article \cite{KLP} contains a discussion of this theorem; in particular, it explains how to deduce it from the properties of Lusztig's canonical basis assuming that $H$ is not of type $A_{2n}\,$.

\subsubsection{The automorphism promised in Example \ref{ex:why Adams}}   \label{sss:explaining Example}
Let us now assume that $H$ has type $A_{2n}\,$, i.e., $H=SL(2n+1)$ (this is the only case where $\sigma$ changes the ``color" of the vertices of the Dynkin diagram; in other words, the distance from any vertex of the Dynkin diagram to its $\sigma$-image is {\it odd}). 

A $\sigma$-invariant weight $\omega$ of $H$ defines a weight of $H_\sigma=Sp(2n)$; restricting the latter to the center of $H_\sigma\,$, one gets a character $\chi_\omega$ of $\BZ/2\BZ$. Explicitly, if $\omega =\sum\limits_{i=1}^n k_i (\omega_i+\omega_{2n+1-i})$ (where $\omega_1,\ldots ,\omega_{2n}$ are the fundamental weights of $H$ numbered in the usual way) then
\[
\chi_\omega (m)=\prod_{i=1}^n (-1)^{mik_i}, \quad m\in\BZ/2\BZ\, .
\]

Now set $G:=H\langle\sigma\rangle$ and define an additive automorphism $\phi$ of  the Grothendieck semigroup $K^+(G)$ as follows:
if $V$ is an irreducible representation of $G$ whose restriction to $H$ is reducible then set $\phi ([V]):=[V]$; if the restriction is irreducible and has highest weight $\omega$ then $\phi ([V]):=[V\otimes\chi_\omega ]$, where $\chi_\omega$ is considered as a character of $G/H$. 
It is clear that $\phi$ induces a nontrivial automorphism of $K^+(G/Z)$, where $Z$ is the center of $H$. Let us prove the other claims from Example~\ref{ex:why Adams}.

\begin{lem}    \label{l:why Adams}
(i) $\phi :K^+(G)\to K^+(G)$ preserves the semiring structure on $K^+(G)$.

(ii) The automorphism of $K(G)$ induced by $\phi$ does not commute with the Adams operation $\psi^2:K(G)\to K(G)$.
\end{lem}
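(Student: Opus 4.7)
I would approach the two parts together by first giving a uniform description of $\phi$ and then exploiting a structural formula. Classify the irreducibles of $G=H\langle\sigma\rangle$: each is either of \emph{type~A}, i.e., $V_{\omega,\pm}$ for a $\sigma$-invariant dominant weight $\omega$ of $H$ (the two extensions of $V_\omega$), or of \emph{type~B}, i.e., $\mathrm{Ind}_H^G V_\mu$ for $\mu$ not $\sigma$-invariant. Since type-B representations absorb any character of $G/H$ under tensoring (such characters being trivial on $H$), one may write $\phi(V)=V\otimes\chi_V$ uniformly, with $\chi_V=\chi_\omega$ in case~A and $\chi_V=1$ in case~B.

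For (i), multiplicativity $\phi(V_1\otimes V_2)=\phi(V_1)\otimes\phi(V_2)$ reduces, for type-A irreducibles $V_i=V_{\omega_i,\epsilon_i}$, to the assertion that in the $G$-decomposition of $V_1\otimes V_2$, the multiplicities $n^{\pm}_\tau$ of $V_{\tau,\pm}$ satisfy $n^+_\tau=n^-_\tau$ whenever $\chi_\tau\neq\chi_{\omega_1}\chi_{\omega_2}$. Comparing characters on $H\sigma$ and invoking Jantzen's Satz~9 ($\chi_{V_{\omega,+}}(\sigma t)=\chi_{V'_\omega}(t_\sigma)$, where $V'_\omega$ is the $H_\sigma=Sp(2n)$ representation indexed by $\omega$ via the paper's identification $\omega_i+\omega_{2n+1-i}\leftrightarrow\omega_i^{Sp}$), one reads off $n^+_\tau-n^-_\tau=\epsilon_1\epsilon_2\,d^{\tau}_{\omega_1,\omega_2}$, where $d^\tau$ is the Littlewood--Richardson coefficient for $Sp(2n)$. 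The key combinatorial observation is that the paper's formula $\chi_\omega(1)=\prod_i(-1)^{ik_i}$ is exactly the central character of $V'_\omega$ at the nontrivial element $-I\in Z(Sp(2n))$: the fundamental weight $\omega_i^{Sp}$ takes value $(-1)^i$ at $-I$. Since tensor products of $Sp(2n)$-representations preserve central characters, $V'_\tau\subset V'_{\omega_1}\otimes V'_{\omega_2}$ forces $\chi_\tau=\chi_{\omega_1}\chi_{\omega_2}$, which is the multiplicativity assertion.

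For (ii), the decisive observation is that $\psi^2$ annihilates $G/H$-characters: $\psi^2(\chi)(g)=\chi(g^2)=1$ because $g^2\in H$ and $\chi|_H=1$. Hence $\psi^2(V\otimes\chi_V)=\psi^2(V)$, so $\psi^2\circ\phi=\psi^2$ identically, and commutation $\psi^2\circ\phi=\phi\circ\psi^2$ would force every $\psi^2(V)$ to be $\phi$-fixed. A counterexample already appears for $H=SL(3)$ with $V=V_{\omega,+}$, $\omega=\omega_1+\omega_2$ (the extended adjoint). In $K(H)$ one has $\psi^2(V_\omega)=V_{2\omega}-V_{3\omega_1}-V_{3\omega_2}+V_0$, and matching the twisted character $\psi^2(V_{\omega,+})(\sigma t)=\chi_{V_\omega}(\sigma(t)\,t)=x^2+2x+2+2x^{-1}+x^{-2}$ (in the $T_\sigma$-coordinate $x=a/c$) against the possible $G$-decomposition forces
\[
\psi^2(V_{\omega,+})=V_{2\omega,+}+V_{\omega,+}-V_{\omega,-}+V_{0,+}-\mathrm{Ind}_H^G V_{3\omega_1}
\]
(the odd-power terms $2x,\,2x^{-1}$ are precisely supplied by the opposing signs on $V_{\omega,+}$ and $V_{\omega,-}$, since $\chi_{V_{\omega,\pm}}(\sigma t)=\pm(x+x^{-1})$). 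Because $\chi_\omega(1)=-1$, $\phi$ swaps $V_{\omega,+}\leftrightarrow V_{\omega,-}$ while fixing the other summands, so
\[
\phi(\psi^2(V_{\omega,+}))-\psi^2(V_{\omega,+})=2(V_{\omega,-}-V_{\omega,+})\neq 0
\]
in $K(G)$; in particular $\phi\circ\psi^2\neq\psi^2\circ\phi$.

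The principal obstacle is the combinatorial bookkeeping identifying $\chi_\omega$ with the central character of $V'_\omega$ under Jantzen's matching of weight lattices, together with consistent sign conventions distinguishing $V_{\omega,+}$ from $V_{\omega,-}$ in the $A_{2n}$ case. Once these conventions are fixed, (i) is a formal consequence of the central-character identity, and (ii) reduces to the explicit character computation above.
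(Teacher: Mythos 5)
Your part (i) is correct, and it uses the same essential inputs as the paper (Jantzen's Satz~9 plus the observation that $\chi_\omega(1)=\prod_i(-1)^{ik_i}$ is the value of the central character of $V'_\omega$ at $-I\in Z(Sp(2n))$); the difference is only in packaging. The paper converts these inputs into the statement that $\phi\otimes E$ is pullback along an automorphism $f$ of the GIT quotient $\underline{G}$ (the identity on the untwisted component, translation by the order-$2$ central element $\varepsilon\in T_\sigma$ on the twisted one), from which multiplicativity is immediate; you instead verify multiplicativity directly by computing the signed multiplicities $n^+_\tau-n^-_\tau=\epsilon_1\epsilon_2 d^\tau_{\omega_1,\omega_2}$ via twisted characters and then using multiplicativity of the $Sp(2n)$ central character. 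Both are fine; the paper's formulation has the advantage that the automorphism $f$ is then reused verbatim in part (ii), while yours stays at the level of Grothendieck-group bookkeeping (you could tighten the reduction by noting explicitly that when a factor is of type~B the twisted character of the product vanishes, so all type-A constituents occur in $\pm$-pairs and both sides are fixed by twisting).

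Part (ii) contains a genuine gap: the lemma is asserted for every $n\ge 1$, but your argument proves it only for $n=1$. Your reduction ($\psi^2$ kills characters of $G/H$, so $\psi^2\circ\phi=\psi^2$ and commutation would force $\phi$ to fix the image of $\psi^2$) is sound, and your $SL(3)$ computation is correct — the decomposition $\psi^2(V_{\omega,+})=V_{2\omega,+}+V_{\omega,+}-V_{\omega,-}+V_{0,+}-\mathrm{Ind}_H^G V_{3\omega_1}$ checks out against both the restriction to $H$ and the twisted character $x^2+2x+2+2x^{-1}+x^{-2}$. But this counterexample lives in $K\bigl((\BZ/2\BZ)\ltimes SL(3)\bigr)$ and does not transfer to $K\bigl((\BZ/2\BZ)\ltimes SL(2n+1)\bigr)$ for $n\ge 2$: restriction along the $\sigma$-stable middle block $SL(3)\subset SL(2n+1)$ commutes with $\psi^2$ but does not intertwine the two automorphisms $\phi$ (which are defined through the irreducible decompositions of the respective groups), so there is no formal reduction to the smallest case. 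The paper's argument is uniform in $n$: since $\phi$ is pullback by $f$, commutation with $\psi^2$ would mean $f(g^2)=f(g)^2$ on $\underline{G}$, and this fails at $g=[\sigma]$ because $f([\sigma])^2$ is the class of $N(\varepsilon)\in T$ (with $N\colon T_\sigma\to T$ the norm map and $N(\varepsilon)\ne 1$), whereas $f([\sigma^2])=f([1])=[1]$. To repair your proof for general $n$ you would have to redo the computation with, say, $\omega=\omega_1+\omega_{2n}$ (so that $\chi_\omega(1)=-1$) and show that the twisted character of $\psi^2(V_{\omega,+})$ has a nonzero component on which translation by $\varepsilon$ acts by $-1$ — which in substance reproduces the paper's norm-map calculation.
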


\begin{proof}
(i) Let $\underline{G}$ denote the GIT quotient of $G$ by the conjugation action of $G$.
The algebra $K(G)\otimes E$ identifies with the algebra of regular functions on $\underline{G}$. 
Statement (i) essentially says that the automorphism of the vector space $K(G)\otimes E$ comes from an automorphism $f\in\Aut\underline{G}$.
Jantzen's theorem formulated in \S\ref{sss:Satz 9} implies that this is true for the automorphism $f\in\Aut\underline{G}$ described below.

$\underline{G}$ has two connected components; the first one is the quotient of $[H]=T/W$ by the action of 
$\langle\sigma\rangle$ and the second one identifies by Proposition~\ref{p:Mohrdieck} with $T_\sigma/W_\sigma\,$. The automorphism $f$ acts as the identity on the first component, and it acts on $T_\sigma/W_\sigma$ as multiplication by the $W_\sigma$-invariant element
$\varepsilon\in T_\sigma$ of order $2$, which can be described in three equivalent ways:

(a) $\varepsilon\in T_\sigma$ is the image of the central element $-1\in Sp(2n)=H$; 

(b)  $\langle\omega_I+\omega_{2n+1-i\,},\varepsilon\rangle=(-1)^i$ for $i\le n$;

(c) $\varepsilon =\pi (\check\omega_n (-1))$, where $\check\omega_n :\BG_m\to T/Z$ is the $n$-th fundamental coweight and $\pi :T/Z\to T_\sigma$ is induced by the projection $T\to T_\sigma\,$.

\medskip

(ii) Commutation with the Adams operation $\psi^2:K(G)\to K(G)$ would mean that the above automorphism $f\in\Aut\underline{G}$ satisfies $f(g^2)=f(g)^2$ for all $g\in\underline{G}$. This is false if $g$ is the image of $\sigma\in H\langle\sigma\rangle\subset G$. Indeed, in this case 
$f(g)^2$ is equal to the image of $N(\varepsilon)\in T$ in $T/W$, where $N:T_\sigma\to T$ is the norm map; note that $N(\varepsilon)\ne 1$.
\end{proof}

\section{On finite groups all of whose Sylow subgroups are cyclic}   \label{s:Zassenhaus}
This Appendix is related to Lemma~\ref{l:cyclic_Sylow} and Proposition~\ref{p:coarse extensions}.

\subsection{A theorem of Zassenhaus}
The next theorem is due to Zassenhaus.

\begin{theorem}  \label{t:Zassenhaus}
Let $H$ be a finite group. Then the following conditions are equivalent:

(i) all Sylow subgroups of $H$ are cyclic;

(ii) $H$ is a semidirect product of two cyclic subgroups of coprime orders.
\end{theorem}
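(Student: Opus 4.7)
The direction (ii) $\Rightarrow$ (i) is immediate: if $H = A \rtimes B$ with $A, B$ cyclic and $\gcd(|A|,|B|) = 1$, then for each prime $p$ dividing $|H|$ exactly one of $A, B$ has order divisible by $p$, so the Sylow $p$-subgroup of $H$ coincides with the Sylow $p$-subgroup of that factor, hence is cyclic.

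For (i) $\Rightarrow$ (ii) I would proceed by induction on $|H|$. Let $p$ be the smallest prime dividing $|H|$ and $P$ the cyclic Sylow $p$-subgroup, of order $p^a$. Since $P$ is abelian, $P \le C_H(P)$, so $|N_H(P)/C_H(P)|$ divides $|H|/|P|$, whose prime factors are all strictly greater than $p$. On the other hand $N_H(P)/C_H(P) \hookrightarrow \Aut(P)$, and the $p'$-part of $|\Aut(P)| = p^{a-1}(p-1)$ divides $p-1$, whose prime factors are all less than $p$. These arithmetic constraints force $N_H(P) = C_H(P)$, and Burnside's normal $p$-complement theorem then yields a normal subgroup $K \triangleleft H$ with $H/K \cong P$; by Schur--Zassenhaus one obtains a splitting $H = P \ltimes K$.

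A naive appeal to the inductive hypothesis on $K$ is not enough, because a cyclic normal Hall subgroup of $K$ need not remain normal in $H$. To close the induction I would strengthen the statement to: \emph{every Z-group $H$ admits a characteristic cyclic normal Hall subgroup $A$ with cyclic quotient $H/A$ of coprime order}; the natural candidate is $A = H'$. Granting this strengthened inductive hypothesis, the characteristic cyclic Hall subgroup $A_K \triangleleft K$ it produces is automatically normal in $H$. One then applies the strengthened hypothesis again to the smaller Z-group $H/A_K$ and pulls back through this quotient to obtain a cyclic normal Hall subgroup $A \triangleleft H$; its cyclic complement $B$ of coprime order is furnished by Schur--Zassenhaus, giving $H = A \rtimes B$ as desired.

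The main obstacle is justifying the strengthened hypothesis, i.e., that $H'$ is a cyclic Hall subgroup. The quotient half is easy: $H/H'$ is abelian with cyclic Sylow subgroups, hence cyclic. Cyclicity of $H'$ itself is the delicate structural content. My plan would be to first establish that $H$ is solvable (by iterating the Burnside argument above, using that every subgroup and quotient of a Z-group is again a Z-group), then exploit the fact that every chief factor of a solvable Z-group has prime order: chief factors are elementary abelian, and an elementary abelian $q$-group inside a Z-group has cyclic Sylow $q$-subgroup, forcing order exactly $q$. Walking up the chief series and using the $H = P \ltimes K$ decomposition at each stage, one identifies $H'$ as the Hall piece built from those primes whose contribution is not ``split off'' by Burnside complements and verifies cyclicity inductively from the structure of $K$ and the action of $P$ on it.
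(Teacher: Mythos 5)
Your direction (ii)$\Rightarrow$(i) and your Burnside step are fine: for the smallest prime $p$, the arithmetic forcing $N_H(P)=C_H(P)$, Burnside's normal $p$-complement theorem, and the splitting $H=P\ltimes K$ are all correct, and you have correctly identified that the induction does not close naively. But the heart of the theorem --- that a group with all Sylow subgroups cyclic has a \emph{cyclic} normal Hall subgroup with cyclic quotient of coprime order (equivalently, that $H'$ is a cyclic Hall subgroup) --- is exactly what your proposal does not establish. The one concrete mechanism you offer is flawed: after finding the characteristic cyclic Hall subgroup $A_K\triangleleft K$ and a cyclic normal Hall subgroup $C/A_K$ of $H/A_K$, you ``pull back'' to get $A=C\triangleleft H$ and assert it is cyclic. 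The preimage $C$ is an extension of the cyclic group $A_K$ by the cyclic group $C/A_K$ of coprime order, and such an extension need not be cyclic ($S_3$ is already a counterexample), so no cyclic normal Hall subgroup of $H$ is produced this way. The final paragraph (``walking up the chief series\dots identifies $H'$ as the Hall piece\dots and verifies cyclicity inductively'') is a statement of intent, not an argument; in particular the coprimality of $|H'|$ and $|H:H'|$ is never addressed by anything more than this sentence.

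For comparison: the paper does not prove the statement at all --- it refers to Zassenhaus [Z, Satz 5] and Hall [H, Thm.~9.4.3] for the implication (i)$\Rightarrow$(ii) --- so the relevant benchmark is the classical argument, which does require precisely the pieces you left open. A viable completion along your lines would be: solvability by iterating Burnside (you have this); supersolvability from the prime-order chief factors (you have this); then the standard fact that the derived subgroup of a supersolvable group is nilpotent, so $H'$ is a nilpotent group with cyclic Sylow subgroups and hence cyclic; and finally a separate argument (e.g., via transfer/the focal subgroup theorem for a cyclic Sylow $q$-subgroup, or a more careful induction through the $K\rtimes P$ decomposition) to show $\gcd(|H'|,|H:H'|)=1$, after which Schur--Zassenhaus gives the cyclic complement. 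As it stands, the proposal is an honest sketch of the easy half of the induction with the hard half missing and its proposed patch incorrect.
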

 
For the proof of the nontrivial implication (i)$\Rightarrow$(ii) see \cite[Satz~5]{Z} or  \cite[Thm.~9.4.3]{H}.

\subsection{Coarse extensions}
\subsubsection{Subject of this subsection}
Let $\Gamma$ be a group and $Z$ an abelian group equipped with $\Gamma$-action.
Let $\Ex (\Gamma ,Z)$ denote the groupoid whose objects are extensions of $\Gamma$ by $Z$ and whose morphisms are isomorphisms of extensions modulo inner automorphisms corresponding to elements of $Z$. Paraphrasing \S\ref{sss:[tildeGamma]}, we will define another groupoid $\Ex' (\Gamma ,Z)$ and a functor $\Ex (\Gamma ,Z)\to\Ex'(\Gamma ,Z)$. This functor turns out to be fully faithful if $\im (\Gamma\to\Aut Z)$ is a finite group satisfying the conditions of Theorem~\ref{t:Zassenhaus}; moreover, it is an equivalence if the group $\im (\Gamma\to\Aut Z)$ satisfies slightly stronger conditions, see Proposition~\ref{p:2coarse extensions} below.

Each of the two groupoids $\Ex (\Gamma ,Z)$ and $\Ex' (\Gamma ,Z)$ could be called the groupoid of coarse extensions of $\Gamma$ by $Z$. The good news is that under certain conditions they are the same.

\subsubsection{The groupoid $\Ex' (\Gamma ,Z)$ and the functor $\Ex (\Gamma ,Z)\to\Ex'(\Gamma ,Z)$}   \label{sss:coarse}
Let $\widetilde{\Ex} (\Gamma ,Z)$ denote the groupoid of sets $S$ equipped with the following pieces of structure:

(a) the map $S\to\Gamma$;

(b) the action of $\Gamma\times\BN$ on $S$ (here $\BN$ is considered as a monoid with respect to multiplication);

(c) the action of $Z$ on $S$;

(d) for every subgroup $\Gamma'\subset\Gamma$, a group structure on 
$(\Gamma'\underset{\Gamma}\times S)/A$, where  $A:=\Ker (Z\epi Z_{\Gamma'})$.

Given an extension $0\to Z\to\tilde\Gamma\to\Gamma\to 0$, let $|\tilde\Gamma |$ denote the quotient set of $\tilde\Gamma$ by the conjugation action of $Z$. Just as in \S\ref{sss:[tildeGamma]}, we equip $|\tilde\Gamma |$ with structures (a)-(d). Thus we get a functor
\begin{equation}   \label{e:quot by Gamma}
\Ex (\Gamma ,Z)\to\widetilde{\Ex} (\Gamma ,Z), \quad \tilde\Gamma\mapsto|\tilde\Gamma |\, .
\end{equation}

Now we will define a full subcategory $\Ex' (\Gamma ,Z)\subset\widetilde{\Ex} (\Gamma ,Z)$ containing the essential image of the functor \eqref{e:quot by Gamma}. Namely, an object 
$S\in\widetilde{\Ex} (\Gamma ,Z)$ belongs to $\Ex' (\Gamma ,Z)$ if the following conditions hold:

(i) the map $S\to\Gamma$ is $(\Gamma\times\BN)$-equivariant and $Z$-equivariant (we assume that $Z$ acts on $\Gamma$ trivially, $\Gamma$ acts on itself by conjugation and $n\in\BN$ acts on $\Gamma$ by raising to the $n$-th power);

(ii) the actions of $\Gamma$ and $Z$ on $S$ combine into an action of $\Gamma\ltimes Z$;

(iii) for each $\gamma\in\Gamma$ the action of $Z$ on the fiber $S_\gamma$ factors through 
$Z_\gamma :=\Coker (\gamma-1 :Z\to Z)$; moreover, the $Z_\gamma$-action makes $S_\gamma$ into a $Z_\gamma$-torsor;

(iv) for each $n\in\BN$, the map $S_\gamma\to S_{\gamma^n}$ that comes from the action of $\BN$ on $S$ is $Z_\gamma$-equivariant if the action of $Z_\gamma$ on $S_{\gamma^n}$ is defined via the norm map $N:Z_\gamma\to Z_{\gamma^n}\,$, $N:=1+\gamma+\ldots +\gamma^{n-1}$;

(v) in the situation of (d) the map $(\Gamma'\underset{\Gamma}\times S)/A\to\Gamma'$ and the action of $Z_{\Gamma'}$ on $(\Gamma'\underset{\Gamma}\times S)/A$ make 
$(\Gamma'\underset{\Gamma}\times S)/A$ into a central extension of $\Gamma'$ by $Z_{\Gamma'}\,$; moreover, the action of $\Gamma'\times\BN$ on $(\Gamma'\underset{\Gamma}\times S)/A$ that comes from (b) is the same as the one that comes from the central extension structure (i.e.,  $\Gamma'$ acts by conjugation and $n\in\BN$ acts by raising to the $n$-th power).

(vi) let $\gamma\in\Gamma$ and let $\Gamma',\Gamma''\subset\Gamma$ be subgroups such that $\gamma\Gamma'\gamma^{-1}\subset\Gamma''$; then the map 
$$(\Gamma'\underset{\Gamma}\times S)/\Ker (Z\epi Z_{\Gamma'})\to (\Gamma''\underset{\Gamma}\times S)/\Ker (Z\epi Z_{\Gamma''})$$
induced by the action of $\gamma$ on $S$ and the conjugation action of $\gamma$ on $\Gamma$ is a homomorphism with respect to the group structure (d).

Let us note that by (iii) and (v), the action of each $\gamma\in\Gamma$ on the fiber $S_\gamma$ is trivial. 

It is clear that the essential image of the functor \eqref{e:quot by Gamma} is contained in 
$\Ex' (\Gamma ,Z)$. Thus we have constructed a functor
\begin{equation}   \label{e:2quot by Gamma}
\Ex (\Gamma ,Z)\to\Ex' (\Gamma ,Z).
\end{equation}

\begin{prop}  \label{p:2coarse extensions}
Let $\Gamma_0:=\Ker (\Gamma\to\Aut Z)$.
Suppose that $\Gamma/\Gamma_0$ is a finite group satisfying the conditions of Theorem~\ref{t:Zassenhaus}. Then 

(a) the functor \eqref{e:2quot by Gamma} is fully faithful;

(b) it is an equivalence if in addition to the conditions of Theorem~\ref{t:Zassenhaus}, $\Gamma/\Gamma_0$ has the following property: for any prime $p$ and any $p$-subgroups $C_1\subset C_2\subset \Gamma/\Gamma_0$ such that $C_1\ne\{ 1\}$, the normalizers of $C_1$ and $C_2$ in $\Gamma/\Gamma_0$ are equal to each other.
\end{prop}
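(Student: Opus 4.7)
The plan is to prove both parts by paralleling the proof of the earlier special case Proposition~\ref{p:coarse extensions}, organized in three stages: (1) faithfulness via Lemma~\ref{l:cyclic_Sylow}; (2) reduction to the case $\Gamma$ finite; (3) analysis using the Zassenhaus decomposition $\Gamma = A \rtimes B$ provided by Theorem~\ref{t:Zassenhaus}.

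For faithfulness, observe that an automorphism of an extension $0 \to Z \to \tilde\Gamma \to \Gamma \to 0$ inducing the identity on $|\tilde\Gamma|$ (and hence on $\Gamma$) is given by a $1$-cocycle $u \colon \Gamma \to Z$. Property (ii) of the morphism to $|\tilde\Gamma|$ forces $u$ to have zero restriction to every cyclic subgroup of $\Gamma$. Since the action of $\Gamma$ on $Z$ factors through $\Gamma/\Gamma_0$, Lemma~\ref{l:cyclic_Sylow} applies and gives $u = 0$ in $H^1(\Gamma, Z)$, so the automorphism is trivial modulo $Z$-conjugation.

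For fullness in (a), I would follow the existing proof of Proposition~\ref{p:coarse extensions}: after subtracting the target extension, reduce to lifting a section $s \colon \Gamma \to |\tilde\Gamma|$ to a homomorphism $\mathbf{s}\colon \Gamma \to \tilde\Gamma$. The restriction $s|_{\Gamma_0}$ is already a homomorphism by property (ii) (applied to $\Gamma' = \Gamma_0$, since $\Gamma_0$ acts trivially on $Z$), its image is central in $\tilde\Gamma$, and the identity \eqref{e:Gamma_0-equivariance} reduces matters to $\Gamma_0 = 1$. Then $\Gamma$ is finite with cyclic Sylow subgroups, so Theorem~\ref{t:Zassenhaus} gives $\Gamma = A \rtimes B$ with $A, B$ cyclic of coprime orders. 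The cyclic case handled in the existing argument yields lifts $\mathbf{s}_A\colon A \to \tilde\Gamma$ and $\mathbf{s}_B \colon B \to \tilde\Gamma$, and the task is to choose them so that $\mathbf{s}(ab) := \mathbf{s}_A(a)\mathbf{s}_B(b)$ is a homomorphism. The obstruction is a function measuring the failure of $\mathbf{s}_B(b)\mathbf{s}_A(a)\mathbf{s}_B(b)^{-1} = \mathbf{s}_A(bab^{-1})$; property (vi) applied to $\langle a, b\rangle$ shows this element lies in $\Ker(Z \to Z_{\langle a,b\rangle})$, and the freedom to modify $\mathbf{s}_A$ by a coboundary reduces the matter to vanishing of a class in $H^1(A, Z)$, which follows from coprimality of $|A|$ and $|B|$ together with Lemma~\ref{l:cyclic_Sylow}.

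For essential surjectivity in (b), given $S \in \Ex'(\Gamma, Z)$ we aim to produce an extension $\tilde\Gamma$ with $|\tilde\Gamma| \cong S$. After the same reductions, property (v) applied to $\Gamma' = A$ and $\Gamma' = B$ produces central extensions of $A$ by $Z_A$ and of $B$ by $Z_B$, and we must promote these to extensions by $Z$ itself and assemble them into a single extension of $\Gamma = A \rtimes B$ by $Z$. For $B$ cyclic we simply choose a lift of a generator. For $A$ we use coprimality and property (vi) with $\gamma \in B$ acting by conjugation: the compatibility forced by (vi) shows that the class in $H^2(A, Z)$ coming from the central extension is $B$-invariant, and coprimality allows the transfer argument to reconstruct it from its image in $H^2(A, Z_A)$. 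The additional normalizer hypothesis enters at the assembly step: it ensures that for each prime $p$, the $N(\Syl_p)$-invariants of $H^i(\Syl_p, Z)$ detect the image of $H^i(\Gamma, Z) \to H^i(\Syl_p, Z)$, and this is what allows the local-on-cyclic data to glue into a global extension. Finally one verifies $|\tilde\Gamma| \cong S$ by checking agreement on each cyclic subgroup, which suffices by the faithfulness established in the first step.

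The main obstacle will be the gluing in part (b): even after producing compatible extensions of $A$ and $B$, combining them into an extension of $\Gamma$ requires both the Zassenhaus structure and the normalizer hypothesis, because without the latter it is possible for a cyclically-compatible family of extension classes to fail to lift to a global class in $H^2(\Gamma, Z)$. In contrast, part (a) only needs the weaker Lemma~\ref{l:cyclic_Sylow}, which is why fully faithful does not already imply equivalence.
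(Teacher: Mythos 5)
Your faithfulness argument is fine (it is exactly the content of Remark~\ref{r:why generalization}), but the rest of the plan has genuine gaps, the most serious being in part (b). ``After the same reductions'' is not available there: for essential surjectivity you cannot subtract a target extension, and reducing to $\Gamma_0=\{1\}$ already requires producing, from the given $S\in\Ex'(\Gamma,Z)$, an extension of $\Gamma$ by $Z$ inducing the central extension of $\Gamma_0$ by $Z$ (with its compatible $\Gamma$-action) that $S$ determines via conditions (v)--(vi). That lifting problem is obstructed by a class in $H^3(\Gamma/\Gamma_0,Z)$ attached to the resulting crossed module, and killing this obstruction (by restricting to cyclic subgroups and using cyclicity of the Sylow subgroups of $\Gamma/\Gamma_0$) is a substantial step of the actual proof, which fibers both $\Ex(\Gamma,Z)$ and $\Ex'(\Gamma,Z)$ over $\Ex(\Gamma_0,Z)^{\Gamma/\Gamma_0}$; your plan never addresses it. Likewise the last sentence of your (b) --- that $|\tilde\Gamma|\cong S$ ``by checking agreement on each cyclic subgroup, which suffices by faithfulness'' --- is a non sequitur: faithfulness is injectivity on Hom-sets and gives no mechanism for gluing cyclic-local isomorphisms into a global one; that gluing is precisely where the proof needs a local-to-global statement in group cohomology and, in (b), the extra normalizer hypothesis (in the paper, the triviality of the categorical fiber of the comparison functor to $\Ex(\Syl_p,Z)^{N_p/\Syl_p}$).

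The local-to-global input you are missing is the Cartan--Eilenberg ``stable elements'' theorem: after localizing $Z$ at $p$, and using that $\Syl_p$ is cyclic, $H^i(\Gamma,Z)$ is identified with the set of classes on $\Syl_p$ whose restriction to each subgroup $C\subset\Syl_p$ is $N_\Gamma(C)$-invariant (Lemma~\ref{l:coh lemma}); this, and not the semidirect decomposition $A\rtimes B$, is how the hypothesis of Theorem~\ref{t:Zassenhaus} actually enters (the paper never uses the decomposition). In particular your obstruction-killing step in (a) --- ``vanishing of a class in $H^1(A,Z)$ \ldots follows from coprimality of $|A|$ and $|B|$ together with Lemma~\ref{l:cyclic_Sylow}'' --- does not work as stated: $H^1(A,Z)$ is $|A|$-torsion, so coprimality with $|B|$ kills nothing, and Lemma~\ref{l:cyclic_Sylow} applied to the cyclic group $A$ is vacuous. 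A correct version of your gluing has to establish surjectivity of $H^1(\Gamma,Z)$ onto the conjugation-compatible families of classes on cyclic subgroups (for fullness) and the analogous $H^2$ statement (for (b)), and must in addition verify that the homomorphism built from data on $A\cup B$ induces the given section $s$ at \emph{every} element of $\Gamma$, not just on $A$ and $B$. All of this is what Lemma~\ref{l:coh lemma} (for $i=1,2$), Remark~\ref{r:cyclic case}, and, in (b), the hypothesis that $N_\Gamma(C)=N_\Gamma(\Syl_p)$ for every nontrivial $C\subset\Syl_p$ are designed to supply.
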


The proof of Proposition~\ref{p:2coarse extensions} is given in \S\ref{ss:2coarse extensions-proof}-\ref{ss:end 2coarse extensions-proof} below. 

\begin{rem}  \label{r:why generalization}
It is easy to see that faithfulness of the functor \eqref{e:2quot by Gamma} is equivalent to Lemma~\ref{l:cyclic_Sylow}.
\end{rem}

\begin{rem}  \label{r:cyclic case}
If $\Gamma$ is a finite cyclic group then it is straightforward to check that  the functor \eqref{e:2quot by Gamma} is an equivalence (one uses condition (iv) from \S\ref{sss:coarse}).
\end{rem}

\subsubsection{Reducing Proposition~\ref{p:2coarse extensions} to the case $\Gamma_0=\{ 1\}$} \label{ss:2coarse extensions-proof}
We have functors 
$$\Ex (\Gamma ,Z)\to\Ex' (\Gamma ,Z)\to\Ex(\Gamma_0 ,Z)^{\Gamma/\Gamma_0},$$
where $\Ex(\Gamma_0 ,Z)^{\Gamma/\Gamma_0}$ is the groupoid of central extensions 
$0\to Z\to\tilde\Gamma_0\to\Gamma_0\to 0$ equipped with an action of $\Gamma$ on $\tilde\Gamma_0$ compatible with the action of $\Gamma$ on $Z$ and the conjugation actions of $\Gamma$ on $\Gamma_0$ and $\Gamma_0$ on $\tilde\Gamma_0\,$. Fix 
$\xi\in \Ex(\Gamma_0 ,Z)^{\Gamma/\Gamma_0}\,$. Let $\Ex (\Gamma ,Z)_\xi$ and 
$\Ex' (\Gamma ,Z)_\xi$ denote the fibers over $\xi$. Let us prove that the functor
$\Ex (\Gamma ,Z)_\xi\to\Ex' (\Gamma ,Z)_\xi$ is fully faithful and under the assumption of Proposition~\ref{p:2coarse extensions}(b) it is an equivalence.

Associated to $\xi$ is an element $v\in H^3(\Gamma/\Gamma_0\, , Z)$ such that 
$v=0\Leftrightarrow\Ex (\Gamma ,Z)_\xi\ne\emptyset$ (namely, $v$ is the class of the crossed module $\tilde\Gamma_0\to\Gamma$). We can assume that $\Ex' (\Gamma ,Z)_\xi\ne\emptyset$ (otherwise there is nothing to prove). Then $v$ has zero restriction to any cyclic subgroup of 
$\Gamma/\Gamma_0\,$. Since all Sylow subgroups of $\Gamma/\Gamma_0$ are cyclic we see that $v=0$, so $\Ex (\Gamma ,Z)_\xi\ne\emptyset$.

Since $\Ex (\Gamma ,Z)_\xi$ and $\Ex' (\Gamma ,Z)_\xi$ are non-empty, they are torsors over the Picard groupoids $\Ex (\Gamma/\Gamma_0 ,Z)$ and $\Ex' (\Gamma/\Gamma_0 ,Z)$, respectively. It remains to show that the Picard functor $\Ex (\Gamma/\Gamma_0 ,Z)\to\Ex' (\Gamma/\Gamma_0 ,Z)$ is fully faithful and under the assumption of Proposition~\ref{p:2coarse extensions}(b) it is an equivalence.

We can assume that $\Gamma_0$ is trivial (so $\Gamma$ is a finite group satisfying the conditions of Theorem~\ref{t:Zassenhaus}).
Localizing at a prime $p$, we can also assume that $Z$ is a module over $\BZ_{(p)}\,$, where $\BZ_{(p)}$ is the localization of $\BZ$ at $p$.
We need the following lemma.

\begin{lem}   \label{l:coh lemma}
Let $\Gamma$ be a finite group satisfying the conditions of Theorem~\ref{t:Zassenhaus}. Let $Z$ be a $\BZ_{(p)}$-module equipped with a 
$\Gamma$-action. Fix a Sylow $p$-subgroup $\Syl_p\subset\Gamma$. Then for every $i>0$ the restriction map $H^i(\Gamma ,Z)\to H^i(\Syl_p\,  ,Z)$ identifies $H^i(\Gamma ,Z)$ with the group of all elements $a\in H^i (\Syl_p\, ,Z)$ that have the following property: for each subgroup $C\subset\Syl_p\,$, the image of $a$ in $H^i (C,Z)$ is invariant with respect to the normalizer of $C$ in $\Gamma$.
\end{lem}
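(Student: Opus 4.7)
The plan is to invoke the classical stable element theorem of Cartan--Eilenberg and then reduce the resulting condition to normalizer-invariance using that $\Syl_p$ is cyclic (by Theorem~\ref{t:Zassenhaus}). Since $[\Gamma:\Syl_p]$ is coprime to $p$, it acts invertibly on the $\BZ_{(p)}$-module $Z$; hence the restriction map $H^i(\Gamma,Z)\to H^i(\Syl_p,Z)$ is injective with image the subgroup of \emph{$\Gamma$-stable} elements, i.e., those $\alpha\in H^i(\Syl_p,Z)$ such that for every $g\in\Gamma$ the two homomorphisms from $H^i(\Syl_p,Z)$ to $H^i(\Syl_p\cap g\Syl_p g^{-1},Z)$ --- namely the direct restriction, and the composition of conjugation by $g$ with restriction --- agree on $\alpha$. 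The remaining task is to show that $\Gamma$-stability is equivalent to the normalizer-invariance condition of the lemma.

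The easy direction is purely formal. Given a $\Gamma$-stable $\alpha$, a subgroup $C\subset\Syl_p$, and $g\in N_\Gamma(C)$, observe that $C$ lies in $\Syl_p\cap g\Syl_p g^{-1}$. Restricting the stability relation further from $\Syl_p\cap g\Syl_p g^{-1}$ down to $C$, and using $gCg^{-1}=C$, yields that the image of $\alpha$ in $H^i(C,Z)$ is fixed by $g$.

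For the converse, the Zassenhaus hypothesis forces $\Syl_p$ to be cyclic, so any two subgroups of $\Syl_p$ of the same order necessarily coincide. Fix $g\in\Gamma$, and set $D:=\Syl_p\cap g\Syl_p g^{-1}$ and $D':=g^{-1}Dg=\Syl_p\cap g^{-1}\Syl_p g$. Both $D$ and $D'$ sit inside $\Syl_p$ and have the same order, hence $D=D'$, so $g\in N_\Gamma(D)$. The assumed $N_\Gamma(D)$-invariance of the image of $\alpha$ in $H^i(D,Z)$ then translates directly into the stability relation at $g$.

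The main obstacle is really just this cyclicity step; the rest is standard restriction--corestriction machinery. The role of the Zassenhaus hypothesis is precisely to collapse the general stable-element condition (which a priori involves all conjugates of $\Syl_p$ simultaneously) into a condition phrased in terms of the normalizers in $\Gamma$ of subgroups of the single fixed Sylow subgroup $\Syl_p$.
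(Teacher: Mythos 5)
Your proposal is correct and takes essentially the same route as the paper: both rest on the Cartan--Eilenberg stable-element theorem (together with injectivity of restriction, valid since $Z$ is a $\BZ_{(p)}$-module) and then use that the cyclic group $\Syl_p$ has a unique subgroup of each order to convert the stability condition into normalizer-invariance. The only cosmetic difference is that the paper states stability via pairs of conjugate subgroups $C_1,C_2\subset\Syl_p$, which makes the easy direction immediate, whereas you use the formulation with $\Syl_p\cap g\Syl_p g^{-1}$ and restrict further to $C$.
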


\begin{proof}
By \cite[Ch.~XII, Thm.~10.1]{CE}, for \emph{any} finite group $\Gamma$, the group $H^i(\Gamma ,Z)$ identifies with the subgroup of  all elements $a\in H^i(\Syl_p\, ,Z)$ with the following property (which is called ``stability" in \cite[\S XII.9]{CE}): for any subgroups $C_1\, ,C_2\subset\Syl_p$ and 
any $\gamma$ such that 
\begin{equation}  \label{e:gamma-conjugacy}
\gamma C_1\gamma^{-1}=C_2\, ,
\end{equation}
the isomorphism $H^i(C_1\, ,Z)\iso H^i(C_2\, ,Z)$ corresponding to $\gamma$ takes 
$r_1 (a)$ to $r_2 (a)$, where $$r_m: H^i(\Syl_p\, ,Z)\to H^i(C_m\, ,Z)$$ is the restriction map. But \eqref{e:gamma-conjugacy} implies that 
$|C_1|=|C_2|$, and
if $\Syl_p$ is cyclic this implies that $C_1=C_2$ and $\gamma$ belongs to the normalizer of $C_1\,$.
\end{proof}

\subsubsection{End of the proof of Proposition~\ref{p:2coarse extensions}}   \label{ss:end 2coarse extensions-proof}
To prove Proposition~\ref{p:2coarse extensions}(a), it suffices to combine Remark~\ref{r:cyclic case} with Lemma~\ref{l:coh lemma} for $i=1$. 
To prove Proposition~\ref{p:2coarse extensions}(b), consider the functors
\[
\Ex (\Gamma ,Z)\overset{F}\longrightarrow\Ex' (\Gamma ,Z)\overset{G}\longrightarrow\Ex' (\Syl_p\, ,Z)^{N_p/\Syl_p}=\Ex (\Syl_p\, ,Z)^{N_p/\Syl_p}\, ,
\]
where $N_p$ is the normalizer of $\Syl_p$ in $\Gamma$. By assumption, $\Gamma$ has the following property: the normalizer of any nontrivial subgroup $C\subset\Syl_p$ equals $N_p\,$. So applying Lemma~\ref{l:coh lemma} for $i=2$, we see that the essential image of $G$ is equal to the essential image of $G\circ F$. Then one checks using the same property of $\Gamma$ that the categorical fiber of $G$ over 
$0\in\Ex (\Syl_p ,Z)^{N_p/\Syl_p}$ is trivial.    \qed

\section{The group $\hat\Pi$: inventory and summary of results}  \label{s:what we know}
Let $p$, $X$,  $\tX$, and $\Pi$ be as be as in \S\ref{sss:some}. In addition, assume that $X$ is smooth.

In \S\ref{sss:lambda-completion} we fixed an algebraic closure $\BQbar$ of $\BQ$ and defined an object $\hat\Pi_{(\lambda)}$ of the ``coarse" category $\Pross (\BQbar)$, which \emph{a priori} depends on the additional choice of a non-Archimedean place $\lambda$ of $\BQbar$ not dividing $p$. However, by Theorem~\ref{t:main}, $\hat\Pi_{(\lambda)}$  does not depend on $\lambda$, so we get a well-defined object $\hat\Pi\in \Pross (\BQbar)$, which we call \emph{the pro-semisimple completion of $\Pi$.} 

Recall that the neutral connected component $\hat\Pi^\circ$ is simply connected (see Proposition~\ref{p:2simply connected}) and the group $\hat\Pi/\hat\Pi^\circ$ canonically identifies with $\Pi$.

\subsection{Inventory}
\subsubsection{The Dynkin diagram of $\hat\Pi$}   \label{ss:Dyn}
Let $\Delta_{\hat\Pi}$ denote the Dynkin diagram of $\hat\Pi$ in the sense of \S\ref{sss:Dynkin diagrams}. It is a disjoint union of finite Dynkin diagrams; in particular, it is a \emph{forest}, i.e., a disjoint union of trees. The number of trees in this forest can be infinite.
By \S\ref{sss:Dynkin diagrams}, $\Delta_{\hat\Pi}$ is equipped with a continuous action of $\Pi$.

Let $T$ (resp. $Z$) denote the maximal torus (resp.~center) of $\hat\Pi^\circ$. Since $\hat\Pi^\circ$ is simply connected $T$ and $Z$ are uniquely determined by $\Delta_{\hat\Pi}\,$; namely,
\begin{equation}   \label{e:groups via Dynkin}
T=\Hom (P,\BG_m ), \quad Z=\Hom (P/Q,\BG_m ), \quad  T/Z=\Hom (Q,\BG_m ),
\end{equation}
where $P$ is the weight group of $\Delta_{\hat\Pi}$ and $Q\subset P$ is the subgroup generated by the roots. Let $W$ denote the Weyl group of $\Delta_{\hat\Pi}$. The action of $\Pi$ on $\Delta_{\hat\Pi}$ induces its action on $T,Z,W$.

\subsubsection{The extension of $\Pi$ by $Z(\BQbar )$}  \label{sss:tildePi}
By \S\ref{sss:Pross''}, we have a group extension
\begin{equation}   \label{e:2ext of pi}
0\to Z(\BQbar )\to \tilde\Pi\to\Pi\to 0
\end{equation}
defined as an object of the ``coarse" groupoid $\Ex (\Pi ,Z(\BQbar ))$, whose objects are extensions of $\Pi$ by $Z(\BQbar )$ and whose morphisms are isomorphisms of extensions modulo conjugations by elements of $Z(\BQbar )\,$.
Let $[\tilde\Pi ]$ denote the quotient of $\tilde\Pi$ by the conjugation action of $Z(\BQbar )$. We often consider totally disconnected compact topological spaces (e.g., $\Pi$ and $[\tilde\Pi ]$) as affine schemes over $\BQbar$.

\begin{rem}   \label{r:rigidity}
The extension \eqref{e:2ext of pi} may have nontrivial automorphisms. However, as an object of the above-mentioned ``coarse" groupoid 
$\Ex (\Pi ,Z(\BQbar ))$, it has no nontrivial automorphisms preserving the Frobenius data discussed in \S\ref{sss:Frobenius data} below. 
This follows from Proposition~\ref{p:rigidity}. This can also be proved by combining Lemma~\ref{l:cyclic_Sylow} with the injectivity claim in Remark \ref{r:injectivity} below.
\end{rem}

\subsubsection{Frobenius data}  \label{sss:Frobenius data}
Recall that $\hat\Pi$ is equipped with a canonical $\Pi$-equivariant map 
\begin{equation}   \label{e:0Frobenius data}
\Pi_{\Fr}\to [\hat\Pi ] (\BQbar )
\end{equation}  
with Zariski-dense image, whose composition with the projection
$[\hat\Pi ](\BQbar )\to\Pi/\Pi^\circ=\Pi$ is equal to the inclusion $\Pi_{\Fr}\mono \Pi$. Let us reformulate these ``Frobenius data" in more concrete terms.

For any $\gamma\in\Pi$ set $Z_\gamma:=\Coker (Z\overset{\gamma-1}\longrightarrow Z)$, $T_\gamma:=\Coker (T\overset{\gamma-1}\longrightarrow T)$ and $\cT_\gamma :=T_\gamma/W^\gamma$, where $W^\gamma\subset W$ is the subgroup of $\gamma$-invariants. The group $Z_\gamma$ acts on $\cT_\gamma$ by multiplication.

Each $\cT_\gamma$ is an affine scheme over $\BQbar$. As $\gamma\in\Pi$ varies, the schemes $\cT_\gamma$ form an affine scheme $\cT$ over $\Pi$. (More precisely, $\cT$ is the spectrum of the algebra of 
regular functions $f$ on $T\times\Pi$ with the following property: for each $\gamma\in\Pi$ the restriction of $f$ to $T\times\{\gamma\}$ factors through $\cT_\gamma\,$.) Similarly, the groups $Z_\gamma$ form a group scheme $\cZ$ over $\Pi$; the group scheme $\cZ$ acts on $\cT$ (viewed as a scheme over $\Pi$).

Let $\tilde\Pi$ be as in \eqref{e:2ext of pi}, then $[\tilde\Pi ]$ is a $\cZ$-torsor over $\Pi$. By Proposition~\ref{p:Mohrdieck}, \emph{$[\hat\Pi ]$ canonically identifies with the $[\tilde\Pi ]$-twist of $\cT$} (the notion of twist makes sense because $\cZ$ acts on $\cT$).
Using this identification, we can rewrite the map \eqref{e:0Frobenius data} as a $Z$-anti-equivariant map
\begin{equation}   \label{e:Frobenius data}
[\tilde\Pi ]\times_\Pi \Pi_{\Fr}\to \cT (\BQbar ).
\end{equation}

Equip $\Pi$ with the usual action of $\Pi\times\BN$, where $\BN$ is the multiplicative monoid of positive integers ($\Pi$ acts by conjugation and $n\in\BN$ acts by raising to the $n$-th power). This action canonically lifts to an action of $\Pi\times\BN$ on $\cT$; namely, the map 
$\cT_\gamma$ to $\cT_{\gamma^n}$ corresponding to the action of
$n\in\BN$ is induced by the homomorphism
$T_\gamma\to T_{\gamma^n}$ that takes $t$ to $t\cdot\gamma (t)\cdot\ldots\cdot\gamma^{n-1}(t)$. 
Then the map \eqref{e:Frobenius data} is $(\Pi\times\BN )$-equivariant.

\begin{rem}  \label{r:injectivity}
Fix $\gamma\in\Pi$. Let $[\tilde\Pi ]_\gamma$ denote the fiber of $[\tilde\Pi ]$ over $\gamma$. 
Let $B_\gamma$ denote the direct limit of the 
$\BQbar$-algebras of all functions $V\cap \Pi_{\Fr} \to\BQbar$, where $V$ runs through the set of all open subsets of $\Pi$ containing~$\gamma$. Given $\tilde\gamma\in [\tilde\Pi ]_\gamma\,$, let
$\tilde B_{\tilde \gamma}$ denote the direct limit of the 
$\BQbar$-algebras of all functions 
$\tilde V \times_\Pi \Pi_{\Fr}\to\BQbar$, where $\tilde V$ runs through the set of all open subsets of $ [\tilde\Pi ]$ containing~$\tilde\gamma$. Let $A$ denote the $\BQbar$-algebra of regular functions on $\cT$.  For each $\tilde\gamma\in [\tilde\Pi ]_\gamma\,$, the map \eqref{e:Frobenius data} gives rise to a homomorphism $f_{\tilde\gamma}:A\to\tilde B_{\tilde\gamma}\,$. It is easy to prove that $f_{\tilde\gamma} (A)\subset B_\gamma$ and the map
\[
[\tilde\Pi ]_\gamma\to\Hom (A, B_\gamma ), \quad \tilde\gamma\mapsto f_{\tilde\gamma}
\]
is \emph{injective}.
\end{rem}

\subsubsection{A slightly more economic description of Frobenius data}
It is straightforward to split the map \eqref{e:Frobenius data} into pieces corresponding to the connected components of $\Delta_{\hat\Pi}\,$. Let us explain the details for completeness.

Fix a connected component $C\subset\Delta_{\hat\Pi}\,$. Let $T_C$ (resp.~$Z_C$, $W_C$) denote the maximal torus (resp.~center, Weyl group) of the connected simply connected semisimple group over $\BQbar$ with Dynkin diagram $C$. Set 
\[
\Pi_C:=\{ \gamma\in\Pi\,|\, \gamma (C)=C\}, \quad \Pi_{C,\Fr}:=\Pi_C\cap\Pi_{\Fr}\, .
\]
For each $\tau\in\Aut C$ we set $\cT_{C,\tau}:=T_{C,\tau}/W_C^\tau\,$, where 
$T_{C,\tau}=\Coker (T_C\overset{\tau-1}\longrightarrow T_C)$ and
$W^\tau\subset W$ is the subgroup of $\tau$-invariants. Let $\Pi_{C,\Fr ,\tau}$ denote the preimage of $\tau$ in $\Pi_{C,\Fr}\,$.

The extension \eqref{e:2ext of pi} induces an extension
\[
0\to Z_C(\BQbar )\to\tilde\Pi_C\to\Pi_C\to 0.
\]
Let $[\tilde\Pi_C]$ denote the quotient of $\tilde\Pi_C$ by the conjugation action of $Z_C(\BQbar )$.

For each connected component $C\subset\Delta_{\hat\Pi}\,$ and each $\tau\in\Aut C$, the map \eqref{e:Frobenius data} induces a 
$Z_C$-anti-equivariant map 
\begin{equation}   \label{e:data componentwise}
[\tilde\Pi_C ]\times_{\Pi_C} \Pi_{C,\Fr ,\tau}\to \cT_{C,\tau} (\BQbar ).
\end{equation}
$\Pi\times\BN$ acts on the collection of the left-hand-sides of all maps \eqref{e:data componentwise} and on the collection of the right-hand-sides. 
Moreover, the collection of all maps \eqref{e:data componentwise} is $(\Pi\times\BN )$-equivariant.

Finally, the map $\eqref{e:Frobenius data}$ can be reconstructed from the maps \eqref{e:data componentwise} using $\BN$-equivariance and the following observation. Suppose that $\gamma\in\Pi$, $\gamma^m\in\Pi_C\,$, $\gamma^i\not\in\Pi_C$ for $0<i<m$; set 
$A:=\prod\limits_{i\in\BZ/m\BZ}T_{\gamma^i(C)}\,$. Then the homomorphism $A\to T_C$ whose restriction to $T_{\gamma^i (C)}$ is given by $\gamma^{-i}$ induces an \emph{isomorphism}
\[
\Coker (A\overset{\gamma-1}\longrightarrow A)\iso T_{C,\gamma^m}\,.
\]

\subsubsection{The action of $\Gal (\BQbar /\BQ)$} \label{sss:Gal(Q)-action}
The group $\Gal (\BQbar /\BQ)$ (viewed as an abstract group\footnote{The author prefers not to discuss the notion of action of a pro-finite group on a category.}) acts on the category $\Pross (\BQbar)$, and the object $\hat\Pi\in \Pross (\BQbar)$ is $\Gal (\BQbar /\BQ)$-equivariant with respect to this action.  So one has an action of $\Gal (\BQbar /\BQ)$ on $\Delta_{\hat\Pi}$ commuting with the $\Pi$-action. This action is \emph{continuous} by Theorem~3.1 of Deligne's article \cite{De2} combined with \cite[Proposition~VII.7(i)]{La}. 

Since $\hat\Pi\in \Pross (\BQbar)$ is $\Gal (\BQbar /\BQ)$-equivariant, the group $\Gal (\BQbar /\BQ)$ acts\footnote{In terms of the isomorphism $Z(\BQbar)\iso\Hom (P/Q, \BQbar^\times )$ from \eqref{e:groups via Dynkin}, this action corresponds to the action of $\Gal (\BQbar /\BQ)$ on both $\BQbar^\times$ and $P/Q$ (the latter via the action on $\Delta_{\hat\Pi}$).} on $Z(\BQbar )$, so it acts on the groupoid $\Ex (\Pi ,Z(\BQbar ))$.
As an object of the ``coarse" groupoid $\Ex (\Pi ,Z(\BQbar ))$, the extension $\tilde\Pi$ is canonically defined, so this object is equivariant with respect to $\Gal (\BQbar /\BQ)$. Therefore $\Gal (\BQbar /\BQ)$ acts on the set $[\tilde\Pi ]$. In fact, $[\tilde\Pi ]$ is a topological space, and the action of  $\Gal (\BQbar /\BQ)$ on $[\tilde\Pi ]$ is continuous.

Recall that $T=\Hom (P,\BG_m)$, where $P$ is the weight group of $\Delta_{\hat\Pi}\,$. So 
the action of $\Gal (\BQbar /\BQ)$ on $\Delta_{\hat\Pi}$ induces its action on $P$. We will always equip
the group $T(\BQbar )=\Hom (P,\BQbar^\times )$ with the action of $\Gal (\BQbar /\BQ)$ induced by its action on both $P$ and $\BQbar^\times$.
(This action corresponds to the $\BQ$-structure on the group scheme $T$ defined by the action of $\Gal (\BQbar /\BQ)$ on $P=\Hom (T,\BG_m)$.)

The action of $\Gal (\BQbar /\BQ)$ on $T(\BQbar )$ induces its action on $\cT (\BQbar )$ 
(which is a quotient of $T(\BQbar )\times\Pi$).

The map \eqref{e:Frobenius data} is $\Gal (\BQbar /\BQ)$-equivariant; in particular, each point of the image of the map $\Pi_{\Fr}\to( \cT /Z) (\BQbar )$ induced by \eqref{e:Frobenius data} is invariant under $\Gal (\BQbar /\BQ)$.

\subsubsection{Hope for a cleaner picture}  \label{sss:cleaner conjectural}
The philosophy of motives suggests a conjectural picture (see Appendix~\ref{s:Tannakian}), which is much cleaner than that of \S\ref{sss:Gal(Q)-action}. In particular, the $\Gal (\BQbar /\BQ)$-equivariant object 
$\tilde\Pi\in\Ex (\Pi ,Z(\BQbar ))$ discussed in \S\ref{sss:Gal(Q)-action} should conjecturally come from a canonical object of the groupoid $\Ex (\Pi\times\Gal (\BQbar /\BQ) ,Z(\BQbar ))$ (and moreover, from an object of a certain 2-groupoid described in \S\ref{sss:Tann pol SCPS Pi}).

\subsection{A result of L.~Lafforgue}   \label{ss:Laurent Lafforgue}
Let $T(\BQbar )_0\subset T(\BQbar )$ denote the subgroup of all elements $t\in T(\BQbar )$ such that for any $\chi\in\Hom (T,\BG_m)$ the number $\chi (t)\in\BQbar^\times$ is a unit outside of $p$ and all Archimedean absolute values of $\chi (t)$ equal 1. For $\gamma\in\Pi$,  the image of $T(\BQbar )_0$ in $\cT_\gamma (\BQbar )$ will be denoted by $\cT_\gamma (\BQbar )_0\,$. Let $\cT (\BQbar )_0\subset\cT (\BQbar )$ denote the union of $\cT_\gamma (\BQbar )_0$ for all $\gamma\in\Pi_{\Fr}\,$.

\begin{prop}   \label{p:Laurent Lafforgue}
(i) The image of the map \eqref{e:Frobenius data} is contained in $\cT (\BQbar )_0\, $.

(ii) The action of $\Gal (\BQbar/\BQ )$ on $\Delta_{\hat\Pi}$ factors through $\Gal (\sfC/\BQ )$, where $\sfC\subset\BQbar$ is the maximal CM-subfield. Moreover, the complex conjugation $\sigma\in\Gal (\sfC/\BQ )$ acts as the canonical involution of $\Delta_{\hat\Pi}$ (i.e., $\sigma$ takes any simple root $\alpha$ to $-w_0(\alpha)$); in particular, $\sigma$ preserves each connected component of $\Delta_{\hat\Pi}\,$.
\end{prop}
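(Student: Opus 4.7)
The plan is to deduce both parts from Lafforgue's Proposition~VII.7 of \cite{La} (recalled in Theorem~\ref{t:Langl}), the companion theorems of \cite{Dr}, and the rigidity Proposition~\ref{p:rigidity}.

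For (i), fix $\gamma\in\Pi_{\Fr}$ and let $t\in\cT_\gamma(\BQbar)$ be its image under \eqref{e:Frobenius data}. By the definition of $\cT_\gamma(\BQbar)_0$ it suffices to show that $\chi(t)\in\BQbar^\times$ is a unit outside $p$ and has every Archimedean absolute value equal to~$1$, for every $\gamma$-invariant $\chi\in\Hom(T,\BG_m)$. Writing $\chi$ as a difference of two $\gamma$-invariant dominant weights reduces us to the dominant case. Let $U\subset\Pi$ be the open subgroup stabilizing $\chi$; the irreducible representation of $\hat\Pi^\circ$ of highest weight $\chi$ extends (after replacing $\chi$ by a sufficiently divisible multiple if necessary) to a representation of $\hat\Pi\times_\Pi U$, which via Proposition~\ref{p:subgr of fin index} and \S\ref{sss:rep hat-Pi-lambda} corresponds to a semisimple $\lambda$-adic local system on $\tilde X/U$ whose irreducible components have finite-order determinant. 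Theorem~\ref{t:Langl}(i) yields algebraicity of the Frobenius eigenvalues and the fact that all their Archimedean absolute values equal~$1$; standard $\lambda$-adic integrality combined with the existence of $\ell'$-adic companions for every prime $\ell'\neq p$ (Theorem~\ref{t:Langl}(ii)) shows in addition that these eigenvalues are units at every finite place of $\BQbar$ outside~$p$. Since $\chi(t)$ is a product of such eigenvalues, the claim follows.

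For the first assertion of (ii), the key observation is that any $\alpha\in\BQbar^\times$ with $|\sigma(\alpha)|=1$ for every complex embedding $\sigma$ lies in $\sfC$: indeed $\sigma(\bar\alpha)=\sigma(\alpha)^{-1}=\sigma(\alpha^{-1})$ shows that complex conjugation commutes with every embedding of $\BQ(\alpha)$, so $\BQ(\alpha)$ is either totally real or CM. Applied to the values produced by~(i), this gives that every Frobenius value in $\cT(\BQbar)$ lies in $\sfC$; hence every $\tau\in\Gal(\BQbar/\sfC)$ fixes the map $\Pi_{\Fr}\to\cT(\BQbar)$. Together with the Galois-equivariance of \eqref{e:Frobenius data} recalled in \S\ref{sss:Gal(Q)-action} and Zariski density of its image, this forces the $\Pross(\BQbar)$-automorphism of $\hat\Pi$ induced by $\tau$ to act trivially on $[\hat\Pi]$. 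By Proposition~\ref{p:2simply connected} and Proposition~\ref{p:rigidity}, that automorphism is the identity, so $\tau$ acts trivially on $\Delta_{\hat\Pi}$.

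For the second assertion, let $\sigma\in\Gal(\sfC/\BQ)$ be complex conjugation and denote by $\pi_\sigma\in\Aut\Delta_{\hat\Pi}$ its induced action. Using $\sigma(\alpha)=\alpha^{-1}$ for every $\alpha\in\sfC^\times$ with Archimedean absolute value $1$, the Galois action on $T(\BQbar)$ sends a Frobenius $t$ to the map $\chi\mapsto t(\pi_\sigma^{-1}\chi)^{-1}=t(-\pi_\sigma^{-1}\chi)$. Equating this with $t$ modulo the Weyl group action of $W^\gamma$, for $\gamma$ ranging over Frobenius elements in the open subgroup of $\Pi$ acting trivially on $P$ (such $\gamma$ exist by \v{C}ebotarev), gives $-\pi_\sigma^{-1}\in W$ on $P$. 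Since $\pi_\sigma$ is a diagram automorphism and $-w_0$ is the unique element of $-W$ permuting simple roots, we conclude $\pi_\sigma=-w_0$, the canonical involution. The main obstacle I foresee is the technical step in~(i) of realizing an arbitrary $\gamma$-invariant dominant character as the highest weight of a genuine representation of $\hat\Pi\times_\Pi U$ whose corresponding local system has finite-order determinant on each irreducible component, so that Lafforgue's theorem is directly applicable; this requires careful passage through finite etale covers and some bookkeeping with the coarse groupoid $\Pross(\BQbar)$.
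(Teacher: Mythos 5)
The central problem is in part (i): you claim that Theorem~\ref{t:Langl}(i) ``yields algebraicity of the Frobenius eigenvalues and the fact that all their Archimedean absolute values equal~$1$''. It does not. As recorded in the paper, Theorem~\ref{t:Langl}(i) (= \cite[Thm.~VII.7(i)]{La}) says only that $\det(1-t\,F_x,\E)\in\BQbar[t]$; it contains no purity statement. The assertion that the eigenvalues have all Archimedean absolute values equal to $1$ is precisely the weight-zero purity statement of Lafforgue's Th\'eor\`eme VII.6(ii), and it cannot be recovered from your other inputs: integrality at all finite places away from $p$ (which your companion argument does give, correctly, for the non-Archimedean part) says nothing about Archimedean absolute values ($p$ itself is a unit outside $p$). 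This is exactly the input the paper cites --- its proof of (i) is a one-line paraphrase of \cite[Thm.~VII.6(ii)-(iii)]{La}. Since your proof of the first assertion of (ii) rests on the CM-field lemma applied to numbers of Archimedean absolute value $1$, the gap propagates there as well. (The lemma itself, and the consequence that intrinsic complex conjugation inverts such numbers, are fine.)

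There is a second, more local gap in your argument for the second assertion of (ii): you restrict to Frobenius elements ``in the open subgroup of $\Pi$ acting trivially on $P$''. The kernel of $\Pi\to\Aut\Delta_{\hat\Pi}$ need not be open, because $\Delta_{\hat\Pi}$ may have infinitely many connected components (only vertex stabilizers are open), so \v{C}ebotarev does not supply such $\gamma$. To repair this you must work one finite-type quotient (or one connected component with its $\Pi$-orbit) at a time, where the relevant stabilizer is open, and you also need fiberwise Zariski density of Frobenius images in $[G]_\gamma$, which requires a short character-theoretic argument beyond the global density stated in \S\ref{ss:add-structure}. By contrast, the paper's proof of (ii) avoids the torus/Weyl-group analysis altogether: by \cite[Thm.~VII.6(ii)]{La}, for every open $U\subset\Pi$, every representation $\rho$ of $\hat\Pi\times_\Pi U$ and every Frobenius $\gamma$, one has $\Tr\rho(\gamma)\in\sfC$ and $\sigma(\Tr\rho(\gamma))=\Tr\rho^*(\gamma)$; so $\Gal(\BQbar/\sfC)$ fixes all Frobenius character data (hence acts trivially on $\Delta_{\hat\Pi}$ by rigidity), and complex conjugation exchanges each representation with its dual, which on highest weights is exactly $-w_0$. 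Your reduction in (i) to $\gamma$-invariant dominant weights, the extension of the corresponding highest-weight representation to $\hat\Pi\times_\Pi U$, and the lifting from invariant-character conditions to membership in the image of $T(\BQbar)_0$ are all repairable bookkeeping, but they are unnecessary once the correct citation is in place.
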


\begin{proof}
Statement (i) is just a paraphrase of \cite[Theorem VII.6 (ii)-(iii)]{La}.

Let us prove (ii). Let $U\subset\Pi$ be any open subgroup, $\rho$ a finite-dimensional representation of $\hat\Pi\times_{\Pi}U$, and $\gamma\in \im (U_{\Fr}\to [\hat\Pi](\BQbar )\times_{\Pi}U)$. By \cite[Theorem VII.6 (ii)]{La}, $\Tr\rho (\gamma )\in\sfC$ and 
$\sigma (\Tr\rho (\gamma ))=\Tr\rho^* (\gamma )$, where $\rho^*$ is the dual representation. Statement (ii) follows.
\end{proof}

\subsection{$p$-adic behavior of Frobenius data}   \label{ss:p-adic behavior}

\subsubsection{The Newton map}
Recall that we fixed a universal cover $\tilde X$ of $X$ and $\Pi:=\Aut (\tilde X/X)$. Recall that $|\tilde X|$ denotes the set of closed points of 
$\tilde X$. Let $\Val_p(\BQbar)$ denote the set of valuations $v:\BQbar^\times\to\BQ$ such that $v(p)=1$. Using the map \eqref{e:Frobenius data}, we will define the \emph{Newton map\footnote{It is similar to the ``Newton polygon" in the sense of \cite{K}.}}
\begin{equation}   \label{e:Newton map}
|\tilde X|\times \Val_p(\BQbar)\to\Hom (P,\BQ )/W,
\end{equation}
where $P$ is the weight group of $\Delta_{\hat\Pi}\,$. 

First, note that $P$ is the direct sum of the weight groups $P_C$ corresponding to all connected components $C\subset\Delta_{\hat\Pi}\,$. Moreover, $W$ is the product of the groups $W_C$ acting  on $P_C\,$. So defining the map \eqref{e:Newton map} is equivalent to defining a map
\begin{equation}   \label{e:CNewton map}
|\tilde X|\times \Val_p(\BQbar)\to\Hom (P_C,\BQ )/W_C\, ,
\end{equation}
for each connected component $C\subset\Delta_{\hat\Pi}\,$. Let $\tilde x\in\tilde X$ and $F_{\tilde x}\in\Pi$ the corresponding geometric Frobenius. Choose $n\in\BN$ so that $F_{\tilde x}^n$ acts on $C$ as the identity. Then the map \eqref{e:data componentwise} associates to $F_{\tilde x}^n$ a well-defined element $u_n\in (T_C (\BQbar )\otimes\BQ )/W_C\,$. One has $u_{mn}=(u_n)^m$.
$T_C (\BQbar) =\Hom (P_C, \BQbar^\times)$, so an element
$v\in\Val_p(\BQbar)$ induces a homomorphism $\Hom (P_C, \BQbar^\times)\to \Hom (P_C, \BQ)$ and therefore a map
 $\Hom (P_C, \BQbar^\times)/W_C\to \Hom (P_C, \BQ)/W_C\,$. Take the image of $u_n$ under this map and divide it by $n\cdot\deg\tilde x$, where $\deg\tilde x$ is the degree of the residue field of $\tilde x$ over $\BF_p\,$. The result does not depend on $n$. Thus we have defined the map \eqref{e:CNewton map} and therefore the map \eqref{e:Newton map}. It is clear that the map \eqref{e:Newton map} is equivariant with respect to $\Pi\times\Gal (\BQbar /\BQ)$.
  
 \subsubsection{Remark}
 As explained by T.~Koshikawa \cite{Kos}, for any connected component $C\subset\Delta_{\hat\Pi}$ the map \eqref{e:CNewton map} is nonzero (i.e., its image contains a nonzero element of $\Hom (P_C,\BQ )/W_C\,$). Indeed, Proposition~\ref{p:Laurent Lafforgue}(i) implies that if the map \eqref{e:CNewton map} were zero then the map \eqref{e:data componentwise} corresponding to any $\tau\in\Aut C$ would have finite image; but this is impossible because the map \eqref{e:0Frobenius data} has Zariski-dense image by  \v{C}ebotarev density.

\subsubsection{Some results of \cite{VLa,DK}}
As usual, we identify the set $\Hom (P_C,\BQ )/W_C$ with the dominant cone $\Hom^+ (P_C,\BQ )\subset\Hom (P_C,\BQ )$, and we equip it with the following partial order: given dominant rational coweights $\check\mu_1$ and $\check\mu_2$ we say that $\check\mu_1\le\check\mu_2$ if $\check\mu_2-\check\mu_1$ is a linear combination of simple coroots with non-negative rational coefficients.

Fix a connected component $C\subset\Delta_{\hat\Pi}$ and a valuation $v\in\Val_p(\BQbar)$. Then the map \eqref{e:CNewton map} corresponding to $C$ yields a map
\begin{equation} \label{e:2vCNewton map}
 |\tilde X|\to\Hom (P_C\, ,\BQ )/W_C=\Hom^+ (P_C\, ,\BQ ), \quad \tilde x\mapsto \check\mu_{\tilde x}
 \end{equation}
 (this map depends on the choice of $v\in\Val_p(\BQbar)$). 
 
 \begin{prop}   \label{p:DK}
 (i) There exists $\check\mu_{\max}\in\Hom^+ (P_C\, ,\BQ )$ such that
 \begin{equation}   \label{e:theinequality}
 \check\mu_{\tilde x}\le\check\mu_{\max} \quad \mbox{ for all } \tilde x\in |\tX |
 \end{equation}
 and  $\check\mu_{\tilde x}=\check\mu_{\max}$ for some $\tilde x\in |\tX |$.
 
 (ii) Let $U$ be the set of all $\tilde x\in |\tX |$ such that $\check\mu_{\tilde x}=\check\mu_{\max}$. Then
 $U\times_{|X|}|C|$ is open in $|\tX |\times_{|X|}|C|$  for every curve $C$ in $X$.
 
 (iii) $\check\rho_C-\check\mu_{\max}\in\Hom^+ (P_C\, ,\BQ )$, where $\check\rho_C\in\Hom^+ (P_C\, ,\BQ )$ is the sum of the fundamental coweights. 
 \end{prop}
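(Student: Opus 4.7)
The plan is to translate the abstract Newton data $\check\mu_{\tilde x}$ into the Newton polygons of specific irreducible $\lambda$-adic local systems associated to fundamental weights of $C$, and then invoke the Drinfeld--Kedlaya slope bound together with standard semicontinuity of Newton polygons of $F$-isocrystals on curves.

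First I would set up the dictionary. Let $G_C$ be the connected simply connected semisimple group with Dynkin diagram $C$. For each fundamental weight $\omega_i$ of $G_C$, the irreducible representation $V_{\omega_i}$ extends canonically to a representation of $\hat\Pi\times_\Pi\Pi_C$, which by \S\ref{sss:rep hat-Pi-lambda} and Proposition~\ref{p:subgr of fin index} corresponds to an irreducible semisimple $\lambda$-adic local system $\mathcal{E}_i$ on $\tilde X/\Pi_C$ (fix some $\lambda\nmid p$). By Theorem~\ref{t:Langl}(i), for $\tilde x\in|\tilde X|$ and $n\ge 1$ with $F_{\tilde x}^n\in\Pi_C$, the eigenvalues of $F_{\tilde x}^n$ on $\mathcal{E}_i$ lie in $\BQbar$. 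Unwinding the construction of \eqref{e:2vCNewton map} shows that the pairing $\langle\omega_i\,,\check\mu_{\tilde x}\rangle$ coincides with the minimal $v$-valuation of these eigenvalues, divided by $n\cdot\deg\tilde x$.

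For part (iii) I would invoke \cite{DK}. Given $\tilde x$, choose a smooth curve $Y\subset X$ whose image contains the closed point under $\tilde x$. By Theorem~\ref{t:AE}(ii) the local system $\mathcal{E}_i|_Y$ admits a crystalline companion, which is an irreducible (hence indecomposable) overconvergent $F$-isocrystal on $Y$. The Drinfeld--Kedlaya theorem then asserts that consecutive Newton slopes of this $F$-isocrystal differ by at most one; applied to the fundamental representation $V_{\omega_i}$, this yields $\langle\omega_i\,,\check\mu_{\tilde x}\rangle\le\langle\omega_i\,,\check\rho_C\rangle$. Letting $\omega_i$ range over the fundamental weights of $C$ gives $\check\rho_C-\check\mu_{\tilde x}\in\Hom^+(P_C,\BQ)$ for every $\tilde x$, and in particular for the maximum of (i).

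For parts (i) and (ii), the bound from (iii) confines all $\check\mu_{\tilde x}$ to the bounded region $\{\check\mu: 0\le\check\mu\le\check\rho_C\}$. Combined with Theorem~\ref{t:AE}(i) (or Proposition~\ref{p:Laurent Lafforgue}(i)), which controls the denominators of the relevant valuations, this region contains only finitely many Newton points attainable by the Frobenius of any closed point; thus a maximum $\check\mu_{\max}$ exists and is attained. For part (ii), fix a curve $Y$ in $X$ and pass to the crystalline companion of each $\mathcal{E}_i|_Y$; Grothendieck--Katz upper semicontinuity of Newton polygons of $F$-isocrystals shows that the locus on $Y$ where $\langle\omega_i\,,\check\mu_{\tilde x}\rangle\ge\langle\omega_i\,,\check\mu_{\max}\rangle$ for all $i$ is open, and by (i) this locus coincides with $U\times_{|X|}|Y|$.

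The main technical obstacle is that crystalline companions and the Drinfeld--Kedlaya slope bound are available only after restriction to a curve (higher-dimensional crystalline companions are listed as open in \S\ref{ss:crystalline analog}). Happily, $\check\mu_{\tilde x}$ depends only on $F_{\tilde x}\in\Pi$ and its image in $[\hat\Pi](\BQbar)$, so the bound and the semicontinuity obtained curve-by-curve suffice for both the pointwise statement (iii) and the curve-restricted openness in (ii); the existence of the global maximum in (i) then follows formally from finiteness of the image of the Newton map.
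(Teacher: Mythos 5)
The paper gives no proof of Proposition~\ref{p:DK}: it is quoted from \cite[\S 8.5--8.6]{DK}, and your overall strategy (crystalline companions on curves via Abe, the Drinfeld--Kedlaya slope bound, Grothendieck--Katz semicontinuity) is indeed the approach of that reference; your treatment of (ii) is essentially sound once the inputs are in place. But two of your steps would fail as written. For (i): finitely many attainable Newton points in a bounded region only yields \emph{maximal} elements of a finite poset; the dominance order is partial, so ``a maximum exists and is attained'' is not formal. The existence of one $\check\mu_{\max}$ dominating \emph{every} $\check\mu_{\tilde x}$ is the real content of (i) and needs a semicontinuity-plus-connectedness argument (e.g.\ joining two points by an irreducible curve on which the relevant local systems stay irreducible and comparing both Newton points with the generic one of that curve); this in turn requires a careful, Hilbert-irreducibility-type choice of curve in the spirit of \cite[Prop.~2.17]{Dr} --- the restriction of $\mathcal{E}_i$ to an arbitrary curve through a prescribed point need not be irreducible, as you tacitly assume, and ``$V_{\omega_i}$ extends canonically to $\hat\Pi\times_\Pi\Pi_C$'' is also not automatic ($\omega_i$ need not be $\Pi_C$-invariant and there is an $H^2$-obstruction). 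Also, the bounded-denominator claim does not follow from Theorem~\ref{t:AE}(i) or Proposition~\ref{p:Laurent Lafforgue}(i) (these give algebraicity and integrality/archimedean constraints, not denominators); it comes from Dieudonn\'e--Manin applied to the companion on a curve.

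For (iii): the Drinfeld--Kedlaya theorem bounds the consecutive \emph{generic} slopes of an indecomposable $F$-isocrystal; it gives nothing at an arbitrary closed point, so your bound ``for every $\tilde x$'' is unjustified, and its dominant-cone form does not descend along the dominance order (e.g.\ $(1/2,1/2,-1)\le(1,0,-1)$ but has a slope gap $3/2$) --- which is exactly why the proposition asserts (iii) only for $\check\mu_{\max}$. You also conflate two cones: nonnegative pairing with all fundamental weights $\omega_i$ characterizes the cone spanned by simple coroots, i.e.\ the inequality $\check\mu\le\check\rho_C$ of the Corollary, whereas (iii) is the statement $\check\rho_C-\check\mu_{\max}\in\Hom^+(P_C,\BQ)$, equivalent to $\langle\alpha_j,\check\mu_{\max}\rangle\le 1$ for all simple roots $\alpha_j$ (incidentally, $\langle\omega_i,\check\mu_{\tilde x}\rangle$ is the \emph{maximal}, not minimal, normalized valuation in $V_{\omega_i}$). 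The correct route is the reverse of yours: first establish (i)--(ii), then pick a curve through a point attaining $\check\mu_{\max}$ on which the restrictions remain irreducible, note that the generic Newton point of that curve must equal $\check\mu_{\max}$, and only then apply the Drinfeld--Kedlaya bound, reading off $\langle\alpha_i,\check\mu_{\max}\rangle\le1$ from the gap between the top two slopes of the companion of $\mathcal{E}_i$.
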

 
 The proposition is proved in \cite[\S 8.5-8.6]{DK} using $F$-isocrystals and the main theorem of T.~Abe's work \cite{A} (existence of ``crystalline companions" in the case $\dim X=1$).
 
 \begin{rem}
According to \cite[Thm.~1.4]{Ke7}, the set $U$ from statement (ii) is, in fact, open. 
\end{rem}

 Since the dominant cone $\Hom^+ (P_C,\BQ )\subset\Hom (P_C,\BQ )$ is contained in the positive cone,
 Proposition~\ref{p:DK}(iii) and the inequality \eqref{e:theinequality} imply the following
 
 \begin{cor}
 $\check\mu_{\tilde x}\le \check\rho_C$ for all $\tilde x\in |\tX |$.  \qed
 \end{cor}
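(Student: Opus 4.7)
The plan is very short: chain two inequalities together. First, by the inequality \eqref{e:theinequality} from Proposition~\ref{p:DK}(i), for every $\tilde x\in|\tX|$ one has $\check\mu_{\tilde x}\le\check\mu_{\max}$, i.e.\ $\check\mu_{\max}-\check\mu_{\tilde x}$ is a non-negative rational combination of simple coroots. It therefore suffices to establish the second inequality $\check\mu_{\max}\le \check\rho_C\,$, and then invoke transitivity of $\le$.

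The second inequality is exactly the assertion that $\check\rho_C-\check\mu_{\max}$ lies in the positive cone (the non-negative rational span of the simple coroots of $C$). By Proposition~\ref{p:DK}(iii), we already know that $\check\rho_C-\check\mu_{\max}\in\Hom^+(P_C,\BQ)$, that is, it lies in the dominant cone. So the only substantive point is the claim quoted in the parenthetical remark preceding the Corollary: the dominant cone is contained in the positive cone.

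This last containment is a standard fact about irreducible finite root systems and I would simply cite it. Concretely, any element of the dominant cone $\Hom^+(P_C,\BQ)$ is a non-negative rational combination of the fundamental coweights $\check\omega_i\,$; each $\check\omega_i$ in turn is a non-negative rational combination of the simple coroots, since the entries of the inverse Cartan matrix of an irreducible finite Dynkin diagram are all non-negative (this can be found, e.g., in Bourbaki's tables). Hence every dominant rational coweight lies in the positive cone, which gives $\check\mu_{\max}\le\check\rho_C$ and completes the proof. There is no real obstacle here—the whole argument is a two-line chaining of inequalities, with the only external input being the classical non-negativity of the inverse Cartan matrix.
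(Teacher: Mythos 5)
Your proposal is correct and is essentially the paper's own argument: the paper derives the corollary by exactly the same chaining of $\check\mu_{\tilde x}\le\check\mu_{\max}$ (from \eqref{e:theinequality}) with $\check\mu_{\max}\le\check\rho_C$ (from Proposition~\ref{p:DK}(iii) plus the containment of the dominant cone in the positive cone). Your explicit justification of that containment via the non-negativity of the inverse Cartan matrix of the connected diagram $C$ is the standard fact the paper leaves implicit, so there is nothing to add.
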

 
 If $C$ has type $A_n$ the corollary was proved by V.~Lafforgue without using $F$-isocrystals (see \cite[Prop.~2.1]{VLa}). His proof is elementary modulo the Langlands conjecture for $GL(n)$ over global function fields (of dimension 1) in the  direction ``from Galois to automorphic". This conjecture was proved in \cite{La} using Piatetski-Shapiro's ``converse theorem".

\section{Tannakian categories and the motivic hope}   \label{s:Tannakian}
In \S\ref{ss:Tannakian}-\ref{ss:polarizations} we recall basic facts about Tannakian categories and symmetric polarizations. The main goal here is to formulate Corollaries~\ref{c:polarized extensions} and \ref{c:polarized-Q}.

Then we formulate the ``motivic hope" briefly mentioned in Remarks~\ref{r:mot-hope1}-\ref{r:mot-hope2}.
In \S\ref{ss:mot Tann}-\ref{ss:passing to tilde X} we formulate it in the Tannakian language. Then we use 
Corollary~\ref{c:polarized-Q} to reformulate the conjectural picture in a more down-to-earth language and to compare it with the unconditional results, see \S\ref{sss:translating to Dynkins}-\ref{ss:dreams vs reality}. In \S\ref{ss:reciprocity} we show that the ``motivic hope" would imply certain ``reciprocity laws" involving a sum over all $\ell$-adic cohomology theories (including the crystalline theory for $\ell=p$), see Conjectures \ref{conj:reciprocity} and \ref{conj:2reciprocity}.

\subsection{Recollections on Tannakian categories} \label{ss:Tannakian}
The main references on Tannakian categories are \cite{Sa,DM, De90, De3}. Let us note that at least in characteristic $0$ instead of the language of fpqc-gerbes used in  \cite{Sa,DM, De90} one can use Galois gerbes (see \cite[\S 2]{LR} and \cite[Appendix B]{Ko}).

\subsubsection{Goal of this subsection}   \label{sss:Tannakian goal}
Let $E$ be a field and $\bar E$ an algebraic closure of $E$. For simplicity, we assume that $E$ has characteristic~0.

Let $\Tann_E$ denote the 2-groupoid of Tannakian categories over $E$.
We will use ``SCPS" as a shorthand for ``connected, simply connected, pro-semisimple".
Let $\cT\in\Tann_E\,$; we say that $\cT$ is SCPS if $\cT\otimes_{E}\bar E$ is $\otimes$-equivalent to the category of finite-dimensional representations of an SCPS group scheme over $\bar E$. Let $\Tann_E^{\rm SCPS}\subset\Tann_E$ denote the full 2-subgroupoid of SCPS Tannakian categories. Our goal is to recall an explicit description of $\Tann_E^{\rm SCPS}$ (see Proposition~\ref{p:Tann-SCPS} below).

\subsubsection{Pinned SPSC groups}
The words ``Dynkin diagram" are understood in the sense of \S\ref{sss:Dynkin diagrams}.
Let $\Dyn_E$ denote the groupoid of Dynkin diagrams equipped 
with a continuous action of $\Gal (\bar E /E )$. 

By a \emph{pinning} of a pro-semisimple group scheme $G$ over $E$ we mean a pinning of $G\otimes_E\bar E$ which is invariant under $\Gal (\bar E/E)$; it exists if and only if $G$ is quasi-split.
The groupoid of pinned SPSC group schemes over $E$ canonically identifies with $\Dyn_E\,$. The pinned SPSC group scheme over $E$ corresponding to $\Delta\in\Dyn_E$ will be denoted by $G_\Delta\,$. The center of $G_\Delta$ will be denoted by $Z_\Delta\,$.

\subsubsection{Constructing SPSC Tannakian categories}   \label{sss:Constructing SPSC}
For each $\Delta\in\Dyn_E\,$, finite-dimensional representations of $G_\Delta$ form a Tannakian category over $E$, denoted by $\Rep (G_\Delta )$.

Now suppose we are given an extension of pro-finite groups
\begin{equation}   \label{e:obj of Extrue}
0\to Z_\Delta (\bar E )\to H\to\Gal(\bar E/E )\to 0,
\end{equation}
where $\Gal(\bar E/E )$ acts on $Z_\Delta (\bar E )$ in the usual way. Then one defines a ``twisted version" of $\Rep (G_\Delta )$, denoted by $\Rep_H (G_\Delta )$. Namely, an object of $\Rep_H (G_\Delta )$ is a finite-dimensional $\bar E$-vector space $V$ equipped with an action of $G_\Delta\otimes_E\bar E$ and a continuous $E$-linear action of the pro-finite group $H$ so that

(i) the action of $Z_\Delta (\bar E )\subset H$ is the same as the action of $Z_\Delta (\bar E )\subset G_\Delta(\bar E )$;

(ii) for any $\tau\in\Gal(\bar E/E )$ and any $h\in H$ such that $h\mapsto\tau$, one has
\[
h(\lambda\cdot v)=\tau (\lambda )\cdot (hv), \quad h(g v)=\tau (g)\ (hv)
\]
for all $v\in V$, $\lambda\in \bar E$, $g\in G_\Delta(\bar E )$. 

Equipped with the obvious tensor product, $\Rep_H (G_\Delta )$ is a Tannakian category over $E$. If the extension \eqref{e:obj of Extrue} is trivial and trivialized then $\Rep_H (G_\Delta )$ identifies with 
$\Rep (G_\Delta )$ by Galois descent.

Let $\Ex_{\rm true} (\Gal (\bar E /E) ,Z_\Delta (\bar E ))$ denote the following 2-groupoid: its objects are extensions \eqref{e:obj of Extrue},
and the groupoid of isomorphisms between two such objects $H_1$ and $H_2$ is defined to be the quotient groupoid of the set of isomorphisms of extensions $H_1\iso H_2$ by the action  of the group $Z_\Delta (\bar E )$ (it acts by composing an isomorphism 
$H_1\iso H_2$ with conjugation by $z\in Z_\Delta (\bar E )$).
It is easy to check\footnote{One has to check that if $z\in Z_\Delta (\bar E)$ and $\varphi_z: H\iso H$ is the automorphism of conjugation by $z$ 
then the auto-equivalence of $\Rep_H(G_\Delta )$ corresponding to $\varphi_z$ is canonically isomorphic to the identity functor.} that the assignment $H\mapsto\Rep_H(G_\Delta )$ defines a functor $\Ex_{\rm true}(\Gal (\bar E /E) ,Z_\Delta(\bar E ))\to\Tann_{E}\,$ for each
$\Delta\in\Dyn_E\,$. Combining these functors for all $\Delta\in\Dyn_E\,$, one gets a functor
\[
\E_E\to\Tann_E\, ,
\]
where $\E_E$ is the groupoid of pairs consisting of $\Delta\in\Dyn_E$ and $H\in\Ex_{\rm true} (\Gal (\bar E /E) ,Z_\Delta (\bar E ))$.

\begin{prop}  \label{p:Tann-SCPS}
This functor induces an equivalence $\E_E\iso\Tann_E^{\rm SCPS}$. 
\end{prop}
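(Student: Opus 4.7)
First I would construct a quasi-inverse $\Psi : \Tann_E^{\rm SCPS} \to \E_E$ to the functor in the proposition. Given $\cT \in \Tann_E^{\rm SCPS}$, fix a fiber functor $\omega : \cT \otimes_E \bar E \to \Vect_{\bar E}$ (which exists because $\cT \otimes_E \bar E$ is by hypothesis equivalent to $\Rep(G)$ for some SCPS group $G$ over $\bar E$); its tensor automorphism group $G_\omega := \Aut^{\otimes}(\omega)$ is then an SCPS group scheme over $\bar E$. The Galois action of $\Gal(\bar E/E)$ on $\cT \otimes_E \bar E$ coming from the $E$-structure induces a continuous action on the Dynkin diagram of $G_\omega$; this gives a well-defined $\Delta \in \Dyn_E$, independent of $\omega$ because different fiber functors yield inner-isomorphic group schemes (and inner automorphisms act trivially on Dynkin diagrams). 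By the defining property of $G_\Delta$, there is then an isomorphism $\iota : G_\Delta \otimes_E \bar E \iso G_\omega$ matching the pinnings, unique up to conjugation by $Z_\Delta(\bar E)$.

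Next I would extract the extension $H$. The $E$-structure of $\cT$ gives, for each $\tau \in \Gal(\bar E/E)$, a $\tau$-semi-linear tensor auto-equivalence of $\cT \otimes_E \bar E$; via $\omega$ and Tannakian duality this corresponds to a $\tau$-semi-linear automorphism of $G_\omega$ together with the $G_\omega(\bar E)$-torsor of fiber functor isomorphisms lifting it. Collecting these yields a group extension
\[
1 \to G_\omega(\bar E) \to \tilde\Gamma \to \Gal(\bar E/E) \to 1.
\]
Because the pinning of $G_\Delta$ is $\Gal(\bar E/E)$-equivariant and the splitting $\Aut(G_\Delta \otimes_E \bar E) = G_\Delta^{\rm ad}(\bar E) \rtimes \Aut(\Delta)$ furnished by the pinning is canonical, every element of $\tilde\Gamma$ is $G_\omega^{\rm ad}(\bar E)$-conjugate to one whose conjugation action preserves (via $\iota$) the fixed pinning. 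Letting $H \subset \tilde\Gamma$ be the subgroup of such pinning-preserving elements, one obtains an extension of $\Gal(\bar E/E)$ by $Z_\Delta(\bar E)$, well-defined up to $Z_\Delta(\bar E)$-conjugation; set $\Psi(\cT) := (\Delta, H)$.

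To verify the quasi-inverse property, I would check that: (a) on $\Rep_H(G_\Delta)$ the obvious fiber functor has automorphism group $G_\Delta \otimes_E \bar E$ and descent extension $H$, so $\Psi(\Rep_H(G_\Delta)) \simeq (\Delta, H)$; and (b) the assignment sending $V \in \cT$ to $\omega(V)$ equipped with the induced actions of $G_\Delta \otimes_E \bar E$ and of $H$ is a tensor equivalence $\cT \iso \Rep_H(G_\Delta)$, whose inverse is the evident $H$-equivariant descent functor. Compatibility with $1$-morphisms (tensor equivalences) and $2$-morphisms (tensor natural isomorphisms) is a direct translation using $\omega$ and the $Z_\Delta(\bar E)$-ambiguity built into both sides of the correspondence.

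The main obstacle is the construction of $H$ in Step~2: one must verify that the pinning-preserving subgroup $H \subset \tilde\Gamma$ surjects onto $\Gal(\bar E/E)$ and that its $Z_\Delta(\bar E)$-conjugation class is independent of $\iota$. Both rely on the fact that the pinned splitting of $\Aut(G_\Delta \otimes_E \bar E)$, combined with the uniqueness-up-to-adjoint-conjugation of pinnings on an SCPS group, rigidifies the gerbe of fiber functors of $\cT$ down to a gerbe banded by $Z_\Delta$. Once this rigidification is established, essential surjectivity of the functor is a consequence of standard non-abelian Galois descent for Tannakian categories (applied with the band $G_\Delta$), and full faithfulness at the level of $1$- and $2$-morphisms is an easy consequence of the same pinning rigidity.
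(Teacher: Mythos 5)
Your proposal is correct, and its key mechanism --- using a pinning to rigidify the ambiguity in the band down to the center, so that the only remaining descent datum is an extension of $\Gal (\bar E/E)$ by $Z_\Delta (\bar E)$ taken modulo $Z_\Delta (\bar E)$-conjugation --- is exactly the point on which the paper's argument turns as well. The route is nevertheless different in formalism: the paper constructs no quasi-inverse; it invokes Deligne's dictionary between Tannakian categories and affine fpqc-gerbes \cite{De90}, observes that a band locally represented by an SCPS group is the same thing as an object of $\Dyn_E$ (the pinned quasi-split form $G_\Delta$ being a canonical representative), and then identifies the fiber 2-groupoid of SCPS gerbes with band $\Delta$ with the 2-groupoid of $Z_\Delta$-gerbes over $E$, i.e.\ with $\Ex_{\rm true}(\Gal (\bar E /E) ,Z_\Delta (\bar E ))$. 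Your construction --- fix a fiber functor $\omega$ over $\bar E$, form the extension $1\to G_\omega(\bar E)\to\tilde\Gamma\to\Gal(\bar E/E)\to 1$ of ``Galois gerbe'' type, and cut it down to the pinning-preserving subgroup $H$ --- is precisely the ``Galois gerbes instead of fpqc-gerbes'' alternative that the paper itself flags as available in characteristic $0$, with references to \cite{LR} and \cite{Ko}. What your route buys is explicitness: the quasi-inverse and the equivalence $\cT\iso\Rep_H(G_\Delta)$ are written down directly, without the 2-categorical gerbe machinery; what it costs is that you must verify by hand what that machinery packages, chiefly the topological points: the outer Galois action on the Dynkin diagram of $G_\omega$ must be shown continuous, $H$ must be shown to carry a natural profinite topology making \eqref{e:obj of Extrue} an extension of topological groups (this follows by writing $\cT$ as a filtered union of finitely $\otimes$-generated subcategories, trivialized over finite subextensions of $\bar E/E$), and the semilinear $H$-actions on $\omega(V)$ must be continuous, since continuity is built into the definitions of $\Ex_{\rm true}$ and of $\Rep_H(G_\Delta)$. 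Two small inaccuracies are harmless: surjectivity of $\tilde\Gamma\to\Gal(\bar E/E)$ rests on the fact that all fiber functors over $\bar E$ are isomorphic (Deligne \cite{De3}, also used in the paper), and your pinned isomorphism $\iota$ is actually unique once the pinnings and the identification of Dynkin diagrams are fixed, since conjugation by $Z_\Delta(\bar E)$ acts trivially; the genuine $Z_\Delta(\bar E)$-ambiguity enters only through the choices of $\omega$ and of the pinning of $G_\omega$, which is exactly why $H$ is well defined only as an object of the coarse 2-groupoid $\Ex_{\rm true}$.
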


\subsubsection{Why Proposition~\ref{p:Tann-SCPS} is well known}  \label{sss:well known}
We will use the terminology from the Appendix of \cite{De90} (in particular, the name ``affine gerbe" defined in the first paragraph on p.225 of \cite{De90}). We will say  simply ``gerbe over $E$"  instead of ``fpqc-gerbe on the category of $E$-schemes"; we also use a similar convention for bands.

According to one of the main results\footnote{The result follows from the statements of \cite[\S1.10-1.13]{De90} 
using the dictionary of \cite[\S3.1-3.6]{De90}; the dictionary relates affine gerbes (``gerbe \`a lien affine")  and transitve groupoids.} of \cite{De90},
$\Tann_{E}$ is canonically equivalent to the 2-groupoid of affine gerbes: namely, to an affine gerbe one associates the tensor category of its finite-dimensional representations, to a Tannakian category over $E$ one associates the gerbe of its fiber functors.

Thus describing $\Tann_E^{\rm SCPS}$ is equivalent to describing the 2-groupoid $\sG$ of affine gerbes over $E$ whose band is SCPS
(by this we mean that the band is locally defined by an SCPS group scheme). Let $\Gamma$ denote the groupoid of such bands. Then $\Gamma$ canonically identifies with $\Dyn_E\,$.
If $\Delta\in\Dyn_E$ let $\sG_\Delta$ denote the fiber over $\Delta$ of the functor 
$\sG\to\Gamma=\Dyn_E\,$. In $\sG_\Delta$ there is a distinguished object: namely, the classifying gerbe of $G_\Delta\,$. Using this distinguished object, one identifies $\sG_\Delta$ with the 2-groupoid of $Z_\Delta$-gerbes over $E$; the latter identifies with
$\Ex_{\rm true} (\Gal (\bar E /E) ,Z_\Delta(\bar E ))$. Finally, if $J\in\sG_\Delta$ corresponds to 
$H\in\Ex_{\rm true}(\Gal (\bar E /E) ,Z_\Delta(\bar E ))$ then the category of finite-dimensional representations of the gerbe $J$ identifies with 
the category $\Rep_H(G_\Delta)$ defined above.

\subsection{Recollections on symmetrically polarized Tannakian categories}   \label{ss:polarizations}
\subsubsection{Symmetric polarizations}   \label{sss:sym polarizations}   
Recall that a \emph{symmetric polarization} on a Tannakian category $\cT$ over $\BR$ is given by specifying for each $V\in\cT$ a class of 
non-degenerate symmetric bilinear forms on $V$ which are declared to be ``positive"; this class should satisfy certain conditions formulated in 
\cite[\S V.2.4.1]{Sa} or \cite[p.169,183]{DM}. 

Symmetric polarizations on a given Tannakian category $\cT$ over $\BR$ form a set, which is either empty or a torsor over the group of
$(\BZ/2\BZ)$-gradings of $\cT$ (see  \cite[\S V.3.2.2.1]{Sa} or \cite[Cor.~5.15]{DM}); this group can also be described as $\Ker (Z(\BR )\overset{2}\longrightarrow Z(\BR ))$, where $Z$ is the center of the band of $\cT$.

If $\cT$ is a Tannakian category over $\BQ$ there is a similar notion of symmetric polarization on $\cT$ (see \cite[\S V.2.4.1]{Sa}); on the other hand, by \cite[\S V.2.4.2]{Sa} this is the same\footnote{This is true even if not all $(\BZ/2\BZ)$-gradings of $\cT\otimes\BR$ come from $(\BZ/2\BZ)$-gradings of $\cT$.} as a symmetric polarization on $\cT\otimes\BR$. 

A \emph{symmetrically polarized Tannakian category} is a Tannakian category equipped with a symmetric polarization.

\subsubsection{Relation to compact groups}   \label{sss:compact groups}

Let $\Tann_{\BR,\pol}$ denote the 2-groupoid of symmetrically polarized
Tannakian categories over $\BR$. Let us recall its description in terms of compact groups.\footnote{As usual, we identify the category of compact topological groups with a a full subcategory of the category of group schemes over $\BR$ (to a compact group $K$ one associates the spectrum of the algebra of spherical functions $K\to\BR$).}
For any compact group $K$, finite-dimensional representations of $K$ over $\BR$ from a Tannakian category over $\BR$, denoted by $\Rep_\BR (K)$. The usual notion of positive-definite bilinear form on a real vector space defines a canonical symmetric polarization on $\Rep_\BR (K)$. By \cite[Thm 4.27]{DM}, this identifies $\Tann_{\BR ,\pol}$ with the 2-groupoid whose objects are compact groups, whose 1-morphisms are isomorphisms of compact groups, and whose 2-morphisms are defined as follows: a 2-isomorphism between 1-isomorphisms $f_1 ,f_2:K\iso K'$ is an element $g\in K$ such that $f_2^{-1}(f_1(x))=gxg^{-1}$ for all $x\in K$. Equivalently, the 1-groupoid of isomorphisms between $K_1$ and $K_2$ is the groupoid of $(K_1,K_2)$-bitorsors $Y$ such that $Y(\BR)$ is not empty.\footnote{$(K_1,K_2)$-bitorsors $Y$ with $Y(\BR)=\emptyset$ correspond to tensor equivalences $\Rep_\BR (K_1)\iso \Rep_\BR (K_2)$ not compatible with the polarizations. Note that if $K$ is a compact group with center $Z$ then any nontrivial $Z$-torsor defines a $(K,K)$-bitorsor $Y$ with $Y(\BR)=\emptyset$ because the map $K(\BR )\to (K/Z) (\BR )$ is surjective. Also note that isomorphism classes of $Z$-torsors are parametrized by 
$H^1(\BR ,Z)=\Ker (Z\overset{2}\longrightarrow Z)$.}

\subsubsection{The SCPS case}   \label{sss:polarized SCPS}
Let $\Tann_{\BR ,\pol}^{\rm SCPS}\subset\Tann_{\BR,\pol}$ denote the full subgroupoid corresponding to those compact groups that are connected and simply connected; equivalently, $\Tann_{\BR ,\pol}^{\rm SCPS}$ is the pre-image of $\Tann_\BR^{\rm SCPS}$ with respect to the forgetful functor
$\Tann_{\BR ,\pol}\to\Tann_\BR\,$. Let us describe $\Tann_{\BR ,\pol}^{\rm SCPS}$ in terms of Dynkin diagrams.

We say that $\Delta\in\Dyn_\BR$ is \emph{polarizable} if the nontrivial element $\sigma\in\Gal (\BC/\BR )$ acts on the set of vertices of $\Delta$ as the canonical involution (i.e., $\sigma$ takes any simple root $\alpha$ to $-w_0(\alpha)$). Let $\Dyn_{\BR,\pol}\subset\Delta_\BR$ denote the full subgroupoid of polarizable objects of $\Delta_\BR\,$. Of course, the forgetful functor $\Dyn_{\BR,\pol}\to\Dyn_\BC$ is an equivalence.

To every $\Delta\in\Dyn_{\BR,\pol}$ one associates (see \cite[\S 8, Thm. 16]{St}) a canonical connected simply connected compact group 
$K_\Delta$ so that $K_\Delta\otimes_\BR\BC=G_\Delta\otimes_\BR\BC$ and all homomorphisms $f_\alpha:SL(2,\BC)\to K_\Delta (\BC )$, 
$\alpha\in\Delta$,  corresponding to the pinning of the group $K_\Delta\otimes_\BR\BC=G_\Delta\otimes_\BR\BC$  map the subgroup $SU(2)\subset SL(2,\BC)$ to $K_\Delta (\BR )$.  This identifies $\Tann_{\BR ,\pol}^{\rm SCPS}$ with the product of $\Dyn_{\BR,\pol}$ and the 2-groupoid of $Z_\Delta (\BR )$-gerbes over a point.

Our next goal is to formulate Proposition~\ref{p:polarizations} describing $\Rep_\BR (K_\Delta )$ in terms of Proposition~\ref{p:Tann-SCPS}. To do this, we need a certain canonical element of $Z_\Delta (\BR )$.

\subsubsection{A canonical element of the center}  \label{sss:checkrho}
Let $E$ be a field and $\Delta\in\Dyn_E\,$. Let $\check\rho=\check\rho_\Delta$ denote the sum of the fundamental coweights of $\Delta$. Then $2\check\rho$ is an element of the coroot lattice invariant under $\Gal (\bar E/E)$, so it defines a homomorphism $\BG_m\to G_\Delta\,$, denoted by $t\mapsto t^{2\check\rho}$. It is well known that $(-1)^{2\check\rho}\in Z_\Delta (E )$. Note that $((-1)^{2\check\rho})^2=1^{2\check\rho}=1$.

\subsubsection{A canonical object of $\Ex_{\rm true}(\Gal (\BC /\BR) ,Z_\Delta(\BC ))$}   \label{sss:quaternionic}
The unique extension 
\begin{equation}  \label{e:Z/4Z}
0\to\BZ/2\BZ\to\BZ/4\BZ\to\BZ/2\BZ\to 0
\end{equation}
is an object of $\Ex_{\rm true}(\BZ/2\BZ ,\BZ/2\BZ )$. For any $\Delta\in\Dyn_{\BR,\pol}\,$, let $H_\Delta\in\Ex_{\rm true}(\Gal (\BC /\BR) ,Z_\Delta(\BC ))$ denote its image under the functor 
\[
\Ex_{\rm true}(\BZ/2\BZ ,\BZ/2\BZ )\to \Ex_{\rm true}(\Gal (\BC /\BR) ,Z_\Delta(\BC ))
\]
induced by the homomorphism $\BZ/2\BZ\to Z_\Delta(\BR )$ that takes $1$ to $(-1)^{2\check\rho}\in Z_\Delta(\BR )$.

\begin{prop}    \label{p:polarizations}
For any $\Delta\in\Dyn_{\BR,\pol}\,$, the Tannakian category $\Rep_\BR (K_\Delta )$ canonically identifies with $\Rep_{H_\Delta} (G_\Delta )\,$.
\end{prop}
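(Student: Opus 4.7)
My plan is to construct an explicit tensor equivalence $F\colon\Rep_\BR(K_\Delta)\iso \Rep_{H_\Delta}(G_\Delta)$. Since both sides have base change $\Rep(G_\Delta(\BC))$ to $\BC$, the task reduces to identifying the two $\Gal(\BC/\BR)$-descent data on $\Rep(G_\Delta(\BC))$. The crucial input will be a specific element $c_0\in G_\Delta(\BC)$ that implements the Chevalley involution and whose interaction with both real structures produces exactly the extension $H_\Delta$.

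I would begin by choosing the compact form $K_\Delta$ in Cartan position relative to $G_\Delta$, so that the anti-holomorphic involutions $\sigma_{qs}$ (with fixed points $G_\Delta(\BR)$) and $\sigma_c$ (with fixed points $K_\Delta(\BR)$) on $G_\Delta(\BC)$ commute; their product $\theta:=\sigma_c\sigma_{qs}$ is then a holomorphic involution acting trivially on the Dynkin diagram (both $\sigma_{qs}$ and $\sigma_c$ act by $-w_0$ on $\Delta$, by polarizability of $\Delta$), hence inner. The first step is to exhibit $c_0\in G_\Delta(\BR)\cap K_\Delta(\BR)$ with $\mathrm{Ad}(c_0)=\theta$ and $c_0^2=(-1)^{2\check\rho}$; this is essentially the normalized Tits lift of $w_0$ to the simply-connected group, restricted to the common maximal compact subgroup. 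Next, for $(V,\rho)\in\Rep_\BR(K_\Delta)$, I set $F(V,\rho):=(V_\BC,\rho_\BC,\phi)$, where $V_\BC:=V\otimes_\BR\BC$ carries its natural complex conjugation $J$ and $\phi:=\rho(c_0)\circ J$ gives the action of a chosen lift $\epsilon\in H_\Delta$ of complex conjugation satisfying $\epsilon^2=(-1)^{2\check\rho}$. A routine calculation using $J\rho(g)J^{-1}=\rho(\sigma_c(g))$ together with $c_0\in K_\Delta(\BR)$ and the identity $\sigma_c\sigma_{qs}\sigma_c=\sigma_{qs}$ verifies the three required properties: $\phi$ is antilinear, $\phi^2=\rho((-1)^{2\check\rho})$, and $\phi\rho(g)\phi^{-1}=\rho(\sigma_{qs}(g))$, matching conditions (i)--(ii) of \S\ref{sss:Constructing SPSC} for $H_\Delta$. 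Checking that $F$ is a tensor equivalence is then formal: full faithfulness holds because $G_\Delta(\BC)$-equivariance of a morphism reduces commutation with $\phi$ to commutation with $J$; essential surjectivity follows by inverting the formula as $J:=\rho_W(c_0)^{-1}\phi$, whose square equals $1$ thanks to $\sigma_{qs}(c_0)=c_0$ and $c_0^2=(-1)^{2\check\rho}$; and the tensor structure is preserved because $c_0$ acts diagonally on tensor products. Finally, canonicity: alternate choices of $c_0$ differ by some $z\in Z_\Delta(\BR)\cap K_\Delta(\BR)$ with $z^2=1$, and the corresponding functors differ by conjugation by $z$ in $H_\Delta$, which represents the same morphism in the 2-groupoid $\Tann_\BR^{\mathrm{SCPS}}$.

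The hard part will be producing $c_0$ with all four required properties simultaneously: lying in both real forms, implementing $\theta$ via conjugation, and squaring to $(-1)^{2\check\rho}$. The underlying assertion that the Tits lift of $w_0$ to the simply-connected group squares to $(-1)^{2\check\rho}$ and can be chosen in the common compact real form is classical but non-trivial, and one must also verify that the central element arising there coincides with the canonical $(-1)^{2\check\rho}\in Z_\Delta(\BR)$ used to build the extension $H_\Delta$ in \S\ref{sss:quaternionic} --- a matching of sign conventions rather than a separate computation, but worth checking case-by-case across the irreducible root systems.
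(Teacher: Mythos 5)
Your construction is essentially the paper's proof run in the opposite direction: the paper starts from an object of $\Rep_{H_\Delta}(G_\Delta)$ with antilinear $J$ and passes to the compact form via $J'=\tilde w_0 J$, while you start from a real representation of $K_\Delta$ and set $\phi=\rho(c_0)J$; your $c_0$ is exactly the Tits lift $\tilde w_0$ of $w_0$, and the two key facts you isolate ($\mathrm{Ad}(\tilde w_0)$ implements the involution relating the two real structures, and $\tilde w_0^2=(-1)^{2\check\rho}$) are precisely the inputs the paper takes from Tits and Adams--Vogan, so no case-by-case check is needed. The argument is correct and takes essentially the same approach.
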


\begin{proof}
We need some notation. Set $\fg_\Delta:=\Lie (G_\Delta)$. The complex conjugation on $\fg_\Delta\otimes_{\BR}\BC$ and $G_\Delta (\BC )$ will be denoted by $x\mapsto\bar x$. Let $\sigma :\Delta\to\Delta$ be the canonical involution.

Since $G_\Delta$ is pinned the Lie algebra 
$\fg_\Delta\otimes_{\BR}\BC$ is equipped with generators $e_i\,, f_i\,, h_i\,$, $i\in\Delta$, such that 
$$\overline{e_i}=e_{\sigma (i)}\, ,\overline{f_i}=f_{\sigma (i)}\, ,\overline{h_i}=h_{\sigma (i)}\, .$$
These generators satisfy the usual relations; in particular, $[e_i\, ,f_i]=h_i\,$.

Let us recall the description of the subgroup $K_\Delta\subset G_\Delta (\BC )$. Define $\tau\in\Aut (\fg_\Delta\otimes_{\BR}\BC)$ by 
$$\tau (e_i)=-f_{\sigma (i)}\, ,\quad \tau (f_i)=-e_{\sigma (i)}\, , \quad \tau (h_i)=-h_{\sigma (i)}\, .$$
The corresponding automorphism of $G_\Delta (\BC )$ will also be denoted by $\tau$. Note that $\tau (\bar g)=\overline{\tau (g)}$ for 
$g\in G_\Delta (\BC )$. Then
\[
K_\Delta=\{g\in G_\Delta (\BC )\,|\, \tau (\bar g)=g\}.
\]

The key point is that the automorphism $\tau$ is inner; moreover, it is given by conjugating with an element of $G_\Delta (\BR )$ which will be now defined \emph{explicitly}. For each $i\in I$ let $\tilde s_i\in G_\Delta (\BC)$ denote the image of the matrix 
$$\left(\begin{matrix} 0&1\\-1&0\end{matrix}\right)$$
under the homomorphism $SL(2,\BC)\to G(\BC )$ corresponding to the triple $(e_i,f_i,h_i)$. Let $w_0$ denote the longest element of the Weyl group of $G_\Delta$. 
Choose a reduced decomposition $w_0=s_{i_1}\cdot\ldots\cdot s_{i_N}$ as a product of simple reflections and define $\tilde w_0\in G_\Delta (\BC)$ by 
\begin{equation}  \label{e:def tilde w0}
\tilde w_0:=\tilde s_{i_1}\cdot \ldots\cdot \tilde s_{i_N}\, . 
\end{equation}
By \cite{T}, $\tilde w_0$ does not depend on the choice of a reduced decomposition. Using the decomposition $w_0=s_{\sigma (i_1)}\cdot \ldots.\cdot s_{\sigma (i_N)}$ instead of the original one, we see that $\tilde w_0\in G_\Delta (\BR)$. By \cite[Cor.~12.3]{AV} (which corresponds to Corollary~12.4 of the e-print version of  \cite{AV}), 
\begin{equation}  \label{e:tilde w0}
\tau (x)=\tilde w_0 x \tilde w_0^{-1} \quad\mbox{ for all } x\in G_\Delta (\BC ),
\end{equation}
It is also known  that
\begin{equation} \label{e:2tilde w0}
\tilde w_0^2=(-1)^{2\check\rho}
\end{equation}
(this is a particular case of  \cite[Prop.~12.1]{AV}, which corresponds to Proposition~12.2 of the e-print version of  \cite{AV}).

Now let us construct a tensor equivalence $\Rep_{H_\Delta} (G_\Delta )\iso\Rep_\BR (K_\Delta )$. By definition, an object of $\Rep_{H_\Delta} (G_\Delta )$ is a finite-dimensional $\BC$-vector space $V$ equipped with an action of $G_\Delta$ and an anti-linear map $J:V\to V$ such that $J^2=(-1)^{2\check\rho}$ and $J(gv)=\bar g J(v)$ for $g\in G_\Delta (\BC )$ and $v\in V$. Define 
$J':V\to V$ by $J'(v):=\tilde w_0 J(v)$. By \eqref{e:2tilde w0}, $J'$ is an anti-linear involution. By \eqref{e:tilde w0}, for $g\in G_\Delta (\BC )$ and $v\in V$ one has
$$J'(gv)=\tilde w_0 J(gv)=\tilde w_0 \bar g J(v)=\tau (\bar g)\tilde w_0 J(v)=\tau (\bar g)J'(v),$$
so $J'$ commutes with the action of $K_\Delta\,$. Thus the space $\{ v\in V\,|\,J'(v)=v\}$ is an object of $\Rep_\BR (K_\Delta )$.
\end{proof}

\subsubsection{Symmetric polarizations in terms of\, $\Ex_{\rm true}\,$}   \label{sss:polarized extensions}
Let $\Delta\in\Dyn_{\BR,\pol}\,$. Let $H\in\Ex_{\rm true}(\Gal (\BC /\BR) ,Z_\Delta(\BC ))$, so we have an exact sequence 
$0\to Z_\Delta (\BC )\to H\to\Gal(\BC /\BR )\to 0$. Note that since $\Delta$ is polarizable $Z_\Delta (\BC )=Z_\Delta(\BR )$, so $Z_\Delta (\BC )$ is central in the group $H$; since $\Gal (\BC /\BR)$ is cyclic this implies that the group $H$ is abelian. In particular, the conjugation action of $Z_\Delta (\BC )$ on $H$ is trivial, so talking about elements of $H$ makes sense even though $H$ is viewed as an object of $\Ex_{\rm true}(\Gal (\BC /\BR) ,Z_\Delta(\BC ))$.

By a \emph{polarization} of  $H$ we will mean an element $j\in H$ such that 

(i) the image of $j$ in $\Gal (\BC /\BR)$ equals the complex conjugation $\sigma$;

(ii) $j^2$ is equal to the element $(-1)^{2\check\rho}\in Z_\Delta (\BR )$ defined in \S\ref{sss:checkrho}.

It is clear that $H$ admits a polarization if and only if the class of $H$ in $H^2(\Gal (\BC /\BR), Z_\Delta (\BR ))=Z_\Delta (\BR )/2 Z_\Delta (\BR )$ is equal to the image of $(-1)^{2\check\rho}\in Z_\Delta (\BR )$.

\begin{cor}     \label{c:polarized extensions}
Let $\Delta\in\Dyn_{\BR}$ and $H\in\Ex_{\rm true}(\Gal (\BC /\BR) ,Z_\Delta(\BC ))$. If $\Delta\not\in\Dyn_{\BR,\pol}$ then $\Rep_H(G_\Delta)$ has no symmetric polarizations. 

If $\Delta\in\Dyn_{\BR,\pol}$ then one has a canonical bijection between symmetric polarizations on $\Rep_H(G_\Delta)$ and polarizations of $H$; namely, the symmetric polarization on  $\Rep_H(G_\Delta)$ corresponding to a polarization $j\in H$ is defined as follows: a nondegenerate symmetric bilinear form $B$ on $V\in\Rep_H(G_\Delta)$ is declared to be ``positive" if $B(v,\tilde w_0jv)>0$ for all $v\in V\setminus\{0\}$, where $\tilde w_0\in G_\Delta (\BR )$ is defined by \eqref{e:def tilde w0}.
\end{cor}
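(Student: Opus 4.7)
For the non-polarizable case, I will argue by contradiction. Suppose $\Rep_H(G_\Delta)$ carries a symmetric polarization. By the compact-group description in \S\ref{sss:compact groups}, this Tannakian category is then $\otimes$-equivalent over $\BR$ to $\Rep_\BR(K)$ for some compact group $K$; since $\Rep_H(G_\Delta)$ is SCPS, so is $K$. But by \S\ref{sss:polarized SCPS}, the Dynkin diagram of any connected simply connected compact group lies in $\Dyn_{\BR,\pol}$, and this Dynkin diagram is an invariant of the band of the Tannakian category, hence coincides with $\Delta$. This contradicts $\Delta \notin \Dyn_{\BR,\pol}$.

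For the polarizable case, my plan is to construct the map in two steps and then check bijectivity via a torsor argument. First, given a polarization $j \in H$, observe that $j$ and the canonical lift $j_0 := (1,0) \in H_\Delta$ (under the pushout description of \S\ref{sss:quaternionic}) both project to $\sigma$ and both square to $(-1)^{2\check\rho}$; since $H$ and $H_\Delta$ are abelian extensions (as noted in \S\ref{sss:polarized extensions}), the assignment $j \mapsto j_0$ extends uniquely to a morphism of extensions $\varphi_j : H \to H_\Delta$ that is the identity on $Z_\Delta(\BC)$. Through the induced tensor equivalence $\Rep_H(G_\Delta) \iso \Rep_{H_\Delta}(G_\Delta)$ combined with the identification $\Rep_{H_\Delta}(G_\Delta) \iso \Rep_\BR(K_\Delta)$ from Proposition~\ref{p:polarizations}, I pull back the canonical polarization on $\Rep_\BR(K_\Delta)$ (positive-definite real symmetric $K_\Delta$-invariant forms) to obtain a polarization on $\Rep_H(G_\Delta)$.

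Next, to verify that the resulting positive forms are described by $B(v, \tilde w_0 j v) > 0$, I will unwind the equivalence in Proposition~\ref{p:polarizations}: it sends $V \in \Rep_H(G_\Delta)$ to the real subspace $V^{J'} = \{v \in V : \tilde w_0 J v = v\}$, where $J$ denotes the action of $j$. Using that $\tilde w_0 \in G_\Delta(\BR)$ and that $B$ is $G_\Delta$-invariant, the restriction $B\big|_{V^{J'} \times V^{J'}}$ takes real values, and a direct computation (using $j\tilde w_0 j^{-1} = \tilde w_0$ from $\tilde w_0 \in G_\Delta(\BR)$ and $j^2 = (-1)^{2\check\rho}$) identifies positivity of this restriction on $V^{J'}$ with positivity of the sesquilinear form $(v,w) \mapsto B(v, \tilde w_0 j w)$ on all of $V$. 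Finally, for bijectivity I note that the set of polarizations of $H$ (when non-empty) is a torsor over $Z_\Delta(\BR)[2] := \Ker(Z_\Delta(\BR) \xrightarrow{2} Z_\Delta(\BR))$ via $j \mapsto jz$ (using that $\sigma$ fixes $Z_\Delta(\BR)$ pointwise), and symmetric polarizations on $\Rep_H(G_\Delta)$ form a torsor over the same group by \S\ref{sss:sym polarizations} (the center of the band being $Z_\Delta$). Equivariance of the constructed map with respect to these two torsor structures is then straightforward, completing the bijection.

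The main obstacle I anticipate is the verification of the explicit formula for positivity, which requires an honest computation reconciling three different pieces: the $J \mapsto J' = \tilde w_0 J$ twist from Proposition~\ref{p:polarizations}, the interaction of $B$ with $j$ via $H$-equivariance, and the sign behavior of $(-1)^{2\check\rho}$ on irreducible components of $V$. Everything else reduces to manipulations with pushouts of abelian extensions and torsor-equivariance checks.
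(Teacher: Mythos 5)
Your construction of the forward map is sound and, after unwinding, coincides with the paper's: since $H$ is abelian and generated by $Z_\Delta(\BC)$ together with any $j$ lying over $\sigma$, a polarization $j$ does determine an isomorphism of extensions $H\iso H_\Delta$ (your $\varphi_j$), and transporting the canonical polarization of $\Rep_\BR(K_\Delta)=\Rep_{H_\Delta}(G_\Delta)$ through Proposition~\ref{p:polarizations} yields exactly the stated rule $B(v,\tilde w_0 jv)>0$; your plan for that verification (decompose $V=V^{J'}\oplus iV^{J'}$ with $J'=\tilde w_0 J$ and use symmetry of $B$) is the right computation. The non-polarizable case is also fine and is essentially the paper's argument.

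The genuine gap is in bijectivity. Your torsor argument (both sides are torsors over $Z_\Delta(\BC)[2]=Z_\Delta(\BR)[2]$ and the map is equivariant) only proves bijectivity when the set of polarizations of $H$ is \emph{non-empty}. But the corollary also asserts the converse existence statement: if $H$ admits no polarization --- i.e.\ if its class in $H^2(\Gal(\BC/\BR),Z_\Delta(\BC))=Z_\Delta(\BC)/2Z_\Delta(\BC)$ is not the image of $(-1)^{2\check\rho}$ --- then $\Rep_H(G_\Delta)$ has \emph{no} symmetric polarization (for instance $\Delta=A_1$ with trivial Galois action and $H$ split, where $\Rep_H(G_\Delta)$ is the category of real algebraic representations of split $SL_2$). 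Your proposal never rules this out, and the statement is not formal: to close it you need the full faithfulness of the classification. Concretely, a symmetric polarization makes $\Rep_H(G_\Delta)$ tensor-equivalent over $\BR$ to $\Rep_\BR(K)$ for a connected simply connected compact $K$, hence to $\Rep_{H_\Delta}(G_\Delta)$; by Proposition~\ref{p:Tann-SCPS} such an equivalence amounts to a diagram automorphism $\alpha$ of $\Delta$ plus an isomorphism $\alpha_*H_\Delta\iso H$ in $\Ex_{\rm true}$, and since $(-1)^{2\check\rho}$ is invariant under all diagram automorphisms one gets $H\iso H_\Delta$, so $H$ does admit a polarization. This is in effect how the paper proceeds: it reads the whole bijection off at once from the chain ``symmetric polarization $=$ pair $(\Delta',F)$ with $\Delta'\in\Dyn_{\BR,\pol}$ $=$ 2-isomorphism class of 1-isomorphisms $H_\Delta\iso H$ $=$ element $j$ satisfying (i)--(ii)'', which handles the empty case automatically, and only afterwards extracts the explicit positivity formula from the proof of Proposition~\ref{p:polarizations}. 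Your route buys a more hands-on description of the correspondence, but as written it proves a weaker statement unless you add this step.
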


\begin{proof}
By \S\ref{sss:polarized SCPS}, the 1-categorical truncation of the 2-groupoid $\Tann_{\BR ,\pol}^{\rm SCPS}$ identifies with 
$\Dyn_{\BR,\pol}$ via the functor $\Delta'\mapsto\Rep_\BR (K_{\Delta'})$, $\Delta'\in\Dyn_{\BR,\pol}\,$. By Proposition~\ref{p:polarizations}, 
$\Rep_\BR (K_{\Delta'})=\Rep_{H_{\Delta'}} (G_{\Delta'} )$. So a symmetric polarization on $\Rep_H(G_\Delta)$ is the same as an isomorphism class of pairs consisting of $\Delta'\in\Dyn_{\BR,\pol}$ and a tensor equivalence $F:\Rep_{H_{\Delta'}}(G_{\Delta'})\iso \Rep_H(G_\Delta)$. 
If $\Delta\not\in\Dyn_{\BR,\pol}$ there are no such pairs. By Proposition~\ref{p:Tann-SCPS}, if  $\Delta\in\Dyn_{\BR,\pol}$ then an isomorphism class of pairs $(\Delta', F)$ as above is the same as a 2-isomorphism class of 1-isomorphisms $H_\Delta\iso H$. Finally, this is the same as an element $j\in H$ satisfying the above conditions (i)-(ii); namely, $j$ is the image of $1\in\BZ/4\BZ$, where $\BZ/4\BZ$ is the central term of the extension \eqref{e:Z/4Z}. 

The concrete description of the symmetric polarization on $\Rep_H(G_\Delta)$ corresponding to a polarization $j\in H$ is obtained from the construction of the equivalence $\Rep_{H_\Delta} (G_\Delta )\iso\Rep_\BR (K_\Delta )$ in the proof of Proposition~\ref{p:polarizations}.
\end{proof}

\subsubsection{Symmetrically polarized SCPS Tannakian categories over $\BQ$}
Let $\sfC\subset\BQbar$ denote the maximal CM-subfield.
Say that $\Delta\in\Dyn_\BQ$ is \emph{polarizable} if the action of $\Gal (\BQbar/\BQ )$ on the Dynkin diagram $\Delta$ factors through $\Gal (\sfC/\BQ )$ and complex conjugation (which is a well-defined element of $\Gal (\sfC/\BQ )$) acts on $\Delta$ as the canonical involution of $\Delta$. Let $\Dyn_{\BQ,\pol}\subset\Dyn_{\BQ}$ denote the full subgroupoid of polarizable diagrams; equivalently, $\Dyn_{\BQ,\pol}$ is the preimage of $\Dyn_{\BR,\pol}$ in $\Dyn_{\BQ}\,$.

\begin{cor}   \label{c:polarized-Q}
The 2-groupoid of symmetrically polarized SCPS Tannakian categories over $\BQ$ is canonically equivalent to the 2-groupoid of triples
$(\Delta ,H,j)$, where $$\Delta\in\Dyn_{\BQ,\pol}\,, \quad H\in\Ex_{\rm true}(\Gal (\BQbar /\BQ ) ,Z_\Delta(\BQbar )),$$ and $j$ is a polarization (in the sense of \S\ref{sss:polarized extensions}) of the image of $H$ in $\Ex_{\rm true}(\Gal (\BC /\BR ) ,Z_\Delta(\BC ))$.
\end{cor}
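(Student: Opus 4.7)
The plan is to combine Proposition~\ref{p:Tann-SCPS} (the classification of SCPS Tannakian categories over $\BQ$ by pairs $(\Delta,H)$) with Corollary~\ref{c:polarized extensions} (the classification of symmetric polarizations in terms of $\Ex_{\rm true}$), passing between the two via the base change $\BQ\rightsquigarrow\BR$.

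First, by Proposition~\ref{p:Tann-SCPS} applied over $E=\BQ$, the assignment $(\Delta,H)\mapsto \Rep_H(G_\Delta)$ defines an equivalence $\E_\BQ\iso\Tann_\BQ^{\rm SCPS}$, where $\E_\BQ$ is the 2-groupoid of pairs $(\Delta,H)$ with $\Delta\in\Dyn_\BQ$ and $H\in\Ex_{\rm true}(\Gal(\BQbar/\BQ),Z_\Delta(\BQbar))$. By \S\ref{sss:sym polarizations}, specifying a symmetric polarization on a Tannakian category $\cT$ over $\BQ$ is the same as specifying a symmetric polarization on $\cT\otimes_\BQ\BR$; so it suffices to compute $\Rep_H(G_\Delta)\otimes_\BQ\BR$ in terms of the data $(\Delta,H)$ and then invoke Corollary~\ref{c:polarized extensions}.

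Second, I fix an embedding $\BQbar\mono\BC$, giving a homomorphism $\iota:\Gal(\BC/\BR)\mono\Gal(\BQbar/\BQ)$. Let $\Delta_\BR\in\Dyn_\BR$ be the image of $\Delta$ under the forgetful functor $\Dyn_\BQ\to\Dyn_\BR$ induced by $\iota$, and let $H_\BR\in\Ex_{\rm true}(\Gal(\BC/\BR),Z_{\Delta_\BR}(\BC))$ be the pullback extension along $\iota$ (noting $Z_{\Delta_\BR}(\BC)=Z_\Delta(\BQbar)$ as abelian groups with $\Gal(\BC/\BR)$-action). An elementary unwinding of the definition of $\Rep_H(G_\Delta)$ in \S\ref{sss:Constructing SPSC} shows that base change along $\BQ\mono\BR$ gives a canonical tensor equivalence $\Rep_H(G_\Delta)\otimes_\BQ\BR \iso \Rep_{H_\BR}(G_{\Delta_\BR})$; this is just the statement that the affine gerbe classifying $\Rep_H(G_\Delta)$ (see \S\ref{sss:well known}) pulls back along $\Spec\BR\to\Spec\BQ$ to the gerbe corresponding to $(\Delta_\BR,H_\BR)$.

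Third, I apply Corollary~\ref{c:polarized extensions} to $(\Delta_\BR,H_\BR)$: symmetric polarizations on $\Rep_{H_\BR}(G_{\Delta_\BR})$ exist if and only if $\Delta_\BR\in\Dyn_{\BR,\pol}$, in which case they are in canonical bijection with polarizations $j$ of $H_\BR$. The condition $\Delta_\BR\in\Dyn_{\BR,\pol}$ says exactly that complex conjugation (viewed through $\iota$) acts on $\Delta$ as the canonical involution; since the action of $\Gal(\BQbar/\BQ)$ on $\Delta$ is continuous and the canonical involution is intrinsic, this is equivalent to $\Delta\in\Dyn_{\BQ,\pol}$ (and in particular is independent of the embedding $\BQbar\mono\BC$). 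Assembling these equivalences gives the object-level correspondence of Corollary~\ref{c:polarized-Q}: a symmetrically polarized SCPS Tannakian category over $\BQ$ is the same as a triple $(\Delta,H,j)$.

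Finally, for the 2-categorical structure: the equivalence on 1-morphisms and 2-morphisms follows from the fact that each of the three equivalences used above (Proposition~\ref{p:Tann-SCPS} over $\BQ$, base change, Corollary~\ref{c:polarized extensions}) is an equivalence of 2-groupoids, and the composite is by construction the functor $(\Delta,H,j)\mapsto(\Rep_H(G_\Delta),\text{polarization from }j)$. I expect no real obstacle here beyond bookkeeping; the one point that deserves care is verifying independence of the embedding $\BQbar\mono\BC$ at the level of the 2-groupoid of triples, which follows because any two such embeddings differ by an element of $\Gal(\BQbar/\BQ)$, and conjugation by such an element induces an auto-equivalence of $\E_\BQ$ canonically isomorphic to the identity (this is essentially the content of the fact that the 2-groupoid $\Ex_{\rm true}$ is defined using quotients by conjugation by $Z_\Delta(\BQbar)$, together with the observation that the polarization condition is preserved and the polarizations transported).
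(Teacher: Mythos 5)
Your proposal is correct and follows essentially the same route as the paper: the paper's proof simply cites the fact that a symmetric polarization on a Tannakian category over $\BQ$ is the same as one on its base change to $\BR$ (\cite[\S V.2.4.2]{Sa}) and then combines Proposition~\ref{p:Tann-SCPS} with Corollary~\ref{c:polarized extensions}, which is exactly what you do, just with the base-change compatibility (correctly handled via the gerbe description, since $\BQbar\otimes_\BQ\BR$ is not a field) and the equivalence $\Delta\in\Dyn_{\BQ,\pol}\Leftrightarrow\Delta_\BR\in\Dyn_{\BR,\pol}$ (which the paper builds into the definition of $\Dyn_{\BQ,\pol}$) spelled out explicitly.
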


\begin{proof}
By \cite[\S V.2.4.2]{Sa} a symmetric polarization on a Tannakian category $\cT$ over $\BQ$ is the same as a symmetric polarization on 
$\cT\otimes\BR$. It remains to use Proposition~\ref{p:Tann-SCPS} and Corollary~\ref{c:polarized extensions}.
\end{proof}

\subsection{Motivic hope in the Tannakian language}   \label{ss:mot Tann}
\subsubsection{}  
For each prime number $\ell$ we have the semisimple Tannakian category $\cT_\ell (X)$ over $\BQ_\ell\,$: it was defined in \S\ref{sss:rep hat-Pi-lambda} if $\ell\ne p$  and in \S\ref{sss:cT_p} if $\ell = p$. Recall that if $\ell\ne p$ then $\cT_\ell (X)$ is the tensor category of semisimple lisse $\BQ_\ell$-sheaves $\E$ on $X$ such that the determinant of each irreducible component of $\E\otimes_{\BQ_\ell}\BQbar_\ell$ has finite order. To define $\cT_p (X)$, one replaces lisse $\BQ_\ell$-sheaves with overconvergent $F$-isocrystals.\footnote{A brief discussion of overconvergent $F$-isocrystals and some references can be found in \cite[\S 1.3]{Ke2} and especially in \cite{Ke3}. On the other hand, I treat $\cT_p (X)$ as a black box.} By Proposition~\ref{p:Weil-2} and  \S\ref{sss:cT_p}, for every prime $\ell $ (including $\ell=p$) and every connected etale covering $\pi :X'\to X$, the functor $\pi^*$ maps $\cT_\ell (X)$ to $\cT_\ell (X')$. 

\subsubsection{}   \label{sss:mot Tann}
The philosophy of motives suggests that \emph{there should be a canonical Tannakian category $\cT (X)$ over $\BQ$ equipped with tensor equivalences $\cT (X)\otimes\BQ_\ell\iso\cT_\ell (X)$} for all primes $\ell$. Moreover, $\cT (X)$ should be a Tannakian subcategory of the category of pure motives of weight $0$ over the field of rational functions $\BF_p(X)$, and the above-mentioned equivalences should be given by the usual realization functors. Moreover, the pullback functor \{motives over $\BF_p(X)\}\to$\{motives over $\BF_p(X')$\} should map $\cT (X)$ to $\cT (X')$.

\subsubsection{}   \label{sss:standard Hodge}
Assuming the standard conjectures, the category of pure motives of weight $0$ over a field is equipped with a canonical symmetric polarization\footnote{For the notion of symmetric polarization see \S\ref{sss:sym polarizations}-\ref{sss:compact groups} and references therein.} defined in 
 \cite[\S VI.4.4]{Sa}. So $\cT (X)$ should be equipped with a canonical symmetric polarization.

\subsection{Passing to $\tX$}      \label{ss:passing to tilde X}
\subsubsection{}
Let $U\subset\Pi$ be a normal open subgroup. Then $\Pi/U$ acts on $\cT_\ell (\tX/U)$, and $\cT_\ell (X)$ identifies with the category of 
$(\Pi/U)$-equivariant objects of $\cT_\ell (\tX/U)$, which is denoted by $\cT_\ell (\tX/U)^{\Pi/U}$. Assuming \S\ref{sss:mot Tann}, we see that the functor $\cT (X)\iso\cT (\tX/U)^{\Pi/U}$ should be an equivalence.

\subsubsection{}
Let $\cT_\ell (\tX)$ (resp.~$\cT (\tX)$) denote the direct limit of the categories $\cT_\ell (\tX/U)$ (resp.~$\cT (\tX/U)$). Then $\Pi$ acts on 
$\cT_\ell (\tX)$ and $\cT (\tX)$. We have $\cT_\ell (X)=\cT_\ell (\tX)^\Pi$, $\cT (X)=\cT (\tX)^\Pi$.

\subsubsection{}    \label{sss:Tann-SCPS}
If $\ell\ne p$ then $\cT_\ell (X)=\Rep (\hat\Pi_\ell )$, where $\hat\Pi_\ell$ is the $\ell$-adic pro-semisimple completion of $\Pi$. Accordingly, 
$\cT_\ell (\tX )=\Rep (\hat\Pi_\ell^\circ )$. By Proposition~\ref{p:simply connected}, the group $\hat\Pi_\ell^\circ$ is SCPS (shorthand for ``connected, simply connected, pro-semisimple"). So for $\ell\ne p$ the Tannakian category $\cT_\ell (\tX )$ is SCPS in the sense of \S\ref{sss:Tannakian goal}. Assuming \S\ref{sss:mot Tann}, we see that $\cT (\tX )$ and $\cT_p (\tX )$ have to be SCPS. Note that if $\dim X=1$ then $\cT_p (\tX )$ is SCPS unconditionally by Corollary~\ref{c:p-simply connected}.

Let $\Tann_E^{\rm SCPS}$ denote the 2-groupoid of SCPS Tannakian categories over a field $E$ of characteristic $0$. Let
$\Tann_{\BQ ,\pol}^{\rm SCPS}$ denote the 2-groupoid of symmetrically polarized SCPS Tannakian categories over $\BQ$. Our $\cT (\tX )$ is an object of $(\Tann_{\BQ ,\pol}^{\rm SCPS})^\Pi$, i.e., the 2-groupoid of $\Pi$-equivariant objects of $\Tann_{\BQ ,\pol}^{\rm SCPS}\,$. Similarly, 
$\cT_{\ell} (\tX )\in(\Tann_{\BQ_\ell}^{\rm SCPS})^\Pi\,$.

\subsection{Description of $(\Tann_E^{\rm SCPS})^\Pi$ and $(\Tann_{\BQ ,\pol}^{\rm SCPS})^\Pi$} \label{sss:translating to Dynkins}
Proposition~\ref{p:Tann-SCPS} and Corollary~\ref{c:polarized-Q} describe the 2-groupoids $\Tann_E^{\rm SCPS}$ and $\Tann_{\BQ ,\pol}^{\rm SCPS}$ in ``concrete'' terms. This immediately yields a ``concrete'' description of $(\Tann_E^{\rm SCPS})^\Pi$ and $(\Tann_{\BQ ,\pol}^{\rm SCPS})^\Pi$. We formulate it below.

\subsubsection{Notation}
Let $\Dyn_E^\Pi$ denote the groupoid of Dynkin diagrams equipped with an action of the group $\Pi\times\Gal(\bar E/E)$. If 
$\Delta\in\Dyn_E^\Pi$ let $Z_\Delta$ denote the center of the corresponding quasi-split SCPS group over~$E$ (so $Z_\Delta$ is a pro-finite group scheme over $E$ equipped with $\Pi$-action).

\subsubsection{Description of the 2-groupoid $(\Tann_E^{\rm SCPS})^\Pi$}   \label{sss:Tann SCPS Pi}
An object of $(\Tann_E^{\rm SCPS})^\Pi$ is the same as a pair $(\Delta,H)$, where $\Delta\in\Dyn_E^\Pi$ and 
$H\in\Ex_{\rm true}(\Pi\times\Gal (\bar E/E)  ,Z_\Delta(\bar E))$. (The notation $\Ex_{\rm true}$ was introduced in \S\ref{sss:Constructing SPSC}.)

\subsubsection{Description of the 2-groupoid $(\Tann_{\BQ ,\pol}^{\rm SCPS})^\Pi$}  
\label{sss:Tann pol SCPS Pi}
Fix an embedding $\BQbar\mono\BC$; it induces an embedding $\Gal (\BC/\BR )\mono\Gal (\BQbar /\BQ)$. Let 
$\sigma\in\Gal (\BC/\BR )\subset\Gal (\BQbar /\BQ)$ denote complex conjugation. Say that $\Delta\in\Dyn_{\BQ}^\Pi$ is \emph{polarizable} if 
$\sigma$ acts on $\Delta$ as the canonical involution of $\Delta$; this implies that the action of $\Gal (\BQbar /\BQ)$ on $\Delta$ factors through
$\Gal (\sfC/\BQ )$, where $\sfC\subset\BQbar$ is the maximal CM-subfield. Let $\Dyn_{\BQ,\pol}^\Pi$ denote the full subcategory of polarizable objects of $\Dyn_{\BQ}^\Pi\,$.

An object of $(\Tann_{\BQ ,\pol}^{\rm SCPS})^\Pi$ is the same as a triple $(\Delta ,H,j)$, where 
$$\Delta\in\Dyn_{\BQ,\pol}^\Pi\, ,\quad H\in\Ex_{\rm true}(\Pi\times\Gal (\BQbar /\BQ)  ,Z_\Delta(\BQbar )),$$ 
and $j$ is an element of $H$ such that

(i) the image of $j$ in $\Pi\times\Gal (\BQbar /\BQ)$ equals $(1_\Pi\, ,\sigma)$;

(ii) $j^2$ is equal to the element $(-1)^{2\check\rho}\in Z_\Delta (\BQ )$ defined in \S\ref{sss:checkrho};

(iii) $j$ centralizes $\Ker (H\epi\Gal (\BQbar /\BQ))$.

\subsubsection{Remark}
If $\Delta\in\Dyn_{\BQ,\pol}^\Pi$ then $\sigma$ acts on $Z_\Delta(\BC )$ trivially, so 
$H^2(\Gal (\BC /\BR), Z_\Delta(\BC ))=Z_\Delta(\BC )/2Z_\Delta(\BC )$. Moreover, the existence of $j\in H$ satisfying the above properties (i)-(ii) implies that the class of $H$ in $H^2(\Gal (\BC /\BR), Z_\Delta(\BC ))=Z_\Delta(\BC )/2Z_\Delta(\BC )$ is equal to the image of 
$(-1)^{2\check\rho}\in Z_\Delta (\BC )$.

\subsection{Comparing the conjectural picture with the unconditional one}  \label{ss:dreams vs reality}
\subsubsection{The triple $(\Delta ,H,j)$ and the pairs $(\Delta_\ell ,H_\ell )$}  \label{sss:conjectural triple}
Let $(\Delta ,H,j)$ be the triple corresponding by \S\ref{sss:Tann pol SCPS Pi} to $\cT (\tX )\in(\Tann_{\BQ ,\pol}^{\rm SCPS})^\Pi$. Let 
$(\Delta_\ell ,H_\ell )$ be the pair corresponding by \S\ref{sss:Tann SCPS Pi} to $\cT_\ell (\tX )\in(\Tann_{\BQ_\ell}^{\rm SCPS})^\Pi$. Then $\Delta_\ell\in\Dyn_{\BQ_\ell}^\Pi$ is the image of $\Delta\in\Dyn_{\BQ,\pol}^\Pi\,$, and $H_\ell\in\Ex_{\rm true}(\Pi\times\Gal (\BQbar_\ell /\BQ_\ell )  ,Z_\Delta( \BQbar_\ell ))$ is the image of
$H\in\Ex_{\rm true}(\Pi\times\Gal (\BQbar /\BQ)  ,Z_\Delta(\BQbar ))$.

\subsubsection{Unconditional coarsenings of $(\Delta ,H,j)$}   \label{sss:unconditional}
Let us emphasize that the triple $(\Delta ,H,j)$ is \emph{only conjectural} (assuming \S\ref{ss:mot Tann}). However, it has some
coarsenings which are defined unconditionally.

First of all, the pair 
$(\Delta_\ell ,H_\ell )$ is defined unconditionally for $\ell\ne p$. If $\dim X=1$ this is also true for $\ell =p$ because $\cT_p (\tX )$ is SCPS unconditionally by Corollary~\ref{c:p-simply connected}.

In \S\ref{ss:Dyn} and \S\ref{sss:Gal(Q)-action} we unconditionally defined $\Delta_{\hat\Pi}\in\Dyn_{\BQ}^\Pi\,$. We also proved that 
$\Delta_{\hat\Pi}\in\Dyn_{\BQ, \pol }^\Pi\,$, see Proposition~\ref{p:Laurent Lafforgue}(ii). 
It is clear that if the triple $(\Delta ,H,j)$ from \S\ref{sss:conjectural triple} is defined (i.e., if the assumptions of \S\ref{ss:mot Tann} hold) then $\Delta$ canonically identifies with $\Delta_{\hat\Pi}\,$. Unconditionally, the image of $\Delta_{\hat\Pi}$ in 
$\Dyn_{\BQ_{\ell}}^\Pi$ canonically identifies with $\Delta_\ell$ for each prime $\ell\ne p$; by Theorem~\ref{t:p-main}, this is true even for $\ell =p$ if $\dim X=1$.

In \S\ref{sss:tildePi} and \S\ref{sss:Gal(Q)-action} we unconditionally defined  a canonical $\Gal (\BQbar /\BQ)$-equivariant object $\tilde\Pi$ of the 1-ca\-te\-go\-rical truncation\footnote{In Appendix~\ref{s:what we know} this truncation was denoted by  $\Ex (\Pi ,Z(\BQbar ))$.} of the 2-groupoid 
$\Ex (\Pi ,Z_{\Delta_{\hat\Pi}}(\BQbar ))$. It is clear that if the assumptions of \S\ref{ss:mot Tann} hold then this object comes from the object $H\in\Ex_{\rm true}(\Pi ,Z_\Delta(\BQbar ))$ defined in \S\ref{sss:conjectural triple}  (without the assumptions of \S\ref{ss:mot Tann} it is not clear how to define $H$).

\subsection{On fiber functors}
For any prime $\ell$, \emph{any\,} SCPS Tannakian category over $\BQ_{\ell}$ has a fiber functor over~$\BQ_{\ell}\,$, which is unique up to (non-unique) isomorphism: indeed, this is just a reformulation of \cite[Satz 2]{Kn}. Of course, for $\ell\ne p$ the category $\cT_{\ell} (\tX )=
\Rep (\hat\Pi_\ell^\circ )$ is equipped with a \emph{canonical} fiber functor. Note that composing a fiber functor for $\cT_{\ell} (\tX )$ with the pullback $\cT_{\ell} (X )\to\cT_{\ell} (\tX )$ one gets a fiber functor for $\cT_{\ell} (X )$.

\subsection{Reciprocity law. I}  \label{ss:reciprocity}
Assume that $\dim X=1$. Then by \S\ref{sss:unconditional} we unconditionally have
$H_\ell\in\Ex_{\rm true}(\Pi\times\Gal (\BQbar_\ell /\BQ_\ell )  ,Z_{\Delta_{\hat\Pi}}( \BQbar_\ell ))$ for each prime $\ell$ (including $\ell=p$). Let $u_\ell\in H^2(\BQ_\ell ,Z_{\Delta_{\hat\Pi}})$ denote the class of $H_\ell\,$. Recall that 
\begin{equation}  \label{e:recall Z}
Z_{\Delta_{\hat\Pi}}=\Hom (P/Q,\BG_m), 
\end{equation}
where $P$ is the weight group of $\Delta_{\hat\Pi}$ and $Q\subset P$ is the subgroup generated by the roots. Local class field theory identifies $H^2(\BQ_\ell ,Z_{\Delta_{\hat\Pi}})$ with $\Hom ((P/Q)^{\Gal (\BQbar_\ell/\BQ_\ell)},\BQ/\BZ)$, so $u_\ell\in\Hom ((P/Q)^{\Gal (\BQbar_\ell/\BQ_\ell)},\BQ/\BZ)$. Let $\bar u_\ell\in\Hom ((P/Q)^{\Gal (\BQbar/\BQ )},\BQ/\BZ )$ denote the image of $u_\ell\,$. 

Let $u_\infty\in\Hom ((P/Q)^{\Gal (\BC/\BR)},\BQ/\BZ)$ and $\bar u_\infty\in\Hom ((P/Q)^{\Gal (\BQbar/\BQ )},\BQ/\BZ)$ denote the restrictions of
the homomorphism $P/Q\to\frac{1}{2}\BZ/\BZ\subset\BQ/\BZ$ corresponding to $\check\rho\in Q^*$.

We will show in \S\ref{ss:almost all Brauer} that for any $\omega\in (P/Q)^{\Gal (\BQbar/\BQ )}$ one has
\begin{equation}   \label{e:almost all Brauer}
(\bar u_\ell ,\omega)=0 \mbox{ for almost all } \ell .
\end{equation}
So the sum $\sum\limits_\ell\bar u_\ell$ makes sense.

\begin{conj}  \label{conj:reciprocity}
One has 
\begin{equation}  \label{e:reciprocity}
\sum\limits_\ell\bar u_\ell=\bar u_\infty\,.
\end{equation}
\end{conj}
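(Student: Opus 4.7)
The plan is to deduce the reciprocity law from the motivic hope of \S\ref{ss:mot Tann}, after which it becomes a standard instance of Brauer--Hasse--Noether reciprocity. I outline the argument in three steps, and then discuss the sole obstruction.

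\emph{Step 1: Extract a global class.} Assume the motivic picture, so that the symmetrically polarized SCPS Tannakian category $\cT(\tX)$ over $\BQ$ corresponds, by the $\Pi$-equivariant version of Corollary~\ref{c:polarized-Q} given in \S\ref{sss:Tann pol SCPS Pi}, to a triple $(\Delta,H,j)$ with $\Delta=\Delta_{\hat\Pi}$, $H\in\Ex_{\rm true}(\Pi\times\Gal(\BQbar/\BQ),Z_\Delta(\BQbar))$, and $j^2=(-1)^{2\check\rho}$. Pushing $H$ out along the projection $\Pi\times\Gal(\BQbar/\BQ)\to\Gal(\BQbar/\BQ)$ yields a canonical class $\nu\in H^2(\Gal(\BQbar/\BQ),Z_{\Delta_{\hat\Pi}}(\BQbar))$, to which global reciprocity will be applied.

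\emph{Step 2: Match the local restrictions.} By the construction of $(\Delta_\ell,H_\ell)$ from $(\Delta,H)$ in \S\ref{sss:conjectural triple}, the restriction $\nu|_{\Gal(\BQbar_\ell/\BQ_\ell)}$ equals $u_\ell$ for every prime $\ell$ (including $\ell=p$ when $\dim X=1$). At the archimedean place, the condition $j^2=(-1)^{2\check\rho}$ means that the pushout of $H$ to an extension of $\Gal(\BC/\BR)$ is classified by the image of $(-1)^{2\check\rho}$ in $H^2(\Gal(\BC/\BR),Z_\Delta(\BC))=Z_\Delta(\BR)/2Z_\Delta(\BR)$. Pairing with a character $\omega\in P/Q$ turns this into the sign $(-1)^{(\omega,2\check\rho)}\in\Br(\BR)=\tfrac12\BZ/\BZ$, i.e.\ into $(\omega,\check\rho)\pmod\BZ$; this is precisely~$u_\infty$. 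Hence $\nu$ has local invariants $(u_\ell)_{\ell<\infty}$ and $u_\infty$ at every place.

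\emph{Step 3: Apply Brauer--Hasse--Noether.} Fix $\omega\in(P/Q)^{\Gal(\BQbar/\BQ)}$, say of order $n$; it defines a $\Gal(\BQbar/\BQ)$-equivariant homomorphism $Z_{\Delta_{\hat\Pi}}\to\mu_n\hookrightarrow\BG_m$. Composing $\nu$ with this homomorphism yields a class in $H^2(\BQ,\BG_m)=\Br(\BQ)$ whose sum of local invariants over all places vanishes by the fundamental exact sequence of global class field theory. Combined with the local identifications of Step~2, this gives $\sum_\ell\bar u_\ell(\omega)+\bar u_\infty(\omega)=0$ in $\BQ/\BZ$. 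Since $\bar u_\infty$ is $2$-torsion, $-\bar u_\infty=\bar u_\infty$, and the sum is finite by \eqref{e:almost all Brauer}; since $\omega$ was arbitrary, this is \eqref{e:reciprocity}.

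\emph{The obstacle.} What is conjectural in this argument is not the formalism but the \emph{existence} of $(\Delta,H,j)$: one must produce a canonical global extension $H$ whose local images agree with the unconditionally constructed $H_\ell$, and equip it with an archimedean polarization. This is essentially equivalent to constructing enough of the motivic Tannakian category $\cT(X)$ of \S\ref{sss:mot Tann}. Even when $\dim X=1$, where Theorem~\ref{t:AE} provides the $H_\ell$ compatibly across all finite places, gluing them into a single extension over $\BQ$ would seem to require a substantial theory of motives over $\BF_p(X)$ that is not presently available, as well as the standard conjectures used in \S\ref{sss:standard Hodge} to produce the polarization; a direct unconditional attack on Conjecture~\ref{conj:reciprocity} would have to perform essentially the same construction.
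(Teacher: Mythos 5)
Your proposal coincides with the paper's own treatment: since the statement is a conjecture, the paper likewise only derives it conditionally from the motivic hope of \S\ref{ss:mot Tann}, by taking the class $u\in H^2(\BQ,Z_{\Delta_{\hat\Pi}})$ of the global extension $H$, noting its restrictions at finite places are the $u_\ell$ and at the real place is $u_\infty$ (via $j^2=(-1)^{2\check\rho}$), and invoking global class field theory together with $2u_\infty=0$. The only nitpick is terminological: the class over $\Gal(\BQbar/\BQ)$ is obtained by restricting the extension $H$ along the inclusion of the Galois factor into $\Pi\times\Gal(\BQbar/\BQ)$, not by ``pushing out along the projection''; otherwise your Steps 1--3 and your assessment of what remains conjectural match the paper.
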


This conjecture would follow from the ``motivic hope" of \S\ref{ss:mot Tann}. Indeed, assuming \S\ref{ss:mot Tann} we would have
$H\in\Ex_{\rm true}(\Pi\times\Gal (\BQbar /\BQ)  ,Z_{\Delta_{\hat\Pi}}( \BQbar ))$ and $j\in H$ as in \S\ref{sss:Tann pol SCPS Pi}. 
Let $u\in H^2(\BQ ,Z_{\Delta_{\hat\Pi}})$ denote the class of $H$, then the image of $u$ in $H^2(\BQ_\ell ,Z_{\Delta_{\hat\Pi}})$ equals $u_\ell\,$, and the properties of $j$ (see \S\ref{sss:Tann pol SCPS Pi}) imply that the image of $u$ in $H^2(\BR ,Z_{\Delta_{\hat\Pi}})$ equals $u_\infty\,$. So \eqref{e:reciprocity} follows by global class field theory and the equality $2u_\infty=0$.

\begin{lem}   \label{l:where both sides live}
One has a canonical isomorphism
\[
\Hom ((P/Q)^{\Gal (\BQbar/\BQ )},\BQ/\BZ )\simeq (Z_{\Delta_{\hat\Pi}}/Z_{\Delta_{\hat\Pi}}^2)_{\Gal (\BQbar/\BQ )}\, ,
\]
where $Z_{\Delta_{\hat\Pi}}^2$ is the image of the homomorphism $Z_{\Delta_{\hat\Pi}}\to Z_{\Delta_{\hat\Pi}}\,$, $z\mapsto z^2$.
\end{lem}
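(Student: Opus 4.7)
The plan is to compute both sides via Cartier and Pontryagin duality, with polarizability of $\Delta_{\hat\Pi}$ supplying the crucial link. Set $A := P/Q$, a torsion abelian group (a direct sum of the finite groups $P_C/Q_C$ over the connected components $C$ of $\Delta_{\hat\Pi}$) carrying a continuous $\Gal(\BQbar/\BQ)$-action. By \eqref{e:groups via Dynkin}, $Z := Z_{\Delta_{\hat\Pi}}$ is the Cartier dual of $A$. The squaring endomorphism $[2]\colon Z \to Z$ is Cartier dual to multiplication by $2$ on $A$, so dualizing $0 \to A[2] \to A \xrightarrow{2} A \to A/2A \to 0$ produces a canonical identification $Z/Z^2 \simeq \Hom(A[2],\mu_2) = \Hom(A[2],\BZ/2\BZ)$; crucially $\mu_2$ carries trivial Galois action, so the $\Gal$-action on $Z/Z^2$ comes entirely from the action on $A[2]$.

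Next I would apply Pontryagin duality for discrete torsion modules with continuous $\Gal$-action, which is an exact contravariant autoequivalence that swaps invariants and coinvariants: $(M^\vee)_{\Gal} \simeq (M^{\Gal})^\vee$, where $M^\vee := \Hom(M,\BQ/\BZ)$. Applying this with $M = A[2]$ gives
\[
(Z/Z^2)_{\Gal} \;\simeq\; \Hom(A[2],\BZ/2\BZ)_{\Gal} \;\simeq\; \Hom\bigl(A^{\Gal}[2],\BZ/2\BZ\bigr).
\]
In the infinite Galois setting this swap is checked orbit-by-orbit on the set of connected components of $\Delta_{\hat\Pi}$: each orbit factors through a finite quotient of $\Gal(\BQbar/\BQ)$, reducing to the standard finite-group statement.

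The punch line comes from polarizability. By Proposition~\ref{p:Laurent Lafforgue}(ii), complex conjugation $\sigma \in \Gal(\BC/\BR) \subset \Gal(\BQbar/\BQ)$ acts on $\Delta_{\hat\Pi}$ as the canonical involution $\alpha \mapsto -w_0(\alpha)$, and the Weyl group acts trivially on $P/Q$ (a simple reflection changes any weight by a multiple of a simple root, hence by an element of $Q$). So $-w_0$ acts as $-\mathrm{id}$ on $A$, and consequently $A^{\Gal(\BQbar/\BQ)} \subset A^{\sigma}$ is annihilated by $2$. This gives $A^{\Gal}[2] = A^{\Gal}$, and then $\Hom(A^{\Gal},\BZ/2\BZ) = \Hom(A^{\Gal},\BQ/\BZ)$ because every homomorphism from a $2$-torsion group to $\BQ/\BZ$ factors through $\tfrac{1}{2}\BZ/\BZ$. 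Chaining all the identifications yields the lemma. I do not expect a genuine obstacle; the only bookkeeping is checking naturality of the chain and the Pontryagin-duality swap in the pro-finite/continuous setting.
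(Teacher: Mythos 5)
Your proof is correct and follows the same route as the paper: the paper's (one-line) proof likewise combines $Z_{\Delta_{\hat\Pi}}=\Hom(P/Q,\BG_m)$ with the consequence of Proposition~\ref{p:Laurent Lafforgue}(ii) that complex conjugation acts as $-1$ on $P/Q$, and your Cartier/Pontryagin duality computation with the invariants--coinvariants swap is exactly the bookkeeping the paper leaves implicit.
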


\begin{proof}
Combine \eqref{e:recall Z} with the following consequence of Proposition~\ref{p:Laurent Lafforgue}(ii): the complex conjugation automorphism of $\BQbar$ corresponding to any embedding $\BQbar\mono\BC$ acts on $P/Q$ as $-1$. 
\end{proof}

By Lemma~\ref{l:where both sides live}, both sides of \eqref{e:reciprocity} can be considered as elements of 
$(Z_{\Delta_{\hat\Pi}}/Z_{\Delta_{\hat\Pi}}^2)_{\Gal (\BQbar/\BQ )}\,$. The right-hand side is just the image of 
$(-1)^{2\check\rho}\in Z_{\Delta_{\hat\Pi}}(\BQ )$.

\subsection{Reciprocity law. II}  \label{ss:2reciprocity}
As before, we assume that $\dim X=1$. We will formulate Conjecture~\ref{conj:2reciprocity}, which is related to Conjecture~\ref{conj:reciprocity} (see Proposition~\ref{p:two reciprocities} below).

Let $\Irr (\hat\Pi )$ denote the set of isomorphism classes of irreducible representations of $\hat\Pi$. The group $\Gal (\BQbar/\BQ)$ acts on 
$\Irr (\hat\Pi )$.

Let $\rho\in\Irr (\hat\Pi )$. Then the stabilizer of  $\rho$ in $\Gal (\BQbar/\BQ)$ equals $\Gal (\BQbar/E)$, where $E\subset\BQbar$ is the subfield generated by the coefficients of the Frobenius characteristic polynomials of all closed points of $X$. One has $[E:\BQ]<\infty$. Since the Frobenius eigenvalues are Weil numbers, $E$ is either totally real or a CM field. For the same reason, $\rho^*=\sigma (\rho )$, where $\sigma\in\Aut E$ is complex conjugation and $\rho^*\in\Irr (\hat\Pi )$ is the dual of~$\rho$. So $E$ is totally real if and only if $\rho^*=\rho$, which means that $\rho$ is either orthogonal or symplectic.

We can think of $\rho$ as a compatible system of irreducible objects
 $\rho_\lambda\in\cT_\ell (X)\otimes_{\BQ_\ell}\bar E_\lambda$, where 
$\ell$ runs through the set of all primes (including $p$) and $\lambda$ runs through the set of all places of $E$ dividing $\ell$.  The isomorphism class of $\rho_\lambda$ is $\Gal (\bar E_\lambda/E_\lambda )$-invariant. Let $\beta_\lambda\in\Br (E_\lambda )=\BQ/\BZ$ be the Brauer obstruction corresponding to $\rho_\lambda\,$. We will show in \S\ref{ss:almost all Brauer} that  
\begin{equation} \label{e:2almost all Brauer}
\beta_\lambda=0 \mbox{ for almost all } \lambda .
\end{equation}
Since $\rho^*=\sigma (\rho)$ we have $\beta_\lambda+\beta_{\sigma (\lambda )}=0$, so
\[
\sum_\lambda \beta_\lambda\in\frac{1}{2}\BZ/\BZ \, .
\]

\begin{conj}  \label{conj:2reciprocity}
In this situation\footnote{An important feature of our situation is that each $\rho_\lambda$ has weight $0$ (because $\det\rho_\lambda$ has finite order).}\,
$\sum\limits_\lambda \beta_\lambda$ equals $\frac{1}{2}$ if and only if $\rho$ is symplectic and $[E:\BQ ]$ is odd.
\end{conj}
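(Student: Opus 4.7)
The plan is to deduce Conjecture~\ref{conj:2reciprocity} from the motivic hope of \S\ref{ss:mot Tann}, together with the canonical symmetric polarization on $\cT(X)$ predicted by \S\ref{sss:standard Hodge}, and then invoke global class field theory for the totally real field $E$.

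First I would grant the existence of the $\BQ$-Tannakian category $\cT(X)$ and its canonical symmetric polarization. The irreducible object $\rho \in \Irr(\hat\Pi)$ then corresponds to an absolutely irreducible object of $\cT(X)\otimes_{\BQ}\bar E$ whose isomorphism class is $\Gal(\bar E/E)$-invariant (this is exactly how $E$ is defined: the field generated by Frobenius traces). The standard Tannakian Galois descent produces a global Brauer class $\beta \in \Br(E)$ whose localization at each non-Archimedean place $\lambda$ is the class $\beta_\lambda$ of the statement. Self-duality $\rho^*\cong\rho$ yields a $\rho_E$-invariant bilinear form, hence an involution of the first kind on the underlying central simple $E$-algebra, which already forces $\beta \in \Br(E)[2]$.

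Next I would compute $\beta_v$ at each Archimedean $v$ of $E$. Since $\rho$ is self-dual, $E$ is totally real, so every Archimedean $v$ is real and $\Br(E_v) = \tfrac{1}{2}\BZ/\BZ$. Fixing a real embedding $E\mono\BR$ and restricting the polarization of $\cT(X)$ along it, the dictionary of \S\ref{sss:compact groups} presents (the restriction of) $\rho$ as an absolutely irreducible complex representation of a compact group. Frobenius--Schur theory then identifies the Archimedean Brauer obstruction with the Schur indicator: $\beta_v=0$ in the orthogonal case (indicator $+1$, real representation, $\End_\BR=\BR$), and $\beta_v=\tfrac{1}{2}$ in the symplectic case (indicator $-1$, quaternionic representation, $\End_\BR=\BH$). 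Global class field theory then gives $\sum_{v}\beta_v=0$ in $\BQ/\BZ$, where $v$ runs over \emph{all} places of $E$. Separating the Archimedean and non-Archimedean contributions,
\[
\sum_\lambda \beta_\lambda \;=\; -\sum_{v\mid\infty}\beta_v \pmod{\BZ}.
\]
In the orthogonal case the right-hand side vanishes. In the symplectic case each of the $[E:\BQ]$ real places contributes $\tfrac12$, so the right-hand side equals $[E:\BQ]/2 \pmod 1$; this is $\tfrac12$ precisely when $[E:\BQ]$ is odd, which is exactly the claim of the conjecture.

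The main obstacle is that the entire argument rests on the conjectural existence of the $\BQ$-form $\cT(X)$ and of its symmetric polarization, both of which in turn depend on the standard conjectures on algebraic cycles. To make the argument unconditional, one would need to construct a global class $\beta \in \Br(E)$ directly from the compatible system $(\rho_\lambda)$---plausibly via Theorem~\ref{t:AE}(ii) together with a patching/Hasse-principle argument for central simple algebras---and moreover to pin down its Archimedean components with the orthogonal/symplectic dichotomy described above. The latter, a substitute for the missing polarization, seems to be the hardest point, since an ``Archimedean companion'' theory for $\ell$-adic local systems on $X$ is not presently available; on the other hand, only the Brauer \emph{class} at $v\mid\infty$, rather than a full representation, must be controlled, which may make this easier than general Archimedean companions.
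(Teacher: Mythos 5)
Your proposal is correct (as a conditional derivation, which is all the paper offers for this conjecture) and follows essentially the same route as the paper: grant the motivic hope of \S\ref{ss:mot Tann} and the polarization of \S\ref{sss:standard Hodge}, descend $\rho$ to a global Brauer class $\beta\in\Br(E)$ localizing to the $\beta_\lambda$, identify the real-place invariants as $0$ (orthogonal) or $\tfrac12$ (symplectic) via the compact-group picture of \S\ref{sss:compact groups}, and conclude by the sum formula of global class field theory, the CM case being covered by the vanishing of the Brauer groups at complex places. The extra remarks on Frobenius--Schur indicators and on a possible unconditional construction of $\beta$ are sensible additions but do not change the argument.
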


Let us show that Conjecture~\ref{conj:2reciprocity} would follow from the ``motivic hope" of \S\ref{ss:mot Tann}. Indeed, assuming \S\ref{ss:mot Tann}, $\rho$ corresponds to an irreducible object $M\in\cT (X)\otimes\BQbar$ whose isomorphism class is $\Gal (\bar E /E )$-invariant. Associated to $M$ is the Brauer obstruction $\beta\in\Br (E)$. For every non-Archimedean place $\lambda$ of~$E$, the image of 
$\beta$ in $\Br (E_\lambda )$ equals $\beta_\lambda\,$. So $\sum\limits_\lambda \beta_\lambda\ne 0$ only if $E$ is totally real. It remains to show that in this case for every embedding $E\mono\BR$ the image of $\beta$ in $\Br (\BR )$ equals $0$ if $M$ is orthogonal and 
$\frac{1}{2}$ if $M$ is symplectic. This follows from the fact that by \S\ref{sss:standard Hodge} and \S\ref{sss:compact groups} the Tannakian category $\cT (X)\otimes\BR$ is equivalent to the category of real representations of a compact group.

\begin{prop}  \label{p:two reciprocities}
Suppose that Conjecture~\ref{conj:2reciprocity} holds for all curves $X$. Then Conjecture~\ref{conj:reciprocity} holds for all curves $X$.
\end{prop}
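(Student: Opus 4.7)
The plan is to check the identity $\sum_\ell \bar u_\ell = \bar u_\infty$ pointwise in $\Hom((P/Q)^{\Gal(\BQbar/\BQ)}, \BQ/\BZ)$, by evaluating both sides at an arbitrary $\omega\in (P/Q)^{\Gal(\BQbar/\BQ)}$. The first observation is that both targets lie in $\tfrac12\BZ/\BZ$: Proposition~\ref{p:Laurent Lafforgue}(ii) shows that complex conjugation acts on $P/Q$ as $-1$ (because $w_0$ acts trivially on $P/Q$), so Galois-invariance of $\omega$ forces $2\omega=0$; hence $(\bar u_\ell, \omega)=\mathrm{inv}\bigl((\chi_\omega)_\ast u_\ell\bigr)$ is $2$-torsion in $\BQ/\BZ$, where $\chi_\omega\colon Z_{\Delta_{\hat\Pi}}\to\BG_m$ is the character attached to $\omega$. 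Similarly $(\bar u_\infty, \omega)=\langle\check\rho,\omega\rangle \bmod \BZ$ is $\tfrac12$ precisely when $\langle 2\check\rho,\omega\rangle$ is odd.

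For the fixed $\omega$, first replace $X$ by the smooth curve $X':=\tilde X/U$ for an open subgroup $U\subset\Pi$ fixing $\omega$; by Proposition~\ref{p:subgr of fin index} neither the Dynkin diagram $\Delta_{\hat\Pi}$ nor the classes $u_\ell$ change, while $\omega$ becomes $\pi_0(\hat\Pi_{X'})$-invariant. Using Lemma~\ref{l:minuscule} and a case-by-case verification that every $2$-torsion class in $P/Q$ admits a self-dual lift (immediate for the types where $w_0=-1$; for type $A_{2k-1}$ the middle fundamental weight $\omega_k$ works, and for $D_{2k+1}$ the vector weight $\omega_1$), pick a $U\times\Gal(\BQbar/\BQ)$-invariant dominant weight $\tilde\omega\in P$ with $\tilde\omega \equiv \omega \pmod Q$ and $-w_0\tilde\omega = \tilde\omega$. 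The irreducible $\hat\Pi^\circ_{X'}$-module $V_{\tilde\omega}$ extends to an irreducible $\rho\in\Irr(\hat\Pi_{X'})$ by letting $\pi_0=U$ act trivially on the highest weight line; this $\rho$ is self-dual and its isomorphism class is $\Gal(\BQbar/\BQ)$-invariant, so in the notation of Conjecture~\ref{conj:2reciprocity} we have $E=\BQ$ and $[E:\BQ]=1$.

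The crucial identification is
\[
\beta_\ell \;=\; \mathrm{inv}\bigl((\chi_\omega)_\ast u_\ell\bigr)\;=\;(\bar u_\ell,\omega)\qquad\text{in }\Br(\BQ_\ell).
\]
It should follow by unwinding the Tannakian description of $\cT_\ell(\tilde X')$ as $\Rep_{H_\ell}(G_{\Delta_{\hat\Pi}})$ from Proposition~\ref{p:Tann-SCPS} and \S\ref{sss:Constructing SPSC}: the obstruction to descending the $\BQbar_\ell$-representation $V_{\tilde\omega}$, with its trivial $\pi_0$-action on the highest weight line, to an object of $\cT_\ell(X')$ defined over $\BQ_\ell$ is the pushforward of the universal class $u_\ell$ of $H_\ell$ via $\chi_\omega$. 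Summing over $\ell$ yields $\sum_\ell\beta_\ell=\sum_\ell(\bar u_\ell,\omega)$, and Conjecture~\ref{conj:2reciprocity} applied to $X'$ (with $[E:\BQ]=1$ odd) equates the left-hand side with $\tfrac12$ iff $\rho$ is symplectic. The classical fact that a self-dual irreducible with highest weight $\tilde\omega$ is symplectic iff $\langle 2\check\rho,\tilde\omega\rangle$ is odd then matches the right-hand side with $(\bar u_\infty,\omega)$, yielding the desired equality.

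The main obstacle is the identification $\beta_\ell = \mathrm{inv}((\chi_\omega)_\ast u_\ell)$: one must check that the specific Brauer obstruction measuring descent of $\rho_\ell$ from $\BQbar_\ell$ to $\BQ_\ell$ coincides with the image of the universal class $u_\ell\in H^2(\BQ_\ell,Z_{\Delta_{\hat\Pi}})$ under the pushforward by $\chi_\omega$. This amounts to a careful Tannakian bookkeeping linking the extension class of $H_\ell$ to the obstruction of realizing an $H_\ell$-equivariant object over $\BQ_\ell$. A secondary obstacle is the reliance on a case-by-case choice of self-dual lift; a uniform argument, exploiting the structure of $w_0$ on the center more systematically, would be preferable.
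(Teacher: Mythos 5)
Your argument is correct and follows essentially the same route as the paper's proof: pair both sides with a $\Gal(\BQbar/\BQ)$-invariant dominant lift of $\omega$ (Lemma~\ref{l:minuscule}), pass to a finite etale covering of $X$ on which an irreducible $\rho\in \Irr(\hat\Pi)^{\Gal(\BQbar/\BQ)}$ restricting to $V_{\tilde\omega}$ exists (so $E=\BQ$ and $[E:\BQ]$ is odd), identify $(\bar u_\ell,\omega)$ with the Brauer obstruction $\beta_\ell$, and conclude via Conjecture~\ref{conj:2reciprocity} and the orthogonal/symplectic criterion matching $(\bar u_\infty,\omega)=(\check\rho,\omega)\bmod\BZ$; the step you flag as the main obstacle, namely $(\bar u_\ell,\omega)=\beta_\ell$, is asserted at the same level of detail in the paper itself. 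Your case-by-case search for a self-dual lift is superfluous: by Proposition~\ref{p:Laurent Lafforgue}(ii) complex conjugation acts on weights as the canonical involution $-w_0$, so any $\Gal(\BQbar/\BQ)$-invariant weight is automatically self-dual.
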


\begin{proof}  
Both sides of \eqref{e:reciprocity} are homomorphisms $(P/Q)^{\Gal (\BQbar/\BQ )}\to \BQ/\BZ$. By Lemma~\ref{l:minuscule}, the map 
$P^{\Gal (\BQbar/\BQ )}\to (P/Q)^{\Gal (\BQbar/\BQ )}$ is surjective. So it suffices to show that
\[
\sum\limits_\ell (\bar u_\ell ,\omega)=(\bar u_\infty,\omega) \quad\mbox{ for all }\omega\in P^{\Gal (\BQbar/\BQ )}.
\]

Replacing $X$ by a finite etale covering of $X$, we can assume that there exists $\rho\in \Irr (\hat\Pi )^{\Gal (\BQbar/\BQ)}$ whose restriction to
$\hat\Pi^\circ$ is the irreducible representation with highest weight $\omega$. Then $(\bar u_\ell ,\omega)=\beta_\ell\,$, where $\beta_\ell$ is the Brauer obstruction corresponding to $\rho$ and $\ell$. 

On the other hand, $(\bar u_\infty,\omega)\in\frac{1}{2}\BZ/\BZ$ is the image of $(\check\rho ,\omega ) \in\frac{1}{2}\BZ$, where $\check\rho\in Q^*$ is the sum of the fundamental coweights. So $(\bar u_\infty,\omega)$ equals $0$ if $\rho$ is orthogonal and $\frac{1}{2}$ if $\rho$ is symplectic.
\end{proof}

\subsection{Proof of \eqref{e:almost all Brauer} and \eqref{e:2almost all Brauer}} \label{ss:almost all Brauer}
We will deduce \eqref{e:almost all Brauer} and \eqref{e:2almost all Brauer} from Proposition~\ref{p:irred reductions} below.

Let $E\subset\BQbar$, $[E:\BQ]<\infty$. For each non-Archimedean place $\lambda$ of $E$ not dividing $p$, we fix an algebraic closure $\bar E_\lambda\supset E_\lambda\,$. Let $\bar O_\lambda\subset \bar E_\lambda$ be the ring of integers and $\bar m_\lambda\subset \bar O_\lambda$ the maximal ideal. Suppose that for each $\lambda$ as above we are given an irreducible representation $\rho_\lambda :\Pi\to GL(n,\bar E_\lambda )$ so that for each $x\in |X|$ the polynomial $\det (1-t\rho_\lambda (F_x))$ is defined over $E$ and independent of $\lambda$. Let $r_\lambda$ denote the reduction of $\rho_\lambda$ modulo $\bar m_\lambda$; it is well-defined as an element of the Grothendieck semigroup of the category of representations of $\Pi$ over $\bar O_\lambda/\bar m_\lambda\,$.

\begin{prop}  \label{p:irred reductions}
$r_\lambda$ is irreducible for almost all $\lambda$.
\end{prop}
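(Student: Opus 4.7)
My proposed approach combines the $\lambda$-independence of the traces on Frobenii with a specialization argument applied to a moduli scheme of pseudo-character decompositions. I would argue by contradiction, assuming $r_\lambda$ is reducible for infinitely many $\lambda$.

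First I would observe that for $g \in \Pi_{\Fr}$, the trace $t(g) := \operatorname{Tr}\rho_\lambda(g)$ belongs to $E$ and is independent of $\lambda$: writing $g = F_{\tilde x}^n$, the quantity $t(g)$ is a polynomial (via Newton's identities) in the coefficients of $\det(1 - s\rho_\lambda(F_x)) \in E[s]$, which are $\lambda$-independent by hypothesis. Thus $t : \Pi_{\Fr} \to E$ is a well-defined function. Since $\Pi_{\Fr}$ is \v{C}ebotarev-dense in $\Pi$ and $\rho_\lambda$ is irreducible (hence semisimple), the Brauer--Nesbitt theorem ensures that $t$ determines $\rho_\lambda$ up to isomorphism for each $\lambda$, and that the Frobenius trace of the semisimplification $r_\lambda^{\mathrm{ss}}$ equals the reduction $\bar t := t \bmod \bar m_\lambda$ (after embedding $E \hookrightarrow \bar E_\lambda$).

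Next, by pigeonhole pick a splitting type $(n_1, n_2)$ with $n_1 + n_2 = n$ and $0 < n_i < n$ that occurs for an infinite set $S$ of places; for each $\lambda \in S$ the function $\bar t$ decomposes on $\Pi_{\Fr}$ as $\bar t = \bar t_1^\lambda + \bar t_2^\lambda$, where $\bar t_i^\lambda$ is the Frobenius trace of an $n_i$-dimensional constituent of $r_\lambda^{\mathrm{ss}}$. I would then introduce the affine $E$-scheme $\mathcal{M} = \mathcal{M}_{n_1, n_2}$ parametrizing decompositions $t = t_1 + t_2$ of the (pseudo-character associated to the) function $t$ into pseudo-characters of $\Pi$ of dimensions $n_1$ and $n_2$. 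Invoking Chenevier's theory of determinants (or the earlier variants of Rouquier and Taylor), $\mathcal{M}$ is of finite type over $E$ once the Frobenius characteristic polynomials are fixed, because a Cayley--Hamilton identity in degree $n_i$ rigidifies each pseudo-character to finitely many parameters. By construction $\mathcal{M}(\bar O_\lambda / \bar m_\lambda) \ne \emptyset$ for every $\lambda \in S$; a standard constructibility argument --- a finite-type $E$-scheme whose fibers over infinitely many residue fields of $\bar E$ are non-empty has a $\bar E$-point --- then yields $\mathcal{M}(\bar E) \ne \emptyset$.

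A $\bar E$-point of $\mathcal{M}$ provides a decomposition of the pseudo-character of $\rho_\lambda$ over $\bar E$ as a sum of pseudo-characters of dimensions $n_1$ and $n_2$; base-changing to $\bar E_\lambda$ and applying Brauer--Nesbitt together with the semisimplicity of $\rho_\lambda$, this gives $\rho_\lambda \cong \rho^{(1)} \oplus \rho^{(2)}$ as representations over $\bar E_\lambda$, contradicting absolute irreducibility. The principal obstacle is making the moduli $\mathcal{M}$ precise and verifying its finite-type property: the pseudo-character axioms involve values on arbitrary products of elements of $\Pi$, not merely on $\Pi_{\Fr}$, whereas only values on $\Pi_{\Fr}$ are a priori $\lambda$-independent and $E$-valued. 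Bridging this gap --- expressing the decomposition data in terms of finitely many $E$-algebraic parameters via the Chenevier framework, and then invoking standard constructibility to specialize from infinitely many residue fibers to a geometric point --- is the heart of the argument.
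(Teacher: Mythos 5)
There is a genuine gap, and it sits exactly where you yourself flag ``the heart of the argument.'' Your moduli scheme $\mathcal{M}$ is not available as stated, for two reasons. First, there is no $E$-valued (or even $\BQbar$-valued) pseudo-character of $\Pi$ to decompose: only the Frobenius traces $t|_{\Pi_{\Fr}}$ are $\lambda$-independent and $E$-rational, while $\Tr\rho_\lambda(g)$ for general $g\in\Pi$ lies in $\bar E_\lambda$, is obtained only as a $\lambda$-adic limit of Frobenius traces, and has no algebraicity or $\lambda$-independence; so the object whose decompositions $\mathcal{M}$ is supposed to parametrize does not exist over $E$ (its existence is essentially part of the motivic picture one is trying to approach, not an input). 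Second, even granting a pseudo-character, the finite-type claim is unjustified and, in this generality, false: $\Pi$ is not topologically finitely generated (already for an affine curve), and fixing the Frobenius characteristic polynomials of the \emph{total} representation does not bound the constituents $t_1,t_2$ --- their ramification is unconstrained, and pseudo-characters of $\Pi$ with unbounded ramification form an infinite-dimensional family. A Cayley--Hamilton identity rigidifies nothing here, because the Frobenius data of the constituents (infinitely many quantities, one family per closed point of $X$) are themselves the unknowns. Without finite-typeness, the specialization step ``nonempty fibers over infinitely many residue fields $\Rightarrow$ a $\bar E$-point'' has no force.

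This is precisely the content the paper supplies by other means, and which your proposal would have to reproduce: one first reduces to curves by Hilbert irreducibility; then the Swan conductor of the compatible system is $\lambda$-independent (Deligne), de Jong's conjecture (proved by Gaitsgory) lifts each mod-$\lambda$ constituent $r_{\lambda,i}$ to a $\lambda$-adic representation, and L.~Lafforgue's theorem gives \emph{finiteness}, independent of $\lambda$, of the set of irreducible representations of each rank $n_i$ with bounded Swan conductor and normalized determinant. Only after the possible constituents are pinned down to a finite list $\rho_i$ does one get a decomposition governed by finitely many $E$-algebraic parameters (the unramified twists $z_i$), and the scheme $Y\subset(\BG_m)^k$ cut out by the identities $P_x(\rho,t)=\prod_i P_x(\rho_i,tz_i^{\deg x})$ is visibly of finite type; your constructibility endgame then coincides with the paper's. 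So the specialization idea is shared, but the lifting-plus-finiteness input (and the reduction to curves needed to invoke it) is missing from your argument, and the pseudo-character formalism does not substitute for it.
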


Note that if $r_\lambda$ is irreducible then the Brauer obstruction $\beta_\lambda\in\Br (E_\lambda )$ corresponding to $\rho_\lambda$ is zero, so Proposition~\ref{p:irred reductions}  implies \eqref{e:2almost all Brauer}. On the other hand, the proof of Proposition~\ref{p:two reciprocities} shows that  \eqref{e:2almost all Brauer} implies \eqref{e:almost all Brauer}. So it remains to prove Proposition~\ref{p:irred reductions}.

\begin{lem}  \label{l:de Jong}
Suppose that $\dim X=1$. Let $r:\Pi\to GL(n', \bar O_\lambda/\bar m_\lambda )$ be an irreducible representation. Assume\footnote{If $X$ is affine then the assumption that $\lambda$ does not divide $n'$ is not necessary (in \cite{dJ} it is used only in the proof of \cite[Lemma 3.11]{dJ} and only if $X$ is projective).}
that $\lambda$ does not divide $p$, $2$, and $n'$. Then $r$ can be lifted to a representation $\Pi\to GL(n', \bar O_\lambda )$. 
\end{lem}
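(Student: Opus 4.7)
The plan is to apply the standard formalism of Galois deformation theory. The obstructions to lifting $r$ step-by-step along the surjections $\bar O_\lambda/\bar m_\lambda^{n+1}\twoheadrightarrow\bar O_\lambda/\bar m_\lambda^{n}$ are classes in $H^2(\Pi,\End(r))$, and once all obstructions are killed the resulting compatible system assembles into a continuous homomorphism $\Pi\to GL(n',\bar O_\lambda)$ because $\Pi$ is topologically finitely generated (Lemma~\ref{l:top-fin-gen}). So the problem reduces to proving
\[
H^2(\Pi,\End(r))=0.
\]

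Since $\lambda\nmid n'$, the map $f\mapsto (\Tr f)/n'$ splits the inclusion of scalars and yields a $\Pi$-equivariant decomposition $\End(r)=\End^0(r)\oplus\mathbf{1}$, where $\End^0$ is the kernel of the trace. The trivial summand contributes $H^2_\et(X,\BFbar_\ell)$; for affine $X$ this vanishes by Proposition~\ref{p:cohom-vanishing}(ii), while for projective $X$ the hypothesis $\lambda\nmid n'$ enters exactly as in \cite[Lemma~3.11]{dJ}, the argument using the Hochschild--Serre spectral sequence together with the action of geometric Frobenius by $p$ on $H^2(X\otimes\BFbar_p,\BFbar_\ell)\cong\BFbar_\ell(-1)$ and absorbing the surviving class into an appropriate twist of $\End^0(r)$.

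For the harder summand $\End^0(r)$, in the affine case one concludes by the vanishing of $H^2$ for lisse coefficients of prime-to-$p$ order (Proposition~\ref{p:cohom-vanishing}(i) together with a standard extension-of-scalars argument). In the projective case the natural tool is Poincar\'e duality on the smooth proper curve, which identifies $H^2(\Pi,\End^0(r))$ with the dual of $H^0(\Pi,\End^0(r)^{\vee}(1))$; absolute irreducibility of $r$ together with Schur's lemma implies that $\End^0(r)^{\vee}$ has no nonzero $\Pi$-invariant, and the hypothesis $\ell\ne 2$ ensures that the trace-pairing identification $\End^0(r)^{\vee}\cong\End^0(r)$ is clean and that the Tate twist does not introduce a spurious invariant.

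Once $H^2(\Pi,\End(r))=0$ is established, Mazur's criterion makes the universal framed deformation ring of $r$ formally smooth over $\bar O_\lambda$, hence a power series ring over $\bar O_\lambda$, any $\bar O_\lambda$-point of which furnishes the desired lift. The principal obstacle is the projective case of the $\End^0$-vanishing, where one must combine absolute irreducibility with a careful analysis of the Tate twist produced by Poincar\'e duality and keep track of the role of $\ell\ne 2$; this is precisely the content of the part of \cite[Lemma~3.11]{dJ} pointed to by the footnote of the statement.
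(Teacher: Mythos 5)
Your plan cannot work as written, because the deformation problem in question is genuinely obstructed: the lemma is not provable by showing $H^2(\Pi,\End(r))=0$, and that vanishing is false in general. In the projective case, duality identifies $H^2(\Pi,\End^0(r))$ with the dual of $\Hom_\Pi(\End^0(r),\BFbar_\ell(1))$, and irreducibility of $r$ only kills $\Hom_\Pi(\End^0(r),\BFbar_\ell)$; the Tate twist can very well ``introduce a spurious invariant'' (this happens whenever $r\cong r\otimes\bar\chi_{\mathrm{cyc}}$, e.g.\ for representations induced from the constant-field extension of degree $n'$ when the mod-$\lambda$ cyclotomic character has order dividing $n'$), and the hypothesis $\ell\ne 2$ does nothing to prevent it. In the affine case your appeal to Proposition~\ref{p:cohom-vanishing} is also off target: part (i) (cohomological dimension $\le 2$) gives no vanishing of $H^2$, and part (ii) is about the divisible coefficients $\BQ/\BZ$, not about $\BFbar_\ell$ or $\End^0(r)$; in fact $H^2(\Pi,M)\cong H^1(\hat\BZ,H^1(\Pi^{\mathrm{geom}},M))$, the Frobenius coinvariants of the geometric $H^1$, which is nonzero in plenty of examples (already for $M=\BFbar_\ell$). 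There are also secondary issues: your Schur-lemma steps need \emph{geometric} (absolute) irreducibility, which you never arrange, and the successive quotients $\bar O_\lambda/\bar m_\lambda^n$ do not make sense as a discrete filtration since the valuation on $\bar O_\lambda$ is not discrete (one must descend to the ring of integers of a finite extension, using that $r$ has finite image).

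A telling symptom is that your argument makes no use of the input that actually makes the lemma true: de Jong's conjecture, proved by Gaitsgory \cite{G} for $\ell\ne 2$ --- this, not a trace-pairing normalization, is where the hypothesis that $\lambda$ does not divide $2$ enters (while $\lambda\nmid n'$ enters through \cite[Lemma~3.11]{dJ}, as the footnote indicates). The paper's proof is a reduction plus citation: one may assume $r$ is not induced from a proper open subgroup of $\Pi$ (otherwise lift the inducing representation over the corresponding covering curve and induce the lift), so $r$ is geometrically irreducible, and then the statement is \cite[Remark~3.6(b)]{dJ} combined with \cite{G}. De Jong's mechanism is not unobstructedness: he bounds below the dimension of every irreducible component of the universal deformation ring by a global Euler--characteristic computation, and uses the conjecture (finiteness of geometric monodromy for representations over $\BFbar_\ell[[t]]$) to show that the special fibre of the deformation ring is finite; together these force the ring to be finite flat over the Witt vectors, hence to have characteristic-zero points, i.e.\ lifts. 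Any direct argument would have to reproduce that mechanism; an $H^2$-vanishing argument is doomed because $H^2$ really is nonzero in examples.
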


\begin{proof}  
Without loss of generality, we can assume that $r$ is not induced from any open subgroup of $\Pi$ different from $\Pi$. Then $r$ is \emph{geometrically} irreducible. So the statement follows from Remark 3.6(b) of de Jong's article \cite{dJ} and the fact that his conjecture mentioned in that remark was proved in~\cite{G} for $\ell\ne 2$.
\end{proof}

\begin{proof}[Proof of Proposition~\ref{p:irred reductions}]
By Hilbert irreducibility (e.g., see \cite[Prop.~2.17]{Dr} and \cite[Thm.~2.15(i)]{Dr}) we can assume that $\dim X=1$. 
Let $\bar X$ be the smooth compactification of $X$. Let $D$ be the Swan conductor of $\rho_\lambda\,$; this is an effective divisor on 
$\bar X$ supported on $\bar X\setminus X$, which is independent of $\lambda$ by \cite[Thm.~9.8]{De0}. Fix $x_0\in |X|$; for each $n'<n$ let $A_{n'}$ be the set of isomorphism classes of irreducible continuous representations $\Pi\to GL(n',\BQbar_{\mathfrak l})$ with Swan conductor $\le D$ whose determinant is trivial on $F_{x_0}\in\Pi$ (here $\mathfrak l$ is a non-Archimedean place of $\BQbar$ not dividing $p$, and the only difference between $\BQbar_{\mathfrak l}$ and $\bar E_\lambda$ is that $\BQbar_{\mathfrak l}$ is equipped with a homomorphism $\BQbar\to \BQbar_{\mathfrak l}$); applying the main theorem of L.~Lafforgue's article \cite{La}, we see that the set $A_{n'}$ is finite and independent of $\mathfrak l$. 

We write $\rho$ for the compatible system $\{\rho_\lambda \}$. For $x\in |X|$ set $P_x (\rho ,t):=\det (1-t\cdot\rho_\lambda (F_x))\in E[t]$. For each $\rho'\in A_{n'}$ one has a similar polynomial
$P_x (\rho' ,t)\in \BQbar [t]$. After enlarging $E$, we can assume that $P_x (\rho' ,t)\in E [t]$. In fact, the coefficients of the polynomials $P_x (\rho ,t)$ and $P_x (\rho' ,t)$ are in $O_E [p^{-1}]$, where $O_E\subset E$ is the ring of algebraic integers.

We have to prove that $r_\lambda$ is irreducible for almost all $\lambda$. Assume the contrary. Then there exists a partition
\[
n=n_1+\ldots +n_k, \quad k>1, \;n_i\in\BN
\]
and an infinite set $S$ of places of $E$ such that for all $\lambda\in S$ one has
\[
r=\sum_{i=1}^k r_{\lambda\, ,i}\, ,
\]
where $r_{\lambda\, ,i}$ is an irreducible representation of $\Pi$ over $\bar O_\lambda/\bar m_\lambda$ of dimension $n_i\,$. By Lemma~\ref{l:de Jong}, we can assume that for $\lambda\in S$ the representation $r_{\lambda ,i}$ can be lifted to a representation of $\Pi$ over $\bar O_\lambda\,$. Since the group $\bar O_\lambda^\times$ is divisible, we see that after twisting such a lift by some homomorphism 
\begin{equation}   \label{e:chi_lambda}
\chi_{\lambda,i}:\Gal (\BFbar_p/\BF_p )\to \bar O_\lambda^\times, 
\end{equation}
one gets an element of $A_{n_i}\,$. Since $A_{n_i}$ is finite, we can assume (after shrinking~$S$) that this element is independent of $\lambda$; denote it by $\rho_i\,$. 

Now consider the scheme $Y$ over $O_E [p^{-1}]$ whose $R$-points are $k$-tuples $(z_1,\ldots , z_k)\in (R^\times )^k$ such that the identities
\begin{equation}  \label{e:infinite system}
P_x (\rho ,t)=\prod_{i=1}^k P_x (\rho_i\, ,tz_i^{\deg x}), \quad x\in |X|,
\end{equation}
hold in $R[t]$. For each $\lambda\in S$ one has $Y(\bar O_\lambda/\bar m_\lambda)\ne\emptyset$: indeed, the identities \eqref{e:infinite system} hold if $z_i\in (\bar O_\lambda/\bar m_\lambda)^\times$ is the image of $\chi_{\lambda,i}(F)^{-1}$, where $\chi_{\lambda,i}$ is as in \eqref{e:chi_lambda} and $F\in\Gal (\BFbar_p/\BF_p )$ is the geometric Frobenius. Since $Y$ has finite type and $S$ is infinite it follows that $Y$ has a $\BQbar$-point $(z_1,\ldots ,z_k)\in (\BQbar^\times )^k$. The identities  \eqref{e:infinite system} for these numbers $z_i$ mean that the class of $\rho$ in the Grothendieck group is the sum of the classes of the $z_i$-twists of 
$\rho_i\,$, $1\le i\le k$. This is impossible because $\rho$ was assumed irreducible.
\end{proof}

\bibliographystyle{ams-alpha}

\end{document}